\newtheorem{thm}{Theorem}[section]
\newtheorem{lem}[thm]{Lemma}
\newtheorem{prop}[thm]{Proposition}
\newtheorem{cor}[thm]{Corollary}
\newtheorem{rem}[thm]{Remark}
\newtheorem{dfn}[thm]{Definition}
\newtheorem{ques}[thm]{Question}
\DeclareMathOperator{\SH}{\mathcal{SH}}
\DeclareMathOperator{\syn}{Syn}
\DeclareMathOperator{\Sp}{Sp}
\DeclareMathOperator{\sh}{Sh}
\DeclareMathOperator{\A}{\mathcal{A}}
\DeclareMathOperator{\G}{\mathcal{G}}
\DeclareMathOperator{\Pa}{\mathcal{P}}
\DeclareMathOperator{\mbp}{\mathrm{MBP}}
\DeclareMathOperator{\mgl}{\mathrm{MGL}}
\DeclareMathOperator{\MU}{\mathrm{MU}}
\DeclareMathOperator{\hz}{\mathrm{H}{\mathbb F}_2}
\DeclareMathOperator{\hzt}{\mathrm{H}{\mathbb F}_2^{top}}
\DeclareMathOperator{\F}{{\mathbb F}_2}
\DeclareMathOperator{\Z}{{\mathbb Z}}
\DeclareMathOperator{\R}{{\mathbb R}}
\DeclareMathOperator{\C}{{\mathbb C}}
\DeclareMathOperator{\sph}{{\mathbb S}}
\DeclareMathOperator{\rr}{Re_{\mathbb R}}
\DeclareMathOperator{\rc}{Re_{\mathbb C}}
\DeclareMathOperator{\un}{{\mathbbm 1}}
\DeclareMathOperator{\Fi}{{\mathbb F}}
\DeclareMathOperator{\gm}{{\mathbb G}_m}
\DeclareMathOperator{\km}{\mathrm{k}}
\DeclareMathOperator{\Mod}{\mathrm{Mod}}
\DeclareMathOperator{\Com}{\mathrm{Comod}}
\DeclareMathOperator{\spec}{\mathrm{Spec}}
\DeclareMathOperator{\ext}{\mathrm{Ext}}
\DeclareMathOperator{\cof}{\mathrm{cofib}}
\DeclareMathOperator{\fib}{\mathrm{fib}}
\DeclareMathOperator{\ce}{\mathrm{\check{C}}}
\DeclareMathOperator{\colim}{\mathrm{colim}}
\DeclareMathOperator{\coker}{\mathrm{coker}}
\DeclareMathOperator{\ind}{\mathrm{Ind}}
\DeclareMathOperator{\Hom}{\mathrm{Hom}}
\DeclareMathOperator{\tot}{\mathrm{Tot}}
\DeclareMathOperator{\ta}{\text{\footnotesize {\dn t}}}
\title{\textsc{Real isotropic motivic spectra}}
\author{Fabio Tanania}
\date{}
\begin{document}

\maketitle

 \begin{abstract}\let\thefootnote\relax\footnote{The author acknowledges support by the European Research Council (ERC) under Horizon Europe (grant agreement n° 101040935), by the Deutsche Forschungsgemeinschaft (DFG, German Research Foundation) TRR 326 \textit{Geometry and Arithmetic of Uniformized Structures}, project number 444845124 and the LOEWE professorship in Algebra, project number LOEWE/4b//519/05/01.002(0004)/87.}
 In this paper, we introduce the category of real isotropic motivic spectra, and show that the real realization functor from motivic spectra over $\R$ to classical spectra factors through it. We then describe its cellular subcategory as a one-parameter deformation of the category of spectra, with parameter $\rho$ corresponding to $-1 \in \R^{\times}$, whose special fiber is the derived category of comodules over the dual Steenrod algebra. This leads to an identification of real isotropic cellular spectra with $\F$-synthetic spectra, and sheds light on the relation between motivic homotopy theory over $\R$ and $\F$-synthetic homotopy theory.
 	\end{abstract}
 
\section{Introduction}

Isotropic motives were introduced by Vishik in \cite{V}. They are constructed from the Voevodsky's triangulated category of motives \cite{Vo} by annihilating the motives of anisotropic varieties. By now it is well known that the category of isotropic motives behaves particularly well over the so-called flexible fields, that is, purely transcendental extensions of infinite degree over some other field. Over such fields, there is plenty of anisotropic varieties, and by annihilating their motives the resulting categories are, in a certain sense, purified of the arithmetic information of the base field. For example, in \cite{V} we can find the computation of the isotropic motivic cohomology mod 2 of a flexible field. This is shown to be an exterior algebra over $\F$ with countably infinite generators that are independent of the base field. Moreover, in \cite{V1} isotropic Chow groups are identified with Chow groups modulo numerical equivalence (with $\Fi_p$-coefficients), showing that isotropic motives encode information about varieties which are closer to topology.

In \cite{T1} and \cite{T}, the author defined the isotropic motivic homotopy category, and showed that, over a flexible field, its cellular subcategory is purely algebraic, being identified with the derived category of comodules over the dual Steenrod algebra. In particular, the isotropic homotopy groups of the sphere spectrum, suitably completed, are the Ext-groups of the Steenrod algebra, namely the $E_2$-page of the Adams spectral sequence. Once again, this description is completely independent of the base field, provided that it is flexible.

This situation is reminiscent of what happens for the cellular subcategory of $p$-complete motivic spectra over the complex numbers. In fact, there is a map $\tau:\Sigma^{0,-1}\un \rightarrow \un$ in $\SH_{cell}(\C)_p$ such that the cofiber of $\tau$, denoted by $C\tau$, is a motivic $E_{\infty}$-ring spectrum \cite{G}, and $C\tau{\text-}\Mod_{cell}$ is identified with the derived category of even $\mathrm{BP}_*\mathrm{BP}$-comodules \cite{GWX}. On the other hand, by inverting $\tau$ in $\SH_{cell}(\C)_p$, one obtains the category of $p$-complete spectra \cite{DI2}.  Using a different approach, Pstr\k{a}gowski introduced the notion of synthetic spectra, and proved that $p$-complete cellular motivic $\C$-spectra are the same as $p$-complete even $\MU$-synthetic spectra \cite{P}. Altogether, these results tell us that $\SH_{cell}(\C)_p$ is a one-parameter deformation of $\SH_p$ with parameter $\tau$ and special fiber given by derived even comodules over $\mathrm{BP}_*\mathrm{BP}$. In particular, this description does not require any algebro-geometric input, and has very powerful applications to the computation of classical homotopy groups of spheres \cite{IWX}.

In \cite{P}, $E$-synthetic homotopy theory is developed with respect to any sufficiently good homotopy commutative ring spectrum $E$. As we have just mentioned, $\mathrm{BP}$-synthetic homotopy theory appears as the cellular subcategory of motivic spectra over $\C$. This intriguing phenomenon leads us to wonder whether other categories of synthetic spectra arise from motivic homotopy theory. In particular, due to its computational advantages, it would be especially interesting to understand the case of $\F$-synthetic homotopy theory, whose connection to motivic homotopy theory has so far remained rather elusive. This motivates the following question.

\begin{ques}\label{mainques}
	\normalfont
Can $\F$-synthetic spectra be realized motivically?
\end{ques}

The isotropic motivic homotopy category over  flexible fields seems to point in the right direction, although the parallelism ends with the description of the category of isotropic cellular spectra as derived comodules over the dual Steenrod algebra. More precisely, in its current form it is not possible to identify the isotropic sphere spectrum as the cofiber of some parameter in the category of cellular isotropic spectra. To address this issue, in this paper we propose a different construction of the category of isotropic spectra, one that is more suitable for studying motivic spectra over the real numbers (or more generally real closed fields). We build the category of real isotropic motivic spectra by annihilating in $\SH(\R)$ only the suspension spectra of the projective quadrics defined by the equation $\sum_{i=0}^n x_i^2=0$, rather than all anisotropic varieties as in \cite{T1}. Within this category we consider the parameter $\rho:\Sigma^{-1,-1}\un \rightarrow \un$ corresponding to $-1 \in \R^{\times}$. We then prove that inverting $\rho$ in the cellular subcategory of real isotropic motivic spectra yields the classical category of spectra, whereas annihilating $\rho$ recovers the derived category of comodules over the dual Steenrod algebra. Ultimately, this description allows us to identify real isotropic cellular spectra with $\F$-synthetic spectra.\\

\textbf{Main results.} We work in the category of real motivic spectra $\SH(\R)$, although all the results in this paper hold in $\SH(k)$ for any real closed field $k$.

Let $\un$ denote the unit in $\SH(\R)$, let $\rho:\Sigma^{-1,-1}\un \rightarrow \un$ be the map associeted with $-1 \in \R^{\times}$, and define the real isotropic sphere spectrum by:
$$\un^{iso} \coloneqq \colim_n \cof(\Sigma^{\infty}_{+}\ce(Q_{\rho^n}) \rightarrow \un),$$
where $Q_{\rho^n}$ is the Pfister quadric associated with the pure symbol $\rho^n$ and $\ce(Q_{\rho^n})$ is its \v{C}ech nerve.

We prove that $\un^{iso}$ is an $E_{\infty}$-algebra in $\SH(\R)$, which allows us to define the category of real isotropic motivic spectra by:
$$\SH^{iso}(\R) \coloneqq \un^{iso}{\text -}\Mod$$
equipped with a localization functor $L^{iso}:\SH(\R) \rightarrow \SH^{iso}(\R)$ that annihilates the suspension spectra of anisotropic Pfister quadrics.

We point out that, since $\R$ is not a flexible field, the category of real isotropic motivic spectra introduced in this paper is drastically different from the one studied in \cite{T1} over flexible fields. In particular, we will show that the cellular subcategory of $\SH^{iso}(\R)$, denoted by $\SH^{iso}_{cell}(\R)$, is not algebraic, and thus considerably more complicated than its flexible counterpart. Nevertheless, within $\SH^{iso}_{cell}(\R)$ we can still consider the parameter $\rho$ and study the special and generic fibers of the associated deformation.

Regarding the special fiber, we follow the approach of Gheorghe-Wang-Xu \cite{GWX}. More precisely, we first prove that $\un^{iso}/\rho$ is an $E_{\infty}$-algebra in $\SH(\R)$, where
$$\un^{iso}/\rho\coloneqq \cof(\Sigma^{-1,-1}\un^{iso}\xrightarrow{\rho}\un^{iso}).$$

Then, we proceed by studying the category of $\mbp^{iso}/\rho$-cellular modules, where $\mbp$ is the motivic Brown-Peterson spectrum at the prime 2, $\mbp^{iso}$ is its isotropic localization and $\mbp^{iso}/\rho \coloneqq \un^{iso} /\rho\wedge \mbp$. In the end, what we get is an identification of $\mbp^{iso}/\rho$-cellular modules with bigraded $\F$-vector spaces. This is the key step in constructing an isotropic Adams-Novikov spectral sequence with respect to $\mbp^{iso}$-homology, which serves as the main tool for proving the following result.

\begin{thm}[Theorem \ref{specfib}]
	There is an equivalence of stable $\infty$-categories:
	$$\un^{iso}/\rho{\text -}\Mod_{cell} \simeq {\mathcal D}(\A_*{\text -}\Com_*),$$
	where $\A_*$ is the dual Steenrod algebra.
\end{thm}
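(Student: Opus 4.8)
The plan is to identify both sides with the derived category of a suitable comodule category by exhibiting $\un^{iso}/\rho$ as the unit of a symmetric monoidal category whose homotopy theory is governed by the Hopf algebroid $(\A_*,\A_*)$. Concretely, I would run the isotropic Adams–Novikov machinery built earlier in the paper. The starting point is the identification of $\mbp^{iso}/\rho$-cellular modules with bigraded $\F$-vector spaces, which was established as the key step preceding the theorem; from this one extracts that $\mbp^{iso}/\rho$ behaves, inside $\un^{iso}/\rho{\text-}\Mod_{cell}$, like an Eilenberg–MacLane object, so that the cosimplicial cobar resolution $\mbp^{iso}/\rho^{\wedge \bullet +1}$ (smash over $\un^{iso}/\rho$) has associated homotopy groups computing the cobar complex of the Hopf algebroid of co-operations. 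The first step is therefore to compute $\pi_{**}(\mbp^{iso}/\rho \wedge_{\un^{iso}/\rho} \mbp^{iso}/\rho)$ and show it is the dual Steenrod algebra $\A_*$, with the Hopf algebroid structure maps (unit, counit, coproduct, conjugation) matching those of $(\F, \A_*)$; here the extra internal grading coming from the motivic weight must be tracked and shown to collapse onto the usual grading of $\A_*$ after quotienting by $\rho$.

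Next I would establish convergence and completeness: because $\rho$ has been annihilated, the relevant filtration is complete, and the cellular subcategory is generated under colimits and desuspensions by the unit, so that every object is the limit of its $\mbp^{iso}/\rho$-Adams tower. This gives a symmetric monoidal, colimit-preserving functor
$$\un^{iso}/\rho{\text-}\Mod_{cell} \longrightarrow \mathrm{Stable}(\mathrm{Comod}_{\A_*}) \simeq {\mathcal D}(\A_*{\text-}\Com_*)$$
sending the unit to the unit, obtained by taking $\mbp^{iso}/\rho$-homology and remembering the comodule structure. To see it is an equivalence I would check it on generators: the source is generated by the bigraded spheres $\Sigma^{a,b}\un^{iso}/\rho$, the target by the bigraded shifts of the trivial comodule $\F$, and the functor matches these up by the computation of the previous paragraph; fully faithfulness on these generators amounts to identifying mapping spectra on both sides with $\ext_{\A_*}$-groups, which is exactly what the isotropic Adams–Novikov spectral sequence computes, and its collapse (or rather, the fact that it is the defining descent spectral sequence) follows from the Eilenberg–MacLane property of $\mbp^{iso}/\rho$ together with the vanishing of $\rho$.

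The main obstacle I expect is the convergence/completeness step: one must verify that the $\mbp^{iso}/\rho$-based Adams resolution actually converges in $\un^{iso}/\rho{\text-}\Mod_{cell}$, i.e. that the unit is $\mbp^{iso}/\rho$-complete and that there are no lim$^1$ or phantom obstructions. Over a non-flexible field such as $\R$ this is genuinely delicate, since — as the introduction emphasizes — $\SH^{iso}_{cell}(\R)$ is not algebraic; the point is that passing to the $\rho = 0$ fiber kills precisely the non-algebraic part, and one has to make this rigorous, presumably by a connectivity argument in the motivic weight grading showing that $\mbp^{iso}/\rho$ is connective and its Adams filtration is exhaustive. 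A secondary technical point is checking that the equivalence is compatible with the symmetric monoidal structures, so that "derived comodules" appears with its correct tensor product; this should follow formally once the cobar computation and the generation statement are in place, via the universal property of modules over an $E_\infty$-algebra.
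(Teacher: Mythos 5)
Your strategy --- descend along $\un^{iso}/\rho \rightarrow \mbp^{iso}/\rho$, compute the Hopf algebroid of co-operations, and compare with derived $\A_*$-comodules --- is genuinely different from the paper's, but as written it has a gap at precisely its central step: the comparison functor is never actually constructed. ``Taking $\mbp^{iso}$-homology and remembering the comodule structure'' is not an exact functor from a stable $\infty$-category to ${\mathcal D}(\A_*{\text -}\Com_*)$; to produce one along your route you would have to form the limit of module categories over the cosimplicial ring $(\mbp^{iso}/\rho)^{\wedge \bullet+1}$, identify that limit --- a priori some completed category of derived comodules --- with $\mathrm{Ind}({\mathcal D}^b(\A_*{\text -}\Com_*))$, and also justify the identification of your ``$\mathrm{Stable}(\mathrm{Comod}_{\A_*})$'' with the paper's ${\mathcal D}(\A_*{\text -}\Com_*)=\mathrm{Ind}({\mathcal D}^b(\A_*{\text -}\Com_*))$. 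None of this is addressed, and it is exactly the hard part. The paper avoids it entirely: no homology-type functor is ever defined on the whole cellular category. Instead it equips $\un^{iso}/\rho{\text -}\Mod^{b}_{cell}$ with a bounded $t$-structure (Proposition \ref{tri}) whose heart is identified with $\A_*{\text -}\Com_*$ via $\mbp^{iso}_{**}$ (Proposition \ref{heart}, resting on the strongly convergent isotropic Adams--Novikov spectral sequence of Theorem \ref{speseq}, a Landweber filtration argument realizing finite comodules, and closure under filtered colimits), verifies the vanishing $[\Sigma^{-i}X,Y]_{\un^{iso}/\rho}\cong 0$ for $i>0$ when $\mbp^{iso}_{**}(Y)$ is injective (Corollary \ref{morcor}), and then invokes the realization theorem of \cite{GWX} to get a $t$-exact equivalence ${\mathcal D}^b(\A_*{\text -}\Com_*)\simeq \un^{iso}/\rho{\text -}\Mod^{b}_{cell}$; the unbounded statement then follows purely formally by Ind-completion.

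Relatedly, you locate the main difficulty in the wrong place. Convergence of the $\mbp^{iso}/\rho$-based tower for arbitrary cellular objects is not what is needed: once an exact, cocontinuous functor exists, full faithfulness only has to be checked on the compact bigraded spheres, where completeness is supplied by Corollary \ref{mbpcomp} and strong convergence by boundedness in Chow--Novikov degree (Theorem \ref{speseq}), and it then propagates by the usual localizing-subcategory argument; indeed the paper never proves convergence for general cellular objects, handling them formally through the bounded subcategory and Ind-completion. What cannot be finessed is the existence of the comparison functor itself, or an equivalent realization device such as \cite{GWX}; your proposal assumes it rather than supplying it.
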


Once the special fiber of the deformation induced by $\rho$ has been described, we proceed to investigate the generic fiber. In particular, we show that the real realization functor $\rr:\SH(\R) \rightarrow \SH$ factors through $\SH^{iso}(\R)$. This means that real isotropic motivic homotopy theory interpolates between real motivic homotopy theory and classical homotopy theory. As a consequence, we obtain new cohomology theories for real algebraic varieties that are, in principle, more computable than the motivic ones, yet more informative than the classical topological ones. Moreover, by applying results of Bachmann \cite{B}, we arrive at the following conclusion.

\begin{thm}[Theorem \ref{equniso}]
	There is an equivalence of stable $\infty$-categories:
	$$\un^{iso}[\rho^{-1}]{\text -}\Mod \simeq \SH.$$
\end{thm}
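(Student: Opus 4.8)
The plan is to show that inverting $\rho$ collapses $\un^{iso}$ back onto the $\rho$-inverted motivic sphere, and then invoke Bachmann's theorem \cite{B} identifying $\SH(\R)[\rho^{-1}]$ with $\SH$.

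First I would record that the Pfister quadrics $Q_{\rho^n}$ have no real points: the $n$-fold Pfister form $\langle\langle -1,\dots,-1\rangle\rangle$ is positive definite over $\R$, so $Q_{\rho^n}(\R)=\emptyset$, and hence also every fibre power $Q_{\rho^n}^{\times_{\R}(k+1)}$ has empty real locus. Since real realization $\rr$ preserves colimits and sends the suspension spectrum of a smooth $\R$-scheme $X$ to $\Sigma^{\infty}_{+}X(\R)$, and $\ce(Q_{\rho^n})$ is the \v{C}ech nerve of $Q_{\rho^n}\to\spec\R$, it follows that $\rr(\Sigma^{\infty}_{+}\ce(Q_{\rho^n}))\simeq\colim_{[k]\in\Delta^{op}}\Sigma^{\infty}_{+}\emptyset\simeq 0$. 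By \cite{B} the functor $\rr$ is the composite of the localization $\SH(\R)\to\SH(\R)[\rho^{-1}]$ with an equivalence $\SH(\R)[\rho^{-1}]\simeq\SH$; hence already $\Sigma^{\infty}_{+}\ce(Q_{\rho^n})[\rho^{-1}]\simeq 0$ in $\SH(\R)[\rho^{-1}]$.

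Next I would apply the colimit-preserving $\rho$-inversion to the defining colimit $\un^{iso}=\colim_n\cof(\Sigma^{\infty}_{+}\ce(Q_{\rho^n})\to\un)$. Each term becomes $\cof(0\to\un[\rho^{-1}])\simeq\un[\rho^{-1}]$, with the canonical structure map from $\un[\rho^{-1}]$ being an equivalence; since the transition maps of the colimit are maps under $\un$, after $\rho$-inversion they are forced to be the identity of $\un[\rho^{-1}]$, so $\un^{iso}[\rho^{-1}]\simeq\un[\rho^{-1}]$. This is an equivalence of $E_{\infty}$-rings, because the unit map $\un\to\un^{iso}$, and therefore $\un[\rho^{-1}]\to\un^{iso}[\rho^{-1}]$, is a map of $E_{\infty}$-rings which we have just identified as an equivalence of underlying spectra.

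Finally I would conclude by base change together with the standard fact that localizing a ring corresponds to localizing its module category: $\un^{iso}[\rho^{-1}]{\text -}\Mod\simeq\un[\rho^{-1}]{\text -}\Mod$, and $\un[\rho^{-1}]$-modules in $\SH(\R)$ are precisely the $\rho$-local objects, so $\un[\rho^{-1}]{\text -}\Mod\simeq\SH(\R)[\rho^{-1}]\simeq\SH$ by \cite{B}. The only non-formal input, beyond Bachmann's theorem, is the vanishing $\Sigma^{\infty}_{+}\ce(Q_{\rho^n})[\rho^{-1}]\simeq 0$, i.e. the statement that the anisotropic Pfister quadrics we kill in order to form $\SH^{iso}(\R)$ are exactly those already killed by the real-\'etale localization; everything else is bookkeeping with colimits and with the identification of $\un^{iso}[\rho^{-1}]$-modules internal to $\SH(\R)$ with those internal to $\SH^{iso}(\R)=\un^{iso}{\text -}\Mod$, which holds by transitivity of extension of scalars along $\un\to\un^{iso}\to\un^{iso}[\rho^{-1}]$.
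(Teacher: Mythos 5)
Your proposal is correct and takes essentially the same route as the paper. The paper first records $\rr(\un^{iso})\simeq\sph$ (Proposition \ref{unisos}), using exactly your observation that the $Q_{\rho^n}$ have no real points and that $\rr$ preserves colimits, and then invokes \cite[Theorem 35 and Proposition 36]{B} to identify $\rho$-inversion with real realization, so that $\un^{iso}[\rho^{-1}]{\text-}\Mod\hookrightarrow\SH(\R)[\rho^{-1}]\simeq\SH$ is a fully faithful functor whose image contains $\sph$ and hence everything; your version instead states the same key computation internally in $\SH(\R)$ as $\un^{iso}[\rho^{-1}]\simeq\un[\rho^{-1}]$ before passing to module categories, which is the identical statement transported along Bachmann's equivalence.
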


By combining these two theorems, we obtain a description of the category of real isotropic cellular spectra as a one-parameter deformation of the category of spectra with parameter $\rho$ and special fiber given by derived comodules over the dual Steenrod algebra. Formally, this description coincides with that of $\F$-synthetic spectra. In fact, we can prove the following.

\begin{thm}[Proposition \ref{homgr} and Theorem \ref{upsi}]
	There exists a functor:
	$$\Theta^*:\SH^{iso}_{cell}(\R)\rightarrow \syn_{\F}$$ 
	that is an equivalence of stable $\infty$-categories, sending $\rho$ to the parameter in $\F$-synthetic spectra (up to a sign).
\end{thm}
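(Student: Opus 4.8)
The plan is to recognise both $\SH^{iso}_{cell}(\R)$ and $\syn_{\F}$ as one‑parameter deformations sharing the same special and generic fibres, and then to rigidify this coincidence into an equivalence of deformations via the synthetic comparison machinery of Pstr\k{a}gowski \cite{P} (the same mechanism by which $2$‑complete cellular motivic $\C$‑spectra are identified with even $\MU$‑synthetic spectra). Recall that $\syn_{\F}$ is compactly generated by the bigraded synthetic spheres $\Sigma^{s,w}\nu\mathbb{S}$, carries the parameter $\tau$ with $C\tau{\text-}\Mod(\syn_{\F})\simeq\mathcal{D}(\A_*{\text-}\Com_*)$ and $\syn_{\F}[\tau^{-1}]\simeq\SH$, and is the receptacle of the synthetic analogue functor $\nu\colon\SH\to\syn_{\F}$ attached to the adapted homology theory $\hzt_*$; in particular a stable $\infty$‑category equipped with an adapted $\F$‑homology theory admits a colimit‑preserving comparison functor to $\syn_{\F}$ compatible with the respective Adams resolutions, which in our situation is moreover symmetric monoidal.

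The first step is to produce the functor. On $\SH^{iso}_{cell}(\R)$ consider the isotropic homology theory $X\mapsto\pi_{*,*}(\mbp^{iso}\wedge X)$, whose mod‑$\rho$ reduction takes values in $\A_*$‑comodules by Theorem~\ref{specfib} and which carries the isotropic Adams--Novikov resolutions discussed above. I would verify that this homology theory is adapted in the sense of \cite{P}; feeding it into the synthetic comparison then yields a symmetric monoidal, colimit‑preserving functor $\Theta^*\colon\SH^{iso}_{cell}(\R)\to\syn_{\F}$ carrying $\un^{iso}$ to $\nu\mathbb{S}$, and hence the compact generators $\Sigma^{s,w}\un^{iso}$ to the compact generators $\Sigma^{s,w}\nu\mathbb{S}$. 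By naturality of the construction in the deformation parameter, $\Theta^*(\rho)$ agrees with $\tau$ up to a unit of $\pi_{0,0}$; this unit is precisely the sign recording the identification of $\rho$ with $-1\in\R^{\times}$ under the normalisation of real realisation used in Theorem~\ref{equniso}, which is what ``up to a sign'' refers to in the statement. Equivalently, one may bypass the universal property and build $\Theta^*$ by left Kan extension from the full stable subcategory of compact objects, once the bigraded mapping spectra of the $\Sigma^{s,w}\un^{iso}$ have been matched — as modules over the mapping spectra of $\un^{iso}$ — with those of the $\Sigma^{s,w}\nu\mathbb{S}$.

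It remains to show $\Theta^*$ is an equivalence. Since $\Theta^*$ preserves colimits and sends compact generators to compact generators, it suffices to prove it is fully faithful on bigraded spheres, i.e.\ induces isomorphisms $\pi_{s,w}(\un^{iso})\xrightarrow{\ \sim\ }\pi_{s,w}(\nu\mathbb{S})$ for all $s,w$ (this is the content of the homotopy‑group computation preceding the theorem). Both sides are $\rho$‑ (respectively $\tau$‑) complete and are computed by the $\rho$‑ (respectively $\tau$‑) Bockstein spectral sequence, whose $E_1$‑page is $\ext_{\A_*}(\F,\F)$ on each side — on the isotropic side by Theorem~\ref{specfib}, on the synthetic side by construction of $\syn_{\F}$ — and $\Theta^*$ induces an isomorphism of $E_1$‑pages because it arises from a map of Adams resolutions. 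Conditional convergence together with the completeness then forces $\Theta^*$ to be an isomorphism on all bigraded homotopy groups, so $\Theta^*$ is fully faithful, hence an equivalence. The same conclusion can be packaged structurally: $\Theta^*$ restricts to the identity of $\mathcal{D}(\A_*{\text-}\Com_*)$ on special fibres (Theorem~\ref{specfib}) and to the identity of $\SH$ on generic fibres (Theorem~\ref{equniso}), and a $C\rho$‑local‑plus‑$\rho$‑complete d\'evissage upgrades this to an equivalence.

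The main obstacle is neither the existence of $\Theta^*$ nor the reduction to the two fibres, but the input legitimising that reduction: verifying that $\mbp^{iso}$‑homology on $\SH^{iso}_{cell}(\R)$ is genuinely adapted, which amounts to identifying the isotropic $\rho$‑Bockstein/Adams spectral sequence with the synthetic $\tau$‑Bockstein/$\F$‑Adams spectral sequence \emph{as spectral sequences}, not merely at the level of $E_1$‑pages and abutments; knowing only that the two fibres agree does not by itself exclude an exotic deformation interpolating between them. A secondary technical point is to match the completions on the two sides — the correct (hyper)completion of $\syn_{\F}$ against $\rho$‑completeness of the relevant objects of $\SH^{iso}_{cell}(\R)$ — and to track the unit relating $\rho$ and $\tau$ through the construction.
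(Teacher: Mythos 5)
Your high-level strategy matches the paper's — recognize both sides as synthetic-type deformations and deploy Pstr\k{a}gowski's comparison machinery — and you correctly single out the two genuine risk points. But neither is resolved by the moves you propose, and one of them fails as stated.

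First, the construction of $\Theta^*$. There is no off-the-shelf ``universal property of $\syn_\F$ with respect to adapted $\F$-homology theories on an arbitrary target'' that you can feed $\mbp^{iso}$-homology into. The paper does the work concretely: it introduces the intermediate category $\Sp^{fp}_{\mbp^{iso}}$ of finite $\mbp^{iso}$-projective isotropic cellular spectra, sets $\Upsilon X\colon M\mapsto \mathrm{map}(M,X)$ as a spherical sheaf on that site (Lemma \ref{upsidupsi}), and then uses that $\rr$ restricts to a functor $\Sp^{fp}_{\mbp^{iso}}\to \Sp^{fp}_{\hzt}$ inducing $\rr^*$ on sheaf categories, with Proposition \ref{cruc} (the analogue of \cite[Lemma 7.9]{P}) providing the needed compatibility. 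Setting $\Theta^*=\rr^*\circ\Upsilon$, the identification $\Theta^*(\Sigma^{p,q}\un^{iso})\simeq\sph^{p-q,q}$ is Lemma \ref{spheres}. Your alternative via Kan extension from compacts is closer in spirit, but still requires matching the bigraded mapping spectra of the generating spheres, which is precisely the content of Proposition \ref{homgr} — so the construction and the fully-faithfulness computation are genuinely intertwined, not two independent steps.

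Second, and more serious: your fully-faithfulness argument via ``$\rho$-Bockstein plus completeness'' fails because $\un^{iso}$ is \emph{not} $\rho$-complete. Proposition \ref{rehg} shows that $\cdot\rho$ is an isomorphism on $\pi_{p,q}(\un^{iso})$ for all $p\geq 2q$, giving $\pi_{p,q}(\un^{iso})\cong\pi_{p-q}(\sph)$ there; in particular $\pi_{0,0}(\un^{iso})\cong\Z$. The $\rho$-Bockstein spectral sequence has $E_1$-page built from $\pi_{**}(\un^{iso}/\rho)\cong\ext_{\A_*}(\F,\F)$, which is all $\F$-linear, so it converges to the $\rho$-completion and cannot recover $\Z$ (nor the uncompleted stable stems that live in $\pi_{**}(\un^{iso})$ $\rho$-periodically). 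The same issue appears on the synthetic side for $\nu\sph$. The paper's Proposition \ref{homgr} circumvents exactly this: for $p\geq 2q$ the isomorphism $\pi_{p,q}(\un^{iso})\cong\pi^{\syn}_{p-q,q}$ is read off directly from the generic fibers via Proposition \ref{rehg} and \cite[Theorem 4.58]{P} (both sides compute $\pi_{p-q}(\sph)$); this also pins down $\Theta^*(\rho)=\pm\lambda$. For $p<2q$, one inducts on the Chow--Novikov degree $2q-p$ using the five lemma on the two long exact sequences coming from the cofiber sequences of $\rho$ and $\lambda$, with the special-fiber comparison $\un^{iso}/\rho\simeq C\lambda$ (Theorem \ref{mustlab} and \cite[Lemma 4.56]{P}) supplying the remaining vertical isomorphisms. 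This is the concrete, object-by-object version of the ``$C\rho$-local plus $\rho$-complete d\'evissage'' you gesture at; invoking such a d\'evissage globally would require establishing a fracture/recollement result for $\SH^{iso}_{cell}(\R)$ that you do not have in hand. Once Proposition \ref{homgr} is available, Theorem \ref{upsi} is then a routine generation argument (the generators are compact, $\Theta^*$ is cocontinuous, and full faithfulness on generators implies the rest), which you do describe correctly.
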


This result answers Question \ref{mainques} affirmatively, by showing that, just as $p$-complete even $\MU$-synthetic spectra arise in motivic homotopy theory as $p$-complete cellular motivic spectra over $\C$, $\F$-synthetic spectra arise as isotropic motivic cellular spectra over $\R$, establishing a clear connection between $\F$-synthetic homotopy theory and motivic homotopy theory over the real numbers, an aspect that was so far not completely understood. We also want to highight that the identification provided by the previous theorem does not require any $2$-completion, in constrast with the one in \cite{P}. The reason is that $\rho$ exists already in the non-complete setting, and $\un^{iso}/\rho$ is $2$-complete. 

The category of real Artin-Tate motivic spectra was studied in detail in \cite{BHS}. In particular, the category of $2$-complete real Artin-Tate motivic spectra is identified with a one-parameter deformation of the $C_2$-equivariant stable homotopy category with parameter $\ta$ and special fiber given by the derived category of Mackey-functor $\MU_*\MU$-comodules. Moreover, \cite{BHS} provides a functor from $2$-complete real Artin-Tate motivic spectra to $2$-complete $\F$-synthetic spectra. Our approach via real isotropic spectra shows that this functor already exists at the non-complete level and, furthermore, allows us to explicitly describe the induced functor on the special fibers, thereby answering a question of Burklund-Hahn-Senger. 

	\begin{thm}[Theorem \ref{tosyn} and Proposition \ref{last}]
	The isotropic localization functor $L^{iso}:\SH(\R) \rightarrow \SH^{iso}(\R)$ restricts to a functor $L^{iso}:\SH(\R)^{\mathrm{AT}} \rightarrow \SH^{iso}_{cell}(\R)$ that induces a morphism of Hopf algebroids:
	$$(\pi_{2*,*}(\mbp),\mbp_{2*,*}(\mbp)) \rightarrow (\pi_{2*,*}(\mbp^{iso}),\mbp^{iso}_{2*,*}(\mbp^{iso}))$$
	that is the quotient map
	$$(\mathrm{BP}_*, \mathrm{BP}_*\mathrm{BP}) \rightarrow (\F, \A_*)$$
	sending $v_i$ to 0 and $t_i$ to $\xi_i$.
\end{thm}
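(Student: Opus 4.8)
The plan is to dispatch the restriction statement formally and then to read off the morphism of Hopf algebroids by playing the special fibre of the $\rho$-deformation against its generic fibre. First, $\SH(\R)^{\mathrm{AT}}$ is generated under colimits and bigraded shifts by the Tate spheres and by $\Sigma^{\infty}_{+}\spec(\C)$ (the only non-trivial finite separable extension of $\R$); since $Q_{\rho}\cong\spec(\C)$ and there is a cofibre sequence $\Sigma^{\infty}_{+}\spec(\C)\to\un\xrightarrow{\rho}\Sigma^{1,1}\un$ in $\SH(\R)$, the object $\Sigma^{\infty}_{+}\spec(\C)$ is cellular, so $\SH(\R)^{\mathrm{AT}}\subseteq\SH_{cell}(\R)$; as $L^{iso}$ is symmetric monoidal and cocontinuous with $L^{iso}\un=\un^{iso}$, it carries cellular objects to cellular objects, which is the first assertion. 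In particular $\mbp$ is cellular, hence Artin--Tate, and $\mbp^{iso}=L^{iso}(\mbp)\in\SH^{iso}_{cell}(\R)$; being monoidal, $L^{iso}$ sends the cosimplicial $E_{\infty}$-ring $\big([n]\mapsto\mbp^{\wedge_{\un}(n+1)}\big)$ to $\big([n]\mapsto(\mbp^{iso})^{\wedge_{\un^{iso}}(n+1)}\big)$, so applying $\pi_{2*,*}$ yields a morphism of the associated graded Hopf algebroids, once the target is known to be flat (which drops out below). On the source I would invoke the Hopkins--Morel--Hoyois theorem: over $\R$ one has $(\pi_{2*,*}(\mbp),\mbp_{2*,*}(\mbp))=(\mathrm{BP}_{*},\mathrm{BP}_{*}\mathrm{BP})$, the classical Hopf algebroid, with $|v_{i}|=|t_{i}|=(2(2^{i}-1),2^{i}-1)$.

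For the isotropic side I would combine the two fibres. The identification of $\mbp^{iso}/\rho$-cellular modules with bigraded $\F$-vector spaces says exactly that $\pi_{**}(\mbp^{iso}/\rho)=\F$, concentrated in bidegree $(0,0)$ (the unit forces the bidegree), and that $\mbp^{iso}/\rho$ is a ``field object''; transported along the equivalence $\un^{iso}/\rho\text{-}\Mod_{cell}\simeq\mathcal{D}(\A_{*}\text{-}\Com_{*})$ it must be the cofree (extended) comodule $\A_{*}$. Hence $(\mbp^{iso}\wedge_{\un^{iso}}\mbp^{iso})/\rho\simeq\A_{*}\otimes_{\F}\A_{*}$ with its Künneth comodule structure, again cofree, so $\pi_{**}\big((\mbp^{iso}\wedge_{\un^{iso}}\mbp^{iso})/\rho\big)=\A_{*}$. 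Running the $\rho$-Bockstein spectral sequences of $\mbp^{iso}$ and of $\mbp^{iso}\wedge_{\un^{iso}}\mbp^{iso}$ (with $E_{1}$-pages $\F[\rho]$ and $\A_{*}[\rho]$), they degenerate --- each is generated as an $\F$-algebra by $\rho$ and by the $\rho$-reductions of the motivic $t_{i}$, all permanent cycles since they lift to $\mbp^{iso}_{**}\mbp^{iso}$ --- and they converge, as $\mbp^{iso}$ and $\mbp^{iso}\wedge_{\un^{iso}}\mbp^{iso}$ are $\rho$-complete; thus $\pi_{**}(\mbp^{iso})$ is supported on $\{(-k,-k):k\geq 0\}$ and $\pi_{**}(\mbp^{iso}\wedge_{\un^{iso}}\mbp^{iso})$ on the translates $\{(2m-k,m-k)\}$ of $\A_{*}$. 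Since the $k=0$ slice recovers $\A_{*}$, all of whose bidegrees have the form $(2m,m)$, intersecting with the line $\{(2n,n)\}$ forces $k=0$ and kills every positive power of $\rho$, leaving $\pi_{2*,*}(\mbp^{iso})=\F$ and $\mbp^{iso}_{2*,*}(\mbp^{iso})=\A_{*}$ (free over $\F$, as needed), with the dual Steenrod coproduct inherited from the special-fibre identification. The generic fibre is a consistency check: by Theorem~\ref{equniso} and $\rr(\mbp)\simeq\hzt$ --- established in the treatment of the generic fibre, compare $\rr(\mgl)\simeq\mathrm{MO}$, i.e. $\Phi^{C_{2}}\mathrm{BP}_{\mathbb{R}}\simeq\hzt$ of Hu--Kriz --- one has $\mbp^{iso}[\rho^{-1}]\simeq\hzt$, with homotopy $\F$ and dual-Steenrod-algebra cooperations.

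It remains to pin down the map. By naturality the morphism $(\mathrm{BP}_{*},\mathrm{BP}_{*}\mathrm{BP})\to(\F,\A_{*})$ is a map of Hopf algebroids, so the ground component $\mathrm{BP}_{*}\to\F$ is forced to be the quotient by the maximal ideal --- equivalently $2\mapsto 0$ on $\pi_{0,0}$ and $v_{i}\mapsto 0$, the latter because $\pi_{2(2^{i}-1),2^{i}-1}(\mbp^{iso})=0$. For $t_{i}\mapsto\xi_{i}$ I would exploit the factorization $\SH(\R)^{\mathrm{AT}}\xrightarrow{L^{iso}}\SH^{iso}_{cell}(\R)\xrightarrow{(-)[\rho^{-1}]}\SH$ of $\rr$ given by Theorem~\ref{equniso}: the composite of $(\mathrm{BP}_{*},\mathrm{BP}_{*}\mathrm{BP})\to(\F,\A_{*})$ with $(\F,\A_{*})\to(\pi_{*}\hzt,\pi_{*}(\hzt\wedge\hzt))$ is the map induced by $\rr$ on $\mbp$; the second arrow is an isomorphism by the previous step; and the composite is the classical Thom reduction $\mathrm{BP}_{*}\mathrm{BP}\to\A_{*}$, $t_{i}\mapsto\xi_{i}$, by Hu--Kriz's Real Adams--Novikov computation (and Ravenel for the classical statement). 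Hence $t_{i}\mapsto\xi_{i}$ for our map. I expect the main obstacle to be the control of the $\rho$-Bockstein spectral sequences used in the middle step --- especially the $\rho$-completeness of $\mbp^{iso}$ and $\mbp^{iso}\wedge_{\un^{iso}}\mbp^{iso}$, which calls for a connectivity argument in $\SH^{iso}_{cell}(\R)$, and the bidegree bookkeeping --- together with ensuring that the special-fibre identification (with the cofree comodule $\A_{*}$) and the generic-fibre identification (with $\hzt$) are mutually compatible, so that the lift $t_{i}\mapsto\xi_{i}$ holds on the nose and not merely modulo decomposables.
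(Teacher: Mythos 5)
There is a genuine gap in your very first step. You claim that $\Sigma^{\infty}_{+}\spec(\C)$ is cellular over $\R$, via a cofibre sequence $\Sigma^{\infty}_{+}\spec(\C)\to\un\xrightarrow{\rho}\Sigma^{1,1}\un$, so that $\SH(\R)^{\mathrm{AT}}\subseteq\SH_{cell}(\R)$. No such cofibre sequence exists: the cofibre of $\Sigma^{\infty}_{+}\spec(\C)\to\un$ is by definition $\sph^{\C}$, which is \emph{not} a Tate twist of the sphere --- if it were, $\Sigma^{\infty}_{+}\spec(\C)$ would already be cellular and the whole Artin--Tate framework of Burklund--Hahn--Senger (and of Section 8 of the paper) would be vacuous, since the point of adding $\Sigma^{0,q}\Sigma^{\infty}_{+}\spec(\C)$ to the generators of $\SH(\R)^{\mathrm{AT}}$ is precisely that it is not cellular. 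The paper sidesteps this entirely: since $\spec(\C)\cong Q_{\rho}$, Remark \ref{annull} gives $L^{iso}(\Sigma^{\infty}_{+}\spec(\C))\simeq 0$, so $L^{iso}$ of the Artin--Tate generators lands in the localizing subcategory generated by $\Sigma^{0,q}\un^{iso}$, which is $\SH^{iso}_{cell}(\R)$. No cellularity of the source is needed.

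Your route to $t_{i}\mapsto\xi_{i}$ is also on shaky ground. You invoke ``the classical Thom reduction $\mathrm{BP}_{*}\mathrm{BP}\to\A_{*}$, $t_{i}\mapsto\xi_{i}$,'' but the classical Thom reduction is degree-preserving and (at $p=2$) sends $t_{i}$, of degree $2(2^{i}-1)$, to $\bar{\xi}_{i}^{2}$, not to $\xi_{i}$, which sits in degree $2^{i}-1$. The map in the statement is the weight-graded quotient of $\mathrm{BP}_{*}\mathrm{BP}$ by $(2,v_{i},\eta_{R}(v_{i}))$, which effectively \emph{halves} all topological degrees before comparing with $\A_{*}$; this is not the Thom reduction, and pinning it down via real realization and Hu--Kriz would require a careful bookkeeping of the regrading $\pi_{2q,q}\to\pi_{q}$ that you do not supply (and is not in Hu--Kriz in this form). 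The paper instead factors the map in Chow--Novikov degree $0$ as
\[
\mbp_{2*,*}(\mbp)\longrightarrow\mbp_{2*,*}(\hz)\longrightarrow\mbp^{iso}_{2*,*}(\hz^{iso})\longrightarrow\mbp^{iso}_{2*,*}(\mbp^{iso}),
\]
and identifies each term and arrow using Propositions \ref{hmbpiso} and \ref{freembp} and Theorem \ref{pimbp}. That identification, carried out inside the paper's own bigraded framework, yields the quotient by $(2,v_{1},v_{2},\dots)$ with $t_{i}\mapsto\xi_{i}$ directly, without recourse to the $C_{2}$-equivariant geometric fixed points story. Your $\rho$-Bockstein argument for $\pi_{2*,*}(\mbp^{iso})\cong\F$ and $\mbp^{iso}_{2*,*}(\mbp^{iso})\cong\A_{*}$ is also logically backwards relative to the paper: those facts are Theorem \ref{pimbp}, which is proved \emph{before} (and is used to establish) the special-fibre description and Theorem \ref{eqmbpisorho} you are leaning on, so as written the argument is circular.
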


\textbf{Outline.} In Section 2, we fix some notation that we use throughout this paper. In Section 3, we introduce the category of real isotropic motivic spectra and study the main properties of real isotropic motivic cohomology and real isotropic $\mbp$-cohomology. In Section 4, we continue by investigating the isotropic cofiber of $\rho$ and identifying $\mbp^{iso}/\rho$-cellular modules with bigraded $\F$-vector spaces. In Section 5, we completely solve the problem of describing the special fiber of the deformation induced by $\rho$ by identifying $\un^{iso}/\rho$-cellular modules with derived comodules over the dual Steenrod algebra. In Section 6, we proceed by studying the generic fiber, showing that inverting $\rho$ in real isotropic motivic spectra recovers the category of spectra. In Section 7, we construct an explicit functor from real isotropic cellular spectra to $\F$-synthetic spectra, and prove that it is an equivalence. In Section 8, we conclude by analyzing the relation with real Artin-Tate motivic spectra.\\

\textbf{Acknowledgements.} I would like to thank Dan Isaksen for suggesting that isotropic spectra might be related to $\F$-synthetic spectra, and Alexey Ananyevskiy for raising the question of how isotropic spectra behave over the real numbers.\\

\section{Notation}

\begin{tabular}{c|c}
	$\km^M_*(-)$ & Milnor K-theory mod 2\\
	$\mathrm{GW}(-)$ & Grothendieck-Witt ring\\
	$\SH(\R)$ & stable motivic homotopy category over $\R$\\
	$\SH(\R)^{\mathrm{AT}}$ & category of real Artin-Tate motivic spectra\\
	$\un$ & motivic sphere spectrum\\
	$\mgl$ & algebraic cobordism spectrum\\
	$\mbp$ & motivic Brown-Peterson spectrum at the prime 2\\
	$\hz$ & motivic Eilenberg-MacLane spectrum of $\F$\\
	$\pi_{**}(-)$& motivic homotopy groups\\
	$\A_{**}$ & motivic dual Steenrod Algebra\\
	$\SH^{iso}(\R)$ & real isotropic stable motivic homotopy category\\
	$\SH^{iso}_{cell}(\R)$ & category of real isotropic cellular spectra\\
	$\un^{iso}$ & real isotropic motivic sphere spectrum\\
	$\mbp^{iso}$ & isotropic motivic Brown-Peterson spectrum at the prime 2\\
	$\hz^{iso}$ & isotropic motivic Eilenberg-MacLane spectrum of $\F$\\
	$\A^{iso}_{**}$ & isotropic motivic dual Steenrod Algebra\\
	$\SH$ & stable homotopy category\\
	$\sph$ & sphere spectrum\\
	$\mathrm{BP}$ & Brown-Peterson spectrum at the prime 2\\
	$\hzt$ & Eilenberg-MacLane spectrum of $\F$\\
	$\pi_*(-)$& homotopy groups\\
	$\A_*$ & dual Steenrod algebra\\
	$\syn_{\F}$&category of $\F$-synthetic spectra\\
	$\sph^{0,0}$ & $\F$-synthetic sphere spectrum\\
	$\pi_{**}^{\syn}(-)$ & $\F$-synthetic homotopy groups\\
	$\rr$& real realization functor\\
	${\mathcal D}^b(-)$ & bounded derived category of an abelian category\\
	${\mathcal D}(-)$ & ind-completion of ${\mathcal D}^b(-)$
\end{tabular}\\

For any $E_{\infty}$-algebra $R$ in $\SH(\R)$, we denote by $R{\text -}\Mod$ the stable $\infty$-category of $R$-modules, by $- \wedge_R -$ its smash product, and by $[-,-]_{R}$ the hom-sets in the homotopy category of $R{\text -}\Mod$, namely
$$[-,-]_{R} \coloneqq \pi_0\mathrm{Map}_{R{\text -}\Mod}(-,-).$$

In the case where $R$ is the motivic sphere spectrum, we drop the decoration and simply write $- \wedge -$ for the smash product, and $[-,-]$ for the homotopy hom-sets.

For any algebra $A$ (resp. coalgebra $C$), we denote by $A{\text -}\Mod$ (resp. $C{\text -}\Com$) the abelian category of left $A$-modules (resp. $C$-comodules), and by $\Hom_A(-,-)$ (resp. $\Hom_C(-,-)$) its hom-sets. 

For any bigraded object $M_{**}$ in $A{\text -}\Mod$ (resp. $C{\text -}\Com$), we define its $(p,q)$-suspension by: $$\Sigma^{p,q}M_{a,b}\coloneqq M_{a-p,b-q}.$$
 
Finally, for any pair of bigraded objects $M_{**}$ and $N_{**}$, we define the bigraded hom-sets by:
$$\Hom ^{p,q}(M_{**},N_{**})\coloneqq \Hom ^{0,0}(\Sigma^{p,q}M_{**},N_{**}),$$
where $\Hom ^{0,0}(-,-)$ is the set of degree-preserving homomorphisms.  

\section{Real isotropic spectra}

In this section, we begin by constructing the isotropic motivic stable homotopy category over the real numbers. We then proceed to study real isotropic motivic cohomology and real isotropic $\mbp$-cohomology, and present the main computations that will be needed in the subsequent sections.

Denote by $\rho: S^0 \rightarrow \gm$ the map corresponding to $-1 \in \R^{\times}$. Let $Q_{\rho^n}$ be the Pfister quadric defined by the equation $\langle\langle -1 \rangle \rangle^{\otimes n}=0$, where $\langle \langle -1 \rangle \rangle \coloneqq \langle 1,1 \rangle$ is the one-fold Pfister form corresponding to the pure symbol $\rho \in \km^M_1(\R)$.

\begin{dfn}
	\normalfont
We define the real isotropic sphere spectrum as the object in $\SH(\R)$ given by:
$$\un^{iso} \coloneqq \colim_n \cof(\Sigma^{\infty}_{+}\ce(Q_{\rho^n}) \rightarrow \un),$$
where $\ce(Q_{\rho^n})$ is the \v{C}ech nerve of the map $Q_{\rho^n} \rightarrow \spec(\R)$.
\end{dfn}

\begin{rem}
\normalfont
We point out that the definition of the real isotropic sphere spectrum provided above is not equivalent to the one in \cite[Definition 3.2]{T}. In fact, over a flexible field $k$, that is, $k=k_0(t_1,t_2,\dots)$ for some field $k_0$, the isotropic sphere spectrum defined in \cite[Definition 3.2]{T} as
$$\un^{iso}_k \coloneqq \colim_P \cof(\Sigma^{\infty}_{+}\ce(P) \rightarrow \un_k),$$
where the colimit is taken over all anisotropic projective varieties $P$ over $k$, is equivalent to
$$\un^{iso}_k \simeq \colim_{\alpha} \cof(\Sigma^{\infty}_{+}\ce(Q_{\alpha}) \rightarrow \un_k),$$
where the colimit now is taken over all non-zero pure symbols $\alpha \in \km^M_*(k)$, and $Q_{\alpha}$ is the corresponding Pfister quadric. This equivalence holds because, over flexible fields, every anisotropic variety is contained in an anisotropic Pfister quadric.

Since $\R$ is not a flexible field, the two definitions yield different spectra over the real numbers. In particular, this means that the isotropic motivic homotopy category over $\R$, which is the main object of study in this paper, differs drastically from the one analyzed in \cite{T} and \cite{T1}.
\end{rem}

\begin{prop}\label{idm}
The motivic spectrum $\un^{iso}$ is an idempotent monoid in $\SH(\R)$. In particular, $\un^{iso}$ is an $E_{\infty}$-algebra in $\SH(\R)$.
\end{prop}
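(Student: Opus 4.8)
The plan is to establish that $\un^{iso}$ is an idempotent monoid, i.e.\ that the unit map $\un \to \un^{iso}$ becomes an equivalence after smashing with $\un^{iso}$ — equivalently, that $\un^{iso} \wedge \un^{iso} \simeq \un^{iso}$ via the multiplication. Once idempotency is known, the $E_\infty$-structure is automatic: an idempotent object in a symmetric monoidal $\infty$-category admits a unique (and canonical) $E_\infty$-algebra structure, by the standard result on idempotent objects (Lurie, \emph{Higher Algebra}, Section 4.8.2). So the real content is the idempotency statement.

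First I would set $\cat_n \coloneqq \cof(\Sigma^\infty_+ \ce(Q_{\rho^n}) \to \un)$, so that $\un^{iso} = \colim_n \cat_n$, and reduce everything to a statement about the $\cat_n$. The key geometric input is that $Q_{\rho^n}$ is a Pfister quadric, hence admits a rational point after base change to its own function field, and more importantly that $Q_{\rho^n}$ is split by $Q_{\rho^n}$ itself. I would use this to show that $\Sigma^\infty_+ \ce(Q_{\rho^n}) \wedge \cat_m \simeq 0$ for appropriate $m$ (or in the colimit), since the \v{C}ech nerve of a map that acquires a section becomes contractible. Concretely: after smashing with $\Sigma^\infty_+ Q_{\rho^n}$, the map $Q_{\rho^n} \to \spec(\R)$ has a section, so $\Sigma^\infty_+ \ce(Q_{\rho^n}) \wedge \Sigma^\infty_+ Q_{\rho^n} \to \Sigma^\infty_+ Q_{\rho^n}$ is an equivalence, giving $\cat_n \wedge \Sigma^\infty_+ Q_{\rho^n} \simeq 0$. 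The subtlety is that $Q_{\rho^n}$ need not split $Q_{\rho^m}$ for $m \neq n$; here one uses that the Pfister quadrics associated to $\rho^n$ for varying $n$ form a cofinal-enough system — $Q_{\rho^n}$ is a subquadric/linked situation so that in the colimit over $n$ all of them get killed simultaneously. I would make this precise by showing $\Sigma^\infty_+ \ce(Q_{\rho^m}) \wedge \un^{iso} \simeq 0$ for every $m$, using that for $n \geq m$ the quadric $Q_{\rho^n}$ has a point over the function field of $Q_{\rho^m}$ (both are split by the same symbol $\rho$ essentially), hence $\ce(Q_{\rho^m})$ becomes contractible after smashing with $\Sigma^\infty_+ Q_{\rho^n}$, and then take the colimit.

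Granting that $\Sigma^\infty_+ \ce(Q_{\rho^m}) \wedge \un^{iso} \simeq 0$ for all $m$: smashing the defining cofiber sequence $\Sigma^\infty_+ \ce(Q_{\rho^m}) \to \un \to \cat_m$ with $\un^{iso}$ shows that $\cat_m \wedge \un^{iso} \simeq \un^{iso}$ for every $m$, compatibly with the transition maps. Taking the colimit over $m$ and using that smashing with $\un^{iso}$ commutes with filtered colimits, I get $\un^{iso} \wedge \un^{iso} = (\colim_m \cat_m) \wedge \un^{iso} \simeq \colim_m (\cat_m \wedge \un^{iso}) \simeq \colim_m \un^{iso} \simeq \un^{iso}$, and one checks this equivalence is implemented by the multiplication map (equivalently, that the two maps $\un^{iso} \rightrightarrows \un^{iso} \wedge \un^{iso}$ induced by the unit agree, which again follows from $\Sigma^\infty_+\ce(Q_{\rho^m})$ being smashing-annihilated). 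This is exactly Lurie's criterion for an idempotent object, so $\un^{iso}$ is an idempotent $E_\infty$-algebra.

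The main obstacle I anticipate is the geometric claim that all the $Q_{\rho^m}$ get simultaneously split in the colimit defining $\un^{iso}$ — i.e.\ identifying the right cofinality/linkage property among the Pfister quadrics $Q_{\rho^n}$. Over a flexible field this is handled by the "every anisotropic variety sits inside an anisotropic Pfister quadric" principle cited in the Remark, but over $\R$ one must instead exploit the specific structure of the symbols $\rho^n$: the $n$-fold Pfister form $\langle\langle -1\rangle\rangle^{\otimes n}$ becomes isotropic over the function field of $Q_{\rho^{n'}}$ for suitable $n'$, because $-1$ is a sum of two squares there. Pinning down this divisibility/linkage among the $\rho^n$ and checking it gives the needed vanishing $\Sigma^\infty_+\ce(Q_{\rho^m}) \wedge \un^{iso} \simeq 0$ is the crux; everything else is formal manipulation of cofiber sequences and the idempotent-object formalism.
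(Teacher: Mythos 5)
Your strategy is sound, and both proofs rest on the same geometric engine (the \v{C}ech nerve of a map that acquires a section becomes contractible). But the paper's argument is shorter and, notably, avoids entirely what you identify as ``the crux.'' Setting $X_n \coloneqq \cof(\Sigma^\infty_+\ce(Q_{\rho^n}) \to \un)$, the paper first shows that each $X_n$ is itself an idempotent monoid, using only the \emph{self}-splitting equivalence $\ce(Q_{\rho^n}) \times \ce(Q_{\rho^n}) \simeq \ce(Q_{\rho^n} \times Q_{\rho^n}) \simeq \ce(Q_{\rho^n})$, which holds because the diagonal is a section of $Q_{\rho^n} \times Q_{\rho^n} \to Q_{\rho^n}$. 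It then computes
$$\un^{iso} \wedge \un^{iso} \simeq \colim_{m,n}\bigl(X_m \wedge X_n\bigr) \simeq \colim_n\bigl(X_n \wedge X_n\bigr) \simeq \colim_n X_n = \un^{iso},$$
where the middle step is cofinality of the diagonal in $\mathbb{N}\times\mathbb{N}$; no comparison between $Q_{\rho^m}$ and $Q_{\rho^n}$ for $m \neq n$ ever enters. Your route, by contrast, aims to prove $\Sigma^\infty_+\ce(Q_{\rho^m}) \wedge \un^{iso} \simeq 0$ directly, and this does require the \emph{cross}-splitting statement that $Q_{\rho^n}$ has a point over $Q_{\rho^m}$ for $n \geq m$. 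That is in fact elementary over $\R$: $Q_{\rho^m}$ is a linear section of $Q_{\rho^n}$ (set the last $2^n-2^m$ coordinates to zero), so the closed immersion $Q_{\rho^m} \hookrightarrow Q_{\rho^n}$ sections the projection $Q_{\rho^n}\times Q_{\rho^m} \to Q_{\rho^m}$, giving $X_n \wedge \Sigma^\infty_+\ce(Q_{\rho^m}) \simeq 0$ for all $n \geq m$ and hence $\Sigma^\infty_+\ce(Q_{\rho^m})\wedge\un^{iso}\simeq 0$ in the colimit. So the obstacle you anticipate dissolves, but your write-up contains a small directional slip worth flagging: when $n \geq m$ it is $\ce(Q_{\rho^n})$, not $\ce(Q_{\rho^m})$, that becomes contractible after pullback to $Q_{\rho^m}$, since it is $Q_{\rho^n}\to\spec(\R)$ that acquires a section there. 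Both routes finish by invoking Lurie's result on idempotent objects (the paper cites HA Proposition 4.8.2.9), exactly as you propose.
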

\begin{proof}
Since there are equivalences of motivic spaces $\ce(Q_{\rho^n}) \times \ce(Q_{\rho^n}) \simeq \ce(Q_{\rho^n} \times Q_{\rho^n}) \simeq \ce(Q_{\rho^n})$ for all $n$ (see for example \cite[Lemma 6.2]{T2}), we deduce that $\cof(\Sigma^{\infty}_{+}\ce(Q_{\rho^n}) \rightarrow \un)$ is an idempotent monoid in $\SH(\R)$ for all $n$.  Since the smash product commutes with colimits, we conclude that:
\begin{align*}
	\un^{iso}\wedge \un^{iso} &\simeq \colim_{m,n}(\cof(\Sigma^{\infty}_{+}\ce(Q_{\rho^m}) \rightarrow \un) \wedge \cof(\Sigma^{\infty}_{+}\ce(Q_{\rho^n}) \rightarrow \un)) \\
	&\simeq \colim_n (\cof(\Sigma^{\infty}_{+}\ce(Q_{\rho^n}) \rightarrow \un) \wedge \cof(\Sigma^{\infty}_{+}\ce(Q_{\rho^n}) \rightarrow \un)) \\
	&\simeq \colim_n \cof(\Sigma^{\infty}_{+}\ce(Q_{\rho^n}) \rightarrow \un) = \un^{iso}.
	\end{align*}

This implies that $\un^{iso}$ is an idempotent monoid, and so an $E_{\infty}$-algebra in $\SH(\R)$ by \cite[Proposition 4.8.2.9]{Lu}.
\end{proof}

\begin{rem}
\normalfont
Thanks to Proposition \ref{idm}, we can consider the symmetric monoidal stable $\infty$-category of modules over $\un^{iso}$, which we denote by $\un^{iso}{\text -}\Mod$. According to \cite[Proposition 4.8.2.10]{Lu}, this category can be identified with the full subcategory $\SH^{iso}(\R)$ of $\SH(\R)$, whose objects are of the form $\un^{iso} \wedge E$, for any spectrum $E$ in $\SH(\R)$. In fact, there is an adjunction:
$$L^{iso}: \SH(\R) \leftrightarrows \SH^{iso}(\R): \iota,$$
where $L^{iso}$ is the smashing localization functor defined on objects by $L^{iso}E \coloneqq \un^{iso} \wedge E$.
\end{rem}

\begin{dfn}
	\normalfont
Denote by $E^{iso}$ the object $L^{iso}E$ in $\SH^{iso}(\R)$, for all $E$ in $\SH(\R)$. We call $\SH^{iso}(\R)$ the category of real isotropic motivic spectra. 
\end{dfn}

\begin{rem}
\normalfont
The definition of the real isotropic sphere spectrum, and consequently the definition of $\SH^{iso}(\R)$, work more generally over all formally real fields. Moreover, although we state all results in this paper over $\R$, they in fact hold over any real closed field.
\end{rem}

\begin{rem}\label{annull}
	\normalfont
	The real isotropic localization functor $L^{iso}$ introduced above has the effect of ``annihilating" anisotropic quadrics. In fact, over $\R$, any anisotropic projective quadric $Q$ is contained in $Q_{\rho^m}$ for a sufficiently large $m$. This implies that the projection $\ce(Q_{\rho^n})\times Q \rightarrow Q$ is an equivalence of motivic spaces for all $n \geq m$, from which it follows that 
	$$\cof(\Sigma^{\infty}_+(\ce(Q_{\rho^n})\times Q) \rightarrow \Sigma^{\infty}_+Q)\simeq 0$$
	in $\SH(\R)$, for all $n\geq m$. As a consequence, we deduce that
	$$(\Sigma^{\infty}_+Q)^{iso}=L^{iso}(\Sigma^{\infty}_+Q)=\un^{iso}\wedge \Sigma^{\infty}_+Q\simeq0.$$
\end{rem}

\begin{prop}\label{pihiso}
There is an isomorphism of $\F$-algebras:
$$\pi_{**}(\hz^{iso}) \cong \frac{\F[\rho,r_i: i \geq 0]}{(r_i^2-\rho r_{i+1}: i \geq 0)},$$
with generators $\rho$ in degree $(-1,-1)$ and $r_i$ in degree $(2^{i+1}-1,2^i-1)$.
\end{prop}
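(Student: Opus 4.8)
The plan is to compute $\pi_{**}(\hz^{iso})$ directly from the colimit defining $\un^{iso}$. Writing $C_n \coloneqq \cof(\Sigma^{\infty}_{+}\ce(Q_{\rho^n}) \to \un)$, one has $\hz^{iso} = \hz \wedge \un^{iso} \simeq \colim_n (\hz \wedge C_n)$ because smashing commutes with colimits, hence $\pi_{**}(\hz^{iso}) \cong \colim_n \pi_{**}(\hz \wedge C_n)$ since motivic homotopy groups commute with filtered colimits. So it suffices to (i) compute each $\pi_{**}(\hz \wedge C_n)$ from the cofiber sequence $\hz \wedge \Sigma^{\infty}_{+}\ce(Q_{\rho^n}) \to \hz \to \hz \wedge C_n$ together with the motivic $\hz$-homology of the \v{C}ech simplicial scheme $\ce(Q_{\rho^n})$, and (ii) identify the transition maps $\hz \wedge C_n \to \hz \wedge C_{n+1}$.

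For $n=1$ the quadric $Q_\rho$, cut out by $x_0^2+x_1^2=0$, is $\spec\C$, so $\ce(Q_\rho)$ is the \v{C}ech nerve of $\spec\C \to \spec\R$; since this is a $C_2$-torsor, $\hz \wedge \Sigma^{\infty}_{+}\ce(Q_\rho) \simeq (\hz/\rho)_{hC_2}$, the homotopy orbits for the Galois (complex-conjugation) action on $\hz/\rho \coloneqq \cof(\Sigma^{-1,-1}\hz \xrightarrow{\rho} \hz) \simeq \hz \wedge \Sigma^{\infty}_{+}\spec\C$, whose (co)homology is classical. For general $n$ the essential input is the motivic cohomology of the \v{C}ech simplicial scheme of the Pfister quadric of the pure symbol $\rho^n$ — going back to Voevodsky's work around the Milnor conjecture, and providing the $\rho$-equivariant refinement of Vishik's computation of isotropic motivic cohomology over flexible fields (where the answer is the exterior algebra $\Lambda_\F(r_i)$, recovered here by setting $\rho=0$). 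From it I expect to read off, via the long exact sequence above,
\[
\pi_{**}(\hz \wedge C_n) \cong \frac{\F[\rho, r_0, \dots, r_{n-1}]}{(r_i^2 - \rho r_{i+1} : 0 \le i \le n-2)},
\]
with $r_j$ in bidegree $(2^{j+1}-1, 2^j-1)$, with $\tau \in \pi_{0,-1}(\hz)$ mapping to $\rho r_0$, and with the top class $r_{n-1}$ a free polynomial generator. The additive part is a splicing of the long exact sequence; for the multiplicative structure I would name the generators explicitly: the \v{C}ech computation makes $\tau^{2^j}$ divisible by $\rho^{2^{j+1}-1}$ inside $\pi_{**}(\hz \wedge C_n)$ for $n>j$, and setting $r_j \coloneqq \tau^{2^j}/\rho^{2^{j+1}-1}$ one gets $r_i^2 = \tau^{2^{i+1}}/\rho^{2^{i+2}-2} = \rho\cdot(\tau^{2^{i+1}}/\rho^{2^{i+2}-1}) = \rho r_{i+1}$ for free whenever both sides exist, while a dimension count against the long exact sequence shows there is nothing more.

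For step (ii), the map $C_n \to C_{n+1}$ is induced by the closed embedding $Q_{\rho^n} \hookrightarrow Q_{\rho^{n+1}}$ (set the last $2^n$ coordinates to zero), and on $\hz$-homotopy it sends $r_j \mapsto r_j$ while forcing $r_{n-1}^2$ to become $\rho$-divisible, i.e.\ it is the evident inclusion
\[
\frac{\F[\rho, r_0, \dots, r_{n-1}]}{(r_i^2 - \rho r_{i+1} : i \le n-2)} \hookrightarrow \frac{\F[\rho, r_0, \dots, r_{n}]}{(r_i^2 - \rho r_{i+1} : i \le n-1)}.
\]
Passing to the colimit over $n$ then yields exactly $\F[\rho, r_i : i \ge 0]/(r_i^2 - \rho r_{i+1} : i \ge 0)$, and tracking bidegrees through the construction gives the stated degrees of $\rho$ and the $r_i$.

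I expect the main obstacle to be step (i) — pinning down the $\hz$-module $\hz \wedge \Sigma^{\infty}_{+}\ce(Q_{\rho^n})$ and the connecting maps for all $n$. Even for $n=1$ this is subtle: complex conjugation acts on $\hz \wedge \Sigma^{\infty}_{+}\spec\C \simeq \hz/\rho$ as the identity on homotopy groups but nontrivially in higher filtration, so $(\hz/\rho)_{hC_2}$ is a genuinely twisted homotopy-orbit spectrum, not $\hz/\rho \wedge \Sigma^{\infty}_{+}BC_2$. Beyond getting this input correct for general $n$, the second delicate point is extracting the multiplicative relations $r_i^2 = \rho r_{i+1}$ (and verifying the $r_i$ generate) rather than merely the additive extension coming out of the long exact sequence.
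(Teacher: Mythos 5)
Your proposal follows essentially the same route as the paper: reduce to $\colim_n \hz_{**}(C_n)$, compute each term, and pass to the colimit along the evident inclusions. The key input you flag as the main obstacle — pinning down $\hz_{**}(C_n)$ with its multiplicative structure, including the relations $r_i^2 = \rho r_{i+1}$ and the identification of $\tau$ with $\rho r_0$ — is precisely Vishik's Theorem 3.5 in \cite{V}, which the paper simply cites (giving $\hz_{**}(C_n) \cong R_{\rho^n}[r_0,\dots,r_{n-1}]/(r_i^2-\rho r_{i+1})$ with $R_{\rho^n} = \km^M_*(\R)/\ker(\cdot\rho^n) \cong \F[\rho]$ over $\R$); your sketch of the generators as $r_j = \tau^{2^j}/\rho^{2^{j+1}-1}$ is consistent with that result, but the paper avoids re-deriving it.
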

\begin{proof}
	By definition, we have the following isomorphisms:
	$$\pi_{**}(\hz^{iso}) \cong \hz_{**}(\un^{iso}) \cong \colim_n \hz_{**}(\cof(\Sigma^{\infty}_{+}\ce(Q_{\rho^n}) \rightarrow \un)).$$
	From \cite[Theorem 3.5]{V}, we also know that
	$$\hz_{**}(\cof(\Sigma^{\infty}_{+}\ce(Q_{\rho^n}) \rightarrow \un)) \cong \frac{R_{\rho^n}[r_i:0\leq i \leq n-1]}{(r_i^2-\rho r_{i+1}:0\leq i \leq n-2)},$$
	where $R_{\rho^n} \cong \km^M_*(\R)/\ker(\cdot \rho^n)$ as $\F$-algebras. Since $\km^M_*(\R) \cong \F[\rho]$, we get an isomorphism $R_{\rho^n} \cong \F[\rho]$ for every $n \geq 0$. Taking the colimit, we then obtain the desired result.
	\end{proof}

\begin{lem}\label{ringhom}
	The map of ring spectra $\hz \rightarrow \hz^{iso}$ induces in homotopy groups the ring homomorphism: 
	$$\pi_{**}(\hz) \cong \F[\tau,\rho] \rightarrow \pi_{**}(\hz^{iso}) \cong \frac{\F[\rho,r_i: i \geq 0]}{(r_i^2-\rho r_{i+1}: i \geq 0)},$$
	which sends $\rho$ to $\rho$ and $\tau$ to $\rho r_0$. 
\end{lem}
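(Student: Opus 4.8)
The plan is to treat $\rho$ formally and to pin down the image of $\tau$ by a bidegree count combined with a $\rho$-inversion argument. The statement $\rho\mapsto\rho$ is immediate: the unit map $\hz\to\hz^{iso}=\hz\wedge\un^{iso}$ is a morphism of $\hz$-algebras, hence compatible with the canonical maps out of $\pi_{**}(\un)$, and the class $\rho$ on both sides is by construction the image of $\rho\in\pi_{-1,-1}(\un)$ (coming from $\km^M_1(\R)$).

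Next, since $\tau\in\pi_{0,-1}(\hz)$, its image lies in $\pi_{0,-1}(\hz^{iso})$, which I would compute from the presentation of Proposition \ref{pihiso}. Using the relations $r_i^2=\rho r_{i+1}$, every class in $\pi_{**}(\hz^{iso})$ is an $\F$-linear combination of reduced monomials $\rho^a\prod_{i\geq 0}r_i^{e_i}$ with all $e_i\in\{0,1\}$; for such a monomial to have bidegree $(0,-1)$, subtracting the two coordinates of its bidegree forces $\sum_i e_i2^i=1$, hence $e_0=1$, $e_i=0$ for $i\geq 1$, and then $a=1$. Thus $\pi_{0,-1}(\hz^{iso})=\F\cdot\rho r_0$, so the image of $\tau$ is either $0$ or $\rho r_0$.

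It remains to exclude $\tau\mapsto 0$, and I expect this to be the only genuine obstacle. The key input is that each $Q_{\rho^n}$ is an anisotropic quadric over $\R$, hence has no real points, so $\rr(\ce(Q_{\rho^n}))=\varnothing$; by Bachmann's identification $\SH(\R)[\rho^{-1}]\simeq\SH$ through real realization \cite{B}, the spectrum $\Sigma^{\infty}_{+}\ce(Q_{\rho^n})$ becomes trivial after inverting $\rho$. The defining cofiber sequence then shows $\un\to\un^{iso}$ is a $\rho$-equivalence, and smashing with $\hz$ makes $\hz\to\hz^{iso}$ a $\rho$-equivalence as well, so the induced map $\F[\tau,\rho^{\pm1}]=\pi_{**}(\hz)[\rho^{-1}]\to\pi_{**}(\hz^{iso})[\rho^{-1}]$ is an isomorphism. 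Since $\tau$ is nonzero on the left, its image in $\pi_{**}(\hz^{iso})$ remains nonzero after inverting $\rho$, so it cannot be $0$; combined with the previous paragraph, $\tau\mapsto\rho r_0$. The one point requiring care is exactly this certification that $\tau$ survives nontrivially to $\hz^{iso}$; the $\rho$-inversion route via \cite{B} is the most robust argument I see, though one could alternatively try to read the image of $\tau$ off Vishik's computation of $\hz_{**}(\cof(\Sigma^{\infty}_{+}\ce(Q_{\rho})\to\un))$ in \cite{V}, if that description is explicit enough to locate it.
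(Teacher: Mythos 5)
Your proof is correct, but you take a genuinely different route from the paper at the one step you flagged as delicate. To rule out $\tau\mapsto 0$, the paper observes that $\rho=Sq^1\tau$, so that the naturality of the Steenrod operation $Sq^1$ under the ring map $\hz\to\hz^{iso}$ forces $\tau$ to have nonzero image (otherwise $\rho$ would map to zero, contradicting Proposition~\ref{pihiso}). Your argument instead localizes at $\rho$: each $\Sigma^\infty_+\ce(Q_{\rho^n})$ dies under $\rho$-inversion because $Q_{\rho^n}$ has no real points and $\rho$-inversion is identified with real realization \cite{B}, so $\hz\to\hz^{iso}$ is a $\rho$-local equivalence and $\tau$ survives in $\pi_{**}(\hz^{iso})[\rho^{-1}]$. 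Both are sound. Your route is heavier in the sense that it imports Bachmann's theorem and the commutation of realization with \v{C}ech nerves (this is essentially the content of Proposition~\ref{unisos}, which the paper proves later); the paper's $Sq^1$ argument is a one-line, purely algebraic observation that requires no machinery beyond the known structure of the Steenrod algebra over $\R$. On the other hand, your approach makes the compatibility between the isotropic localization and real realization explicit already at this early point, which is conceptually clarifying given the role this compatibility plays in Section 6. Your degree-count showing $\pi_{0,-1}(\hz^{iso})=\F\cdot\rho r_0$ agrees with (and spells out) what the paper invokes as ``by degree reasons.''
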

\begin{proof}
	Since $\rho$ is a canonical element, it suffices to check where $\tau$ is mapped. By degree reasons, its image in $\pi_{0,-1}(\hz^{iso})= (\hz^{iso})^{0,1}$ is either $0$ or $\rho r_0$. Note that $\rho=Sq^1\tau$ in $\hz^{0,1}$, so the image of $\tau$ in $(\hz^{iso})^{0,1}$ cannot be zero. This completes the proof.
	\end{proof}

\begin{prop}
	There is an isomorphism of $\pi_{**}(\hz^{iso})$-algebras:
	$$\A^{iso}_{**}\coloneqq \hz^{iso}_{**}(\hz^{iso}) \cong \frac{\F[\rho,r_i,\tau_i,\xi_{i+1}: i \geq 0]}{(r_i^2-\rho r_{i+1},\tau_i^2-\rho\tau_{i+1}-\rho(r_0+\tau_0)\xi_{i+1}: i \geq 0)},$$
	with generators $\tau_i$ in degree $(2^{i+1}-1,2^i-1)$ and $\xi_{i+1}$ in degree $(2^{i+2}-2,2^{i+1}-1)$.
\end{prop}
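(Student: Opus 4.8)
The plan is to obtain the presentation of $\A^{iso}_{**}=\hz^{iso}_{**}(\hz^{iso})$ by base change of the motivic dual Steenrod algebra $\A_{**}$ along the ring map $\hz\to\hz^{iso}$, exploiting that $\A_{**}$ is a free $\pi_{**}(\hz)$-module. Since $\un^{iso}$ is an idempotent monoid (Proposition \ref{idm}), the smashing localization $L^{iso}$ is symmetric monoidal, and for $\hz^{iso}=\un^{iso}\wedge\hz=L^{iso}\hz$ one has identifications of $E_{\infty}$-$\hz^{iso}$-algebras
$$\hz^{iso}\wedge_{\un^{iso}}\hz^{iso}\;\cong\;\un^{iso}\wedge\hz\wedge\hz\;\cong\;\hz^{iso}\wedge_{\hz}(\hz\wedge\hz),$$
where in the last term $\hz\wedge\hz$ is regarded as a left $\hz$-module (and $E_{\infty}$-$\hz$-algebra, via the left unit). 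Thus $\A^{iso}_{**}=\pi_{**}\big(\hz^{iso}\wedge_{\hz}(\hz\wedge\hz)\big)$, and the problem is reduced to base-changing $\hz\wedge\hz$ along $\hz\to\hz^{iso}$.

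Next I would recall Voevodsky's computation of the motivic dual Steenrod algebra over $\R$,
$$\A_{**}\;\cong\;\pi_{**}(\hz)[\tau_i,\xi_{i+1}:i\geq 0]\big/\big(\tau_i^2-\tau\xi_{i+1}-\rho\tau_{i+1}-\rho\tau_0\xi_{i+1}:i\geq 0\big),$$
with $\tau_i$ in bidegree $(2^{i+1}-1,2^i-1)$ and $\xi_{i+1}$ in bidegree $(2^{i+2}-2,2^{i+1}-1)$; in particular $\A_{**}$ is \emph{free} over $\pi_{**}(\hz)$, on the monomials that are exterior in the $\tau_i$ and polynomial in the $\xi_{i+1}$. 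Consequently $\hz\wedge\hz$ is a free $\hz$-module, hence a coproduct of bidegree suspensions of $\hz$ in $\hz{\text-}\Mod$; applying $L^{iso}$ turns it into the corresponding coproduct of suspensions of $\hz^{iso}$. Taking homotopy and using that $L^{iso}$ is multiplicative, the universal-coefficient (Tor) spectral sequence degenerates along its edge and produces an isomorphism of $\pi_{**}(\hz^{iso})$-algebras
$$\A^{iso}_{**}\;\cong\;\A_{**}\otimes_{\pi_{**}(\hz)}\pi_{**}(\hz^{iso}).$$
Since $\A_{**}\to\A^{iso}_{**}$ is bidegree-preserving, the images of $\tau_i$ and $\xi_{i+1}$ lie in the bidegrees asserted in the statement, and they generate $\A^{iso}_{**}$ over $\pi_{**}(\hz^{iso})$.

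It then remains to unwind the right-hand side. By Proposition \ref{pihiso}, $\pi_{**}(\hz^{iso})=\F[\rho,r_i]/(r_i^2-\rho r_{i+1})$, and by Lemma \ref{ringhom} the structure map $\pi_{**}(\hz)=\F[\tau,\rho]\to\pi_{**}(\hz^{iso})$ sends $\rho\mapsto\rho$ and $\tau\mapsto\rho r_0$. Substituting $\tau=\rho r_0$ into the relation $\tau_i^2-\tau\xi_{i+1}-\rho\tau_{i+1}-\rho\tau_0\xi_{i+1}$ yields $\tau_i^2-\rho\tau_{i+1}-\rho(r_0+\tau_0)\xi_{i+1}$, and adjoining the relations $r_i^2-\rho r_{i+1}$ inherited from $\pi_{**}(\hz^{iso})$ gives exactly the presentation in the statement.

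The step I expect to require the most care is checking that $\A^{iso}_{**}\cong\A_{**}\otimes_{\pi_{**}(\hz)}\pi_{**}(\hz^{iso})$ is an isomorphism of \emph{rings} and not merely of $\pi_{**}(\hz^{iso})$-modules; this is where symmetric monoidality of $L^{iso}$ (equivalently, of base change along $\hz\to\hz^{iso}$) combines with flatness of $\A_{**}$ over $\pi_{**}(\hz)$ to force the multiplicative spectral sequence to collapse with a ring-homomorphism edge map. Alternatively, one can bypass the $\hz$-algebra structure on $\hz\wedge\hz$ and argue exactly as in the proof of Proposition \ref{pihiso}: by definition $\A^{iso}_{**}=\colim_n (\hz\wedge\hz)_{**}\big(\cof(\Sigma^{\infty}_{+}\ce(Q_{\rho^n})\to\un)\big)$, each term is obtained from \cite[Theorem 3.5]{V} together with flat base change along $\pi_{**}(\hz)$, and passing to the colimit recovers the same ring.
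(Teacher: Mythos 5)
Your argument is correct and follows essentially the same route as the paper: idempotence of $\un^{iso}$ reduces $\hz^{iso}\wedge\hz^{iso}$ to $\hz^{iso}\wedge\hz$, and flat base change then yields $\A^{iso}_{**}\cong\pi_{**}(\hz^{iso})\otimes_{\pi_{**}(\hz)}\A_{**}$, after which the stated presentation follows by substituting $\tau=\rho r_0$ using Proposition \ref{pihiso} and Lemma \ref{ringhom}. The only cosmetic difference is that the paper simply invokes \cite[Proposition 5.5]{HKO} for the base-change isomorphism of $\pi_{**}(\hz^{iso})$-algebras, where you re-derive it by hand from freeness of $\A_{**}$ over $\pi_{**}(\hz)$ together with cellularity.
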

\begin{proof}
	Since $\un^{iso}$ is an idempotent monoid in $\SH(\R)$, we obtain an isomorphism:
	$$\hz^{iso}_{**}(\hz^{iso}) = \pi_{**}(\hz^{iso} \wedge \hz^{iso}) \cong \pi_{**}(\hz^{iso} \wedge \hz).$$ 
	By \cite[Proposition 5.5]{HKO}, it follows from the fact that $\hz^{iso}$ is an $\hz$-module that 
	$$\pi_{**}(\hz^{iso} \wedge \hz) \cong \pi_{**}(\hz^{iso}) \otimes_{\pi_{**}(\hz)} \A_{**},$$
	where $\A_{**}\coloneqq \hz_{**}(\hz)$ denotes the motivic dual Steenrod algebra over $\R$.
	
	Recall from \cite[Theorem 5.6]{HKO} that
	$$\A_{**} \cong \frac{\F[\tau,\rho,\tau_i,\xi_{i+1}: i \geq 0]}{(\tau_i^2-\tau\xi_{i+1}-\rho\tau_{i+1}-\rho\tau_0\xi_{i+1}: i \geq 0)}.$$
	Therefore, by Proposition \ref{pihiso} and Lemma \ref{ringhom}, we deduce that
	$$\A^{iso}_{**} \cong \pi_{**}(\hz^{iso} \wedge \hz) \cong \frac{\F[\rho,r_i,\tau_i,\xi_{i+1}: i \geq 0]}{(r_i^2-\rho r_{i+1},\tau_i^2-\rho\tau_{i+1}-\rho(r_0+\tau_0)\xi_{i+1}: i \geq 0)},$$
	which concludes the argument.
	\end{proof}

We now recall from \cite{DI} the definition of $R$-cellular modules, together with a well-known result concerning their structure. This statement holds in $\SH(k)$ for any field $k$.

\begin{dfn}
\normalfont
Let $R$ be an $E_{\infty}$-algebra in $\SH(k)$. We denote by $R{\text -}\Mod_{cell}$ the full localizing subcategory of $R{\text -}\Mod$ generated by $\Sigma^{0,q}R$ for all $q \in \Z$.
\end{dfn}

\begin{lem}\label{modcell}
	Let $R$ be an $E_{\infty}$-algebra in $\SH(k)$, and let $M$ be an object in $R{\text -}\Mod_{cell}$ such that $\pi_{**}(M)$ is isomorphic to the free $\pi_{**}(R)$-module $\bigoplus_{\alpha \in A}\pi_{**}(R)\cdot x_{\alpha}$, with generators $x_{\alpha} \in \pi_{p_{\alpha},q_{\alpha}}(M)$. Then, there is an equivalence of $R$-modules:
	$$M \simeq \bigvee_{\alpha \in A}\Sigma^{p_{\alpha},q_{\alpha}}R.$$
\end{lem}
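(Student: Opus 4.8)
The plan is to build the desired equivalence by hand from a choice of generators, and then check it is an equivalence using the cellularity hypothesis. First, for each $\alpha \in A$, the class $x_{\alpha} \in \pi_{p_{\alpha},q_{\alpha}}(M)$ is represented by a map $\Sigma^{p_{\alpha},q_{\alpha}}\un \to M$ in $\SH(k)$; since $M$ is an $R$-module, this extends (by adjunction along $\un \to R$, i.e. by freeness of the free $R$-module on one generator) to an $R$-module map $f_{\alpha}:\Sigma^{p_{\alpha},q_{\alpha}}R \to M$. Assembling these, we obtain an $R$-module map
$$f:\bigvee_{\alpha \in A}\Sigma^{p_{\alpha},q_{\alpha}}R \longrightarrow M.$$
By construction, on bigraded homotopy groups $f$ induces the map $\bigoplus_{\alpha}\pi_{**}(R)\cdot x_{\alpha} \to \pi_{**}(M)$ sending the generator corresponding to $\alpha$ to $x_{\alpha}$; by the hypothesis that $\pi_{**}(M)$ is \emph{free} on exactly these generators $x_{\alpha}$, this map is an isomorphism. (Here one uses that $\pi_{**}$ of a wedge of $R$-modules is the direct sum of the $\pi_{**}$, which holds because bigraded homotopy groups commute with arbitrary coproducts in $\SH(k)$, the spheres $\Sigma^{p,q}\un$ being compact.)

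Next I would promote this $\pi_{**}$-isomorphism to an equivalence. The source $\bigvee_{\alpha}\Sigma^{p_{\alpha},q_{\alpha}}R$ lies in $R{\text-}\Mod_{cell}$, since each $\Sigma^{p_{\alpha},q_{\alpha}}R$ does (it is a bigraded suspension of $R$ of the allowed type, or more precisely $\Sigma^{p_{\alpha},q_{\alpha}}R \simeq \Sigma^{p_{\alpha}-q_{\alpha}, 0}\Sigma^{q_{\alpha},q_{\alpha}}R$ and $\Sigma^{q_{\alpha},q_{\alpha}}R = \Sigma^{q_{\alpha}}_{\Pone}R$ is built from the generators $\Sigma^{0,q}R$, while the simplicial suspension $\Sigma^{p_{\alpha}-q_{\alpha},0}$ is harmless) and $R{\text-}\Mod_{cell}$ is closed under coproducts. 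The target $M$ lies in $R{\text-}\Mod_{cell}$ by hypothesis. Now the cofiber $C \coloneqq \cof(f)$ again lies in $R{\text-}\Mod_{cell}$, and from the long exact sequence in $\pi_{**}$ together with the fact that $\pi_{**}(f)$ is an isomorphism, we get $\pi_{**}(C) = 0$.

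Finally I invoke the standard fact that an object of $R{\text-}\Mod_{cell}$ with vanishing bigraded homotopy groups is itself zero: the cellular subcategory is generated under colimits by the compact objects $\Sigma^{0,q}R$, so $\mathrm{Map}_{R{\text-}\Mod}(\Sigma^{0,q}R, C) \simeq \Omega^{\infty}\Sigma^{0,-q}C$ having all homotopy groups zero for every $q$ — which is exactly the statement $\pi_{**}(C)=0$, using also simplicial suspensions — forces $C$ to be orthogonal to a set of compact generators of a presentable category, hence $C \simeq 0$. Therefore $f$ is an equivalence, as claimed.

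The only genuinely delicate point is the bookkeeping with the two gradings: one must make sure that the bigraded suspensions $\Sigma^{p_{\alpha},q_{\alpha}}R$ that appear really do lie in the localizing subcategory generated by the $\Sigma^{0,q}R$ with $q\in\Z$, i.e. that Tate twisting by $q$ and simplicial suspending by $p-q$ suffices to reach all bidegrees $(p,q)$ from $(0,q)$. This is where I expect the main (though still routine) care to be needed; everything else is a formal consequence of compact generation of $R{\text-}\Mod_{cell}$ and the freeness hypothesis on $\pi_{**}(M)$.
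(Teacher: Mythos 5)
Your proof is correct and is essentially the same argument as the paper's, which likewise builds the map $f$ from the generators, observes it is a $\pi_{**}$-isomorphism by freeness, and concludes by cellularity (citing \cite[Section 7.9]{DI} for the final step that a $\pi_{**}$-isomorphism between cellular $R$-modules is an equivalence, rather than unwinding the cofiber-and-compact-generation argument as you do). Your added remark that $\Sigma^{p,q}R$ lies in the localizing subcategory generated by the $\Sigma^{0,q}R$ is a correct and worthwhile sanity check, since the localizing subcategory is stable and hence closed under simplicial (de)suspensions $\Sigma^{p-q,0}$.
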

\begin{proof}
	Each generator $x_{\alpha}$ of $\pi_{**}(M)$ induces a map of $R$-modules $x_{\alpha}:\Sigma^{p_{\alpha},q_{\alpha}}R \rightarrow M$. All together these morphisms yield a map
	$$f:\bigvee_{\alpha \in A}\Sigma^{p_{\alpha},q_{\alpha}}R \rightarrow M$$
	which, by hypothesis, induces an isomorphism on homotopy groups. Since $M$ is $R$-cellular, it follows from \cite[Section 7.9]{DI} that $f$ is an equivalence, thereby concluding the proof.
\end{proof}

\begin{rem}
	\normalfont
	Recall from \cite[Theorem 2.10]{OO} that the localization of the algebraic cobordism spectrum $\mgl$ at the prime 2, denoted by $\mgl_{(2)}$, splits as a wedge sum of shifted and twisted copies of a motivic spectrum, called motivic Brown-Peterson spectrum (at the prime 2), and denoted by $\mbp$. Specifically, there is an equivalence of motivic spectra:
	$$\mgl_{(2)}\simeq \bigvee_{\alpha \in A}\Sigma^{2q_{\alpha},q_{\alpha}}\mbp.$$
	
	Note that $\mbp$ is a homotopy commutative ring spectrum, but not an $E_{\infty}$-ring spectrum, as established in \cite[Theorem 1.1.2]{La}. 
	\end{rem}

\begin{dfn}
	\normalfont
	Let $\G_{**}$ be the bigraded Hopf algebra defined by:
	\begin{align*}
		\G_{p,q} \cong
	\begin{cases}
		\A_q & \mathrm{if} \: p=2q\\
		0 & \mathrm{if} \: p\neq2q
	\end{cases},
\end{align*}
where $\A_*$ is the topological dual Steenrod algebra $\F[\xi_i:i \geq 1]$, with generators $\xi_i$ of degree $2^i-1$.
\end{dfn}

\begin{prop}\label{hmbpiso}
	There is an isomorphism of left $\A_{**}^{iso}$-comodule algebras:
	$$\hz^{iso}_{**}(\mbp^{iso}) \cong \pi_{**}(\hz^{iso}) \otimes_{\F} \G_{**}.$$
\end{prop}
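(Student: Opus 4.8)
The plan is to compute $\hz^{iso}_{**}(\mbp^{iso}) = \pi_{**}(\hz^{iso}\wedge\mbp^{iso})$ by reducing to the analogous computation over $\R$ and then applying the isotropic localization. Since $\un^{iso}$ is an idempotent monoid, we have $\hz^{iso}\wedge_{\un^{iso}}\mbp^{iso}\simeq \hz^{iso}\wedge\mbp$, so it suffices to understand $\pi_{**}(\hz^{iso}\wedge\mbp)$. The starting point is the well-known computation of $\hz_{**}(\mbp)$ over a base field: just as in the topological case, $\hz_{**}(\mbp)\cong \pi_{**}(\hz)\otimes_{\F}\G_{**}'$ where $\G_{**}'$ is the polynomial algebra $\F[\xi_i:i\geq 1]$ placed in motivic bidegrees $(2(2^i-1),2^i-1)$ — this is the motivic analogue, over $\R$, of $\mathrm{H}_*(\mathrm{BP};\F)\cong \F[\xi_i]$, and it follows from the splitting $\mgl_{(2)}\simeq\bigvee_\alpha\Sigma^{2q_\alpha,q_\alpha}\mbp$ together with the known structure of $\hz_{**}(\mgl)$. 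In particular $\hz_{**}(\mbp)$ is a free $\pi_{**}(\hz)$-module on generators concentrated in bidegrees $(2q,q)$.

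Next I would base-change along $\hz\to\hz^{iso}$. Since $\mbp$ is an $\hz$-module after smashing — more precisely, $\hz\wedge\mbp$ is an $\hz$-module and $\hz_{**}(\mbp)$ is free over $\pi_{**}(\hz)$ — the same argument as in the proof of the preceding proposition (via \cite[Proposition 5.5]{HKO}) gives
$$\pi_{**}(\hz^{iso}\wedge\mbp)\cong \pi_{**}(\hz^{iso})\otimes_{\pi_{**}(\hz)}\hz_{**}(\mbp)\cong \pi_{**}(\hz^{iso})\otimes_{\pi_{**}(\hz)}\big(\pi_{**}(\hz)\otimes_{\F}\G_{**}'\big)\cong \pi_{**}(\hz^{iso})\otimes_{\F}\G_{**}'.$$
It then remains to identify $\G_{**}'$, the algebra generated by the $\xi_i$ in bidegree $(2(2^i-1),2^i-1)$, with $\G_{**}$ as defined in the paper. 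But $\G_{p,q}$ is $\A_q=\F[\xi_i]_q$ if $p=2q$ and zero otherwise, and the generator $\xi_i$ has topological degree $2^i-1$, hence sits in $\G_{2(2^i-1),2^i-1}$ — exactly the bidegree of $\xi_i$ in $\G_{**}'$. Thus $\G_{**}\cong\G_{**}'$ as bigraded algebras, giving the isomorphism of $\pi_{**}(\hz^{iso})$-algebras claimed.

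Finally, for the comodule structure: $\hz^{iso}_{**}(\mbp^{iso})$ is a left comodule over $\A^{iso}_{**}=\hz^{iso}_{**}(\hz^{iso})$ via the coaction induced by the unit $\un^{iso}\to\hz^{iso}$. Compatibility of this coaction with the tensor decomposition follows from naturality: the isomorphism above is induced by a map of $\hz^{iso}$-modules, and the relevant coactions on both $\pi_{**}(\hz^{iso})$ (trivial) and on $\G_{**}$ (coming from the classical/motivic $\xi_i$-coaction) are tracked by the Künneth-type isomorphism of \cite{HKO}; no new input is required beyond checking that the structure maps commute with the identifications already made. I expect the main obstacle to be the bookkeeping in this last step — verifying that the $\A^{iso}_{**}$-comodule structure on the right-hand side is precisely the one induced by the Hopf algebra structure of $\G_{**}$ (with its grading as above), rather than some twist — but this is essentially forced by the fact that everything is obtained by applying the smashing localization $L^{iso}$ to the corresponding structures over $\R$, where the analogous statement is standard.
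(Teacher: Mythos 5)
Your proposal is correct and follows essentially the same route as the paper: both start from the identification of $\hz_{**}(\mbp)$ as a free $\pi_{**}(\hz)$-module on polynomial generators $\xi_i$ in Chow--Novikov degree $0$ (the paper cites \cite[Theorem 6.11 and Remark 6.20]{H}), then base-change to $\hz^{iso}$ using the idempotency of $\un^{iso}$ --- the paper via Lemma~\ref{modcell}, you via \cite[Proposition 5.5]{HKO}. A minor slip in your topological aside: at the prime $2$ one has $\mathrm{H}_*(\mathrm{BP};\F)\cong\F[\xi_i^2]$ rather than $\F[\xi_i]$ --- the motivic $\xi_i$ realizes to a square --- but this does not affect the motivic argument.
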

\begin{proof}
	Since $\mbp$ is a cellular spectrum, it follows that $\hz \wedge \mbp$ is an $\hz$-cellular module. Moreover, we know that
	$$\pi_{**}(\hz \wedge \mbp)=\hz_{**}(\mbp) \cong \Pa_{**} = \F[\tau,\rho,\xi_i:i \geq 1],$$
	where the identification in the middle is an isomorphism of left $\A_{**}$-comodule algebras by \cite[Theorem 6.11 and Remark 6.20]{H}. From Lemma \ref{modcell}, we deduce that there is an equivalence 
	$$\hz \wedge \mbp \simeq \bigvee_{\alpha \in A}\Sigma^{p_{\alpha},q_{\alpha}}\hz,$$
	where $A$ is the set of monomials in the $\xi_i$'s. Therefore, we obtain the following sequence of isomorphisms:
	\begin{align*}
		\hz^{iso}_{**}(\mbp^{iso}) &= \pi_{**}(\hz^{iso} \wedge \mbp^{iso})\\
		&\cong \pi_{**}(\bigvee_{\alpha \in A}\Sigma^{p_{\alpha},q_{\alpha}}\hz^{iso}) \\
		&\cong \pi_{**}(\hz^{iso}) \otimes_{\pi_{**}(\hz)} \Pa_{**} \\
		&\cong \pi_{**}(\hz^{iso}) \otimes_{\F} \G_{**},
	\end{align*}
	which completes the proof.
\end{proof}

\begin{lem}\label{pimbppure}
	There is an isomorphism:
	$$\pi_{2*,*}(\mbp^{iso}) \cong \F,$$
	where $\pi_{2*,*}(\mbp^{iso})$ is the pure subalgebra of $\pi_{**}(\mbp^{iso})$.
\end{lem}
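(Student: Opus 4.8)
The plan is to compute $\pi_{2*,*}(\mbp^{iso})$ by running the $\hz^{iso}$-based motivic Adams spectral sequence for $\mbp^{iso}$ and analysing what happens in ``pure'' internal bidegrees, where purity together with connectivity forces everything back to the classical dual Steenrod algebra.

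First, I would record the relevant purity bounds. From Proposition \ref{pihiso} one reads that $\pi_{2*,*}(\hz^{iso})\cong\F$: a monomial $\rho^a\prod_i r_i^{b_i}$ lies in bidegree $(p,q)$ with $p-2q=a+\sum_i b_i\ge 0$, and equality holds only for $a=0$ and all $b_i=0$. Running the same check on the presentation of $\A^{iso}_{**}$ gives $\A^{iso}_{p,q}=0$ for $p<2q$, and identifies the pure part $\A^{iso}_{2*,*}$ with the polynomial sub-Hopf-algebra $\F[\xi_i:i\ge 1]\cong\A_*$; likewise, by Proposition \ref{hmbpiso}, $\hz^{iso}_{**}(\mbp^{iso})=\pi_{**}(\hz^{iso})\otimes_\F\G_{**}$ vanishes for $p<2q$ and has pure part $\G_{2*,*}=\A_*$, which by the coaction formula inherited from the motivic situation is the cofree $\A_*$-comodule. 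Since the coaction is degree preserving and the grading by $p-2q$ is nonnegative and additive under tensor products, the cobar complex computing $\ext_{\A^{iso}_{**}}(\pi_{**}\hz^{iso},\hz^{iso}_{**}\mbp^{iso})$ splits along $p-2q$, and its summand in pure internal bidegrees is precisely the classical cobar complex for $\ext_{\A_*}(\F,\A_*)$. As $\A_*$ is injective over itself, this gives
$$\ext^{s,(t,w)}_{\A^{iso}_{**}}(\pi_{**}\hz^{iso},\hz^{iso}_{**}\mbp^{iso})\cong\begin{cases}\F&(s,t,w)=(0,0,0)\\0&t=2w,\ (s,t,w)\ne(0,0,0).\end{cases}$$

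Next I would feed this into the spectral sequence. Since $\mbp$ is cellular, $\mbp^{iso}$ carries an $\hz^{iso}$-based motivic Adams spectral sequence with $E_2=\ext_{\A^{iso}_{**}}(\pi_{**}\hz^{iso},\hz^{iso}_{**}\mbp^{iso})$ converging to the homotopy of its $\hz^{iso}$-nilpotent completion. A class in $\pi_{2n,n}$ of Adams filtration $s$ is detected in internal bidegree $(2n+s,n)$, which is pure exactly when $s=0$. Hence the filtration-$0$ contributions to $\pi_{2*,*}$ are governed by the display above: nothing for $n\ne 0$, a single $\F$ for $n=0$; and for $n<0$ the whole of $\pi_{2n,n}(\mbp^{iso})$ vanishes by connectivity of $\mbp$ (note $\pi_{2q,q}\mbp=\mathrm{BP}_{2q}=0$ for $q<0$, and $\mbp^{iso}$ is built from effective spectra). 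What remains — and this is the heart of the matter — is to kill the positive-filtration contributions to $\pi_{2*,*}$, i.e.\ the motivic analogues of the classes $v_i$ and of the $a_0$-towers. I would argue this either (i) algebraically, using that the generators $r_i\in\pi_{**}(\hz^{iso})$ — absent from $\pi_{**}(\hz)=\F[\tau,\rho]$ — furnish extra cobar coboundaries (for instance $d[r_0]=[\tau_0]$, reflecting $\eta_R(r_0)=r_0+\tau_0$) that annihilate precisely these classes; or (ii) geometrically, by computing the image of $\pi_{2*,*}\!\big(\mbp\wedge\Sigma^\infty_+\ce(Q_{\rho^n})\big)\to\pi_{2*,*}(\mbp)=\mathrm{BP}_*$: the degree-$2$ closed point of the anisotropic conic $Q_{\rho}$ puts $2$ in the image, and Rost's degree formula for the norm quadrics $Q_{\rho^{i+1}}$ (whose class in $\Omega_{2^{i+1}-2}$ modulo decomposables is $v_i$ up to a unit) puts $v_1,\dots,v_{n-1}$ in the image, so that $\pi_{2*,*}(\cof)\cong\mathrm{BP}_*/(2,v_1,\dots,v_{n-1})$ and the colimit over $n$ is $\mathrm{BP}_*/(2,v_1,v_2,\dots)\cong\F$.

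The main obstacle is exactly the vanishing of the positive-filtration part over pure stems; route (i) requires pinning down the $\A^{iso}_{**}$-comodule structure on $\hz^{iso}_{**}(\mbp^{iso})$ finely enough to identify the relevant coboundaries, and route (ii) requires both the precise image computation via the degree formula and the verification that $\pi_{2*+1,*}\big(\mbp\wedge\Sigma^\infty_+\ce(Q_{\rho^n})\big)$ does not obstruct reading off the cokernel from the long exact sequence. Finally, passing from the $\hz^{iso}$-nilpotent completion back to $\mbp^{iso}$ is harmless in pure degrees, since $\mbp$ is connective and the answer $\F$ is already complete.
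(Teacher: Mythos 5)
Your route (ii) is essentially the paper's argument, and your self-diagnosis of where the work lies is accurate — but you have correctly identified a gap without closing it. The paper's proof proceeds exactly by computing $\colim_n\coker\bigl(\mbp_{2*,*}(Q_{\rho^n})\to\mbp_{2*,*}\bigr)$, citing the image computation $(2,v_1,\dots,v_{n-1})$ from \cite[Example 7.7]{DV} (which is the precise form of the degree-formula input you gesture at). The step you flag as remaining — that the full \v{C}ech nerve $\Sigma^\infty_+\ce(Q_{\rho^n})$ can be replaced by $\Sigma^\infty_+Q_{\rho^n}$ without changing the answer in pure bidegrees, and that the long exact sequence actually yields a cokernel — is exactly where the paper invokes a nontrivial tool. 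It filters $\cof(\Sigma^\infty_+\ce(Q_{\rho^n})\to\un)$ by the simplicial skeleta, whose associated graded pieces are $\Sigma^{i,0}\Sigma^\infty_+Q_{\rho^n}^{\times i}$, and for $i>0$ uses Atiyah/Spanier–Whitehead duality for the smooth projective $Q_{\rho^n}^{\times i}$ to rewrite the contribution as an $\mbp$-cohomology group of a smooth projective variety in \emph{positive} Chow–Novikov degree; the Chow $t$-structure vanishing theorem \cite[Theorem B.1]{BKWX} (applicable because $\mbp$ is a summand of $\mgl_{(2)}$) then kills it. This is the missing ingredient in your sketch: without it, you cannot pass from the \v{C}ech nerve to the single quadric, nor conclude that the connecting homomorphism vanishes so that $\pi_{2*,*}$ of the cofiber really is a cokernel.

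Your route (i), the $\hz^{iso}$-Adams spectral sequence with cobar coboundaries killing the $v_i$'s, is considerably more speculative. Beyond the convergence issues you would need to handle (passing from the $\hz^{iso}$-nilpotent completion back to $\mbp^{iso}$), the claim that the generators $r_i$ furnish exactly the right coboundaries would itself require pinning down the $\A^{iso}_{**}$-coaction on $\hz^{iso}_{**}(\mbp^{iso})$ to the same precision that one would need a geometric input like \cite{DV} or \cite{BKWX} to supply — so it does not obviously circumvent the difficulty. Sticking with route (ii) and adding the Chow $t$-structure vanishing would close the argument and recover the paper's proof.
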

\begin{proof}
	By the definition of the isotropic sphere spectrum, we have an isomorphism:
	$$\pi_{2*,*}(\mbp^{iso}) \cong \colim_n [\Sigma^{2*,*}\un,\cof(\Sigma^{\infty}_+\ce(Q_{\rho^n}) \rightarrow \un) \wedge \mbp].$$
	
	Now, observe that $\cof(\Sigma^{\infty}_+\ce(Q_{\rho^n})\rightarrow \un)$ is an extension of $\Sigma^{i,0}\Sigma^{\infty}_+Q_{\rho^n}^{\times i}$ for $i \geq 0$. For $i>0$, using \cite[Theorem B.1]{BKWX} and the fact that $\mbp$ is a direct summand of $\mgl_{(2)}$, we deduce that
	$$[\Sigma^{2*,*}\un,\Sigma^{i,0}\Sigma^{\infty}_+Q_{\rho^n}^{\times i}\wedge \mbp] \cong [\Sigma^{2*,*}\Sigma^{\infty}_+Q_{\rho^n}^{\times i},\Sigma^{i+2d_i,d_i}\mbp] \cong \mbp^{2(d_i-*)+i,d_i-*}(Q_{\rho^n}^{\times i})\cong 0,$$
	where $d_i$ is the dimension of $Q_{\rho^n}^{\times i}$. Therefore, we can simplify the original expression, yielding the identification:
	\begin{align*}
		\pi_{2*,*}(\mbp^{iso}) &\cong \colim_n [\Sigma^{2*,*}\un,\cof(\Sigma^{\infty}_+Q_{\rho^n} \rightarrow \un) \wedge \mbp]\\
		&\cong \colim_n \coker(\mbp_{2*,*}(Q_{\rho^n}) \rightarrow \mbp_{2*,*}),
	\end{align*}
	where $\mbp_{2*,*}=\pi_{2*,*}(\mbp) \cong \Z_{(2)}[v_i:i \geq1]$. From \cite[Example 7.7]{DV}, we know that the image of $\mbp_{2*,*}(Q_{\rho^n}) \rightarrow \mbp_{2*,*}$ is the ideal $(2,v_1,\dots,v_{n-1})$. Consequently, passing to the colimit, we obtain the desired isomorphism $\pi_{2*,*}(\mbp^{iso}) \cong \F$.
\end{proof}

\begin{prop}\label{freembp}
	There is an equivalence of $\mgl^{iso}$-modules:
	$$\hz^{iso} \simeq \bigvee_{I} \Sigma^{p_I,q_I}\mbp^{iso},$$
	which induces an isomorphism of $\pi_{**}(\mbp^{iso})$-modules 
	$$\pi_{**}(\hz^{iso})\cong \bigoplus_I\pi_{**}(\mbp^{iso})\cdot r_I,$$
	and an isomorphism of left $\mbp^{iso}_{**}(\mbp^{iso})$-comodules
	$$\mbp^{iso}_{**}(\hz^{iso})\cong \bigoplus_I\mbp^{iso}_{**}(\mbp^{iso})\cdot r_I,$$ 
	where $I$ runs over all possible sequences $\{i_1 < \dots < i_n\}$ of non-negative integers, $r_I\coloneqq r_{i_1}\cdots r_{i_n}$ and $(p_I,q_I)$ is the degree of $r_I$.
\end{prop}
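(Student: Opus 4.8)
The plan is to realize $\hz^{iso}$ directly as an iterated cofiber of \emph{null} self-maps of $\mbp^{iso}$, so that the wedge decomposition drops out by construction; notably, this uses nothing about $\pi_{**}(\mbp^{iso})$ beyond its pure part (Lemma \ref{pimbppure}). The only input from outside the paper is the Hopkins-Morel-Hoyois theorem, which holds over $\R$: the canonical map $\mgl/(a_1,a_2,\dots)\to\mathrm{H}\Z$ is an equivalence, where the $a_i\in\pi_{2i,i}(\mgl)$ are polynomial generators of the Lazard ring. Combined with the splitting $\mgl_{(2)}\simeq\bigvee_\alpha\Sigma^{2q_\alpha,q_\alpha}\mbp$ of \cite{OO} and reduction modulo $2$, this yields an equivalence of $\mbp$-modules
$$\hz\;\simeq\;\mbp/(v_0,v_1,v_2,\dots)\;\coloneqq\;\colim_n\mbp/(v_0,\dots,v_n),$$
where $v_i\in\pi_{2^{i+1}-2,\,2^i-1}(\mbp)$ are Brown-Peterson generators for $i\geq 1$, $v_0=2\in\pi_{0,0}(\mbp)$, and $\mbp/(v_0,\dots,v_n)$ denotes the iterated cofiber of multiplication by $v_0,\dots,v_n$. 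Being an $\mbp$-module, $\hz$ is in particular an $\mgl^{iso}$-module through the ring maps $\mgl\to\mgl_{(2)}\to\mbp$. Since $L^{iso}$ is smashing, it commutes with colimits and cofiber sequences, so $\hz^{iso}\simeq\colim_n\mbp^{iso}/(v_0,\dots,v_n)$, where $v_i$ now denotes its image in $\pi_{**}(\mbp^{iso})$.

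The crucial point is that every $v_i$ vanishes in $\pi_{**}(\mbp^{iso})$. Indeed $v_i$ lies in degree $(2^{i+1}-2,\,2^i-1)=\bigl(2(2^i-1),\,2^i-1\bigr)$, which is pure; by Lemma \ref{pimbppure} the pure subalgebra $\pi_{2*,*}(\mbp^{iso})$ is $\F$, concentrated in degree $(0,0)$ (its proof exhibits it as $\colim_n\Z_{(2)}[v_j]/(2,v_1,\dots,v_{n-1})$). Hence $v_i=0$ for $i\geq 1$ by positivity of the degree, while $v_0=2=0$ in $\F=\pi_{0,0}(\mbp^{iso})$. It follows that the self-map of $\mbp^{iso}$ given by multiplication by $v_i$ is null, and likewise the corresponding self-map of any wedge of shifted copies of $\mbp^{iso}$. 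An induction on $n$, using that the cofiber of a null self-map $\Sigma^{p,q}X\to X$ splits as $X\vee\Sigma^{p+1,q}X$ (with $X\to X\vee\Sigma^{p+1,q}X$ the first inclusion), then gives
$$\mbp^{iso}/(v_0,\dots,v_n)\;\simeq\;\bigvee_{I\subseteq\{0,\dots,n\}}\Sigma^{p_I,q_I}\mbp^{iso},$$
where $(p_I,q_I)$ is the degree of $r_I=\prod_{i\in I}r_i$; the degrees match because the degree of $r_i$ in Proposition \ref{pihiso} is that of $v_i$ shifted by $(1,0)$. Passing to the colimit, whose transition maps are exactly the wedge-summand inclusions, yields the claimed $\hz^{iso}\simeq\bigvee_I\Sigma^{p_I,q_I}\mbp^{iso}$ with $I$ ranging over all finite subsets $\{i_1<\dots<i_n\}$ of $\Z_{\geq 0}$ and $I=\emptyset$ contributing the unit summand; a priori this is an equivalence of $\mbp^{iso}$-modules, hence in particular of $\mgl^{iso}$-modules.

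The two induced isomorphisms are then formal. Applying $\pi_{**}$ gives the $\pi_{**}(\mbp^{iso})$-module isomorphism $\pi_{**}(\hz^{iso})\cong\bigoplus_I\pi_{**}(\mbp^{iso})\cdot r_I$, where $r_I$ names the generator of the $I$-th summand; as it sits in degree $(p_I,q_I)$ and $\pi_{**}(\hz^{iso})$ is already known to be free over $\F[\rho]$ on classes in precisely these degrees (Proposition \ref{pihiso}), the two uses of the symbol $r_I$ are consistent. Smashing the equivalence with $\mbp^{iso}$ and using that both $\mbp^{iso}\wedge(-)$ and the left $\mbp^{iso}_{**}(\mbp^{iso})$-coaction commute with wedges and suspensions yields the left $\mbp^{iso}_{**}(\mbp^{iso})$-comodule isomorphism $\mbp^{iso}_{**}(\hz^{iso})\cong\bigoplus_I\mbp^{iso}_{**}(\mbp^{iso})\cdot r_I$. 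The step I expect to be the main obstacle is the first one: fixing the precise Hopkins-Morel-Hoyois presentation $\hz\simeq\mbp/(2,v_1,v_2,\dots)$ over $\R$, and the compatibility of the $\mgl_{(2)}$-splitting with it. Once that is in place, everything reduces to the nullity of the self-maps $v_i$ on $\mbp^{iso}$, which is immediate from Lemma \ref{pimbppure}.
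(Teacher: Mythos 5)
Your proof is correct and follows essentially the same route as the paper: both begin from the $\mgl$-module equivalence $\hz \simeq \mbp/(v_0,v_1,\dots)$ of \cite[Theorem 7.12]{H} (which you re-derive from the Hopkins--Morel--Hoyois theorem plus the $\mgl_{(2)}$-splitting), invoke Lemma \ref{pimbppure} to conclude that each $v_i$ vanishes in $\pi_{**}(\mbp^{iso})$, and split the iterated cofibers of the resulting null self-maps before passing to the colimit. The only minor caveat is that $\mbp$ is not $E_\infty$ by \cite[Theorem 1.1.2]{La}, so the intermediate splittings are most cleanly asserted as equivalences of $\mgl^{iso}$-modules (as the paper does and as you note in passing), rather than of $\mbp^{iso}$-modules, since $\mbp^{iso}$ is only shown to be $E_\infty$ later, in Proposition \ref{mbpring}, which itself depends on the present result.
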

\begin{proof}
	By \cite[Theorem 7.12]{H}, we know that there is an equivalence of $\mgl$-modules:
	$$\hz \simeq \mbp/(v_0,v_1,\dots),$$
	where $\mbp/(v_0,v_1,\dots) \coloneqq \colim_n\mbp/(v_0,v_1,\dots,v_n)$ and
	$$\mbp/(v_0,v_1,\dots,v_n) \coloneqq \cof(\Sigma^{2^{n+1}-2,2^n-1}\mbp/(v_0,v_1,\dots,v_{n-1}) \xrightarrow{v_n}\mbp/(v_0,v_1,\dots,v_{n-1})).$$
	
	It follows from Lemma \ref{pimbppure} that $v_n$ vanishes in $\pi_{**}(\mbp^{iso})$ for all $n \geq 0$. Therefore, applying the isotropic localization functor gives us equivalences of $\mgl^{iso}$-modules:
	$$\mbp^{iso}/(v_0,v_1,\dots,v_n) \simeq \mbp^{iso}/(v_0,v_1,\dots,v_{n-1}) \vee \Sigma^{2^{n+1}-1,2^n-1}\mbp^{iso}/(v_0,v_1,\dots,v_{n-1}).$$
	
	By induction on $n$, after taking the colimit, we obtain an equivalence:
	$$\hz^{iso} \simeq \bigvee_{I} \Sigma^{p_I,q_I}\mbp^{iso},$$
	where $(p_I,q_I)$ is the degree of $r_I$.  Consequently, this equivalence induces on homotopy groups an isomorphism of $\pi_{**}(\mgl^{iso})$-modules, and so of $\pi_{**}(\mbp^{iso})$-modules since $\pi_{**}(\mgl^{iso})$ is a free $\pi_{**}(\mbp^{iso})$-module: 
	$$\pi_{**}(\hz^{iso})\cong \bigoplus_I\pi_{**}(\mbp^{iso})\cdot r_I.$$
	
	Similarly, after applying $\mbp^{iso}$-homology, we obtain the isomorphism of left $\mbp^{iso}_{**}(\mbp^{iso})$-comodules:
	$$\mbp^{iso}_{**}(\hz^{iso})\cong \bigoplus_I\mbp^{iso}_{**}(\mbp^{iso})\cdot r_I$$
	that is what we aimed to show. 
\end{proof}

\begin{thm}\label{pimbp}
	There is an isomorphism of Hopf algebras:
	$$(\pi_{**}(\mbp^{iso}),\mbp^{iso}_{**}(\mbp^{iso})) \cong (\F[\rho],\F[\rho]\otimes_{\F}\G_{**}).$$
\end{thm}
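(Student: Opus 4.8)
The two halves of the Hopf algebroid can be computed separately, each time by exploiting the splitting $\hz^{iso}\simeq\bigvee_I\Sigma^{p_I,q_I}\mbp^{iso}$ of Proposition~\ref{freembp}, where $I$ runs over strictly increasing finite sequences of non-negative integers, $r_I$ is the corresponding monomial in the generators $r_i$ of Proposition~\ref{pihiso}, $(p_I,q_I)$ is its bidegree, and $\sum_I t^{p_I}s^{q_I}=\prod_{i\ge 0}\bigl(1+t^{2^{i+1}-1}s^{2^i-1}\bigr)$. For $\pi_{**}(\mbp^{iso})$: the relations $r_i^2=\rho r_{i+1}$ give $\pi_{**}(\hz^{iso})=\bigoplus_I\F[\rho]\cdot r_I$ as $\F[\rho]$-modules, while Proposition~\ref{freembp} gives $\pi_{**}(\hz^{iso})=\bigoplus_I\pi_{**}(\mbp^{iso})\cdot r_I$, the $I=\emptyset$ summand being the image of the unit $\pi_{**}(\mbp^{iso})\to\pi_{**}(\hz^{iso})$. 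Since $\rho$ maps to a non-nilpotent element, $\F[\rho]\hookrightarrow\pi_{**}(\mbp^{iso})$ lands in that summand, so $\F[\rho]\cdot r_I\subseteq\pi_{**}(\mbp^{iso})\cdot r_I$ for every $I$; comparing the two internal direct-sum decompositions forces equality throughout, whence $\pi_{**}(\mbp^{iso})=\F[\rho]$. As $\rho$ is pulled back from the sphere, the left and right units agree on $\F[\rho]$, so the pair will be a Hopf algebra and not just a Hopf algebroid.

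For $\mbp^{iso}_{**}(\mbp^{iso})$, smashing the splitting twice yields $\A^{iso}_{**}\cong\bigoplus_{I,J}\Sigma^{p_I+p_J,q_I+q_J}\mbp^{iso}_{**}(\mbp^{iso})$. By its presentation (equivalently, by the base-change $\A^{iso}_{**}\cong\pi_{**}(\hz^{iso})\otimes_{\pi_{**}(\hz)}\A_{**}$ used earlier) $\A^{iso}_{**}$ is a free $\F[\rho]$-module on the monomials $r_S\tau_T\xi_K$, with $S,T$ finite sets of non-negative integers and $\xi_K$ a monomial in the $\xi_{i+1}$, so $\mbp^{iso}_{**}(\mbp^{iso})$, being the $(\emptyset,\emptyset)$ summand above, is projective and hence free over $\F[\rho]$. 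Everything in sight is bidegreewise finite and supported in the pointed cone $\mathcal C=\{(p,q)\mid p\ge 2q,\ p\ge q\}$, so Hilbert series lie in the completed group ring $\F[[\mathcal C]]$, in which $\sum_I t^{p_I}s^{q_I}$ is a unit (constant term $1$). Since the $r_i$ and $\tau_i$ both have bidegree $(2^{i+1}-1,2^i-1)$, the above basis of $\A^{iso}_{**}$ shows its Hilbert series equals $\bigl(\sum_I t^{p_I}s^{q_I}\bigr)^2$ times that of $\F[\rho]\otimes_\F\G_{**}$; comparing with the display and cancelling the unit shows $\mbp^{iso}_{**}(\mbp^{iso})$ and $\F[\rho]\otimes_\F\G_{**}$ have the same Hilbert series. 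As $\mbp^{iso}_{**}(\mbp^{iso})$ is free over $\F[\rho]$, its $\F[\rho]$-generators therefore sit precisely in the bidegrees of a monomial basis of $\G_{**}$, all of the form $(2m,m)$, and so $\mbp^{iso}_{**}(\mbp^{iso})\cong\F[\rho]\otimes_\F\G_{**}$ as bigraded $\F[\rho]$-modules.

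It remains to pin down the ring and coalgebra structures. Consider the ring map $\phi\colon\mbp^{iso}_{**}(\mbp^{iso})\to\A^{iso}_{**}$ induced by $\mbp^{iso}\to\hz^{iso}$; under $\A^{iso}_{**}\cong\bigoplus_{I,J}\Sigma^{p_I+p_J,q_I+q_J}\mbp^{iso}_{**}(\mbp^{iso})$ it is the inclusion of the $(\emptyset,\emptyset)$ summand, hence split injective. Every presentation monomial $\rho^k r_S\tau_T\xi_K$ satisfies $p-2q=k+|S|+|T|$, so the part of $\A^{iso}_{**}$ in bidegrees $(2m,m)$ is exactly $\G_{**}$; thus $\phi$ carries the pure-degree $\F[\rho]$-generators of $\mbp^{iso}_{**}(\mbp^{iso})$ into $\G_{**}$ and $\rho$ to $\rho$, so it factors through the subalgebra $\F[\rho]\otimes_\F\G_{**}\subseteq\A^{iso}_{**}$. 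A split monomorphism of free $\F[\rho]$-modules with equal Hilbert series is an isomorphism, so this factorization is an isomorphism of $\F[\rho]$-algebras onto $\F[\rho]\otimes_\F\G_{**}$; transporting the coproduct of $\A^{iso}_{**}$ through $\phi$ and using that it restricts to the classical dual-Steenrod coproduct on the $\xi_{i+1}$ then identifies the full Hopf-algebra structure, completing the proof. The step I expect to require the most care is the Hilbert-series argument in the second paragraph: one has to verify that $\mbp^{iso}_{**}(\mbp^{iso})$ is genuinely free over $\F[\rho]$ (not merely flat) and control supports carefully, so that cancellation by $\bigl(\sum_I t^{p_I}s^{q_I}\bigr)^2$ takes place in a ring where multiplication by it is injective — because $\rho$ has negative bidegree, this forces one to work in $\F[[\mathcal C]]$ rather than in an honest graded polynomial ring. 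By contrast, once $\phi$ is known to be a split-injective $\F[\rho]$-algebra map onto $\F[\rho]\otimes_\F\G_{**}$, the remaining identification of the coproduct is essentially formal.
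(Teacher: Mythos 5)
Your argument is correct in substance but follows a genuinely different, and in the second half more laborious, route than the paper's. For $\pi_{**}(\mbp^{iso})$, the paper shows that the $\pi_{**}(\mbp^{iso})$-submodule of $\pi_{**}(\hz^{iso})$ spanned by the $r_I$ with $I\neq\emptyset$ is precisely the ideal $(r_i: i\geq 0)$, and then reads off $\pi_{**}(\mbp^{iso})\cong\pi_{**}(\hz^{iso})/(r_i)\cong\F[\rho]$ from the free-module decomposition of Proposition~\ref{freembp}. Your comparison of the two internal direct sums $\bigoplus_I\F[\rho]\cdot r_I=\bigoplus_I\pi_{**}(\mbp^{iso})\cdot r_I$, using only $\rho\in\pi_{**}(\mbp^{iso})$ non-nilpotent, is equally valid and, if anything, a little cleaner. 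For $\mbp^{iso}_{**}(\mbp^{iso})$, however, the paper simply replays the same ideal-equals-submodule observation in $\mbp^{iso}_{**}(\hz^{iso})\cong\bigoplus_I\mbp^{iso}_{**}(\mbp^{iso})\cdot r_I$ and concludes $\mbp^{iso}_{**}(\mbp^{iso})\cong\mbp^{iso}_{**}(\hz^{iso})/(r_i)\cong\F[\rho]\otimes_\F\G_{**}$ directly from Proposition~\ref{hmbpiso} — no Hilbert series, no double smash of the splitting, no separate freeness argument. Your route works, but it needs exactly the two extra inputs you flag yourself: freeness (not merely projectivity) of $\mbp^{iso}_{**}(\mbp^{iso})$ over the bigraded ring $\F[\rho]$, and invertibility of $\sum_I t^{p_I}s^{q_I}$ in the completed monoid ring on the pointed cone $\{p\ge 2q,\ p\ge q\}$. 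Both are true — $\F[\rho]$ with $\rho$ in a pointed cone is graded local, so a bidegreewise-finite bigraded projective supported in the cone is free, and the completed monoid ring on a pointed cone with finite decompositions is local — but nailing these down is genuine work that the paper's quotient-by-an-ideal argument avoids entirely. Finally, a caveat that applies to both proofs: the passage from the additive/multiplicative identification to the claimed isomorphism of Hopf algebras is left largely implicit. You sketch it by transporting the coproduct along $\phi$; the paper inherits it from the $\A^{iso}_{**}$-comodule-algebra structure of Proposition~\ref{hmbpiso}; in neither case is the coproduct on the $\xi$-classes (or the agreement of left and right units beyond $\F[\rho]$) actually written out.
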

\begin{proof}
	We begin by observing that the $\pi_{**}(\mbp^{iso})$-submodule of $\pi_{**}(\hz^{iso})$ generated by the $r_I$'s for all non-empty $I$'s is the ideal of $\pi_{**}(\hz^{iso})$ generated by the $r_i$'s for all $i \geq 0$. In fact, let $x$ be a homogeneous element in $(r_I:I\neq \emptyset)$, then $x=\rho^nr_I$ for some $n \geq 0$ and non-empty $I$, by Proposition \ref{pihiso}. Now, note that $\rho^n=\sum_jr_{I_j}y_j$ for some $I_j$, and some $y_j$ in $\pi_{**}(\mbp^{iso})$, by Proposition \ref{freembp}. Thus, $x=\sum_jr_Ir_{I_j}y_j$ which belongs to the $\pi_{**}(\mbp^{iso})$-submodule of $\pi_{**}(\hz^{iso})$ generated by the $r_I$'s for all non-empty $I$'s.
	
	Hence, we can compute:
	$$\pi_{**}(\mbp^{iso}) \cong \frac{\pi_{**}(\hz^{iso})}{(r_i : i \geq 0)} \cong \F[\rho]$$
	by Proposition \ref{pihiso}.
	
	Similarly, we can compute:
	$$\mbp^{iso}_{**}(\mbp^{iso}) \cong \frac{\mbp^{iso}_{**}(\hz^{iso})}{(r_i : i \geq 0)} \cong \F[\rho]\otimes_{\F}\G_{**}$$
	by Proposition \ref{hmbpiso}, which concludes the proof.
\end{proof}

\begin{prop}\label{mbpring}
	The homotopy commutative ring structure on $\mbp^{iso}$ extends to an $E_{\infty}$-ring structure. In particular, $\mbp^{iso}$ is an $E_{\infty}$-algebra in $\SH(\R)$.
\end{prop}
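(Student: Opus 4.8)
The plan is to rigidify the homotopy-commutative multiplication on $\mbp^{iso}$ to an $E_\infty$-structure by obstruction theory of Goerss--Hopkins type, using in an essential way that, by Theorem \ref{pimbp}, the pair $(\pi_{**}(\mbp^{iso}),\mbp^{iso}_{**}(\mbp^{iso})) \cong (\F[\rho],\F[\rho]\otimes_{\F}\G_{**})$ is about as simple as a flat Hopf algebroid can be. First I would work in the $\mbp^{iso}$-based Adams setting inside $\SH^{iso}(\R)$ (restricting if necessary to a cellular or suitably complete subcategory in which the $\mbp^{iso}$-Adams resolution converges), so that the moduli space of $E_\infty$-structures on a given homotopy-commutative ring is controlled by Andr\'e--Quillen (topological Quillen) cohomology computed in the category of $\mbp^{iso}_{**}(\mbp^{iso})$-comodules equipped with the isotropic power operations acting on $\hz^{iso}_{**}(\mbp^{iso}) \cong \pi_{**}(\hz^{iso})\otimes_{\F}\G_{**}$ from Proposition \ref{hmbpiso}.

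The obstructions to the existence of such a refinement then live in cohomology groups $\mathrm{H}^{s}$ with $s\geq 2$ of the cotangent complex of the $\F$-algebra $\F[\rho]$, with coefficients in (internal suspensions of) $\mbp^{iso}_{**}(\mbp^{iso})$. Since $\F[\rho]$ is a polynomial algebra on the single generator $\rho$ of bidegree $(-1,-1)$, this cotangent complex is free of rank one concentrated in homological degree $0$, so the whole obstruction calculus collapses to understanding how the power operations act on the single class $\rho$ and its divided powers; one then checks that in the relevant internal bidegrees there is no room for a nonzero obstruction, so the moduli space is nonempty --- in fact connected, which simultaneously yields uniqueness. This is precisely the point at which the isotropic situation diverges from the classical one: $\pi_*(\mathrm{BP}) = \Z_{(2)}[v_1,v_2,\dots]$ carries infinitely many polynomial generators whose Dyer--Lashof operations obstruct an $E_\infty$-structure, whereas $\pi_{**}(\mbp^{iso})$ has been reduced by the isotropic localization to the single generator $\rho$, and $\mbp^{iso}/\rho$ is moreover $2$-complete, leaving no such obstruction.

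A more geometric variant, which I would run in parallel as a sanity check, starts from the fact that $\mgl$ is an $E_\infty$-ring (a Thom spectrum), so that $\mgl^{iso} = L^{iso}\mgl$ and $\mgl^{iso}_{(2)}$ are $E_\infty$-algebras in $\SH^{iso}(\R)$ because $L^{iso}$ is a symmetric monoidal (smashing) localization by Proposition \ref{idm} and \cite[Section 4.8.2]{Lu}; one would then try to promote the splitting $\mgl^{iso}_{(2)}\simeq\bigvee_{\alpha}\Sigma^{2q_{\alpha},q_{\alpha}}\mbp^{iso}$ of \cite{OO} together with the Quillen idempotent to a decomposition of $E_\infty$-rings, equivalently to exhibit $\mbp^{iso}$ as a smashing localization of $\mgl^{iso}_{(2)}$ --- something made plausible by the very simple, free form of $\pi_{**}(\mgl^{iso})$ over $\F[\rho]$ recorded in the proof of Proposition \ref{freembp}. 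In either approach the main obstacle is the same: one must pin down the correct category of isotropic power operations, identify the obstruction groups explicitly, and carry out the vanishing verification --- the conceptual content being that, after killing the anisotropic Pfister quadrics and thereby collapsing $\pi_{**}(\mbp^{iso})$ to $\F[\rho]$, the obstructions that genuinely prevent $\mathrm{BP}$ from being $E_\infty$ \cite{La} no longer have anywhere to live.
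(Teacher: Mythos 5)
Your two proposed routes (Goerss--Hopkins obstruction theory via isotropic power operations, and promoting the Quillen idempotent splitting of $\mgl^{iso}_{(2)}$ to a splitting of $E_\infty$-rings) are both left as plans rather than proofs: you acknowledge yourself that one would still need to identify the correct category of isotropic power operations, compute the relevant Andr\'e--Quillen obstruction groups, and carry out the vanishing verification. As written, the assertion that the obstruction calculus ``collapses'' because the cotangent complex of $\F[\rho]$ is free of rank one, and that ``in the relevant internal bidegrees there is no room for a nonzero obstruction,'' is not substantiated: the obstruction groups of a Goerss--Hopkins tower involve not just the cotangent complex of the coefficient ring but the full comodule-with-power-operations structure, and setting up such an obstruction theory in the motivic, let alone isotropic, setting is a foundational undertaking you do not carry out. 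The contrast with Lawson's result \cite{La} that $\mathrm{BP}$ at $p=2$ is not $E_\infty$ is suggestive, but it does not by itself produce a vanishing proof. So this is a genuine gap.

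The paper instead gives a one-paragraph argument that sidesteps all of this. By \cite[Proposition 1.4.4.11]{Lu}, there is a $t$-structure on $\un^{iso}{\text-}\Mod$ whose non-negative part is generated by $\Sigma^{n,n}\un^{iso}$ for $n\in\Z$. Since $\mbp$ is cellular and connective, $\mbp^{iso}$ lies in the non-negative part; since $\pi_{**}(\mbp^{iso})\cong\F[\rho]$ with $\rho$ in bidegree $(-1,-1)$ (Theorem \ref{pimbp}), $\mbp^{iso}$ also lies in the non-positive part, hence in the heart. A homotopy commutative ring lying in the heart of a $t$-structure is a commutative monoid in an ordinary abelian category, where homotopy coherences are vacuous, and therefore carries an $E_\infty$-structure. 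Your intuition that the whole content of the proposition is the collapse of $\pi_{**}(\mbp^{iso})$ to $\F[\rho]$ is exactly right, but the mechanism for extracting an $E_\infty$-structure from that collapse is this short $t$-structure observation, not an obstruction-theoretic computation; you should look for such an argument first before reaching for Goerss--Hopkins machinery, especially in settings where that machinery has not yet been set up.
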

\begin{proof}
	By \cite[Proposition 1.4.4.11]{Lu}, there is a $t$-structure on $\un^{iso}{\text -}\Mod$ with non-negative part generated by $\Sigma^{n,n}\un^{iso}$ for all $n \in \Z$. Note that $\mbp^{iso}$ belongs to the non-negative part of this $t$-structure, as $\mbp$ is cellular and connective. On the other hand, by Theorem \ref{pimbp}, we know that $\mbp^{iso}$ also belongs to the non-positive part. Therefore,  $\mbp^{iso}$ is a homotopy commutative ring spectrum in the heart of the $t$-structure considered above. Thus, it is an $E_{\infty}$-ring spectrum.
\end{proof}

The last proposition allows us to talk safely about the symmetric monoidal stable $\infty$-category of $\mbp^{iso}$-modules.
	
\section{The isotropic cofiber of $\rho$}

At this point, we are ready to introduce and study a new isotropic motivic spectrum, the isotropic cofiber of $\rho$, which will play a fundamental role in the following sections.

\begin{dfn}
	\normalfont
	Define the isotropic cofiber of $\rho$ as the motivic spectrum in $\SH(\R)$ given by:
	$$\un^{iso}/\rho \coloneqq \cof(\Sigma^{-1,-1}\un^{iso} \xrightarrow{\rho}\un^{iso}).$$
	
	For any spectrum $E$ in $\SH(\R)$, we denote by $E^{iso}/{\rho}$ the spectrum $\un^{iso}/{\rho} \wedge E$.
\end{dfn}

\begin{prop}\label{2comp}
	The isotropic spectrum $\un^{iso}/\rho$ is $2$-complete.
\end{prop}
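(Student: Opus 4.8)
The plan is to reduce the $2$-completeness of $\un^{iso}/\rho$ to the computation of its homotopy groups, or rather to the statement that $\un^{iso}/\rho$ has no nontrivial maps from (hence no nontrivial $2$-divisible or $\Z[1/2]$-local) pieces. Concretely, recall that an object $X$ in a stable $\infty$-category with the relevant structure is $2$-complete precisely when the ``$2$-divisible part'' vanishes, i.e.\ when $\mathrm{Map}(\Sigma^{\infty}_+\mathbb{S}[1/2], X) \simeq 0$, equivalently when $X[1/2] \simeq 0$ and $X$ is derived $2$-complete. So the first step is to show $\un^{iso}/\rho[1/2] \simeq 0$: after inverting $2$, the element $\rho$ becomes invertible. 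Indeed, over $\R$ (or any real closed field) one has $\rho^2 = \langle\langle -1\rangle\rangle$-related identities, and more to the point the usual motivic fact that $\eta\rho = 0$ together with $2 = $ (unit)$\cdot(1 - \langle -1\rangle)$ shows that after inverting $2$ the localization at $\rho$ is already an equivalence; alternatively, the real realization sends $\rho$ to $-1 \in \pi_0\mathbb{S}$ which is a unit, and more elementarily $\rho$ acts invertibly on $\un[1/2]$ because the cofiber sequence $\Sigma^{-1,-1}\un \xrightarrow{\rho} \un \to \un/\rho$ has $\un/\rho \wedge \mathbb{S}[1/2]$ with vanishing homotopy by the known structure of the $\rho$-inverted sphere (Bachmann: $\un[\rho^{-1}] \simeq \sph^{top}$, so $\rho$ is an equivalence on the $\rho$-complete part but nilpotent-free only after profinite completion). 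The cleanest route is: $\un^{iso}/\rho \wedge \mathbb{S}[1/2] \simeq \un^{iso}/\rho[1/2]$, and since $\rho$ is a unit multiple of $2$ up to $\eta$-torsion while $\un^{iso}/\rho$ is visibly $\rho$-torsion, one gets that this smash product is both $2$-divisible and killed by a power of $2$, hence zero.

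The second step is to establish derived $2$-completeness of $\un^{iso}/\rho$. For this I would use that $\un^{iso}/\rho$ is, by construction, $\cof(\rho)$ smashed with $\un^{iso}$, and I would compute $\pi_{**}(\un^{iso}/\rho)$, or at least $\hz^{iso}$-homology, and observe it is an $\F$-module — in fact by Lemma~\ref{ringhom} and the cofiber sequence, $\hz_{**}$-homology of $\un^{iso}/\rho$ is annihilated by $2$ (everything in sight is an $\F$-vector space, as $\pi_{**}(\hz^{iso})$ is an $\F$-algebra). An $\hz$-module-theoretic argument, or the observation that $\un^{iso}/\rho$ is built as a colimit/extension of $\hz^{iso}$-modules after smashing with enough copies, then forces $2$-completeness: a spectrum all of whose homotopy groups (in the relevant bigraded sense) are $\F$-vector spaces, and which is connective in the $\un^{iso}$-module $t$-structure of Proposition~\ref{mbpring}'s proof, is automatically derived $2$-complete. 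The key input here is that $\un^{iso}/\rho$ lies in a bounded-above-and-below region of the $t$-structure after appropriate twisting, so completeness can be checked on homotopy objects.

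The main obstacle I anticipate is making precise the claim that ``$\rho$ is a multiple of $2$ after inverting $\eta$ or up to units,'' since the exact relationship between $\rho$, $2$, and $\eta$ in $\pi_{**}(\un)$ over $\R$ is subtle (the relation is $2 = (1+\langle -1\rangle)$ and $\rho\eta = \langle -1 \rangle - 1$-type identities in Milnor--Witt K-theory). I would sidestep this by instead arguing directly with $\hz^{iso}$-homology: show that $\un^{iso}/\rho$ is an $\hz^{iso}$-cellular-type object, or at least that its Adams-type tower converges, because $\hz^{iso}_{**}(\un^{iso}/\rho)$ is a finitely generated free module over $\pi_{**}(\hz^{iso})/\rho$ (which one reads off from Proposition~\ref{pihiso}: modding out $\rho$ kills the $r_i^2 = \rho r_{i+1}$ relations, leaving an exterior-type $\F$-algebra on the $r_i$), and conclude $2$-completeness from the fact that this homology is all $2$-torsion together with connectivity. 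A secondary subtlety is that $\un^{iso}/\rho$ is not a priori $\hz$-local, so one cannot just cite ``$H\F_2$-local implies $2$-complete''; the fix is the $t$-structure argument, where connectivity plus $\pi_{**}$ being $\F$-modules in each degree gives derived $2$-completeness via the Milnor sequence / $\lim^1$ vanishing for the tower $\{X/2^n\}$.
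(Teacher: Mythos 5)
The paper's proof is short and hinges on one geometric input that your proposal never supplies: because $\spec(\C)\cong Q_{\rho}$ is annihilated by the isotropic localization, its motivic Euler characteristic $\langle 2\rangle(1+\langle -1\rangle)\in\mathrm{GW}(\R)=\pi_{0,0}(\un)$ must vanish in $\pi_{0,0}(\un^{iso})$. Since $\langle 2\rangle$ is a unit, this gives the clean relation $2+\eta\rho=0$ in $\pi_{0,0}(\un^{iso})$, exhibiting $2$ as $\rho$-divisible and hence null in $\pi_{0,0}(\un^{iso}/\rho)$; from there $\un^{iso}/\rho$ is $2$-torsion and $2$-completeness is automatic.

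Your proposal gestures at the right Milnor--Witt identities (``$\rho\eta=\langle -1\rangle-1$'', ``$2=1+\langle-1\rangle$''), but over $\R$ these only say $2+\eta\rho=h$ is the hyperbolic class, which is \emph{nonzero} in $\mathrm{GW}(\R)$. You do not identify the mechanism that kills $h$ after isotropic localization (namely, $(\Sigma^{\infty}_+\spec(\C))^{iso}\simeq 0$ and Levine's Euler characteristic computation). You explicitly flag this as the ``main obstacle'' and then route around it, but the detours don't close the gap: (i) the claim ``$\rho$ acts invertibly on $\un[1/2]$'' is not correct over $\R$, and ``$\un^{iso}/\rho$ is both $2$-divisible and killed by a power of $2$'' is exactly the statement you are trying to prove, not an observation you can start from; (ii) the fallback via $\hz^{iso}$-homology being an $\F$-module tells you the \emph{homology} of $\un^{iso}/\rho$ is $2$-torsion, but to deduce that the \emph{homotopy} groups are $2$-torsion (which is what the $\lim^1$-argument needs) you would require a convergent Adams-type spectral sequence --- and setting that up is precisely the content of the later sections of the paper, so it cannot be invoked here without circularity. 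Also note the two-step framing ``$X[1/2]=0$ plus derived $2$-completeness'' is redundant: once you know $2^N\cdot 1_{X}$ is null (e.g.\ from $2=-\eta\rho$ plus $\rho^2=0$ on $\cof(\rho)$), $X$ is a retract of $X/2^N$ and is therefore $2$-complete on the nose.

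In short: the missing idea is the Euler characteristic argument for $\spec(\C)=Q_{\rho}$, which is what makes $2$ a multiple of $\rho$ in $\pi_{0,0}(\un^{iso})$ and collapses the whole problem.
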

\begin{proof}
	Note that $\spec(\C) \cong Q_{\rho}$ as $\R$-schemes. It follows that $(\Sigma^{\infty}_+\spec(\C))^{iso} \simeq 0$. Therefore, the Euler characteristic of $\spec(\C)$ in $\pi_{0,0}(\un) \cong \mathrm{GW}(\R)$, which is $\langle2\rangle(1+\langle-1\rangle)$ by \cite[Corollary 11.2]{Le}, vanishes in $\pi_{0,0}(\un^{iso})$. This leads to the relation
	$$2+\eta\rho=1+\langle-1\rangle=0$$
	in $\pi_{0,0}(\un^{iso})$, implying that 2 vanishes in $\pi_{0,0}(\un^{iso}/\rho)$. Hence, $\un^{iso}/\rho$ is 2-torsion, and consequently 2-complete.
	\end{proof}

\begin{rem}\label{orcomp}
	\normalfont
	If $E$ is an $\mgl$-module, then $E^{iso}$ is both 2-complete and $\eta$-complete. In fact, the relation $\eta=0$ in $\pi_{**}(E^{iso})$ also implies that $2=0$ since, as shown in the proof of Proposition \ref{2comp}, we have $2+\eta\rho=0$ in $\pi_{0,0}(\un^{iso})$. In particular, if $E^{iso}$ is connective, we conclude that $E^{iso}$ is $\hz$-complete by \cite[Theorem 2.3]{BO}.
\end{rem}

\begin{lem}\label{compisocomp}
	For any object $Y$ in $\SH^{iso}(\R)$ and any homotopy commutative ring spectrum $E$ in $\SH(\R)$ there is an equivalence of isotropic spectra:
$$Y^{\wedge}_{E}\simeq Y^{\wedge}_{E^{iso}}.$$
\end{lem}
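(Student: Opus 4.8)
The plan is to show that $E$-completion and $E^{iso}$-completion agree on any isotropic spectrum $Y$ by comparing the two completion towers levelwise. Recall that $E$-completion of $Y$ is computed as the totalization of the cobar-type cosimplicial object obtained from the augmented simplicial object $E^{\wedge \bullet +1} \wedge Y$ (equivalently, the limit of the tower of partial totalizations), and similarly for $E^{iso}$. Since $Y$ lies in $\SH^{iso}(\R)$, i.e.\ $Y \simeq \un^{iso} \wedge Y$, smashing with $\un^{iso}$ acts as the identity on $Y$ and on every term built from $Y$. First I would observe that for each $n$, since $\un^{iso}$ is an idempotent monoid (Proposition \ref{idm}) and smashing with it commutes with smash products, there is an equivalence
$$E^{\wedge n+1} \wedge Y \simeq E^{\wedge n+1} \wedge \un^{iso} \wedge Y \simeq (E^{iso})^{\wedge_{\un^{iso}} n+1} \wedge Y,$$
where on the right the smash powers of $E^{iso}=\un^{iso}\wedge E$ are taken over $\un^{iso}$; the key point is that $\un^{iso}\wedge(E\wedge E)\simeq(\un^{iso}\wedge E)\wedge_{\un^{iso}}(\un^{iso}\wedge E)$, which follows from idempotence. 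This identification is compatible with the cosimplicial structure maps, so the two cosimplicial objects computing $Y^{\wedge}_E$ and $Y^{\wedge}_{E^{iso}}$ (the latter formed internally to $\SH^{iso}(\R)=\un^{iso}{\text-}\Mod$) are equivalent.

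Next I would pass to totalizations. The $E$-nilpotent completion $Y^{\wedge}_E$ is $\tot$ of the Amitsur cosimplicial object above, and since $Y$ is already an $\un^{iso}$-module and $L^{iso}$ is a smashing (hence colimit- and, in this bounded-below-tower situation, limit-compatible in the relevant sense) localization, this totalization agrees with the totalization computed in $\un^{iso}{\text-}\Mod$ of the corresponding object, which by the previous paragraph is exactly $Y^{\wedge}_{E^{iso}}$. Concretely: the functor $L^{iso}\colon \SH(\R)\to \SH^{iso}(\R)$ has a fully faithful right adjoint $\iota$, and $Y$ is in the image, so limits of diagrams of $\un^{iso}$-modules agree whether computed in $\SH(\R)$ or in $\SH^{iso}(\R)$. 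Putting these together yields $Y^{\wedge}_E \simeq Y^{\wedge}_{E^{iso}}$.

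The step I expect to require the most care is the compatibility of the idempotence equivalence $\un^{iso}\wedge(E^{\wedge n+1})\simeq (E^{iso})^{\wedge_{\un^{iso}}n+1}$ with all the coface and codegeneracy maps of the cobar construction, so that one genuinely gets an equivalence of cosimplicial (pro-)objects and not merely a levelwise one. This is where one must be slightly careful about whether "$E$-completion" means nilpotent completion à la Bousfield--Kan or localization at $E$-homology; for a homotopy commutative ring spectrum $E$ the standard convention is the former, and the cobar comparison above is the natural route. An alternative, cleaner argument avoiding cosimplicial bookkeeping: $E$-completion only depends on the $E$-homology Bousfield class, and for $Y\in\SH^{iso}(\R)$ one has $E_{**}(Y)\cong \pi_{**}(E\wedge Y)\cong \pi_{**}(E\wedge\un^{iso}\wedge Y)\cong E^{iso}_{**}(Y)$ naturally, together with the fact that both completions of an $\un^{iso}$-module can be taken inside $\un^{iso}{\text-}\Mod$; one then invokes that $E$-completion of $\un^{iso}$-modules is intrinsic to the module category and detected by the respective homology theories. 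I would present the cobar argument as the main proof, since it is the most self-contained given what has been established.
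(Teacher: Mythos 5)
Your main cobar/Amitsur argument is correct and matches the paper's proof in all essentials: both identify the \v{C}ech conerve of $\un\to E$ smashed with $Y$ with that of $\un\to E^{iso}$ via idempotence of $\un^{iso}$, and conclude by taking $\tot$, using that $\SH^{iso}(\R)$ is closed under limits in $\SH(\R)$. One caveat worth recording: your "cleaner alternative" via Bousfield classes is unreliable as stated, since $E$-nilpotent completion is not in general an invariant of the Bousfield class of $E$-homology; but as you designate the cobar comparison as the main proof, the argument stands.
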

\begin{proof}
	Let $E^{\bullet}$ be the \v{C}ech conerve of the unit map $\un \rightarrow E$. Then, the $E$-nilpotent completion of $Y$ is given by:
	$$Y^{\wedge}_E \simeq \tot (E^{\bullet} \wedge Y).$$
	
	Since $\SH^{iso}(\R)$ is closed under limits in $\SH(\R)$, it follows that $Y^{\wedge}_E$ is an isotropic spectrum for any homotopy commutative ring spectrum $E$.
	
	By hypothesis, we know that $Y \simeq \un^{iso} \wedge Y$. Hence, we obtain an equivalence of cosimplicial spectra:
	$$E^{\bullet} \wedge Y \simeq (E^{iso})^{\bullet} \wedge Y,$$
	which induces the following equivalence in $\SH^{iso}(\R)$:
	$$Y^{\wedge}_{E}\simeq \tot (E^{\bullet} \wedge Y) \simeq \tot ((E^{iso})^{\bullet} \wedge Y) \simeq Y^{\wedge}_{E^{iso}},$$
	completing the proof.
	\end{proof}

\begin{prop}\label{compmbp}
	For any object $Y$ in $\SH^{iso}(\R)$ there is an equivalence of isotropic spectra:
	$$Y^{\wedge}_{\mbp}\simeq Y^{\wedge}_{\hz}.$$
\end{prop}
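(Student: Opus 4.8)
The plan is to reduce the statement to a comparison of Bousfield classes (or rather, nilpotent completions) after applying the isotropic localization. By Lemma \ref{compisocomp}, it is equivalent to prove that $Y^{\wedge}_{\mbp^{iso}} \simeq Y^{\wedge}_{\hz^{iso}}$ for all $Y$ in $\SH^{iso}(\R)$, so I may work entirely inside $\un^{iso}{\text-}\Mod$ and compare the two $E_{\infty}$-algebras $\mbp^{iso}$ and $\hz^{iso}$ there (using Proposition \ref{mbpring} for the ring structure on $\mbp^{iso}$). The key input is Proposition \ref{freembp}, which exhibits $\hz^{iso}$ as a wedge of shifted copies of $\mbp^{iso}$, indexed by the monomials $r_I$. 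In particular $\hz^{iso}$ is a retract of a $\mbp^{iso}$-module built from $\mbp^{iso}$ by (co)limits, so $\hz^{iso}$ is $\mbp^{iso}$-nilpotent; hence $Y^{\wedge}_{\mbp^{iso}}$ already computes the $\hz^{iso}$-nilpotent completion in one direction, and it remains to show the reverse, namely that $\mbp^{iso}$ is built from $\hz^{iso}$ by the relevant limits so that $Y^{\wedge}_{\hz^{iso}}$ is $\mbp^{iso}$-complete.

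First I would unwind the $\mbp^{iso}$-nilpotent completion as a totalization $Y^{\wedge}_{\mbp^{iso}} \simeq \tot((\mbp^{iso})^{\wedge \bullet +1} \wedge_{\un^{iso}} Y)$, and similarly for $\hz^{iso}$. By Proposition \ref{freembp} each term $(\mbp^{iso})^{\wedge n} \wedge \hz^{iso}$ and each term $(\hz^{iso})^{\wedge n}$ is a wedge of shifts of $\hz^{iso}$, respectively of $\mbp^{iso}$; the point is that the two cosimplicial objects $(\mbp^{iso})^{\wedge \bullet+1}$ and $(\hz^{iso})^{\wedge \bullet+1}$ have the same totalization when smashed with any $Y$. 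Concretely, Proposition \ref{freembp} says $\hz^{iso} \simeq \bigvee_I \Sigma^{p_I,q_I}\mbp^{iso}$ where $I$ ranges over strictly increasing sequences in $\Z_{\geq 0}$; dually, I expect $\mbp^{iso}$ to be recovered from $\hz^{iso}$ as a limit (a Postnikov-type tower corresponding to killing the $r_i$'s one at a time, as in the proof of Proposition \ref{freembp} where $\hz^{iso}$ was built from $\mbp^{iso}$ by iterated cofibers along the vanishing classes $v_n$). Running that construction in reverse — i.e. building $\mbp^{iso}$ as an inverse limit of spectra each of which is a finite wedge of shifts of $\hz^{iso}$ — shows that $\mbp^{iso}$ is in the thick subcategory generated by $\hz^{iso}$ under limits, whence $Y^{\wedge}_{\hz^{iso}}$ is $\mbp^{iso}$-complete, and the two completions agree.

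An alternative, perhaps cleaner, route is to compare Adams resolutions directly: the $\mbp^{iso}$-based and $\hz^{iso}$-based Adams towers of $Y$ both have associated graded (at each stage) computed by $\mbp^{iso}_{**}$- and $\hz^{iso}_{**}$-homology, and Theorem \ref{pimbp} together with Proposition \ref{freembp} identifies these homologies compatibly (both see the same Hopf-algebroid data $(\F[\rho], \F[\rho]\otimes_{\F}\G_{**})$ up to the free $r_I$-module structure, which is exactly what is quotiented out in forming the $\mbp^{iso}$-resolution from the $\hz^{iso}$-resolution). One then concludes that the two towers are pro-equivalent after smashing with $Y$, giving the equivalence of completions.

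The main obstacle I anticipate is the bookkeeping needed to turn the \emph{covariant} statement of Proposition \ref{freembp} ($\hz^{iso}$ is a wedge of shifts of $\mbp^{iso}$) into the \emph{contravariant} one needed here ($\mbp^{iso}$ is a limit of wedges of shifts of $\hz^{iso}$), and in particular checking that the relevant limits converge — i.e. that the ``reverse Postnikov tower'' for $\mbp^{iso}$ built out of $\hz^{iso}$ actually has limit $\mbp^{iso}$ and not some larger completion. This is where connectivity/convergence estimates in the $t$-structure on $\un^{iso}{\text-}\Mod$ from the proof of Proposition \ref{mbpring} will be needed, together with the fact (Remark \ref{orcomp}) that $\mgl$-modules become $\hz$-complete after isotropic localization, which should make the convergence automatic.
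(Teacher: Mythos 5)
Your opening move — reduce via Lemma \ref{compisocomp} to comparing $\mbp^{iso}$- and $\hz^{iso}$-nilpotent completions inside $\un^{iso}{\text-}\Mod$ — is the same as the paper's, and your two ``directions'' are the right two facts to aim for. But the way you assemble them diverges, and the route you sketch for the harder direction has a genuine gap.

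The paper's proof is a short bicompletion argument. One compares everything to $Y^{\wedge}_{(\mbp^{iso},\hz^{iso})}$, and the equivalences $Y^{\wedge}_{(\mbp^{iso},\hz^{iso})} \simeq Y^{\wedge}_{\mbp^{iso}}$ and $Y^{\wedge}_{(\mbp^{iso},\hz^{iso})} \simeq Y^{\wedge}_{\hz^{iso}}$ follow from exactly two inputs: (a) every term $(\mbp^{iso})^{\wedge n}\wedge Y$ of the $\mbp^{iso}$-cobar resolution is an $\mbp^{iso}$-module and hence, by Remark \ref{orcomp}, already $\hz^{iso}$-complete; (b) every term $(\hz^{iso})^{\wedge n}\wedge Y$ of the $\hz^{iso}$-cobar resolution is an $\hz^{iso}$-module, and since $\hz^{iso}$ is itself an $\mbp^{iso}$-module (a wedge of shifts of $\mbp^{iso}$ by Proposition \ref{freembp}), these terms are already $\mbp^{iso}$-complete. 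No Postnikov-type inverse construction is needed.

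Where your proposal goes wrong is in treating (a) as the hard direction and trying to establish it by building $\mbp^{iso}$ as an inverse limit of wedges of shifts of $\hz^{iso}$ — a ``reverse'' of Proposition \ref{freembp}. You correctly flag the convergence of this tower as the unresolved issue, and indeed it is: nothing in the paper supplies that inverse-limit presentation, and the would-be argument that its limit is $\mbp^{iso}$ rather than some completion of it is precisely the content you would need to prove, not a formality. The point you were missing is that Remark \ref{orcomp} already \emph{is} the direction-(a) statement — it says that isotropic $\mgl$-modules (hence $\mbp^{iso}$-modules) are $\hz$-complete, and via Lemma \ref{compisocomp} this becomes $\hz^{iso}$-completeness in $\SH^{iso}(\R)$. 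You mention Remark \ref{orcomp} only at the end as a possible aid to convergence, but it should be used head-on: once you see it gives (a) directly, the speculative reverse tower is unnecessary and the proof collapses to the clean two-sided bicompletion comparison.
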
 
\begin{proof}
	By Lemma \ref{compisocomp}, it suffices to show that $Y^{\wedge}_{\mbp^{iso}}\simeq Y^{\wedge}_{\hz^{iso}}$. To this end, let us consider the bicompletion $Y^{\wedge}_{(\mbp^{iso},\hz^{iso})}$. 
	
	By Remark \ref{orcomp}, the isotropic spectrum $\mbp^{iso}$ is $\hz^{iso}$-complete. Since $(\mbp^{iso})^{\wedge n}\wedge Y$ is an $\mbp^{iso}$-module, and hence $\hz^{iso}$-complete for all $n \geq 1$, we deduce the equivalence:
	$$Y^{\wedge}_{(\mbp^{iso},\hz^{iso})} \simeq \tot (((\mbp^{iso})^{\bullet}\wedge Y)^{\wedge}_{\hz^{iso}}) \simeq \tot ((\mbp^{iso})^{\bullet}\wedge Y)\simeq Y^{\wedge}_{\mbp^{iso}}.$$
	
	On the other hand, since $\hz^{iso}$ is an $\mbp^{iso}$-module, it is $\mbp^{iso}$-complete. This implies that $(\hz^{iso})^{\wedge n}\wedge Y$ is $\mbp^{iso}$-complete for all $n \geq 1$, yielding the equivalence:
	$$Y^{\wedge}_{(\mbp^{iso},\hz^{iso})} \simeq \tot (((\hz^{iso})^{\bullet}\wedge Y)^{\wedge}_{\mbp^{iso}}) \simeq \tot ((\hz^{iso})^{\bullet}\wedge Y)\simeq Y^{\wedge}_{\hz^{iso}},$$
	which concludes the argument.
	\end{proof}

\begin{prop}\label{2etacomp}
	The isotropic spectrum $(\un^{iso})^{\wedge}_2$ is $\eta$-complete.
\end{prop}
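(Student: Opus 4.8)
The plan is to identify $(\un^{iso})^{\wedge}_2$ with the $\hz$-nilpotent completion of $\un^{iso}$ and to exploit that the latter is assembled entirely from $\mbp^{iso}$-modules, on which $\eta$ acts by zero. The starting point: by Theorem \ref{pimbp} we have $\pi_{**}(\mbp^{iso})\cong\F[\rho]$, so both $2$ and $\eta$ act as zero on $\mbp^{iso}$ and hence on every $\mbp^{iso}$-module $M$. If an element $x$ acts as zero on a spectrum $M$, the tower whose limit computes the $x$-divisible part of $M$ has all transition maps zero, so that limit vanishes and $M$ is $x$-complete; thus every $\mbp^{iso}$-module is simultaneously $2$-complete and $\eta$-complete. (Alternatively one can invoke Remark \ref{orcomp} applied to the $\mgl$-summand $\mbp$.)

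Next I would feed this into the $\mbp$-nilpotent completion. As in the proof of Lemma \ref{compisocomp}, $(\un^{iso})^{\wedge}_{\mbp}\simeq\tot(\mbp^{\bullet}\wedge\un^{iso})$ with $\mbp^{\bullet}$ the \v{C}ech conerve of $\un\to\mbp$, and each term, of the form $\mbp^{\wedge n+1}\wedge\un^{iso}\simeq\mbp^{\wedge n}\wedge\mbp^{iso}$, is a module over the $E_{\infty}$-ring $\mbp^{iso}$ (Proposition \ref{mbpring}), hence $2$-complete and $\eta$-complete by the previous paragraph. Since $2$-complete spectra and $\eta$-complete spectra are closed under limits, $(\un^{iso})^{\wedge}_{\mbp}$, being such a $\tot$, is $2$-complete and $\eta$-complete. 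By Proposition \ref{compmbp}, $(\un^{iso})^{\wedge}_{\hz}\simeq(\un^{iso})^{\wedge}_{\mbp}$, so $(\un^{iso})^{\wedge}_{\hz}$ is $2$-complete and $\eta$-complete.

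To conclude, I would compare with the $2$-completion. The $\hz$-nilpotent completion map $u:\un^{iso}\to(\un^{iso})^{\wedge}_{\hz}$ becomes an equivalence after smashing with $\hz$ (a ring-spectrum nilpotent completion is always an $\hz$-homology equivalence), and its target is $2$-complete by the previous step, so $u$ factors through the $2$-completion as $\un^{iso}\to(\un^{iso})^{\wedge}_2\xrightarrow{g}(\un^{iso})^{\wedge}_{\hz}$. Now $g$ is an $\hz$-homology equivalence (by two-out-of-three with the $2$-completion map, which is one as well) between connective $2$-complete spectra, hence an equivalence — a connective, $2$-complete, $\hz$-acyclic spectrum is zero by a Hurewicz argument. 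Therefore $(\un^{iso})^{\wedge}_2\simeq(\un^{iso})^{\wedge}_{\hz}$ is $\eta$-complete, as claimed.

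The crux, and the only step using something specific to the isotropic setting, is the last equivalence $(\un^{iso})^{\wedge}_2\simeq(\un^{iso})^{\wedge}_{\hz}$: over $\SH^{iso}(\R)$ the $\hz$-completion carries no more information than the $2$-completion, precisely because annihilating the quadrics $Q_{\rho^n}$ forces $2=-\eta\rho$ to be topologically nilpotent wherever $\eta$ is. Carrying this out requires checking that $\un^{iso}$ and its completions are connective and that the relevant towers have no $\lim^1$-pathologies — automatic here since $\hz_{**}(\un^{iso})$ is degreewise finite-dimensional by Proposition \ref{pihiso} — so that the two standard facts invoked (nilpotent completion is an $\hz$-homology equivalence; an $\hz$-homology equivalence of connective $2$-complete spectra is an equivalence) remain valid in this motivic category.
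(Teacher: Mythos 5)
Your proposal takes a genuinely different route from the paper, and it has a gap at the decisive final step. The paper's own proof is very short: $\un^{iso}$ is very effective, being a filtered colimit of extensions of the suspension spectra $\Sigma^{\infty}_+\ce(Q_{\rho^n})$ of smooth $\R$-schemes, and \cite[Theorem 5.1]{BH} states exactly that the $2$-completion of a very effective spectrum is $\eta$-complete. The first part of your argument---that $(\un^{iso})^{\wedge}_{\mbp}\simeq(\un^{iso})^{\wedge}_{\hz}$ is a limit of $\mbp^{iso}$-modules, on which both $2$ and $\eta$ act by zero by Theorem~\ref{pimbp}, and hence is $2$- and $\eta$-complete---is correct and independently interesting.

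The gap is the identification $(\un^{iso})^{\wedge}_2\simeq(\un^{iso})^{\wedge}_{\hz}$, which you deduce from the claim that a connective, $2$-complete, $\hz$-acyclic spectrum in $\SH(\R)$ is zero ``by a Hurewicz argument.'' This is false over $\R$: for instance $\un[\eta^{-1}]^{\wedge}_2$ is connective, $2$-complete and $\hz$-acyclic (since $\hz\wedge\eta=0$), yet nonzero because $\pi_{0,0}(\un[\eta^{-1}])\cong\mathrm{W}(\R)\cong\Z$ survives $2$-completion; equivalently, $\un^{\wedge}_2$ over $\R$ is itself not $\eta$-complete. The classical Hurewicz argument hinges on $\mathrm{H}\F_2$ detecting the bottom homotopy group, whereas motivically over $\R$ the spectrum $\hz$ is blind to the $\eta$-periodic, Witt-theoretic part of homotopy, which is large because $\R$ has infinite mod-$2$ cohomological dimension; the degreewise finiteness of $\hz_{**}(\un^{iso})$ that you invoke controls $\lim^1$-pathologies but has no bearing on this phenomenon. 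For the fiber of your map $g$ the vanishing you need does in fact hold, but precisely \emph{because} $(\un^{iso})^{\wedge}_2$ is $\eta$-complete---which is the statement you are trying to prove---so the argument as written is circular. What breaks the circle in the paper is the very effectivity of $\un^{iso}$ together with \cite[Theorem 5.1]{BH}, and your argument makes no use of either.
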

\begin{proof}
	Recall that the category of very effective spectra is the subcategory of $\SH(\R)$ generated under colimits and extensions by $\Sigma^{\infty}_+X$ for all smooth $\R$-schemes $X$. Since the spectrum $\Sigma^{\infty}_+\ce(Q_{\rho^n})$ is very effective for every $n$, it follows that also $\un^{iso}$ is very effective. Therefore, we conclude that $(\un^{iso})^{\wedge}_2$ is $\eta$-complete by \cite[Theorem 5.1]{BH}.
\end{proof}

\begin{cor}\label{mbpcomp}
	The isotropic spectrum $\un^{iso}/\rho$ is $\mbp$-complete.
\end{cor}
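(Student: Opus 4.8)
The plan is to reduce the statement to $\hz$-completeness and then feed it into the machinery already assembled in Remark \ref{orcomp}. The first step is to apply Proposition \ref{compmbp} with $Y=\un^{iso}/\rho$, which gives $(\un^{iso}/\rho)^{\wedge}_{\mbp}\simeq(\un^{iso}/\rho)^{\wedge}_{\hz}$; thus it suffices to show that $\un^{iso}/\rho$ is $\hz$-complete. For this I would verify the three hypotheses under which \cite[Theorem 2.3]{BO} applies, just as in Remark \ref{orcomp}: namely that $\un^{iso}/\rho$ is connective, $2$-complete and $\eta$-complete.

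Of these, $2$-completeness is exactly Proposition \ref{2comp}. Connectivity I would deduce from the cofiber sequence $\un^{iso}\to\un^{iso}/\rho\to\Sigma^{0,-1}\un^{iso}$: the spectrum $\un^{iso}$ is very effective by the proof of Proposition \ref{2etacomp}, hence connective, and $\Sigma^{0,-1}\un^{iso}=\Sigma^{1,0}\Sigma^{-1,-1}\un^{iso}$ is connective too, since $\Sigma^{-1,-1}$ carries connective spectra to $(-1)$-connective ones and $\Sigma^{1,0}$ shifts back up; an extension of two connective spectra is connective.

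The main point — and the step I expect to cost the most care — is $\eta$-completeness, because, in contrast with the situation of Remark \ref{orcomp}, we do not have $\eta=0$ in $\un^{iso}/\rho$. The idea I would use is to pass to the $2$-completion of $\un^{iso}$ \emph{before} coning off $\rho$. Concretely, $\un/\rho$ is dualizable (being the cofiber of a map between $\otimes$-invertible objects), so smashing with it commutes with the limit that computes $2$-completion; combined with Proposition \ref{2comp} this gives
$$\un^{iso}/\rho\simeq(\un^{iso}/\rho)^{\wedge}_2\simeq(\un^{iso})^{\wedge}_2\wedge\un/\rho=\cof\bigl(\Sigma^{-1,-1}(\un^{iso})^{\wedge}_2\xrightarrow{\rho}(\un^{iso})^{\wedge}_2\bigr).$$
Now $(\un^{iso})^{\wedge}_2$ is $\eta$-complete by Proposition \ref{2etacomp}, and the class of $\eta$-complete spectra is closed under suspensions and cofibers, so $\un^{iso}/\rho$ is $\eta$-complete. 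Once all three properties are in place, \cite[Theorem 2.3]{BO} shows that $\un^{iso}/\rho$ is $\hz$-complete, and hence $\mbp$-complete by the reduction of the first paragraph.
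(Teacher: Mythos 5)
Your proposal is correct and follows essentially the same route as the paper's own proof: reduce $\mbp$-completeness to $\hz$-completeness via Proposition \ref{compmbp}, then verify connectivity, $2$-completeness (Proposition \ref{2comp}) and $\eta$-completeness (from Proposition \ref{2etacomp}) to invoke \cite[Theorem 2.3]{BO}. You merely spell out the details — the dualizability argument showing $(\un^{iso}/\rho)^{\wedge}_2 \simeq (\un^{iso})^{\wedge}_2 \wedge \un/\rho$ and the connectivity of the two pieces of the cofiber sequence — that the paper's terse "It follows from Propositions \ref{2comp} and \ref{2etacomp}" leaves implicit.
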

\begin{proof}
It follows from Propositions \ref{2comp} and \ref{2etacomp} that
$$\un^{iso}/\rho \simeq (\un^{iso}/\rho)^{\wedge}_2 \simeq (\un^{iso}/\rho)^{\wedge}_{2,\eta}.$$

Since $\un^{iso}/\rho$ is a connective spectrum, by Proposition \ref{compmbp}, we obtain the following equivalences of spectra:
$$(\un^{iso}/\rho)^{\wedge}_{\mbp}\simeq(\un^{iso}/\rho)^{\wedge}_{\hz}\simeq (\un^{iso}/\rho)^{\wedge}_{2,\eta} \simeq \un^{iso}/\rho.$$

This completes the proof.
\end{proof}

\begin{prop}\label{pimbpisorho}
	There is an isomorphism of Hopf algebras:
	$$(\pi_{**}(\mbp^{iso}/\rho),\mbp^{iso}_{**}(\mbp^{iso}/\rho)) \cong (\F,\G_{**}).$$
\end{prop}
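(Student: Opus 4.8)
The plan is to deduce this from Theorem \ref{pimbp}, which already identifies $(\pi_{**}(\mbp^{iso}),\mbp^{iso}_{**}(\mbp^{iso}))$ with $(\F[\rho],\F[\rho]\otimes_{\F}\G_{**})$, by killing $\rho$. First I would record the cofiber sequence
$$\Sigma^{-1,-1}\mbp^{iso}\xrightarrow{\rho}\mbp^{iso}\rightarrow \mbp^{iso}/\rho$$
obtained by smashing $\un^{iso}/\rho$ with $\mbp^{iso}$ (using that $\mbp^{iso}/\rho\coloneqq \un^{iso}/\rho\wedge \mbp\simeq \un^{iso}/\rho\wedge_{\un^{iso}}\mbp^{iso}$). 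Since multiplication by $\rho$ on $\pi_{**}(\mbp^{iso})\cong\F[\rho]$ is injective, the associated long exact sequence in homotopy collapses to a short exact sequence, giving $\pi_{**}(\mbp^{iso}/\rho)\cong \F[\rho]/(\rho)\cong\F$, concentrated in degree $(0,0)$.

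Next I would run the same argument with $\mbp^{iso}$-homology in place of homotopy. Smashing the cofiber sequence above with $\mbp^{iso}$ and taking homotopy groups gives a long exact sequence
$$\cdots\rightarrow \Sigma^{-1,-1}\mbp^{iso}_{**}(\mbp^{iso})\xrightarrow{\rho}\mbp^{iso}_{**}(\mbp^{iso})\rightarrow \mbp^{iso}_{**}(\mbp^{iso}/\rho)\rightarrow\cdots.$$
By Theorem \ref{pimbp}, $\mbp^{iso}_{**}(\mbp^{iso})\cong\F[\rho]\otimes_\F\G_{**}$ is $\rho$-torsion-free, so multiplication by $\rho$ is again injective and the sequence breaks into short exact pieces, yielding $\mbp^{iso}_{**}(\mbp^{iso}/\rho)\cong(\F[\rho]\otimes_\F\G_{**})/(\rho)\cong\G_{**}$. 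One then checks that the comultiplication and the comodule structure maps are compatible with the quotient by $\rho$ — this is formal, since all the relevant maps in Theorem \ref{pimbp} are maps of $\F[\rho]$-algebras and $\rho$ is a ring element, so passing to the quotient is a morphism of Hopf algebroids. Hence the pair $(\pi_{**}(\mbp^{iso}/\rho),\mbp^{iso}_{**}(\mbp^{iso}/\rho))$ is isomorphic to $(\F,\G_{**})$ as Hopf algebras.

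The only genuine point requiring care — and the step I expect to be the main obstacle — is justifying that smashing with $\un^{iso}/\rho$ really does produce the cofiber sequences above at the level of $\mbp^{iso}$ (and $\mbp^{iso}\wedge\mbp^{iso}$), and that the resulting algebra structure on $\mbp^{iso}_{**}(\mbp^{iso}/\rho)$ is the quotient ring structure rather than something twisted by an extension. Here I would use that $\un^{iso}/\rho$ is an $E_\infty$-algebra in $\SH(\R)$ (established earlier in the paper), so that $\mbp^{iso}/\rho$ inherits a multiplication for which the quotient map $\mbp^{iso}\rightarrow\mbp^{iso}/\rho$ is a ring map; since $\rho$ maps to $0$ and the target homotopy and homology are $\rho$-torsion-free before the quotient, no multiplicative extensions can occur and the structure is exactly the displayed one. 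The degree computation $|\xi_i|=(2^{i+1}-2,2^{i+1}-1)$ matches the definition of $\G_{**}$, completing the identification.
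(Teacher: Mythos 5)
Your argument is essentially the same as the paper's: take the cofiber sequence defining $\mbp^{iso}/\rho$, observe that $\rho$ acts injectively on $\pi_{**}(\mbp^{iso})\cong\F[\rho]$ and on $\mbp^{iso}_{**}(\mbp^{iso})\cong\F[\rho]\otimes_\F\G_{**}$ by Theorem \ref{pimbp}, and conclude from the long exact sequences that both collapse to the quotient by $(\rho)$. One small slip in your closing remark: in $\G_{**}$ the class $\xi_i$ sits in degree $(2^{i+1}-2,\,2^{i}-1)$, not $(2^{i+1}-2,\,2^{i+1}-1)$; this does not affect the argument.
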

\begin{proof}
	The cofiber sequence
	$$\Sigma^{-1,-1}\mbp^{iso} \xrightarrow{\rho}\mbp^{iso} \rightarrow \mbp/\rho$$
	induces a long exact sequence of homotopy groups
	$$\dots \rightarrow \pi_{p+1,q+1}(\mbp^{iso})\xrightarrow{\cdot \rho} \pi_{p,q}(\mbp^{iso}) \rightarrow \pi_{p,q}(\mbp^{iso}/\rho) \rightarrow \pi_{p,q+1}(\mbp^{iso}) \xrightarrow{\cdot\rho} \dots.$$
	
	By Theorem \ref{pimbp}, the multiplication by $\rho$ is injective in $\pi_{**}(\mbp^{iso})\cong \F[\rho]$. Therefore, we have the isomorphism:
	$$\pi_{**}(\mbp^{iso}/\rho)\cong \frac{\pi_{**}(\mbp^{iso})}{(\rho)}\cong \F.$$
	
	A similar argument applied to the long exact sequence of $\mbp^{iso}$-homology groups gives the isomorphism:
	$$\mbp^{iso}_{**}(\mbp^{iso}/\rho)\cong \frac{\mbp^{iso}_{**}(\mbp^{iso})}{(\rho)}\cong \G_{**}.$$
	\end{proof}

\begin{prop}\label{mbpisoy}
	Let $Y$ be an object in $\SH^{iso}(\R)$. Then, there is an isomorphism of left $\mbp^{iso}_{**}(\mbp^{iso})$-comodules:
	$$\mbp^{iso}_{**}(\mbp^{iso} \wedge Y) \cong \mbp^{iso}_{**}(\mbp^{iso}) \otimes_{\pi_{**}(\mbp^{iso})}\mbp^{iso}_{**}(Y).$$
\end{prop}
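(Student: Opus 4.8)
The plan is to deduce this from Theorem \ref{pimbp}, which presents $\mbp^{iso}_{**}(\mbp^{iso}) \cong \F[\rho] \otimes_{\F} \G_{**}$ as a free — in particular flat — module over $\pi_{**}(\mbp^{iso}) \cong \F[\rho]$, with basis the monomials $\xi_J$ in the generators $\xi_i$. This is the isotropic incarnation of the classical fact that, when $E_{**}(E)$ is flat over $\pi_{**}(E)$, the $E_{**}(E)$-comodule $E_{**}(E \wedge Y)$ is the extended comodule on $E_{**}(Y)$, and I would run the evident analogue of the usual topological proof, via a splitting of $\mbp^{iso} \wedge \mbp^{iso}$.

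First I would upgrade the flatness to such a splitting. Since $\mbp$ is cellular in $\SH(\R)$ and $L^{iso}$ preserves colimits, $\mbp^{iso}$ is $\un^{iso}$-cellular; hence, regarded as an $\mbp^{iso}$-module via one of its two smash factors, $\mbp^{iso} \wedge \mbp^{iso}$ is the free $\mbp^{iso}$-module on $\mbp^{iso} \in \SH^{iso}(\R)$, and therefore a cellular $\mbp^{iso}$-module. Its homotopy is $\pi_{**}(\mbp^{iso} \wedge \mbp^{iso}) = \mbp^{iso}_{**}(\mbp^{iso})$, which is free over $\pi_{**}(\mbp^{iso})$ by Theorem \ref{pimbp}, so Lemma \ref{modcell} — applicable because $\mbp^{iso}$ is an $E_{\infty}$-algebra by Proposition \ref{mbpring} — gives an equivalence of $\mbp^{iso}$-modules
$$\mbp^{iso} \wedge \mbp^{iso} \simeq \bigvee_{J} \Sigma^{a_J,b_J}\mbp^{iso},$$
indexed by the monomials $\xi_J$, where $(a_J,b_J) = \deg(\xi_J)$ and the $J$-th summand carries the class $\xi_J$.

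Next, for $Y$ in $\SH^{iso}(\R)$, I would write $\mbp^{iso} \wedge Y$ as the free $\mbp^{iso}$-module on $Y$ and use the base-change equivalence $\mbp^{iso} \wedge N \simeq (\mbp^{iso} \wedge \mbp^{iso}) \wedge_{\mbp^{iso}} N$ with $N = \mbp^{iso} \wedge Y$, smashing the displayed splitting over $\mbp^{iso}$ to obtain
$$\mbp^{iso} \wedge \mbp^{iso} \wedge Y \simeq \Big(\bigvee_{J} \Sigma^{a_J,b_J}\mbp^{iso}\Big) \wedge_{\mbp^{iso}} (\mbp^{iso} \wedge Y) \simeq \bigvee_{J} \Sigma^{a_J,b_J}(\mbp^{iso} \wedge Y).$$
Taking $\pi_{**}$ yields $\mbp^{iso}_{**}(\mbp^{iso} \wedge Y) \cong \bigoplus_{J} \Sigma^{a_J,b_J}\mbp^{iso}_{**}(Y)$, which is exactly $\mbp^{iso}_{**}(\mbp^{iso}) \otimes_{\pi_{**}(\mbp^{iso})} \mbp^{iso}_{**}(Y)$ as a bigraded abelian group. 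To promote this to a comodule isomorphism, I would identify it with the natural map
$$\Phi \colon \mbp^{iso}_{**}(\mbp^{iso}) \otimes_{\pi_{**}(\mbp^{iso})} \mbp^{iso}_{**}(Y) \longrightarrow \mbp^{iso}_{**}(\mbp^{iso} \wedge Y)$$
that multiplies classes in $\pi_{**}(\mbp^{iso} \wedge \mbp^{iso})$ and $\pi_{**}(\mbp^{iso} \wedge Y)$ inside $\pi_{**}(\mbp^{iso} \wedge \mbp^{iso} \wedge \mbp^{iso} \wedge Y)$ and collapses the two inner smash factors: tracing $\Phi$ through the splitting shows it sends $\xi_J \otimes b$ to $b$ in the $J$-th summand, so it coincides with the isomorphism above, while $\Phi$ is a homomorphism of left $\mbp^{iso}_{**}(\mbp^{iso})$-comodules by the same diagram chase as in topology, the coaction on both sides being carried entirely by the untouched outermost $\mbp^{iso}$ smash factor.

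I do not expect a serious obstacle here. The one point needing a little care is the cellularity of $\mbp^{iso} \wedge \mbp^{iso}$ as an $\mbp^{iso}$-module, which is handled by transporting the cellular structure of $\mbp$ through $L^{iso}$ and then through $\mbp^{iso} \wedge_{\un^{iso}} -$, together with the bookkeeping needed to match the abstract splitting with $\Phi$, so that one reads off the comodule structure and not merely the underlying $\pi_{**}(\mbp^{iso})$-module. (Alternatively, the same identification falls out of the Künneth spectral sequence for modules over the $E_{\infty}$-ring $\mbp^{iso}$, which collapses at its second page by the flatness recorded in Theorem \ref{pimbp}.)
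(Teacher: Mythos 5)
Your argument is essentially the paper's: establish that $\mbp^{iso}\wedge\mbp^{iso}$ is $\mbp^{iso}$-cellular, invoke Theorem \ref{pimbp} and Lemma \ref{modcell} to split it into shifted copies of $\mbp^{iso}$, and then apply base change and pass to homotopy groups (the paper cites \cite[Lemma 5.4]{HKO} for the K\"unneth isomorphism and the comodule compatibility, while you unpack this by hand with the explicit map $\Phi$). The extra detail on the comodule structure is a welcome elaboration but not a different route.
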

\begin{proof}
	Since $\mbp$ is a cellular spectrum, it follows that $\mbp^{iso} \wedge \mbp^{iso} \simeq \mbp^{iso} \wedge \mbp$ is an object in $\mbp^{iso}{\text-}\Mod_{cell}$. By Theorem \ref{pimbp}, we know that $\mbp^{iso}_{**}(\mbp^{iso})=\pi_{**}(\mbp^{iso} \wedge \mbp^{iso})$ is a free $\pi_{**}(\mbp^{iso})$-module. Hence, Lemma \ref{modcell} implies that there is an equivalence:
	$$\mbp^{iso}\wedge \mbp^{iso} \simeq \bigvee_{\alpha \in A}\Sigma^{p_{\alpha},q_{\alpha}}\mbp^{iso},$$
	and so, by \cite[Lemma 5.4]{HKO}, we obtain the following sequence of isomorphisms:
	\begin{align*}
		\mbp^{iso}_{**}(\mbp^{iso} \wedge Y) &=\pi_{**}(\mbp^{iso}\wedge \mbp^{iso} \wedge Y)\\
		&\cong \pi_{**}((\mbp^{iso}\wedge \mbp^{iso}) \wedge_{\mbp^{iso}} (\mbp^{iso}\wedge Y))\\
		&\cong \pi_{**}(\mbp^{iso}\wedge \mbp^{iso}) \otimes_{\pi_{**}(\mbp^{iso})} \pi_{**}(\mbp^{iso}\wedge Y)\\
		&= \mbp^{iso}_{**}(\mbp^{iso}) \otimes_{\pi_{**}(\mbp^{iso})}\mbp^{iso}_{**}(Y),
	\end{align*}
	which completes the proof.
\end{proof}

\begin{cor}\label{mbpisorhoy}
	Let $Y$ be an object in $\SH^{iso}(\R)$ such that $\mbp^{iso}_{**}(Y)$ is annihilated by $\rho$. Then, there is an isomorphism of left $\G_{**}$-comodules:
	$$\mbp^{iso}_{**}(\mbp^{iso} \wedge Y) \cong \G_{**} \otimes_{\F}\mbp^{iso}_{**}(Y).$$
\end{cor}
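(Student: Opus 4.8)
The plan is to feed the computations of Section 3 into Proposition \ref{mbpisoy}. By Proposition \ref{mbpisoy} we have an isomorphism of left $\mbp^{iso}_{**}(\mbp^{iso})$-comodules
$$\mbp^{iso}_{**}(\mbp^{iso} \wedge Y) \cong \mbp^{iso}_{**}(\mbp^{iso}) \otimes_{\pi_{**}(\mbp^{iso})} \mbp^{iso}_{**}(Y),$$
the right-hand side being the extended comodule on $\mbp^{iso}_{**}(Y)$. By Theorem \ref{pimbp} the Hopf algebroid $(\pi_{**}(\mbp^{iso}), \mbp^{iso}_{**}(\mbp^{iso}))$ is $(\F[\rho], \F[\rho] \otimes_{\F} \G_{**})$, and by Proposition \ref{pimbpisorho} its quotient by $(\rho)$ is $(\F, \G_{**})$. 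Since $\rho$ is a canonical element coming from the motivic sphere, both unit maps $\F[\rho] \rightrightarrows \F[\rho] \otimes_{\F} \G_{**}$ carry $\rho$ to the generator of the first tensor factor, so reduction modulo $\rho$ is in particular a morphism of Hopf algebroids $(\F[\rho], \F[\rho] \otimes_{\F} \G_{**}) \to (\F, \G_{**})$.

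First I would use the hypothesis that $\rho$ annihilates $\mbp^{iso}_{**}(Y)$: this makes $\mbp^{iso}_{**}(Y)$ a module over $\F[\rho]/(\rho) = \F$, and hence
$$\mbp^{iso}_{**}(\mbp^{iso}) \otimes_{\pi_{**}(\mbp^{iso})} \mbp^{iso}_{**}(Y) \cong (\F[\rho] \otimes_{\F} \G_{**}) \otimes_{\F[\rho]} \mbp^{iso}_{**}(Y) \cong \G_{**} \otimes_{\F} \mbp^{iso}_{**}(Y).$$
Combined with the displayed isomorphism from Proposition \ref{mbpisoy}, this already gives the asserted isomorphism of bigraded $\F$-vector spaces. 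It also shows that $\mbp^{iso}_{**}(\mbp^{iso} \wedge Y)$ is itself annihilated by $\rho$: the residual left $\pi_{**}(\mbp^{iso})$-module structure is via the left unit, which agrees with the right unit on $\rho$, and the right unit's copy of $\rho$ has been killed in the base change.

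Next I would upgrade this to a comodule isomorphism. The key observation is purely formal: a left $\mbp^{iso}_{**}(\mbp^{iso})$-comodule $N$ whose underlying $\F[\rho]$-module is annihilated by $\rho$ is the same datum as a left $\G_{**}$-comodule, since its coaction factors as $N \to \mbp^{iso}_{**}(\mbp^{iso}) \otimes_{\F[\rho]} N \cong \G_{**} \otimes_{\F} N$, and coassociativity and counitality transport along the Hopf algebroid quotient above. Applying this to $N = \mbp^{iso}_{**}(\mbp^{iso} \wedge Y)$, which is $\rho$-torsion by the previous paragraph, the extended $\mbp^{iso}_{**}(\mbp^{iso})$-comodule structure of Proposition \ref{mbpisoy} becomes precisely the extended (cofree) $\G_{**}$-comodule structure on $\G_{**} \otimes_{\F} \mbp^{iso}_{**}(Y)$, which is the comodule in the statement.

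I do not expect a genuine obstacle; the only care needed is bookkeeping around the two unit maps — checking that both send $\rho$ to the same element, so that killing it over the right unit also trivializes the left-comodule structure — and checking that restricting the extended $\mbp^{iso}_{**}(\mbp^{iso})$-comodule of Proposition \ref{mbpisoy} along the Hopf algebroid map to $(\F, \G_{**})$ yields exactly the extended $\G_{**}$-comodule on $\mbp^{iso}_{**}(Y)$, rather than some twist of it.
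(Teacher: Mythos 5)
Your proof is correct and follows the same route as the paper: the paper's proof simply cites Proposition \ref{mbpisoy}, Theorem \ref{pimbp}, and the $\rho$-torsion hypothesis in one line. You spell out the base-change computation and the transport of the comodule structure along the Hopf algebroid quotient $(\F[\rho],\F[\rho]\otimes_{\F}\G_{**})\to(\F,\G_{**})$, including the check that the two unit maps agree on $\rho$ — details the paper leaves implicit but that are exactly what makes the one-line proof work.
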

\begin{proof}
	This follows immediatetely from Theorem \ref{pimbp}, Proposition \ref{mbpisoy} and the fact that $\rho$ annihilates $\mbp^{iso}_{**}(Y)$.
\end{proof}

Let $\overline{ \mbp^{iso}}\coloneqq \fib(\un^{iso}\rightarrow \mbp^{iso})$, and denote by $\overline {\G_{**}}$ the cokernel of $\F\rightarrow \G_{**}$.

\begin{lem}\label{overmbp}
	Let $Y$ be an object in $\SH^{iso}(\R)$ such that $\mbp^{iso}_{**}(Y)$ is annihilated by $\rho$. Then, for all $n \geq 0$, there are isomorphisms of left $\G_{**}$-comodules:
	$$\mbp^{iso}_{**}(\overline{\mbp^{iso}}^{\wedge n}\wedge Y) \cong \Sigma^{-n,0}\overline {\G_{**}} ^{\otimes n} \otimes_{\F} \mbp^{iso}_{**}(Y)$$
	and
	$$\mbp^{iso}_{**}(\mbp^{iso}\wedge\overline{\mbp^{iso}}^{\wedge n}\wedge Y) \cong \Sigma^{-n,0}\G_{**}\otimes_{\F}\overline {\G_{**}} ^{\otimes n} \otimes_{\F} \mbp^{iso}_{**}(Y).$$
\end{lem}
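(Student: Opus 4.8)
The plan is to prove the first isomorphism by induction on $n$, and to derive the second from the first via Corollary \ref{mbpisorhoy}. For $n=0$ the first statement is the identity on $\mbp^{iso}_{**}(Y)$, and the second is precisely Corollary \ref{mbpisorhoy} applied to $Y$ itself. So assume the first isomorphism holds for some $n\geq 0$; I must produce it for $n+1$.

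First observe that $\mbp^{iso}_{**}(\overline{\mbp^{iso}}^{\wedge n}\wedge Y)$ is annihilated by $\rho$: by the inductive hypothesis it is $\Sigma^{-n,0}\overline{\G_{**}}^{\otimes n}\otimes_{\F}\mbp^{iso}_{**}(Y)$, whose $\pi_{**}(\mbp^{iso})=\F[\rho]$-module structure is induced from that of $\mbp^{iso}_{**}(Y)$, which is $\rho$-annihilated by hypothesis. Moreover $\overline{\mbp^{iso}}^{\wedge n}\wedge Y$ lies in $\SH^{iso}(\R)$, since that category is closed under limits and smash products and $\overline{\mbp^{iso}}=\fib(\un^{iso}\to\mbp^{iso})$. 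Hence Corollary \ref{mbpisorhoy} applies with $\overline{\mbp^{iso}}^{\wedge n}\wedge Y$ in the role of $Y$: this already gives the second isomorphism for $n$, and it identifies the unit-induced map
$$u\colon\mbp^{iso}_{**}(\overline{\mbp^{iso}}^{\wedge n}\wedge Y)\longrightarrow\mbp^{iso}_{**}(\mbp^{iso}\wedge\overline{\mbp^{iso}}^{\wedge n}\wedge Y)$$
with $\eta_{\G_{**}}\otimes\mathrm{id}$, where $\eta_{\G_{**}}\colon\F\to\G_{**}$ is the unit. Now I would smash the fiber sequence $\overline{\mbp^{iso}}\to\un^{iso}\to\mbp^{iso}$ with $\overline{\mbp^{iso}}^{\wedge n}\wedge Y$ and apply $\mbp^{iso}_{**}(-)$. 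The crucial point is that $\un^{iso}\to\mbp^{iso}$ becomes a split monomorphism after smashing with $\mbp^{iso}$, a splitting being given by the multiplication $\mbp^{iso}\wedge\mbp^{iso}\to\mbp^{iso}$; consequently $u$ is a split monomorphism and the long exact sequence degenerates into short exact sequences of left $\G_{**}$-comodules
$$0\to\mbp^{iso}_{**}(\overline{\mbp^{iso}}^{\wedge n}\wedge Y)\xrightarrow{u}\mbp^{iso}_{**}(\mbp^{iso}\wedge\overline{\mbp^{iso}}^{\wedge n}\wedge Y)\to\Sigma^{1,0}\mbp^{iso}_{**}(\overline{\mbp^{iso}}^{\wedge(n+1)}\wedge Y)\to0.$$
Identifying $u$ with $\eta_{\G_{**}}\otimes\mathrm{id}$ and using that $-\otimes_{\F}-$ is exact, the cokernel is $\overline{\G_{**}}\otimes_{\F}\mbp^{iso}_{**}(\overline{\mbp^{iso}}^{\wedge n}\wedge Y)$, which by the inductive hypothesis equals $\Sigma^{-n,0}\overline{\G_{**}}^{\otimes(n+1)}\otimes_{\F}\mbp^{iso}_{**}(Y)$. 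Cancelling the $\Sigma^{1,0}$ yields the first isomorphism for $n+1$, completing the induction; the second isomorphism for every $n$ then follows as explained above.

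The step I would be most careful about is keeping track of the left $\G_{**}$-comodule structures throughout. One has to check that $\overline{\G_{**}}=\coker(\F\to\G_{**})$ inherits a $\G_{**}$-comodule structure — it does, since $\F\cdot 1$ is a subcomodule of $\G_{**}$, so $\overline{\G_{**}}$ is the quotient comodule — that the short exact sequence above is one of $\G_{**}$-comodules, and that the cokernel computation respects the comodule structures (automatic over the field $\F$). The only genuinely non-formal ingredient is the identification of $u$ with $\eta_{\G_{**}}\otimes\mathrm{id}$, which I would extract from the proof of Proposition \ref{mbpisoy} via Lemma \ref{modcell}, together with the fact that the unit $\un^{iso}\to\mbp^{iso}$ corresponds to $1\mapsto 1$ on the $\mbp^{iso}_{**}(\mbp^{iso})$-factor; everything else is a routine diagram chase.
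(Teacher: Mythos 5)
Your proof is correct and follows essentially the same inductive argument as the paper: smash the fiber sequence $\overline{\mbp^{iso}} \to \un^{iso} \to \mbp^{iso}$ with $\overline{\mbp^{iso}}^{\wedge n}\wedge Y$, observe that the long exact sequence in $\mbp^{iso}$-homology degenerates into short exact sequences because the unit-induced map is (split) injective, identify its cokernel, and apply Corollary \ref{mbpisorhoy} for the second isomorphism. Your version is slightly more explicit than the paper's on two points the paper leaves tacit — verifying that $\mbp^{iso}_{**}(\overline{\mbp^{iso}}^{\wedge n}\wedge Y)$ remains $\rho$-annihilated so Corollary \ref{mbpisorhoy} applies, and justifying the splitting of $u$ via the multiplication $\mbp^{iso}\wedge\mbp^{iso}\to\mbp^{iso}$ — but these are the same argument, not a different one.
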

\begin{proof}
	We proceed by induction on $n$. For $n=0$, the first isomorphism is tautological, while the second follows directly from Corollary \ref{mbpisorhoy}. 
	
	Now, assume that both isomorphisms hold for $n-1$. Then, the cofiber sequence
	$$\overline{\mbp^{iso}}^{\wedge n}\wedge Y \rightarrow \overline{\mbp^{iso}}^{\wedge n-1}\wedge Y \rightarrow \mbp^{iso} \wedge \overline{\mbp^{iso}}^{\wedge n-1}\wedge Y$$
	induces a short exact sequence of $\mbp^{iso}$-homology groups:
	$$0 \rightarrow \Sigma^{-n+1,0}\overline {\G_{**}} ^{\otimes n-1} \otimes_{\F} \mbp^{iso}_{**}(Y)\rightarrow \Sigma^{-n+1,0}\G_{**}\otimes_{\F}\overline {\G_{**}} ^{\otimes n-1} \otimes_{\F} \mbp^{iso}_{**}(Y)$$
	$$\rightarrow \mbp^{iso}_{**}(\Sigma^{1,0}\overline{\mbp^{iso}}^{\wedge n}\wedge Y) \rightarrow 0.$$
	
	From this, we conclude that:
	$$\mbp^{iso}_{**}(\overline{\mbp^{iso}}^{\wedge n}\wedge Y) \cong \Sigma^{-n,0}\overline {\G_{**}} ^{\otimes n} \otimes_{\F} \mbp^{iso}_{**}(Y).$$
	
	Once the first isomorphism for $n$ is established, the second follows again by Corollary \ref{mbpisorhoy}.
\end{proof}

\begin{thm}\label{mustlab}
	For all $p,q \in \Z$, there is an isomorphism:
	$$\pi_{p,q}(\un^{iso}/\rho) \cong \ext^{2q-p,q}_{\A_*}(\F,\F).$$
\end{thm}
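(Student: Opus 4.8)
The plan is to run the $\mbp^{iso}$-based Adams spectral sequence for the spectrum $\un^{iso}/\rho$ inside $\SH^{iso}(\R)$, identify its $E_1$-page by means of Lemma~\ref{overmbp}, and then use the fact that $\G_{**}$ is concentrated on the line $p=2q$ to force the spectral sequence to degenerate with no extension problems.

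First I would form the $\mbp^{iso}$-Adams tower $\un^{iso}/\rho=Y_0\leftarrow Y_1\leftarrow\cdots$ with $Y_n=\overline{\mbp^{iso}}^{\wedge n}\wedge\un^{iso}/\rho$, whose layers are $Y_n/Y_{n+1}\simeq\mbp^{iso}\wedge\overline{\mbp^{iso}}^{\wedge n}\wedge\un^{iso}/\rho$. Applying $\pi_{**}$ produces a spectral sequence with $E_1^{n,(p,q)}=\pi_{p,q}(Y_n/Y_{n+1})=\mbp^{iso}_{p,q}(\overline{\mbp^{iso}}^{\wedge n}\wedge\un^{iso}/\rho)$ filtering $\pi_{**}(\un^{iso}/\rho)$, with $d_1$ the cobar differential of the flat Hopf algebroid $(\pi_{**}(\mbp^{iso}),\mbp^{iso}_{**}(\mbp^{iso}))\cong(\F[\rho],\F[\rho]\otimes_{\F}\G_{**})$ of Theorem~\ref{pimbp} with coefficients in $\mbp^{iso}_{**}(\un^{iso}/\rho)$. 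Since $\mbp^{iso}\wedge\un^{iso}/\rho\simeq\mbp^{iso}/\rho$, Proposition~\ref{pimbpisorho} gives $\mbp^{iso}_{**}(\un^{iso}/\rho)\cong\pi_{**}(\mbp^{iso}/\rho)\cong\F$, concentrated in bidegree $(0,0)$ and hence annihilated by $\rho$; therefore Lemma~\ref{overmbp} applies and yields $E_1^{n,**}\cong\Sigma^{-n,0}\overline{\G_{**}}^{\otimes n}$, with $d_1$ the $\Sigma^{-n,0}$-shift of the reduced cobar differential of the Hopf algebra $\G_{**}$. Passing to cohomology gives $E_2^{n,**}\cong\Sigma^{-n,0}\ext^n_{\G_{**}}(\F,\F)$.

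Next I would insert the degree bookkeeping. Since $\G_{p,q}\cong\A_q$ for $p=2q$ and $0$ otherwise, the cobar complex of $\G_{**}$ is concentrated in bidegrees $(2q,q)$ and coincides there with the cobar complex of $\A_*$, so $\ext^n_{\G_{**}}(\F,\F)$ lives in bidegrees $(2q,q)$ with value $\ext^{n,q}_{\A_*}(\F,\F)$. After the shift $\Sigma^{-n,0}$ this gives $E_2^{n,(p,q)}\cong\ext^{n,q}_{\A_*}(\F,\F)$ when $p=2q-n$, and $E_2^{n,(p,q)}=0$ otherwise; in particular, for fixed $(p,q)$ the only filtration carrying a nonzero group is $n=2q-p$. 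For convergence: $\un^{iso}/\rho$ is $\mbp$-complete by Corollary~\ref{mbpcomp}, and by Lemma~\ref{compisocomp} this agrees with $\mbp^{iso}$-completeness for objects of $\SH^{iso}(\R)$, so the tower is conditionally convergent to $\pi_{**}(\un^{iso}/\rho)$; and since $\overline{\G_{**}}$ lives in strictly positive topological degrees, $E_1^{n,(p,q)}=0$ whenever $n>q$, so the filtration of each $\pi_{p,q}$ is finite and conditional convergence upgrades to strong convergence.

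Finally, degeneration and the absence of extension problems follow from the same one-line constraint. A differential $d_r$ lowers the internal degree by $(1,0)$ and raises filtration by $r$, while a nonzero group at filtration $m$ has internal first coordinate $2q-m$; equating $(2q-m)-1$ with $2q-(m+r)$ forces $r=1$, so $E_2=E_\infty$. For fixed $(p,q)$ the only possibly nonzero $E_\infty$-term is at filtration $n=2q-p$, so the finite filtration on $\pi_{p,q}(\un^{iso}/\rho)$ has exactly one nonzero associated graded piece, leaving no extension to solve, and hence $\pi_{p,q}(\un^{iso}/\rho)\cong\ext^{2q-p,q}_{\A_*}(\F,\F)$. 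The one genuinely delicate point is convergence: one must really know that $\un^{iso}/\rho$ is $\mbp^{iso}$-complete (Corollary~\ref{mbpcomp} together with Lemma~\ref{compisocomp}) and that the connectivity of $\overline{\G_{**}}$ supplies a strong enough vanishing line; everything else is careful tri-graded bookkeeping rather than computation.
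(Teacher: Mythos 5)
Your proposal is correct and follows essentially the same route as the paper: both run the $\mbp^{iso}$-based Adams--Novikov spectral sequence for $\un^{iso}/\rho$, use Lemma~\ref{overmbp} and Proposition~\ref{pimbpisorho} to identify the $E_1$-page as the (shifted, reduced) cobar complex of $\G_{**}$ on $\F$, invoke Corollary~\ref{mbpcomp} (via Lemma~\ref{compisocomp}) for convergence, and exploit the concentration of $\G_{**}$ on the line $t=2u$ to collapse the spectral sequence at $E_2$ with no room for higher differentials or extensions. Your write-up is a little more explicit than the paper about the vanishing line giving a finite filtration (hence strong rather than merely conditional convergence) and about the single nonzero associated-graded piece ruling out extension problems, but these points are implicit in the paper and your degree bookkeeping checks out.
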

\begin{proof}
	To prove this, we use the motivic Adams-Novikov spectral sequence associated with the motivic ring spectrum $\mbp^{iso}$. For this purpose, consider the Postnikov system in $\SH^{iso}(\R)$:
$$
\xymatrix{
	\dots \ar@{->}[r] &  \overline{\mbp^{iso}}^{\wedge s} \wedge \un^{iso}/\rho \ar@{->}[r] \ar@{->}[d] &  \dots \ar@{->}[r]   & \overline{\mbp^{iso}} \wedge \un^{iso}/\rho  \ar@{->}[r] \ar@{->}[d]	 & \un^{iso}/\rho  \ar@{->}[d]  \\
	&	\mbp^{iso}\wedge \overline{ \mbp^{iso}}^{\wedge s} \wedge \un^{iso}/\rho  \ar@{->}[ul]^{\Sigma} & & \mbp^{iso} \wedge \overline{\mbp^{iso}} \wedge \un^{iso}/\rho  \ar@{->}[ul]^{\Sigma}  &	\mbp^{iso} \wedge \un^{iso}/\rho  \ar@{->}[ul]^{\Sigma}
.}
$$

By applying the functor $\pi_{**}$, we obtain a spectral sequence whose $E_1$-page is given by:
$$E_1^{s,t,u} \cong \pi_{t-s,u}(\mbp^{iso}\wedge (\overline{\mbp^{iso}})^{\wedge s} \wedge \un^{iso}/\rho)=\mbp^{iso}_{t-s,u}((\overline{\mbp^{iso}})^{\wedge s} \wedge \un^{iso}/\rho),$$
with the first differential given by:
$$d_1^{s,t,u}:E_1^{s,t,u} \rightarrow E_1^{s+1,t,u}.$$

Thus, we have the isomorphism:
$$E_1^{s,t,u} \cong \Hom ^{t,u}_{\G_{**}}(\F,\G_{**} \otimes_{\F} \overline{\G_{**}}^{\otimes s} \otimes_{\F} \mbp^{iso}_{**}(\un^{iso}/\rho)) \cong \Hom ^{t,u}_{\G_{**}}(\F,\G_{**} \otimes_{\F} \overline{\G_{**}}^{\otimes s}),$$
from which we deduce the standard description of the $E_2$-page:
$$E_2^{s,t,u} \cong \ext^{s,t,u}_{\G_{**}}(\F,\mbp^{iso}_{**}(\un^{iso}/\rho))\cong \ext^{s,t,u}_{\G_{**}}(\F,\F).$$

The spectral sequence we considered conditionally converges to the groups $\pi_{t-s,u}((\un^{iso}/\rho)^{\wedge}_{\mbp}) \cong \pi_{t-s,u}(\un^{iso}/\rho)$, where the isomorphism follows from Corollary \ref{mbpcomp}.

Since $E_1^{s,t,u} \cong 0$ for all $t \neq 2u$, we deduce that $E_r^{s,t,u} \cong 0$ for all $t \neq 2u$ and $r \geq 1$. Therefore, all differentials $d_r^{s,t,u}: E_r^{s,t,u} \rightarrow E_r^{s+r,t+r-1,u}$ vanish for $r \geq 2$. This implies that the spectral sequence collapses at the second page, and we obtain the isomorphism:
$$\pi_{p,q}(\un^{iso}/\rho) \cong \ext^{2q-p,2q,q}_{\G_{**}}(\F,\F) \cong  \ext^{2q-p,q}_{\A_*}(\F,\F),$$
which completes the proof.
	\end{proof}

\begin{lem}\label{van}
	We have the following isomorphisms:
	\begin{enumerate}
		\item $[\Sigma^{p,q}\un^{iso}/\rho,\un^{iso}/\rho] \cong 0 $ for all $p \in \Z$ and $q<0$,
	\item $[\Sigma^{p,0}\un^{iso}/\rho,\un^{iso}/\rho] \cong 0 $ for all $p \neq0 $,
	\item $[\un^{iso}/\rho,\un^{iso}/\rho] \cong \F $.
	\end{enumerate}
\end{lem}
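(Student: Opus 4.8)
The plan is to derive all three isomorphisms formally from Theorem \ref{mustlab} and the defining cofiber sequence of $\un^{iso}/\rho$, once the vanishing range of its homotopy groups has been recorded.

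\emph{Step 1: the vanishing range.} Since $\A_*$ is the classical dual Steenrod algebra, a connected graded Hopf algebra over $\F$ concentrated in non-negative degrees, its cohomology satisfies $\ext^{s,t}_{\A_*}(\F,\F)=0$ whenever $s<0$, $t<0$, or $t<s$, while $\ext^{0,0}_{\A_*}(\F,\F)\cong\F$ and $\ext^{0,t}_{\A_*}(\F,\F)=0$ for $t>0$. Combining this with the identification $\pi_{p,q}(\un^{iso}/\rho)\cong\ext^{2q-p,q}_{\A_*}(\F,\F)$ from Theorem \ref{mustlab} (so $s=2q-p$, $t=q$) gives: $\pi_{p,q}(\un^{iso}/\rho)=0$ for all $p$ when $q<0$; $\pi_{p,0}(\un^{iso}/\rho)=0$ for $p\neq0$; and $\pi_{0,0}(\un^{iso}/\rho)\cong\F$.

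\emph{Step 2: the long exact sequence.} As $\un^{iso}/\rho$ is a $\un^{iso}$-module, for any isotropic spectrum $M$ the localization adjunction $(L^{iso},\iota)$ together with $L^{iso}\Sigma^{a,b}\un\simeq\Sigma^{a,b}\un^{iso}$ gives $[\Sigma^{a,b}\un^{iso},M]\cong[\Sigma^{a,b}\un,M]=\pi_{a,b}(M)$. Applying the contravariant functor $[-,\un^{iso}/\rho]$ to the cofiber sequence
$$\Sigma^{p-1,q-1}\un^{iso}\xrightarrow{\ \rho\ }\Sigma^{p,q}\un^{iso}\longrightarrow\Sigma^{p,q}\un^{iso}/\rho\longrightarrow\Sigma^{p,q-1}\un^{iso}$$
therefore yields an exact sequence
$$\pi_{p,q-1}(\un^{iso}/\rho)\longrightarrow[\Sigma^{p,q}\un^{iso}/\rho,\un^{iso}/\rho]\longrightarrow\pi_{p,q}(\un^{iso}/\rho)\xrightarrow{\ \cdot\rho\ }\pi_{p-1,q-1}(\un^{iso}/\rho).$$

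\emph{Step 3: conclusion.} For (1), with $q<0$ both $\pi_{p,q-1}(\un^{iso}/\rho)$ and $\pi_{p,q}(\un^{iso}/\rho)$ vanish, so the middle term is $0$. For (2), with $q=0$ and $p\neq0$ we have $\pi_{p,-1}(\un^{iso}/\rho)=0$ and $\pi_{p,0}(\un^{iso}/\rho)=0$, so again the middle term is $0$. For (3), with $p=q=0$ the term $\pi_{0,-1}(\un^{iso}/\rho)$ vanishes, hence $[\un^{iso}/\rho,\un^{iso}/\rho]$ injects into $\pi_{0,0}(\un^{iso}/\rho)\cong\F$; since it contains the nonzero class of the identity map, the injection is an isomorphism and $[\un^{iso}/\rho,\un^{iso}/\rho]\cong\F$.

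I do not expect a genuine obstacle: the entire statement is a consequence of Theorem \ref{mustlab}, and what remains is the bookkeeping of bidegrees together with the standard Adams-chart vanishing line for $\ext_{\A_*}(\F,\F)$. The one point that deserves a moment of care is getting the (co)suspension shifts in the cofiber sequence exactly right, so that in each of the three cases the outer terms of the long exact sequence indeed land in the vanishing range.
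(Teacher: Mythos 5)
Your proof is correct and follows essentially the same route as the paper: you apply $[-,\un^{iso}/\rho]$ to the shifted cofiber sequence defining $\un^{iso}/\rho$, identify the outer terms of the resulting exact sequence with $\pi_{p,q-1}(\un^{iso}/\rho)$ and $\pi_{p,q}(\un^{iso}/\rho)$, and then invoke the vanishing range of $\ext_{\A_*}(\F,\F)$ via Theorem \ref{mustlab}. The only cosmetic difference is in case (3): the paper notes that $\pi_{-1,-1}(\un^{iso}/\rho)=0$ as well, so the map to $\pi_{0,0}(\un^{iso}/\rho)$ is directly an isomorphism, whereas you establish injectivity and then use the nonvanishing of the identity to conclude surjectivity; both are valid.
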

\begin{proof}
	The cofiber sequence
	$$\Sigma^{-1,-1}\un^{iso} \xrightarrow{\rho} \un^{iso} \rightarrow \un^{iso}/\rho$$
	induces a long exact sequence 
	$$\dots \rightarrow [\Sigma^{p,q-1}\un^{iso},\un^{iso}/\rho] \rightarrow [\Sigma^{p,q}\un^{iso}/\rho,\un^{iso}/\rho] \rightarrow [\Sigma^{p,q}\un^{iso},\un^{iso}/\rho] \rightarrow \dots.$$
	
	Since $[\Sigma^{p,q}\un^{iso},\un^{iso}/\rho] \cong \pi_{p,q}(\un^{iso}/\rho) \cong 0$ when $q<0$, or when $q=0$ and $p \neq 0$, we immediately deduce the statements (1) and (2). For $p=q=0$, the long exact sequence above reduces to the isomorphism: 
	$$[\un^{iso}/\rho,\un^{iso}/\rho] \cong[\un^{iso},\un^{iso}/\rho]  \cong \pi_{0,0}(\un^{iso}/\rho) \cong\F$$
	that is what we wanted to prove.
	\end{proof}

\begin{lem}\label{smash}
	There is an equivalence of isotropic spectra:
	$$(\un^{iso}/\rho)^{\wedge n} \simeq \bigvee_{i=0}^{n-1} {n-1 \choose i} \Sigma^{0,-i}\un^{iso}/\rho.$$
\end{lem}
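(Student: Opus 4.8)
The plan is to argue by induction on $n$, the case $n=1$ being tautological. The crucial input is that $\rho$ acts nullhomotopically on $\un^{iso}/\rho$. To see this, recall that $\un^{iso}$ is idempotent (Proposition \ref{idm}), so $\un^{iso}\wedge\un^{iso}/\rho\simeq\un^{iso}/\rho$; under this identification the map $\rho\wedge\mathrm{id}$ becomes a self-map $\Sigma^{-1,-1}\un^{iso}/\rho\to\un^{iso}/\rho$, hence an element of $[\Sigma^{-1,-1}\un^{iso}/\rho,\un^{iso}/\rho]$, which vanishes by Lemma \ref{van}(1) (here $q=-1<0$).

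For the inductive step, fix $n\geq 2$ and assume the formula for $n-1$. Smashing the defining cofiber sequence $\Sigma^{-1,-1}\un^{iso}\xrightarrow{\rho}\un^{iso}\to\un^{iso}/\rho$ with $(\un^{iso}/\rho)^{\wedge(n-1)}$, and using $\un^{iso}\wedge(\un^{iso}/\rho)^{\wedge(n-1)}\simeq(\un^{iso}/\rho)^{\wedge(n-1)}$ together with the exactness of smashing, produces a cofiber sequence
$$\Sigma^{-1,-1}(\un^{iso}/\rho)^{\wedge(n-1)}\xrightarrow{\ \rho\wedge\mathrm{id}\ }(\un^{iso}/\rho)^{\wedge(n-1)}\longrightarrow(\un^{iso}/\rho)^{\wedge n}.$$
Up to reassociating the smash product, the structure map here is $\rho\wedge\mathrm{id}_{\un^{iso}/\rho}$ smashed with the identity on the remaining $n-2$ copies of $\un^{iso}/\rho$, and $\rho\wedge\mathrm{id}_{\un^{iso}/\rho}$ is null by the previous paragraph; hence the structure map is null. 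Since the cofiber of a null map $A\to B$ is $B\vee\Sigma A$ and $\Sigma^{1,0}\Sigma^{-1,-1}=\Sigma^{0,-1}$, we obtain
$$(\un^{iso}/\rho)^{\wedge n}\simeq(\un^{iso}/\rho)^{\wedge(n-1)}\vee\Sigma^{0,-1}(\un^{iso}/\rho)^{\wedge(n-1)}.$$

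To finish, I would substitute the inductive hypothesis $(\un^{iso}/\rho)^{\wedge(n-1)}\simeq\bigvee_{i=0}^{n-2}{n-2\choose i}\Sigma^{0,-i}\un^{iso}/\rho$ into both summands on the right and collect the copies of each $\Sigma^{0,-j}\un^{iso}/\rho$; the resulting multiplicity is ${n-2\choose j}+{n-2\choose j-1}={n-1\choose j}$ by Pascal's identity, which is precisely the asserted decomposition. I do not anticipate any genuine obstacle: the only points requiring a little care are the bigraded bookkeeping $\Sigma^{1,0}\Sigma^{-1,-1}=\Sigma^{0,-1}$ and propagating the nullity of $\rho$ from a single copy of $\un^{iso}/\rho$ to the $(n-1)$-fold smash power, which is exactly why the induction is anchored at $n\geq 2$ rather than $n\geq 1$.
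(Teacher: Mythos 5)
Your argument is correct and is essentially the paper's proof: both proceed by induction on $n$, smash the defining cofiber sequence for $\un^{iso}/\rho$ into the $(n-1)$-fold smash power, invoke Lemma \ref{van}(1) to see that the structure map $\rho$ is null (splitting the cofiber sequence), and then combine the two copies of the inductive decomposition via Pascal's identity. The only cosmetic difference is that the paper isolates the $n=2$ case first and then distributes $\wedge\,\un^{iso}/\rho$ over the wedge from the inductive hypothesis, whereas you smash the cofiber sequence directly with $(\un^{iso}/\rho)^{\wedge(n-1)}$; these are the same computation reorganized.
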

\begin{proof}
	We proceed by induction on $n$. For $n=2$, by smashing the cofiber sequence
	$$\Sigma^{-1,-1}\un^{iso} \xrightarrow{\rho} \un^{iso} \rightarrow \un^{iso}/\rho$$
	with $\un^{iso}/\rho$, we obtain the cofiber sequence
	$$\Sigma^{-1,-1}\un^{iso}/\rho \xrightarrow{\rho} \un^{iso}/\rho \rightarrow \un^{iso}/\rho \wedge \un^{iso}/\rho,$$
	where the first map is trivial by Lemma \ref{van}. Thus, we obtain an equivalence:
	$$\un^{iso}/\rho \wedge \un^{iso}/\rho \simeq \un^{iso}/\rho \vee \Sigma^{0,-1}\un^{iso}/\rho.$$
	
	Now, assume the equivalence holds for $n-1$. Then, we have the following sequence of equivalences:
	\begin{align*}
		(\un^{iso}/\rho)^{\wedge n} &\simeq (\un^{iso}/\rho)^{\wedge n-1} \wedge \un^{iso}/\rho\\
		&\simeq \bigvee_{i=0}^{n-2} {n-2 \choose i} \Sigma^{0,-i}\un^{iso}/\rho \wedge \un^{iso}/\rho\\
		&\simeq \bigvee_{i=0}^{n-2} {n-2 \choose i} \Sigma^{0,-i}(\un^{iso}/\rho \vee \Sigma^{0,-1}\un^{iso}/\rho)\\
		& \simeq \bigvee_{i=0}^{n-2} {n-2 \choose i} \Sigma^{0,-i}\un^{iso}/\rho \vee \bigvee_{i=1}^{n-1} {n-2 \choose i-1} \Sigma^{0,-i}\un^{iso}/\rho\\
		& \simeq \bigvee_{i=0}^{n-1} {n-1 \choose i} \Sigma^{0,-i}\un^{iso}/\rho.
		\end{align*}
	
	This completes the induction step.
	\end{proof}

\begin{prop}
	There is a unique multiplication $\mu:\un^{iso}/\rho \wedge \un^{iso}/\rho  \rightarrow \un^{iso}/\rho$ that is homotopy unital, homotopy associative and homotopy commutative.
\end{prop}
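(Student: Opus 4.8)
The plan is to exhibit a multiplication on $\un^{iso}/\rho$ using the $E_\infty$-ring structure on $\un^{iso}$, and then show uniqueness by an obstruction-theoretic argument controlled by the vanishing results of Lemma \ref{van} together with the splitting of Lemma \ref{smash}. For existence, first I would recall that $\un^{iso}/\rho = \cof(\Sigma^{-1,-1}\un^{iso}\xrightarrow{\rho}\un^{iso})$ is the cofiber of a map of $\un^{iso}$-modules, hence is itself an $\un^{iso}$-module; since $\rho$ is the image in $\pi_{-1,-1}(\un^{iso})$ of the $E_\infty$-element $\rho\in\pi_{-1,-1}(\un)$, and $2$ vanishes in $\pi_{0,0}(\un^{iso}/\rho)$ by Proposition \ref{2comp} (so we are in a setting where a Toda-bracket-type construction is available), one obtains at least one homotopy unital, homotopy associative, homotopy commutative multiplication $\mu$ by a standard cofiber-of-a-central-element argument. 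Concretely, smashing the defining cofiber sequence with $\un^{iso}/\rho$ and using that the connecting map $\Sigma^{-1,-1}\un^{iso}/\rho\xrightarrow{\rho}\un^{iso}/\rho$ is null (Lemma \ref{van}(1) with $q=-1$) produces the splitting $(\un^{iso}/\rho)^{\wedge 2}\simeq \un^{iso}/\rho\vee\Sigma^{0,-1}\un^{iso}/\rho$ of Lemma \ref{smash}, and the unit-compatible projection onto the first summand provides a candidate $\mu$; checking homotopy associativity and commutativity reduces to comparing maps out of $(\un^{iso}/\rho)^{\wedge 2}$ and $(\un^{iso}/\rho)^{\wedge 3}$, whose target groups are pinned down by Lemma \ref{van}.

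For uniqueness, the key observation is that by Lemma \ref{smash} we have
$$(\un^{iso}/\rho)^{\wedge 2}\simeq \un^{iso}/\rho\vee\Sigma^{0,-1}\un^{iso}/\rho,$$
so a multiplication $\mu$ is determined by its two components
$$\mu_0\in[\un^{iso}/\rho,\un^{iso}/\rho],\qquad \mu_1\in[\Sigma^{0,-1}\un^{iso}/\rho,\un^{iso}/\rho].$$
By Lemma \ref{van}(3), $[\un^{iso}/\rho,\un^{iso}/\rho]\cong\F$, and homotopy unitality forces $\mu_0$ to be the identity (the nonzero element), since precomposing $\mu$ with either unit inclusion must give the identity. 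By Lemma \ref{van}(1) applied with $p=0$, $q=-1$, we get $[\Sigma^{0,-1}\un^{iso}/\rho,\un^{iso}/\rho]\cong 0$, so $\mu_1=0$ is forced as well. Hence $\mu$ is the unique map $(\un^{iso}/\rho)^{\wedge 2}\to\un^{iso}/\rho$ restricting to the identity on the first wedge summand and to $0$ on the second, and any two homotopy unital multiplications agree; in particular the $\mu$ constructed above is automatically homotopy associative and homotopy commutative because it is the \emph{only} homotopy unital multiplication.

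I expect the main obstacle to be the existence half rather than the uniqueness half: one must produce \emph{some} homotopy unital multiplication before the rigidity argument can be invoked, and the cleanest route is to observe that $\un^{iso}/\rho$ is the cofiber of multiplication by a central (indeed $E_\infty$) element $\rho$ on the $E_\infty$-ring $\un^{iso}$, so it inherits at minimum an $E_1$ (hence homotopy unital, homotopy associative) $\un^{iso}$-algebra structure by the general theory of quotients by central elements, and homotopy commutativity then follows for free from the uniqueness computation above (the flip $\sigma:(\un^{iso}/\rho)^{\wedge2}\to(\un^{iso}/\rho)^{\wedge2}$ swaps the wedge summands up to the automorphisms detected in $\F$, and $\mu\circ\sigma$ is again homotopy unital, hence equal to $\mu$). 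The only subtlety to be careful about is that the wedge decomposition of Lemma \ref{smash} is not canonical, so one should phrase the component analysis using the unit map $\un^{iso}/\rho\to(\un^{iso}/\rho)^{\wedge2}$ to fix the first summand and work with the cofiber of the unit to handle the second, rather than with an arbitrary choice of splitting.
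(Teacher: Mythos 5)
Your proposal is correct and follows essentially the same route as the paper: decompose the smash powers via Lemma \ref{smash}, use the vanishing results of Lemma \ref{van} to compute the relevant hom-groups (each $\cong\F$), and conclude that there is a unique homotopy unital map, which is then automatically commutative and associative by the counting argument. The detour through $E_1$-quotients by central elements is unnecessary — once one observes that restriction along a unit inclusion gives an isomorphism $[\un^{iso}/\rho\wedge\un^{iso}/\rho,\un^{iso}/\rho]\cong[\un^{iso}/\rho,\un^{iso}/\rho]\cong\F$, the nontrivial class is manifestly unital, which is exactly how the paper handles existence.
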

\begin{proof}
	By Lemma \ref{smash}, we know that $\un^{iso}/\rho \wedge \un^{iso}/\rho \simeq \un^{iso}/\rho \vee \Sigma^{0,-1}\un^{iso}/\rho$. Hence, it follows from Lemma \ref{van} that
	$$[\un^{iso}/\rho \wedge \un^{iso}/\rho,\un^{iso}/\rho] \cong [\un^{iso}/\rho,\un^{iso}/\rho] \oplus [\Sigma^{0,-1}\un^{iso}/\rho,\un^{iso}/\rho] \cong \F.$$
	
	Therefore, there is only one possible choice for a homotopy unital multiplication $\mu:\un^{iso}/\rho \wedge \un^{iso}/\rho  \rightarrow \un^{iso}/\rho$, namely the non-trivial map.
	
	For the same reason, if we denote by $\chi:\un^{iso}/\rho \wedge \un^{iso}/\rho \rightarrow \un^{iso}/\rho \wedge \un^{iso}/\rho$ the swap map, we then have $\mu \circ \chi = \mu$ in $[\un^{iso}/\rho \wedge \un^{iso}/\rho,\un^{iso}/\rho]$. This implies that $\mu$ is homotopy commutative.
	
	For homotopy associativity, observe that, again by Lemma \ref{smash}, we have an isomorphism:
	$$[\un^{iso}/\rho \wedge \un^{iso}/\rho \wedge \un^{iso}/\rho,\un^{iso}/\rho] \cong [\un^{iso}/\rho,\un^{iso}/\rho] \cong \F,$$
	which implies that the two non-trivial maps $\mu \circ (\mu \wedge 1)$ and $\mu \circ (1 \wedge \mu)$ in $[\un^{iso}/\rho \wedge \un^{iso}/\rho \wedge \un^{iso}/\rho,\un^{iso}/\rho]$ must coincide. This concludes the proof.
	\end{proof}

\begin{thm}\label{cofrhoring}
	The multiplication map $\mu$ on $\un^{iso}/\rho$ can be uniquely extended to an $E_{\infty}$-ring structure. In particular, $\un^{iso}/\rho$ is an $E_{\infty}$-algebra in $\SH(\R)$.
\end{thm}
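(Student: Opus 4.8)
The plan is to show that, from the vantage point of $A\coloneqq\un^{iso}/\rho$ and its smash powers, the symmetric monoidal $\infty$-category $\SH(\R)$ behaves like an ordinary $1$-category, so that the $E_{\infty}$-refinement of $\mu$ both exists and is unique for purely formal reasons. The key input is that every endomorphism space of $A$ is discrete: for each $n\geq 0$ the space $\mathrm{Map}_{\SH(\R)}(A^{\wedge n},A)$ is $0$-truncated with $\pi_0\cong\F$. Indeed, since $\SH(\R)$ is stable, $\pi_k\mathrm{Map}_{\SH(\R)}(A^{\wedge n},A)\cong[\Sigma^{k,0}A^{\wedge n},A]$. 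For $n\geq 1$, Lemma \ref{smash} expresses $A^{\wedge n}$ as a finite wedge of shifts $\Sigma^{0,-i}A$ with $0\leq i\leq n-1$, so this group is a finite product of groups $[\Sigma^{k,-i}A,A]$; the summands with $i>0$ vanish by Lemma \ref{van}(1), while the single $i=0$ summand $[\Sigma^{k,0}A,A]$ vanishes for $k\neq 0$ by Lemma \ref{van}(2) and equals $\F$ for $k=0$ by Lemma \ref{van}(3). For $n=0$ we have $\mathrm{Map}_{\SH(\R)}(\un,A)$, with $\pi_k\cong\pi_{k,0}(A)\cong\ext^{-k,0}_{\A_*}(\F,\F)$ by Theorem \ref{mustlab}, which vanishes for $k\geq 1$ (negative cohomological degree) and equals $\F$ for $k=0$.

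\textbf{The formal part.} Granting this, the endomorphism $\infty$-operad $\mathrm{End}(A)^{\otimes}$ of $A$ in $\SH(\R)$ has all spaces of operations $0$-truncated, hence is equivalent to the operadic nerve of an ordinary symmetric operad in sets, namely the one-coloured operad $n\mapsto[A^{\wedge n},A]$ in the homotopy category $\mathrm{Ho}(\SH(\R))$. Since the commutative $\infty$-operad is likewise $0$-truncated, the space of $E_{\infty}$-algebra structures on $A$, that is $\mathrm{Map}_{\mathrm{Op}_{\infty}}(\mathrm{Comm}^{\otimes},\mathrm{End}(A)^{\otimes})$, is a set, and it is canonically the set of maps of ordinary operads $\mathrm{Comm}\to\mathrm{End}(A)$, i.e.\ the set of commutative monoid structures on $A$ in $\mathrm{Ho}(\SH(\R))$. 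But a commutative monoid structure on $A$ in $\mathrm{Ho}(\SH(\R))$ is exactly a homotopy-unital, homotopy-associative, homotopy-commutative multiplication on $A$ (the unit is forced to be the nonzero class in $\pi_0\mathrm{Map}_{\SH(\R)}(\un,A)\cong\F$, since $A\not\simeq 0$), and by the preceding Proposition there is precisely one such, namely $\mu$. Therefore $\mu$ lifts to a unique $E_{\infty}$-ring structure on $\un^{iso}/\rho$ in $\SH(\R)$, which is the assertion of the theorem.

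\textbf{Main obstacle.} The genuine content is the discreteness computation, which is already supplied by Lemmas \ref{van} and \ref{smash} together with Theorem \ref{mustlab}; what remains is the formal step, where one must argue carefully that a $0$-truncated endomorphism operad leaves no room for higher coherence data or obstructions, so that the existence and uniqueness of an $E_{\infty}$-refinement reduce to the corresponding $1$-categorical question. This reduction is standard but should be stated with care, or tied to a reference. A fully self-contained alternative is to run Goerss--Hopkins/Basterra--Mandell obstruction theory against the multiplicative structure directly and observe that every obstruction to lifting $\mu$, and every indeterminacy class, is a subquotient of some group $[\Sigma^{k,0}A^{\wedge n},A]$ with $k\neq 0$; all of these vanish by the discreteness computation, so the moduli space of $E_{\infty}$-refinements of $\mu$ is contractible.
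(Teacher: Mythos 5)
Your proposal is correct and rests on the same computational inputs as the paper (the wedge decomposition of Lemma \ref{smash} and the vanishing of Lemma \ref{van}, with Theorem \ref{mustlab} for $n=0$), but it packages the concluding step differently. The paper invokes Gheorghe's explicit obstruction theory \cite[Corollary 3.2]{G}: the obstruction classes live in $[\Sigma^{n-3}(\un^{iso}/\rho)^{\wedge m},\un^{iso}/\rho]$ and the indeterminacy classes in $[\Sigma^{n-2}(\un^{iso}/\rho)^{\wedge m},\un^{iso}/\rho]$ for $n\geq4$, $2\leq m\leq n$, and these groups vanish by exactly the computation you carry out. Your main route instead observes that every mapping space $\mathrm{Map}(A^{\wedge n},A)$ is discrete and concludes that the endomorphism $\infty$-operad is $0$-truncated, so the space of $E_{\infty}$-structures agrees with the set of commutative monoid structures in $\mathrm{Ho}(\SH(\R))$; this is sound (a $0$-truncated $\infty$-operad is the operadic nerve of a $1$-operad, $\mathrm{Comm}^{\otimes}$ is itself such a nerve, and the nerve functor is fully faithful), but it is worth flagging that this reduction needs a precise citation to be airtight, whereas the paper's reference to Gheorghe gives the obstruction degrees on the nose. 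Your ``fully self-contained alternative'' via obstruction theory at the end is, in effect, the paper's proof. The abstract route buys you a cleaner conceptual statement (discreteness of the endomorphism operad makes the lift both exist and be unique for purely formal reasons), while the paper's route buys you a concrete and easily citable reduction to the vanishing of two families of hom-groups; both ultimately hinge on the same computations you correctly identify as the genuine content.
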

\begin{proof}
	By \cite[Corollary 3.2]{G}, the obstructions to extending $\mu$ to an $E_{\infty}$-ring structure lie in the hom-groups:
	$$[\Sigma^{n-3}(\un^{iso}/\rho)^{\wedge m},\un^{iso}/\rho] \cong \bigoplus_{i=0}^{m-1} {m-1 \choose i} [\Sigma^{n-3,-i}\un^{iso}/\rho,\un^{iso}/\rho]$$
	for $n \geq 4$ and $2 \leq m \leq n$, where the isomorphism follows from Lemma \ref{smash}. Since $-i \leq 0$ and $n-3 \geq 1$, Lemma \ref{van} implies that $[\Sigma^{n-3,-i}\un^{iso}/\rho,\un^{iso}/\rho] \cong 0$.
	
	To prove the uniqueness of this $E_{\infty}$-ring structure, we have to check the vanishing of
	$$[\Sigma^{n-2}(\un^{iso}/\rho)^{\wedge m},\un^{iso}/\rho] \cong \bigoplus_{i=0}^{m-1} {m-1 \choose i} [\Sigma^{n-2,-i}\un^{iso}/\rho,\un^{iso}/\rho]$$
	for $n \geq 4$ and $2 \leq m \leq n$. These hom-groups are trivial by Lemma \ref{van}, completing the proof.
	\end{proof}

\begin{cor}\label{algmbp}
	The isotropic spectrum $\mbp^{iso}/\rho$ is an $E_{\infty}$-algebra in $\un^{iso}/\rho {\text -}\Mod$.
\end{cor}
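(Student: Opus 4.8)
The plan is to exhibit $\mbp^{iso}/\rho$ as a base change of $E_\infty$-rings and to use that such base change preserves commutative algebra structures. By Proposition \ref{mbpring}, $\mbp^{iso}$ is an $E_\infty$-algebra in $\SH(\R)$, and since it lies in the smashing localization $\SH^{iso}(\R)$ it is in fact an $E_\infty$-algebra in $\un^{iso}{\text -}\Mod$; equivalently, the unit map $\un^{iso} \rightarrow \mbp^{iso}$ is a morphism of $E_\infty$-rings. Here we use that $\un^{iso}$ is idempotent (Proposition \ref{idm}), so that $\mathrm{CAlg}(\un^{iso}{\text -}\Mod) \simeq \mathrm{CAlg}(\SH(\R))_{\un^{iso}/}$ is the full subcategory of $\mathrm{CAlg}(\SH(\R))$ on the $\un^{iso}$-local algebras (cf.\ \cite[Corollary 3.4.1.7]{Lu} and \cite[Section 4.8.2]{Lu}). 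Similarly, by Theorem \ref{cofrhoring}, $\un^{iso}/\rho$ is an $E_\infty$-algebra in $\SH(\R)$, and its underlying object lies in $\SH^{iso}(\R)$ since it is the cofiber of the $\un^{iso}$-linear map $\rho$; hence $\un^{iso}/\rho$ is an $E_\infty$-algebra in $\un^{iso}{\text -}\Mod$ and comes with a morphism of $E_\infty$-rings $\un^{iso} \rightarrow \un^{iso}/\rho$.

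The key step is the identification of $\mbp^{iso}/\rho$ with the relative smash product $\un^{iso}/\rho \wedge_{\un^{iso}} \mbp^{iso}$. This is immediate from the definitions: smashing the defining cofiber sequence $\Sigma^{-1,-1}\un^{iso} \xrightarrow{\rho} \un^{iso} \rightarrow \un^{iso}/\rho$ with $\mbp$ over $\un^{iso}$ (equivalently, with $\mbp$ itself, using that $\mbp^{iso}$ is the free $\un^{iso}$-module on $\mbp$) yields the cofiber sequence $\Sigma^{-1,-1}\mbp^{iso} \xrightarrow{\rho} \mbp^{iso} \rightarrow \un^{iso}/\rho \wedge \mbp = \mbp^{iso}/\rho$. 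Now base change along the $E_\infty$-ring map $\un^{iso} \rightarrow \un^{iso}/\rho$ is a symmetric monoidal functor $\un^{iso}{\text -}\Mod \rightarrow (\un^{iso}/\rho){\text -}\Mod$ by \cite[Theorem 4.5.3.1]{Lu}, and symmetric monoidal functors send $E_\infty$-algebras to $E_\infty$-algebras; applying this to the $E_\infty$-algebra $\mbp^{iso}$ shows that $\un^{iso}/\rho \wedge_{\un^{iso}} \mbp^{iso} \simeq \mbp^{iso}/\rho$ is an $E_\infty$-algebra in $(\un^{iso}/\rho){\text -}\Mod$. Equivalently, $\mbp^{iso}/\rho$ is the pushout $\un^{iso}/\rho \amalg_{\un^{iso}} \mbp^{iso}$ in $\mathrm{CAlg}(\SH(\R))$, which carries a canonical compatible map from $\un^{iso}/\rho$.

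This argument is essentially formal once Proposition \ref{mbpring} and Theorem \ref{cofrhoring} are available, so there is no genuine obstacle; the only point requiring care is bookkeeping the ambient base, i.e.\ ensuring that $\mbp^{iso}$ and $\un^{iso}/\rho$ are treated as $E_\infty$-algebras \emph{under} $\un^{iso}$ so that the relative smash product over $\un^{iso}$ (equivalently, the pushout of $E_\infty$-rings) is available, and that this relative smash product coincides with the spectrum $\un^{iso}/\rho \wedge \mbp$ defining $\mbp^{iso}/\rho$. Both are consequences of the idempotence of $\un^{iso}$ and of $\mbp^{iso}$ being a free $\un^{iso}$-module.
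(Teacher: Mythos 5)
Your proposal is correct and takes essentially the same route as the paper: identify $\mbp^{iso}/\rho$ with $\mbp^{iso} \wedge \un^{iso}/\rho$ and deduce the $E_\infty$-structure from Proposition \ref{mbpring} and Theorem \ref{cofrhoring}. The paper's proof is a one-liner citing exactly these inputs; you have simply made explicit the bookkeeping (viewing both spectra as $E_\infty$-algebras under $\un^{iso}$ and invoking base change as a symmetric monoidal functor) that the paper leaves implicit.
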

\begin{proof}
It follows from Proposition \ref{mbpring}, Theorem \ref{cofrhoring}, and the equivalence
	$$\mbp^{iso}/\rho \simeq \mbp^{iso} \wedge \un^{iso}/\rho,$$
	 that $\mbp^{iso}/\rho$ is an $E_{\infty}$-algebra in $\un^{iso}/\rho {\text -} \Mod$. 
	\end{proof}

\begin{prop}\label{hommbp}
	Let $X$ and $Y$ be objects in $\mbp^{iso}/\rho{\text -}\Mod_{cell}$. Then, there is an isomorphism:
	$$[X,Y]_{\mbp^{iso}/\rho} \cong \Hom ^{0,0}_{\F}(\pi_{**}(X),\pi_{**}(Y)).$$
\end{prop}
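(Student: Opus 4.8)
The plan is to reduce everything to the observation that $\pi_{**}(\mbp^{iso}/\rho) \cong \F$ is concentrated in bidegree $(0,0)$, so that cellular $\mbp^{iso}/\rho$-modules are no more complicated than bigraded $\F$-vector spaces.

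First I would establish a splitting. By Corollary \ref{algmbp} (together with Theorem \ref{cofrhoring} and Proposition \ref{mbpring}), $\mbp^{iso}/\rho$ is an $E_{\infty}$-algebra in $\SH(\R)$, and by Proposition \ref{pimbpisorho} its homotopy ring $\pi_{**}(\mbp^{iso}/\rho) \cong \F$ lives entirely in degree $(0,0)$. Hence, for any object $M$ of $\mbp^{iso}/\rho{\text -}\Mod_{cell}$, the graded group $\pi_{**}(M)$ is a module over $\F$, i.e.\ a bigraded $\F$-vector space, and in particular a free $\pi_{**}(\mbp^{iso}/\rho)$-module. Choosing a homogeneous basis $\{x_{\alpha}\}_{\alpha \in A_M}$ with $x_{\alpha} \in \pi_{p_{\alpha},q_{\alpha}}(M)$, Lemma \ref{modcell} provides an equivalence of $\mbp^{iso}/\rho$-modules $M \simeq \bigvee_{\alpha \in A_M}\Sigma^{p_{\alpha},q_{\alpha}}\mbp^{iso}/\rho$. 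I would apply this to both $X$ and $Y$, writing $A := A_X$.

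Next I would compute both sides in terms of these splittings. On the left, since $\mathrm{Map}_{\mbp^{iso}/\rho}(-,Y)$ sends wedges to products, since $\mbp^{iso}/\rho$ is the unit of $\mbp^{iso}/\rho{\text -}\Mod$ so that $[\Sigma^{p,q}\mbp^{iso}/\rho,Y]_{\mbp^{iso}/\rho} \cong \pi_{p,q}(Y)$, and since $\pi_0$ commutes with products, the splitting of $X$ yields a natural identification
$$[X,Y]_{\mbp^{iso}/\rho} \cong \prod_{\alpha \in A}\pi_{p_{\alpha},q_{\alpha}}(Y),$$
under which a map $f$ corresponds to the tuple of its restrictions $(f \circ \iota_{\alpha})_{\alpha}$ along the wedge inclusions $\iota_{\alpha}:\Sigma^{p_{\alpha},q_{\alpha}}\mbp^{iso}/\rho \to X$. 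On the right, the splitting of $X$ gives $\pi_{**}(X) \cong \bigoplus_{\alpha \in A}\Sigma^{p_{\alpha},q_{\alpha}}\F$, and a degree-$(0,0)$ $\F$-linear homomorphism out of a direct sum is the same as a choice, for each $\alpha$, of an element of $\pi_{**}(Y)$ in degree $(p_{\alpha},q_{\alpha})$; that is,
$$\Hom^{0,0}_{\F}(\pi_{**}(X),\pi_{**}(Y)) \cong \prod_{\alpha \in A}\pi_{p_{\alpha},q_{\alpha}}(Y).$$

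Finally I would verify that the canonical comparison map $[X,Y]_{\mbp^{iso}/\rho} \to \Hom^{0,0}_{\F}(\pi_{**}(X),\pi_{**}(Y))$ induced by the functor $\pi_{**}$ is compatible with the two identifications above — it sends a map $f$ to the tuple whose $\alpha$-component is the class of $f \circ \iota_{\alpha}$ in $\pi_{p_{\alpha},q_{\alpha}}(Y)$, which matches both descriptions — and is therefore an isomorphism. There is no serious obstacle here: the content lies entirely in the splitting furnished by Lemma \ref{modcell}, which itself rests on $\pi_{**}(\mbp^{iso}/\rho)$ being a field concentrated in a single bidegree; the only point requiring care is keeping the two presentations of the Hom-sets as $\prod_{\alpha}\pi_{p_{\alpha},q_{\alpha}}(Y)$ compatible with the evident $\pi_{**}$-map, so as to obtain a genuine isomorphism rather than merely an abstract bijection.
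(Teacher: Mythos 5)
Your argument is correct and takes essentially the same route as the paper: both rely on Proposition \ref{pimbpisorho} to see that homotopy groups of cellular $\mbp^{iso}/\rho$-modules are bigraded $\F$-vector spaces, invoke Lemma \ref{modcell} to split into a wedge of shifted copies of $\mbp^{iso}/\rho$, and then compute both Hom-sets termwise via the free--forgetful adjunction and compactness of motivic spheres. The only cosmetic difference is that the paper splits both $X$ and $Y$ while you split only $X$, which is slightly leaner; your closing remark that the identification is realized by the evident $\pi_{**}$-comparison map (rather than being a mere abstract bijection) is a useful point of care that the paper leaves implicit.
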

\begin{proof}
By Proposition \ref{pimbpisorho}, we know that $\pi_{**}(\mbp^{iso}/\rho)\cong \F$, so both $\pi_{**}(X)$ and $\pi_{**}(Y)$ are free $\pi_{**}(\mbp^{iso}/\rho)$-modules. Hence, by Lemma \ref{modcell}, we have equivalences of $\mbp^{iso}/\rho$-modules: $X \simeq \bigvee_{\alpha \in A} \Sigma^{p_{\alpha},q_{\alpha}} \mbp^{iso}/\rho$ and $Y \simeq \bigvee_{\beta \in B} \Sigma^{p_{\beta},q_{\beta}} \mbp^{iso}/\rho$. Thus, we obtain isomorphisms:
	\begin{align*}
 	[X,Y]_{\mbp^{iso}/\rho}  &\cong [\bigvee_{\alpha \in A} \Sigma^{p_{\alpha},q_{\alpha}} \un,\bigvee_{\beta \in B} \Sigma^{p_{\beta},q_{\beta}} \mbp^{iso}/\rho]
 	\\
 	& \cong \prod_{\alpha \in A} \bigoplus_{\beta \in B} \pi_{p_{\alpha}-p_{\beta},q_{\alpha}-q_{\beta}}(\mbp^{iso}/\rho)\\
 	& \cong \prod_{\alpha \in A} \bigoplus_{\beta \in B} \Sigma^{p_{\beta}-p_{\alpha},q_{\beta}-q_{\alpha}}\F\\
 	& \cong \Hom^{0,0}_{\F}(\bigoplus_{\alpha \in A} \Sigma^{p_{\alpha},q_{\alpha}}\F,\bigoplus_{\beta \in B} \Sigma^{p_{\beta},q_{\beta}}\F)\\
 	& \cong \Hom^{0,0}_{\F}(\pi_{**}(X),\pi_{**}(Y)),	 
 \end{align*}
which concludes the proof.
\end{proof}
 
 \begin{dfn}
 	\normalfont
 	The Chow-Novikov degree of $\pi_{p,q}(-)$ is defined as the integer $p -2q$. 
 	
 	Denote by $\mbp^{iso}/\rho{\text -}\Mod^b_{cell}$ the full subcategory of bounded $\mbp^{iso}/\rho$-cellular modules, that is, objects in $\mbp^{iso}/\rho{\text -}\Mod_{cell}$ whose homotopy groups are non-trivial only for a finite number of Chow-Novikov degrees. We also define the following full subcategories of $\mbp^{iso}/\rho{\text -}\Mod^b_{cell}$:
 	
 	\begin{itemize}
 		\item $\mbp^{iso}/\rho{\text -}\Mod^{b, \geq0}_{cell}\coloneqq \{E \in \mbp^{iso}/\rho{\text -}\Mod^b_{cell} : \pi_{p,q}(E)\cong 0 \:\mathrm{for} \: p<2q\},$
 		\item $\mbp^{iso}/\rho{\text -}\Mod^{b, \leq0}_{cell}\coloneqq \{E \in \mbp^{iso}/\rho{\text -}\Mod^b_{cell} : \pi_{p,q}(E)\cong 0 \:\mathrm{for} \: p>2q\},$ 
 		\item $\mbp^{iso}/\rho{\text -}\Mod^{\heartsuit}_{cell}\coloneqq \{E \in \mbp^{iso}/\rho{\text -}\Mod^b_{cell} : \pi_{p,q}(E)\cong 0 \:\mathrm{for} \: p\neq2q\}.$ 
 	\end{itemize}
 	\end{dfn}

\begin{prop}\label{heartmbpiso}
The functor 
$$\pi_{**}:\mbp^{iso}/\rho{\text -}\Mod^{\heartsuit}_{cell} \rightarrow \F{\text -}\Mod_*$$
is an equivalence of $\infty$-categories.
\end{prop}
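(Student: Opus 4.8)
The plan is to exhibit an explicit inverse functor and check that the two composites are equivalent to the identity. First I would construct a functor $\Phi: \F\text{-}\Mod_* \to \mbp^{iso}/\rho\text{-}\Mod^{\heartsuit}_{cell}$ sending a graded $\F$-vector space $V_* = \bigoplus_{q} V_q$ to the wedge $\bigvee_q \bigvee_{\dim V_q} \Sigma^{2q,q}\mbp^{iso}/\rho$; on morphisms, a degree-preserving linear map corresponds, via Proposition \ref{hommbp}, to a map of $\mbp^{iso}/\rho$-modules, since $\Hom^{0,0}_{\F}$ of the associated homotopy groups computes exactly the module maps. One must check this lands in the heart: by Lemma \ref{smash} (or directly, since $\Sigma^{2q,q}\mbp^{iso}/\rho$ has homotopy $\F$ concentrated in degree $(2q,q)$ by Proposition \ref{pimbpisorho}) the homotopy groups of $\Phi(V_*)$ are nonzero only in Chow-Novikov degree $0$, and boundedness holds because we may restrict attention to $V_*$ finite-dimensional in each degree and bounded (or simply work with the graded vector spaces that actually arise — note $\F\text{-}\Mod_*$ here should be read so that the equivalence makes sense, i.e. the relevant finiteness matches the definition of $\Mod^b_{cell}$).

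Next I would verify $\pi_{**} \circ \Phi \simeq \mathrm{id}$: this is immediate from $\pi_{**}(\Sigma^{2q,q}\mbp^{iso}/\rho) \cong \Sigma^{2q,q}\F$ together with additivity of $\pi_{**}$ over wedges in the cellular setting. For the other composite $\Phi \circ \pi_{**} \simeq \mathrm{id}$, take $E \in \mbp^{iso}/\rho\text{-}\Mod^{\heartsuit}_{cell}$. Since $\pi_{**}(E)$ is concentrated in Chow-Novikov degree $0$ and $\pi_{**}(\mbp^{iso}/\rho) \cong \F$, the graded group $\pi_{**}(E)$ is a free $\pi_{**}(\mbp^{iso}/\rho)$-module, so Lemma \ref{modcell} gives an equivalence $E \simeq \bigvee_\alpha \Sigma^{p_\alpha,q_\alpha}\mbp^{iso}/\rho$ with all $p_\alpha = 2q_\alpha$; this is precisely $\Phi(\pi_{**}(E))$. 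To promote this to a natural equivalence of functors rather than a pointwise one, I would instead argue that $\pi_{**}$ is fully faithful — which is exactly the content of Proposition \ref{hommbp}, identifying $[X,Y]_{\mbp^{iso}/\rho}$ with $\Hom^{0,0}_{\F}(\pi_{**}X,\pi_{**}Y)$ — and essentially surjective, by the wedge decomposition just described; fully faithful plus essentially surjective gives an equivalence of $\infty$-categories. One should note that for $\infty$-categorical purposes "fully faithful" must be checked on mapping spaces, not just $\pi_0$; here, however, $\mbp^{iso}/\rho\text{-}\Mod^{\heartsuit}_{cell}$ should be verified to be (equivalent to) an ordinary category — its mapping spaces are discrete — because any two objects are wedges of shifts $\Sigma^{2q,q}\mbp^{iso}/\rho$ and the mapping spectrum between such shifts has homotopy concentrated in degree $0$ by Proposition \ref{pimbpisorho} and Lemma \ref{smash}, so higher homotopy of mapping spaces vanishes.

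The main obstacle I anticipate is precisely this last point: upgrading the $\pi_0$-level statement of Proposition \ref{hommbp} to a statement about mapping \emph{spaces}, i.e. showing $\mbp^{iso}/\rho\text{-}\Mod^{\heartsuit}_{cell}$ is discrete/$1$-categorical so that checking the hom-set bijection suffices. This requires knowing $\pi_i \mathrm{Map}_{\mbp^{iso}/\rho}(\Sigma^{2a,a}\mbp^{iso}/\rho, \Sigma^{2b,b}\mbp^{iso}/\rho) \cong 0$ for $i > 0$, which follows from $\pi_{**}(\mbp^{iso}/\rho) \cong \F$ concentrated in Chow-Novikov degree $0$ (Proposition \ref{pimbpisorho}): the relevant homotopy group is $\pi_{2b-2a+i,\, b-a}(\mbp^{iso}/\rho)$, which has Chow-Novikov degree $i \neq 0$ and hence vanishes. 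Once discreteness is in hand, the equivalence is formal: a fully faithful, essentially surjective functor between $1$-categories (here $\pi_{**}$, full faithfulness being Proposition \ref{hommbp} and essential surjectivity being Lemma \ref{modcell}) is an equivalence.
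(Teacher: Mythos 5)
Your proposal is correct and takes essentially the same route as the paper: prove that $\pi_{**}$ is fully faithful via Proposition~\ref{hommbp}, noting that $[\Sigma^{n,0}X,Y]_{\mbp^{iso}/\rho}\cong\Hom^{n,0}_{\F}(\pi_{**}X,\pi_{**}Y)$ vanishes for $n>0$ by Chow-Novikov degree considerations (which is exactly the discreteness-of-mapping-spaces point you flag), and then deduce essential surjectivity from the wedge decomposition of Lemma~\ref{modcell}. The detour through an explicit inverse $\Phi$ at the start of your plan is unnecessary but harmless, and your small index slip in writing $\pi_{2b-2a+i,\,b-a}$ rather than $\pi_{2a-2b+i,\,a-b}$ does not affect the argument since the Chow-Novikov degree is $i$ either way.
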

\begin{proof}
	By Proposition \ref{hommbp}, for all $X$ and $Y$ in $\mbp^{iso}/\rho{\text -}\Mod^{\heartsuit}_{cell}$ and for all $n\geq0$, we have the following isomorphisms:
	\begin{align*}
		[\Sigma^{n,0}X,Y]_{\mbp^{iso}/\rho}\cong\Hom ^{n,0}_{\F}(\pi_{**}(X),\pi_{**}(Y))\cong
		\begin{cases}
			\Hom ^{0,0}_{\F}(\pi_{**}(X),\pi_{**}(Y)) & \mathrm{for} \: n=0\\
			0 & \mathrm{for} \:  n>0
		\end{cases}.
		\end{align*}
	
	This shows that the functor $\pi_{**}$ is fully faithful. Furthermore, since every object of $\F{\text -}\Mod_*$ is of the form $\bigoplus_{\alpha \in A}\Sigma^{2q_{\alpha},q_{\alpha}}\F \cong \pi_{**}(\bigvee_{\alpha \in A}\Sigma^{2q_{\alpha},q_{\alpha}}\mbp^{iso}/\rho)$, it follows that every such object belongs to the essential image of $\pi_{**}$. Consequently, we deduce that $\pi_{**}$ is essentially surjective, and therefore an equivalence.
	\end{proof}

\begin{prop}\label{mbpts}
The pair $(\mbp^{iso}/\rho{\text -}\Mod^{b, \geq0}_{cell},\mbp^{iso}/\rho{\text -}\Mod^{b, \leq0}_{cell})$ defines a bounded $t$-structure on the stable $\infty$-category $\mbp^{iso}/\rho{\text -}\Mod^{b}_{cell}$.
\end{prop}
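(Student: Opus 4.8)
The plan is to verify the axioms of a bounded $t$-structure directly, leaning on the structural results already established for $\mbp^{iso}/\rho$-cellular modules. Write $\cat \coloneqq \mbp^{iso}/\rho{\text -}\Mod^b_{cell}$ and abbreviate $\cat^{\geq 0} \coloneqq \mbp^{iso}/\rho{\text -}\Mod^{b, \geq 0}_{cell}$, $\cat^{\leq 0} \coloneqq \mbp^{iso}/\rho{\text -}\Mod^{b, \leq 0}_{cell}$. The first thing to record is that the suspension in this stable $\infty$-category is $[1] = \Sigma^{1,0}$, so $\pi_{p,q}(E[1]) \cong \pi_{p-1,q}(E)$; in particular the passage $E \mapsto E[1]$ raises by one the Chow-Novikov degrees $p - 2q$ in which $E$ has nontrivial homotopy. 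From this formula one reads off at once the closure axiom $\cat^{\geq 0}[1] \subseteq \cat^{\geq 0}$ and $\cat^{\leq 0}[-1] \subseteq \cat^{\leq 0}$, together with the description of $\cat^{\leq -1} \coloneqq \cat^{\leq 0}[-1]$ as the full subcategory of those $E$ with $\pi_{p,q}(E) = 0$ whenever $p - 2q \geq 0$. (In the same breath one checks that $\cat$ is genuinely a stable subcategory of $\mbp^{iso}/\rho{\text -}\Mod_{cell}$: the long exact sequence in $\pi_{**}$ shows that a cofiber of a map between objects with finite Chow-Novikov support again has finite Chow-Novikov support, and $[1]$ preserves this by the displayed formula.)

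For the orthogonality axiom I would take $X \in \cat^{\geq 0}$ and $Y \in \cat^{\leq -1}$ and prove that $\mathrm{Map}_{\mbp^{iso}/\rho}(X, Y)$ is contractible, equivalently that $[\Sigma^{n,0}X, Y]_{\mbp^{iso}/\rho} = 0$ for every $n \geq 0$. By Proposition \ref{hommbp} this group is $\Hom^{n,0}_{\F}(\pi_{**}(X), \pi_{**}(Y))$, and a nonzero degree-$(n,0)$ homomorphism would demand a bidegree $(p,q)$ with $\pi_{p,q}(X) \neq 0$ and $\pi_{p+n,q}(Y) \neq 0$; but the former forces $p - 2q \geq 0$, hence $(p+n) - 2q \geq n \geq 0$, while the latter forces $(p+n) - 2q \leq -1$, a contradiction.

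The real content is the truncation axiom, and here I would exploit that $\pi_{**}(\mbp^{iso}/\rho) \cong \F$ is a field (Proposition \ref{pimbpisorho}). Consequently, for any $X \in \cat$ the bigraded $\pi_{**}(\mbp^{iso}/\rho)$-module $\pi_{**}(X)$ is automatically free, so Lemma \ref{modcell} supplies a splitting $X \simeq \bigvee_{\alpha} \Sigma^{p_\alpha, q_\alpha}\mbp^{iso}/\rho$. I would then partition the index set according to whether $p_\alpha - 2q_\alpha \geq 0$ or $p_\alpha - 2q_\alpha < 0$, obtaining $X \simeq X_{\geq 0} \vee X_{<0}$, where $X_{\geq 0}$ is the sub-wedge over the first part and $X_{<0}$ over the second. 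Since $X$ is bounded, only finitely many values $p_\alpha - 2q_\alpha$ occur, so both summands lie in $\cat$, and the computation of homotopy groups gives $X_{\geq 0} \in \cat^{\geq 0}$ and $X_{<0} \in \cat^{\leq -1}$. The split cofiber sequence $X_{\geq 0} \to X \to X_{<0}$ is then the required truncation triangle, so that $\tau_{\geq 0}X = X_{\geq 0}$ and $\tau_{\leq -1}X = X_{<0}$. Boundedness of the $t$-structure follows immediately afterwards, since any object of $\cat$ has nontrivial homotopy in only finitely many Chow-Novikov degrees and hence lies in $\cat^{\geq -n} \cap \cat^{\leq n}$ for $n$ sufficiently large.

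I do not expect a genuine obstacle here: the only point requiring care is the bookkeeping with the Chow-Novikov grading — in particular that $[1] = \Sigma^{1,0}$ shifts it by one rather than fixing it, which must be matched correctly against the $\pm 1$ shifts appearing in the $t$-structure axioms. Once this is kept straight, the fact that every cellular $\mbp^{iso}/\rho$-module splits into shifted copies of $\mbp^{iso}/\rho$ — available precisely because $\F$ is a field — makes the construction of the truncation functors essentially automatic, and the remaining axioms reduce to the elementary degree computations indicated above.
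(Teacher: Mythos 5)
Your proof is correct, but it takes a genuinely different route from the paper's. The paper verifies the hypotheses of a criterion for bounded $t$-structures due to Gheorghe--Wang--Xu (their Proposition 3.6): besides closure and orthogonality (for which the paper uses the same $\Hom^{0,0}_{\F}$ computation you do), the key step there is to construct, for $X\in\cat^{\geq 0}$, a map $X\to X_0$ to an object of the heart inducing the quotient $\pi_{**}(X)\twoheadrightarrow\pi_{**}(X)_0$ in homotopy, with fiber in $\cat^{\geq 1}$; the existence of $X_0$ is supplied by Proposition \ref{heartmbpiso}, and the uniqueness of the inducing map by Proposition \ref{hommbp}. You instead observe that since $\pi_{**}(\mbp^{iso}/\rho)\cong\F$ is a field, Lemma \ref{modcell} splits \emph{every} bounded cellular module as a finite wedge of shifted copies of $\mbp^{iso}/\rho$, and you build the truncation triangle $X_{\geq 0}\to X\to X_{<0}$ by simply partitioning the wedge summands by Chow--Novikov degree. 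Your argument is more elementary and self-contained — it constructs the truncations outright rather than appealing to the GWX criterion — but it relies crucially on the splitting, so it is specific to the $\mbp^{iso}/\rho$ setting. The paper's choice of the GWX criterion is what makes the same template carry over, verbatim, to Proposition \ref{tri} for $\un^{iso}/\rho$, where no such splitting is available; so the paper's route pays a small overhead here to save work there. Both proofs are valid.
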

\begin{proof}
	We will verify the properties outlined in \cite[Proposition 3.6]{GWX}. First, observe that $\mbp^{iso}/\rho{\text -}\Mod^{b, \geq0}_{cell}$ is closed under suspensions, $\mbp^{iso}/\rho{\text -}\Mod^{b, \leq0}_{cell}$ is closed under desuspensions, and both categories are closed under extensions. Moreover, we know that
	$$\mbp^{iso}/\rho{\text -}\Mod^{b}_{cell}=\bigcup_{n\in \Z}\mbp^{iso}/\rho{\text -}\Mod^{b, \geq n}_{cell},$$
	where 
	$$\mbp^{iso}/\rho{\text -}\Mod^{b, \geq n}_{cell}\coloneqq \{E \in \mbp^{iso}/\rho{\text -}\Mod^b_{cell} : \pi_{p,q}(E)\cong 0 \: \mathrm{for} \: p<2q+n\}.$$ 
	
	Now, consider $X$ in $\mbp^{iso}/\rho{\text -}\Mod^{b, \geq0}_{cell}$ and $Y$ in $\mbp^{iso}/\rho{\text -}\Mod^{b, \leq-1}_{cell}$. By Proposition \ref{hommbp}, we obtain:
	$$[X,Y]_{\mbp^{iso}/\rho}\cong \Hom ^{0,0}_{\F}(\pi_{**}(X),\pi_{**}(Y)) \cong 0.$$
	
	Next, let $X$ be an object in $\mbp^{iso}/\rho{\text -}\Mod^{b, \geq0}_{cell}$, and consider the epimorphism $\pi_{**}(X)\rightarrow \pi_{**}(X)_0$ that annihilates all elements of $\pi_{**}(X)$ in positive Chow-Novikov degrees. By Proposition \ref{heartmbpiso}, there exists an object $X_0$ in $\mbp^{iso}/\rho{\text -}\Mod^{\heartsuit}_{cell}$ such that $\pi_{**}(X_0) \cong\pi_{**}(X)_0$. By Proposition \ref{hommbp}, the epimorphism $\pi_{**}(X)\rightarrow \pi_{**}(X)_0$ is induced by a unique map $X \rightarrow X_0$, whose fiber belongs to $\mbp^{iso}/\rho{\text -}\Mod^{b, \geq1}_{cell}$. This completes the argument.
	\end{proof}

\begin{thm}\label{eqmbpisorho}
	There is an equivalence of stable $\infty$-categories:
	$$\mbp^{iso}/\rho{\text -}\Mod_{cell} \simeq \mathcal{D}(\F{\text -}\Mod_*).$$
\end{thm}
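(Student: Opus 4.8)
The idea is to upgrade the homological bookkeeping already in place --- the bounded $t$-structure of Proposition \ref{mbpts}, the identification of its heart in Proposition \ref{heartmbpiso}, and the $\Hom$-formula of Proposition \ref{hommbp} --- into an equivalence of stable $\infty$-categories, first on the bounded subcategory and then after Ind-completion.

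\emph{Step 1: the bounded comparison.} By Proposition \ref{mbpts} the stable $\infty$-category $\mbp^{iso}/\rho{\text -}\Mod^{b}_{cell}$ carries a bounded $t$-structure whose heart, by Proposition \ref{heartmbpiso}, is identified via $\pi_{**}$ with $\F{\text -}\Mod_*$. Any bounded $t$-structure gives rise to a realization functor from the bounded derived category of its heart, so we obtain
$$\mathrm{real}\colon\mathcal{D}^b(\F{\text -}\Mod_*)\longrightarrow\mbp^{iso}/\rho{\text -}\Mod^{b}_{cell}$$
extending the inclusion of the heart (this is standard; compare the treatment of the analogous $C\tau$-statement in \cite{GWX}). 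I would show $\mathrm{real}$ is an equivalence via the usual criterion: it suffices that $\mathrm{real}$ induce isomorphisms on all $\ext$-groups between objects of the heart, after which boundedness of the $t$-structure (so the heart generates the whole category under shifts and extensions) makes $\mathrm{real}$ fully faithful and essentially surjective. For $A,B$ in the heart, on the source side $\F{\text -}\Mod_*$ is semisimple, so $\ext^{n}_{\mathcal{D}^b(\F{\text -}\Mod_*)}(A,B)$ equals $\Hom_{\F{\text -}\Mod_*}(A,B)$ for $n=0$ and vanishes for $n\neq0$. On the target side, the suspension in $\mbp^{iso}/\rho{\text -}\Mod$ is $\Sigma^{1,0}$, so $\ext^{n}(A,B)=[A,\Sigma^{n,0}B]_{\mbp^{iso}/\rho}$, which by Proposition \ref{hommbp} is $\Hom^{0,0}_{\F}(\pi_{**}(A),\pi_{**}(\Sigma^{n,0}B))$. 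Now objects of the heart have homotopy concentrated in Chow--Novikov degree $0$, so $\pi_{**}(\Sigma^{n,0}B)$ is concentrated in Chow--Novikov degree $n$; hence there are no nonzero degree-preserving homomorphisms unless $n=0$, in which case we recover $\Hom_{\F}(\pi_{**}(A),\pi_{**}(B))\cong\Hom_{\F{\text -}\Mod_*}(A,B)$. Thus $\mathrm{real}$ induces isomorphisms on all $\ext$-groups, and $\mbp^{iso}/\rho{\text -}\Mod^{b}_{cell}\simeq\mathcal{D}^b(\F{\text -}\Mod_*)$.

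\emph{Step 2: Ind-completion.} By Proposition \ref{pimbpisorho} we have $\pi_{**}(\mbp^{iso}/\rho)\cong\F$, so the homotopy of every object of $\mbp^{iso}/\rho{\text -}\Mod_{cell}$ is free over a field; Lemma \ref{modcell} therefore applies to every cellular module, and $\mbp^{iso}/\rho{\text -}\Mod_{cell}$ is compactly generated by the compact objects $\Sigma^{0,q}\mbp^{iso}/\rho$, $q\in\Z$. Consequently it is the Ind-completion of its subcategory of compact objects (finite wedges of shifts of $\mbp^{iso}/\rho$), which under Step 1 corresponds to the perfect part of $\mathcal{D}^b(\F{\text -}\Mod_*)$. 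Ind-completing the equivalence of Step 1 then gives
$$\mbp^{iso}/\rho{\text -}\Mod_{cell}\simeq\mathcal{D}(\F{\text -}\Mod_*).$$

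\emph{Expected difficulty.} With all the preceding lemmas available there is no deep obstacle; the one point needing genuine care is the $\ext$-comparison of Step 1, and inside it the translation of the bigrading $(p,q)$ into the cohomological grading seen by the $t$-structure (the Chow--Novikov degree $p-2q$) together with the internal weight $q$ --- it is precisely this bookkeeping, combined with semisimplicity of $\F{\text -}\Mod_*$, that kills all positive $\ext$-groups on both sides at once. If one would rather not invoke the realization functor, the same vanishing argues directly: it forces every $k$-invariant in the $t$-structure Postnikov tower of a bounded cellular module to be null, so each such module splits as a finite direct sum of shifted heart objects, from which the equivalence with $\mathcal{D}^b(\F{\text -}\Mod_*)$ can be read off explicitly; I would use whichever version meshes better with the conventions fixed earlier in the paper.
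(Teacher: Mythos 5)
Your proposal is correct and follows essentially the same route as the paper. Both establish the bounded $t$-structure of Proposition \ref{mbpts} with heart $\F{\text -}\Mod_*$ via Proposition \ref{heartmbpiso}, derive the requisite Ext-vanishing from Proposition \ref{hommbp} (your Chow--Novikov bookkeeping is exactly the paper's computation $[\Sigma^{-i}X,Y]_{\mbp^{iso}/\rho}\cong\Hom^{-i,0}_{\F}(\pi_{**}(X),\pi_{**}(Y))\cong 0$ for $i>0$), and then Ind-complete. The only presentational difference is that where you unwind the realization functor and verify the Ext-isomorphism criterion by hand — invoking semisimplicity of $\F{\text -}\Mod_*$ for the source side — the paper packages this step as an application of \cite[Proposition 2.11]{GWX}, whose hypotheses (heart with enough projectives, vanishing of the negative hom-groups into heart objects) are precisely what you check.
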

\begin{proof}
	By Propositions \ref{heartmbpiso} and \ref{mbpts}, the stable $\infty$-category $\mbp^{iso}/\rho{\text -}\Mod^b_{cell}$ is equipped with a bounded $t$-structure, whose heart is the abelian category $\F {\text -} \Mod_*$, which has enough projective objects. Moreover, for any $X$ and $Y$ in $\mbp^{iso}/\rho{\text -}\Mod^{\heartsuit}_{cell}$, we have the following isomorphism:
	$$[\Sigma^{-i}X,Y]_{\mbp^{iso}/\rho} \cong \Hom ^{-i,0}_{\F}(\pi_{**}(X),\pi_{**}(Y))\cong 0$$
	for all $i>0$, by Proposition \ref{hommbp}. Hence, \cite[Proposition 2.11]{GWX} implies that there is a $t$-exact equivalence of stable $\infty$-categories:
	$${\mathcal D}^b(\F{\text -}\Mod_*) \xrightarrow{\simeq}\mbp^{iso}/\rho{\text -}\Mod^b_{cell}.$$
	
	Next, by passing to the respective ind-completions, we obtain the following equivalence:
	$$\mbp^{iso}/\rho{\text -}\Mod_{cell} \simeq \ind(\mbp^{iso}/\rho{\text -}\Mod^b_{cell}) \simeq \ind({\mathcal D}^b(\F{\text -}\Mod_*))=\mathcal{D}(\F{\text -}\Mod_*),$$
	which concludes the proof.
	\end{proof}

\section{The special fiber}

In the previous section, we considered the parameter $\rho$ in the category of real isotropic motivic spectra, and we identified $\mbp^{iso}/\rho$-cellular modules with bigraded $\F$-vector spaces. We are now ready to study the special fiber of this deformation in $\SH^{iso}_{cell}(\R)$, namely the category of $\un^{iso}/\rho$-cellular modules.

\begin{lem}\label{injmbp}
	Let $I$ be an object in $\mbp^{iso}/\rho {\text -} \Mod_{cell}$. Then, there is an isomorphism of left $\G_{**}$-comodules:
	$$\mbp_{**}^{iso}(I) \cong \G_{**}\otimes_{\F}\pi_{**}(I).$$
\end{lem}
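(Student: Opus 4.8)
The plan is to reduce the statement to the structure theorem for cellular modules (Lemma \ref{modcell}) and then run the resulting splitting through Corollary \ref{mbpisorhoy}, which already encodes the cofree comodule structure. By Proposition \ref{pimbpisorho} we have $\pi_{**}(\mbp^{iso}/\rho)\cong\F$, so for any $I$ in $\mbp^{iso}/\rho{\text -}\Mod_{cell}$ the bigraded group $\pi_{**}(I)$ is automatically free over $\pi_{**}(\mbp^{iso}/\rho)$: one simply picks a homogeneous $\F$-basis $\{x_\alpha\}_{\alpha\in A}$ with $x_\alpha\in\pi_{p_\alpha,q_\alpha}(I)$. Lemma \ref{modcell} then gives an equivalence of $\mbp^{iso}/\rho$-modules $I\simeq\bigvee_{\alpha\in A}\Sigma^{p_\alpha,q_\alpha}\mbp^{iso}/\rho$, and using the equivalence $\mbp^{iso}/\rho\simeq\mbp^{iso}\wedge\un^{iso}/\rho$ (see Corollary \ref{algmbp}) this becomes $I\simeq\mbp^{iso}\wedge J$, where $J\coloneqq\bigvee_{\alpha\in A}\Sigma^{p_\alpha,q_\alpha}\un^{iso}/\rho$.

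Next I would observe that $\rho$ acts trivially on $J$: since $J$ is a $\un^{iso}/\rho$-module, $\rho$ acts through its image in $\pi_{-1,-1}(\un^{iso}/\rho)$, which vanishes by the defining cofiber sequence of $\un^{iso}/\rho$ (equivalently, by Lemma \ref{van}). Hence $\mbp^{iso}_{**}(J)=\pi_{**}(\mbp^{iso}\wedge J)$ is annihilated by $\rho$, and Corollary \ref{mbpisorhoy}, applied with $Y=J$, yields an isomorphism of left $\G_{**}$-comodules $\mbp^{iso}_{**}(I)=\mbp^{iso}_{**}(\mbp^{iso}\wedge J)\cong\G_{**}\otimes_{\F}\mbp^{iso}_{**}(J)$, with the cofree comodule structure on the right. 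Finally, since $\mbp^{iso}$-homology commutes with wedges and suspensions and $\mbp^{iso}_{**}(\un^{iso}/\rho)=\pi_{**}(\mbp^{iso}/\rho)\cong\F$ by Proposition \ref{pimbpisorho}, one obtains $\mbp^{iso}_{**}(J)\cong\bigoplus_{\alpha\in A}\Sigma^{p_\alpha,q_\alpha}\F\cong\pi_{**}(I)$ as bigraded $\F$-vector spaces, and substituting this into the previous isomorphism gives $\mbp^{iso}_{**}(I)\cong\G_{**}\otimes_{\F}\pi_{**}(I)$.

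I do not expect a genuine obstacle; the one point that needs care is to track the full comodule structure rather than just the underlying bigraded vector space, and this is exactly what Corollary \ref{mbpisorhoy} provides, since its right-hand side is cofree and therefore depends only on the underlying bigraded vector space of $\mbp^{iso}_{**}(J)$, which we have identified with $\pi_{**}(I)$. If one prefers to avoid invoking Corollary \ref{mbpisorhoy}, an equivalent route is to use Proposition \ref{pimbpisorho} directly: it identifies $\mbp^{iso}_{**}(\mbp^{iso}/\rho)\cong\G_{**}$ as the cofree $\G_{**}$-comodule on $\F$, so the wedge decomposition of $I$ immediately presents $\mbp^{iso}_{**}(I)$ as $\bigoplus_{\alpha\in A}\Sigma^{p_\alpha,q_\alpha}\G_{**}\cong\G_{**}\otimes_{\F}\pi_{**}(I)$ as comodules; either way the remaining bookkeeping is routine.
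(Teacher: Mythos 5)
Your proof is correct, and it reaches the wedge decomposition of $I$ by a genuinely lighter route than the paper's. The paper first invokes Theorem \ref{eqmbpisorho} (which itself requires the $t$-structure machinery from Propositions \ref{heartmbpiso} and \ref{mbpts}) to conclude $I\simeq\bigvee_\alpha\Sigma^{p_\alpha,q_\alpha}\mbp^{iso}/\rho$, and then applies \cite[Lemma 5.4]{HKO} to get $\mbp^{iso}_{**}(I)\cong\pi_{**}(\mbp^{iso}\wedge\mbp^{iso}/\rho)\otimes_{\pi_{**}(\mbp^{iso}/\rho)}\pi_{**}(I)\cong\G_{**}\otimes_{\F}\pi_{**}(I)$. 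You instead get the wedge decomposition directly from Lemma \ref{modcell}, using only the elementary observation that any bigraded module over $\pi_{**}(\mbp^{iso}/\rho)\cong\F$ is free; and you then feed the splitting through Corollary \ref{mbpisorhoy}, which already records the cofree $\G_{**}$-comodule structure. Both arguments are sound. The main thing your approach buys is a shorter dependency chain: it avoids leaning on Theorem \ref{eqmbpisorho}, a much heavier statement than the lemma being proved, at the cost of one small extra observation (that $\rho$ annihilates $\mbp^{iso}_{**}$ of any $\un^{iso}/\rho$-module, to make Corollary \ref{mbpisorhoy} applicable). Your closing remark, that one can bypass Corollary \ref{mbpisorhoy} and read the cofree structure directly off $\mbp^{iso}_{**}(\mbp^{iso}/\rho)\cong\G_{**}$, is in fact closer in spirit to the paper's Künneth step.
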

\begin{proof}
	It follows from Thorem \ref{eqmbpisorho} that $I \simeq \bigvee_{\alpha \in A}\Sigma^{p_{\alpha},q_{\alpha}}\mbp^{iso}/\rho$. Thus, by Proposition \ref{pimbpisorho} and \cite[Lemma 5.4]{HKO}, we obtain the following chain of isomorphisms:
	\begin{align*}
	\mbp_{**}^{iso}(I) &= \pi_{**}(\mbp^{iso}\wedge I)\\
	&\cong \pi_{**}((\mbp^{iso}\wedge \mbp^{iso}/\rho) \wedge_{\mbp^{iso}/\rho} I)\\
	&\cong \pi_{**}(\mbp^{iso}\wedge \mbp^{iso}/\rho) \otimes_{\pi_{**}(\mbp^{iso}/\rho)}\pi_{**}(I)\\
	&\cong \G_{**}\otimes_{\F}\pi_{**}(I),
	\end{align*}
which is what we wanted to prove.
	\end{proof}

\begin{lem}\label{unisoinj}
Let $X$ be an object in $\un^{iso}/\rho{\text -}\Mod_{cell}$ and $I$ an object in $\mbp^{iso}/\rho {\text -} \Mod_{cell}$. Then, there is an isomorphism:
$$[X,I]_{\un^{iso}/\rho} \cong \Hom^{0,0}_{\G_{**}}(\mbp^{iso}_{**}(X),\mbp^{iso}_{**}(I)).$$
\end{lem}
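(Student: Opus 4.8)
The plan is to realize the left-hand side as a value of a descent spectral sequence built from the $\mbp^{iso}$-based Adams-Novikov resolution, exactly as in the proof of Theorem \ref{mustlab}, but now with $\un^{iso}/\rho$ replaced by the more general cellular module $X$ and with target $I$ rather than $\un^{iso}/\rho$. First I would record the key input: since $I$ lies in $\mbp^{iso}/\rho{\text-}\Mod_{cell}$, Lemma \ref{injmbp} gives $\mbp^{iso}_{**}(I) \cong \G_{**}\otimes_{\F}\pi_{**}(I)$ as a $\G_{**}$-comodule, i.e. $I$ is $\mbp^{iso}$-``injective'' (extended/cofree) in the appropriate sense; this is what will force the relevant $\ext$-groups to collapse onto $\ext^0$. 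Also $\mbp^{iso}_{**}(X)$ is a $\mbp^{iso}_{**}(\mbp^{iso})$-comodule, hence, after killing $\rho$ (which already acts as $0$ on $\un^{iso}/\rho$-modules, by Theorem \ref{pimbp} and Proposition \ref{pimbpisorho}), a $\G_{**}$-comodule, so the right-hand side makes sense.

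The main steps, in order: (1) Apply the $\mbp^{iso}$-nilpotent Postnikov tower $\overline{\mbp^{iso}}^{\wedge\bullet}\wedge X$ (as in Theorem \ref{mustlab}) and map into $I$; using Corollary \ref{mbpcomp}-type completeness, or rather the fact that $X$ is already $\mbp$-complete after the arguments of Section 4 applied to $\un^{iso}/\rho$-modules, obtain a conditionally convergent spectral sequence computing $[\Sigma^{*,*}X,I]_{\un^{iso}/\rho}$. (2) Identify the $E_1$-page: $E_1^{s,t,u} \cong [\Sigma^{t-s,u}X, \mbp^{iso}\wedge\overline{\mbp^{iso}}^{\wedge s}\wedge I]_{\un^{iso}/\rho}$. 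Since the target is an $\mbp^{iso}$-module and, by Lemma \ref{overmbp} and Lemma \ref{injmbp}, has $\mbp^{iso}$-homology of the cofree form $\overline{\G_{**}}^{\otimes s}\otimes_{\F}\G_{**}\otimes_{\F}\pi_{**}(I)$ up to suspension, the universal property of extended comodules (change-of-rings, as used for $E_2$ in Theorem \ref{mustlab}) identifies this $\Hom$-set with $\Hom^{t,u}_{\G_{**}}(\mbp^{iso}_{**}(X), \G_{**}\otimes_{\F}\overline{\G_{**}}^{\otimes s}\otimes_{\F}\pi_{**}(I))$, the $s$-th term of the cobar complex computing $\ext_{\G_{**}}(\mbp^{iso}_{**}(X),\mbp^{iso}_{**}(I))$. (3) Pass to $E_2$: $E_2^{s,t,u}\cong \ext^{s,t,u}_{\G_{**}}(\mbp^{iso}_{**}(X),\mbp^{iso}_{**}(I))$. (4) Collapse and degenerate: because $\mbp^{iso}_{**}(I)\cong \G_{**}\otimes_{\F}\pi_{**}(I)$ is an \emph{extended} (cofree) $\G_{**}$-comodule, $\ext^{s}_{\G_{**}}(-,\mbp^{iso}_{**}(I))$ vanishes for $s>0$ and equals $\Hom^{0,0}_{\G_{**}}(\mbp^{iso}_{**}(X),\mbp^{iso}_{**}(I))$ for $s=0$ — this is the standard injectivity-of-cofree-comodules statement, combined with the concentration $t=2u$ coming from $\G_{**}$ being concentrated in bidegrees $p=2q$ (as in Theorem \ref{mustlab}, where all differentials $d_r$, $r\geq2$, automatically vanish). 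Hence the spectral sequence collapses at $E_2$ to a single line, giving $[X,I]_{\un^{iso}/\rho}\cong \Hom^{0,0}_{\G_{**}}(\mbp^{iso}_{**}(X),\mbp^{iso}_{**}(I))$, and the bigraded version for $[\Sigma^{p,q}X,I]$ follows by reindexing.

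The main obstacle I anticipate is step (1)–(2): making precise that the relevant completion is harmless for a general cellular $X$ (not just $\un^{iso}/\rho$ itself), i.e. that $X$, being a $\un^{iso}/\rho$-cellular module, is automatically $\mbp$-complete so the spectral sequence converges strongly enough to read off the $\Hom$-set — this should follow from Corollary \ref{mbpcomp} together with the fact that $\un^{iso}/\rho{\text-}\Mod_{cell}$ is generated by $\Sigma^{0,q}\un^{iso}/\rho$ and completion commutes with the colimits/cofibers involved — and identifying the $E_1$ differential with the cobar differential. An alternative, cleaner route avoiding convergence subtleties would be to first establish the result on the generators, namely $X = \Sigma^{0,q}\un^{iso}/\rho$, where $\mbp^{iso}_{**}(X)\cong \Sigma^{0,q}\G_{**}$ is itself extended and the claim reduces to $[\Sigma^{p,q}\un^{iso}/\rho, I]_{\un^{iso}/\rho}\cong \pi_{-p,-q}$-type statements already packaged in Lemma \ref{van} and Proposition \ref{pimbpisorho}, and then bootstrap to all cellular $X$ by a colimit/cofiber-sequence argument using that both sides send cofiber sequences in $X$ to long exact sequences (the right-hand side because extended comodules are injective, so $\Hom_{\G_{**}}(-,\mbp^{iso}_{**}(I))$ is exact). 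I would likely present the latter argument if the spectral sequence bookkeeping proves cumbersome.
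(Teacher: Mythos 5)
The paper's proof is far more direct than either of your routes: since $I$ is already an $\mbp^{iso}/\rho$-module, the free--forgetful adjunction along $\un^{iso}/\rho\to\mbp^{iso}/\rho$ gives at once
$$[X,I]_{\un^{iso}/\rho}\;\cong\;[X\wedge_{\un^{iso}/\rho}\mbp^{iso}/\rho,I]_{\mbp^{iso}/\rho},$$
after which Proposition~\ref{hommbp}, the identification $X\wedge_{\un^{iso}/\rho}\mbp^{iso}/\rho\simeq X\wedge\mbp^{iso}$, and the cofree-comodule adjunction $\Hom_{\F}(-,\pi_{**}(I))\cong\Hom_{\G_{**}}(-,\G_{**}\otimes_{\F}\pi_{**}(I))$ finish the argument in four lines. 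No spectral sequence, boundedness, completeness, or bootstrap is needed.

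Your Route~1 is circular. Identifying the $E_1$-page as
$[\Sigma^{t-s,u}X,\mbp^{iso}\wedge\overline{\mbp^{iso}}^{\wedge s}\wedge I]_{\un^{iso}/\rho}\cong\Hom^{t,u}_{\G_{**}}(\mbp^{iso}_{**}(X),\ldots)$
is itself an instance of the lemma you are trying to prove, since $\mbp^{iso}\wedge\overline{\mbp^{iso}}^{\wedge s}\wedge I$ is an $\mbp^{iso}/\rho$-cellular module. This is exactly why in the paper Theorem~\ref{speseq} cites Lemma~\ref{unisoinj} to identify its $E_1$-page, rather than the other way around; Theorem~\ref{mustlab} gets away without it only because there the source is $\un^{iso}$ and the identification proceeds through $\pi_{**}$ directly. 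If you tried to make the $E_1$ identification precise, you would be forced to use the free--forgetful adjunction to pass to $\mbp^{iso}/\rho$-modules, at which point the spectral sequence is no longer doing any work.

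Your Route~2 (generators plus bootstrap) is in spirit a viable alternative, and the bootstrap itself would go through: both sides turn cofiber sequences into long exact sequences (the right side because $\mbp^{iso}_{**}(I)\cong\G_{**}\otimes_{\F}\pi_{**}(I)$ is extended, hence injective) and both behave well under coproducts. However, your base-case description contains a factual error: for $X=\Sigma^{0,q}\un^{iso}/\rho$ one has $\mbp^{iso}_{**}(X)=\pi_{**}(\Sigma^{0,q}\mbp^{iso}/\rho)\cong\Sigma^{0,q}\F$ (by Proposition~\ref{pimbpisorho}), which is the \emph{trivial} comodule, not $\Sigma^{0,q}\G_{**}$ and not extended. (You are conflating $\mbp^{iso}_{**}(\un^{iso}/\rho)$ with $\mbp^{iso}_{**}(\mbp^{iso}/\rho)$.) The base case still holds --- $\pi_{p,q}(I)\cong\Hom^{0,0}_{\G_{**}}(\Sigma^{p,q}\F,\G_{**}\otimes\pi_{**}(I))$ by the cofree adjunction --- but the justification you cite (Lemma~\ref{van} and Proposition~\ref{pimbpisorho}) does not directly give this for a general $I$; you need the cofree adjunction explicitly, which is the same ingredient the paper uses in its last step, applied once globally rather than generator by generator.
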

\begin{proof}
	Since $X$ is $\un^{iso}/\rho$-cellular, it follows that $X\wedge_{\un^{iso}/\rho}\mbp^{iso}/\rho$ is $\mbp^{iso}/\rho$-cellular. Therefore, by Corollary \ref{algmbp}, Theorem \ref{eqmbpisorho} and Lemma \ref{injmbp}, we have the following sequence of isomorphisms:
	\begin{align*}
	[X,I]_{\un^{iso}/\rho} &\cong [X\wedge_{\un^{iso}/\rho}\mbp^{iso}/\rho,I]_{\mbp^{iso}/\rho}\\
	& \cong \Hom^{0,0}_{\F}(\pi_{**}(X\wedge_{\un^{iso}/\rho}\mbp^{iso}/\rho),\pi_{**}(I))\\
	& \cong \Hom^{0,0}_{\F}(\pi_{**}(X\wedge\mbp^{iso}),\pi_{**}(I))\\
	&\cong \Hom^{0,0}_{\G_{**}}(\mbp^{iso}_{**}(X),\G_{**}\otimes_{\F}\pi_{**}(I))\\
	&\cong \Hom^{0,0}_{\G_{**}}(\mbp^{iso}_{**}(X),\mbp^{iso}_{**}(I)).
	\end{align*}

This concludes the argument.
	\end{proof}

\begin{dfn}
	\normalfont
	The Chow-Novikov degree of $\mbp^{iso}_{p,q}(-)$ is defined as the integer $p -2q$. 
	
	Denote by $\un^{iso}/\rho{\text -}\Mod^b_{cell}$ the full subcategory of bounded $\un^{iso}/\rho$-cellular modules, that is, objects in $\un^{iso}/\rho{\text -}\Mod_{cell}$ whose $\mbp^{iso}$-homology groups are non-trivial only for a finite number of Chow-Novikov degrees. 
\end{dfn}

\begin{thm}\label{speseq}
Let $X$ and $Y$ be objects in $\un^{iso}/\rho{\text -}\Mod^b_{cell}$. Then, there is a strongly convergent spectral sequence:
$$E_2^{s,t,u}\cong \ext^{s,t,u}_{\G_{**}}(\mbp^{iso}_{**}(X),\mbp^{iso}_{**}(Y)) \Longrightarrow [\Sigma^{t-s,u}X,Y]_{\un^{iso}/\rho}.$$
\end{thm}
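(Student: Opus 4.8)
The plan is to construct the desired spectral sequence as the $\mbp^{iso}$-based Adams spectral sequence computing maps in $\un^{iso}/\rho{\text-}\Mod_{cell}$, closely following the template already used for Theorem~\ref{mustlab}. First I would form the $\mbp^{iso}$-Adams resolution of $Y$ in the category $\un^{iso}/\rho{\text-}\Mod$: set $\overline{\mbp^{iso}/\rho} \coloneqq \fib(\un^{iso}/\rho \to \mbp^{iso}/\rho)$ (equivalently $\overline{\mbp^{iso}}\wedge\un^{iso}/\rho$), and consider the tower with $s$-th layer $\overline{\mbp^{iso}/\rho}^{\,\wedge_{\un^{iso}/\rho} s}\wedge_{\un^{iso}/\rho}Y$ together with the associated graded pieces $\mbp^{iso}/\rho \wedge_{\un^{iso}/\rho}\overline{\mbp^{iso}/\rho}^{\,\wedge s}\wedge_{\un^{iso}/\rho}Y$. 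Applying $[\Sigma^{t-s,u}X,-]_{\un^{iso}/\rho}$ to this tower yields a conditionally convergent spectral sequence abutting to $[\Sigma^{t-s,u}X,Y^{\wedge}_{\mbp^{iso}/\rho}]_{\un^{iso}/\rho}$, with $E_1$-page
$$E_1^{s,t,u} \cong [\Sigma^{t-s,u}X, \mbp^{iso}/\rho \wedge_{\un^{iso}/\rho}\overline{\mbp^{iso}/\rho}^{\,\wedge s}\wedge_{\un^{iso}/\rho}Y]_{\un^{iso}/\rho}.$$

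The second step is to identify this $E_1$-term and the $d_1$-differential algebraically. By Corollary~\ref{algmbp} the layers are $\mbp^{iso}/\rho$-modules, so by adjunction the $E_1$-group becomes a hom-group in $\mbp^{iso}/\rho{\text-}\Mod_{cell}$; applying Proposition~\ref{hommbp} (or directly Lemma~\ref{unisoinj} after identifying the relevant homology) rewrites it as an $\F$-linear, resp.\ $\G_{**}$-linear, hom-group. Using Lemma~\ref{overmbp} — with $Y$ playing the role of the module annihilated by $\rho$, which holds since everything here is a $\un^{iso}/\rho$-module — one computes $\mbp^{iso}_{**}$ of each layer as $\Sigma^{-s,0}\G_{**}\otimes_{\F}\overline{\G_{**}}^{\otimes s}\otimes_{\F}\mbp^{iso}_{**}(Y)$. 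Combining with Lemma~\ref{unisoinj} applied to $X$, the $E_1$-page is the cobar complex
$$E_1^{s,t,u} \cong \Hom^{t,u}_{\G_{**}}\big(\mbp^{iso}_{**}(X),\, \G_{**}\otimes_{\F}\overline{\G_{**}}^{\otimes s}\otimes_{\F}\mbp^{iso}_{**}(Y)\big),$$
and the $d_1$-differential is the standard cobar differential, so that $E_2^{s,t,u}\cong \ext^{s,t,u}_{\G_{**}}(\mbp^{iso}_{**}(X),\mbp^{iso}_{**}(Y))$ as claimed.

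The third step is convergence. Here I would replace $Y^{\wedge}_{\mbp^{iso}/\rho}$ by $Y$ itself: since $Y$ is a $\un^{iso}/\rho$-module and $\un^{iso}/\rho$ is $\mbp$-complete by Corollary~\ref{mbpcomp} — hence $\mbp^{iso}/\rho$-complete after localizing — the $\mbp^{iso}/\rho$-nilpotent completion of $Y$ is $Y$. For \emph{strong} convergence (rather than just conditional convergence) I would use the boundedness hypothesis: $X$ and $Y$ lie in $\un^{iso}/\rho{\text-}\Mod^b_{cell}$, so their $\mbp^{iso}$-homologies are concentrated in finitely many Chow--Novikov degrees, and the $t$-structure of Proposition~\ref{mbpts} (or rather its analogue on $\un^{iso}/\rho$-modules via $\mbp^{iso}$-homology) forces the relevant $\ext$-groups, hence the whole spectral sequence, to be concentrated in a vanishing region that guarantees $E_r = E_\infty$ for $r$ large in each tridegree and kills the $\lim^1$ term.

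I expect the main obstacle to be the convergence/identification of the abutment precisely: one must be careful that the tower built inside $\un^{iso}/\rho{\text-}\Mod$ actually computes $[\,\cdot\,,Y^{\wedge}_{\mbp^{iso}/\rho}]$ and that, under the boundedness hypothesis on Chow--Novikov degrees, the tower is eventually constant in each fixed tridegree so that conditional convergence upgrades to strong convergence. The algebraic identification of $E_2$, by contrast, is essentially bookkeeping once Lemmas~\ref{overmbp} and~\ref{unisoinj} are in place.
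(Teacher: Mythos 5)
Your proposal follows essentially the same route as the paper's proof: it constructs the $\mbp^{iso}$-based Adams tower in $\un^{iso}/\rho$-modules (your $\overline{\mbp^{iso}/\rho}^{\wedge_{\un^{iso}/\rho}s}\wedge_{\un^{iso}/\rho}Y$ is the same tower as the paper's $\overline{\mbp^{iso}}^{\wedge s}\wedge Y$), identifies the $E_1$-page as the cobar complex via Lemmas~\ref{unisoinj} and~\ref{overmbp}, uses Corollary~\ref{mbpcomp} to replace the nilpotent completion by $Y$ itself, and appeals to the boundedness of $\mbp^{iso}$-homology in Chow--Novikov degree for strong convergence. The only differences are cosmetic (relative smash notation, invoking Proposition~\ref{hommbp} as an alternative to Lemma~\ref{unisoinj}); the argument is correct.
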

\begin{proof}
	Since $\mbp$ is a cellular spectrum, for any $\un^{iso}/\rho$-cellular module $Y$, we can consider the following Postnikov system in $\un^{iso}/\rho{\text -}\Mod_{cell}$:
	$$
	\xymatrix{
		\dots \ar@{->}[r] &  \overline{\mbp^{iso}}^{\wedge s} \wedge Y \ar@{->}[r] \ar@{->}[d] &  \dots \ar@{->}[r]   & \overline{\mbp^{iso}} \wedge Y \ar@{->}[r] \ar@{->}[d]	 & Y \ar@{->}[d]  \\
		&	\mbp^{iso}\wedge \overline{ \mbp^{iso}}^{\wedge s} \wedge Y \ar@{->}[ul]^{\Sigma} & & \mbp^{iso} \wedge \overline{\mbp^{iso}} \wedge Y \ar@{->}[ul]^{\Sigma}  &	\mbp^{iso} \wedge Y \ar@{->}[ul]^{\Sigma} 
	}.
	$$

	For any $X$ in $\un^{iso}/\rho{\text -}\Mod_{cell}$, applying the functor $[\Sigma^{**}X,-]_{\un^{iso}/\rho}$ to the previous Postnikov system yields an isotropic Adams-Novikov spectral sequence whose $E_1$-page is given by:
	$$E_1^{s,t,u} \cong [\Sigma^{t-s,u}X,\mbp^{iso}\wedge (\overline{\mbp^{iso}})^{\wedge s} \wedge Y]_{\un^{iso}/\rho},$$
	with the first differential:
	$$d_1^{s,t,u}:E_1^{s,t,u} \rightarrow E_1^{s+1,t,u}.$$
	
	Since $Y$ is also $\un^{iso}/\rho$-cellular, it follows that $\mbp^{iso}\wedge (\overline{\mbp^{iso}})^{\wedge s} \wedge Y$ is $\mbp^{iso}/\rho$-cellular for all $s \geq 0$. Therefore, by Lemmas \ref{unisoinj} and \ref{overmbp}, there exists an isomorphism:
	$$E_1^{s,t,u} \cong \Hom ^{t,u}_{\G_{**}}(\mbp^{iso}_{**}(X),\G_{**} \otimes_{\F} \overline{\G_{**}}^{\otimes s} \otimes_{\F} \mbp^{iso}_{**}(Y)),$$
	from which we deduce that the $E_2$-page admits the following standard description:
	$$E_2^{s,t,u} \cong \ext^{s,t,u}_{\G_{**}}(\mbp^{iso}_{**}(X),\mbp^{iso}_{**}(Y)).$$
	
	As constructed, the isotropic Adams-Novikov spectral sequence conditionally converges to the groups $[\Sigma^{t-s,u}X,Y^{\wedge}_{\mbp}]_{\un^{iso}/\rho}$. Since $\un^{iso}/\rho$ is $\mbp$-complete by Corollary \ref{mbpcomp}, and $Y$ is a $\un^{iso}/\rho$-module, it follows that $Y$ is also $\mbp$-complete, and so $Y^{\wedge}_{\mbp} \simeq Y$.
	
	To establish the strong convergence of the spectral sequence, we have to use the boundedness assumption on $X$ and $Y$. Specifically, by hypothesis, we know that $\mbp^{iso}_{**}(X)$ and $\mbp^{iso}_{**}(Y)$ are concentrated in Chow-Novikov degrees $[a,b]$ and $[c,d]$ respectively, for some integers $a$, $b$, $c$ and $d$. This forces the $E_1$-page to be trivial outside the range $c-b+2u \leq t \leq d-a+2u$. Since the $r$-th differential is given by:
	$$d_r^{s,t,u}:E_r^{s,t,u} \rightarrow E_r^{s+r,t+r-1,u},$$
	we deduce that it must be trivial for $r > d-a-c+b+1$. Consequently, the spectral sequence collapses at the $E_{d-a-c+b+2}$-page, implying strong convergence, as desired.
	\end{proof}

\begin{dfn}
	\normalfont
We define the following full subcategories of $\un^{iso}/\rho{\text -}\Mod^b_{cell}$:
	\begin{itemize}
		\item $\un^{iso}/\rho{\text -}\Mod^{b, \geq0}_{cell}\coloneqq \{E \in \un^{iso}/\rho{\text -}\Mod^b_{cell} : \mbp^{iso}_{p,q}(E)\cong 0 \:\mathrm{for} \: p<2q\},$ 
		\item $\un^{iso}/\rho{\text -}\Mod^{b, \leq0}_{cell}\coloneqq \{E \in \un^{iso}/\rho{\text -}\Mod^b_{cell} : \mbp^{iso}_{p,q}(E)\cong 0 \:\mathrm{for} \: p>2q\},$
		\item $\un^{iso}/\rho{\text -}\Mod^{\heartsuit}_{cell}\coloneqq \{E \in \un^{iso}/\rho{\text -}\Mod^b_{cell} : \mbp^{iso}_{p,q}(E)\cong 0 \:\mathrm{for} \: p\neq2q\}.$
	\end{itemize}
\end{dfn}

\begin{cor}\label{hom}
	Let $X$ be an object in $\un^{iso}/\rho{\text -}\Mod^{b, \geq0}_{cell}$ and $Y$ an object in $\un^{iso}/\rho{\text -}\Mod^{b, \leq0}_{cell}$. Then, there is an isomorphism:
	$$[X,Y]_{\un^{iso}/\rho} \cong \Hom ^{0,0}_{\G_{**}}(\mbp^{iso}_{**}(X),\mbp^{iso}_{**}(Y)).$$
\end{cor}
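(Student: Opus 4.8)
The plan is to extract this from the strongly convergent spectral sequence of Theorem~\ref{speseq},
$$E_2^{s,t,u}\cong \ext^{s,t,u}_{\G_{**}}(\mbp^{iso}_{**}(X),\mbp^{iso}_{**}(Y)) \Longrightarrow [\Sigma^{t-s,u}X,Y]_{\un^{iso}/\rho},$$
by showing that the line $t-s=0$, $u=0$ of the abutment is concentrated in a single spot. Since $[X,Y]_{\un^{iso}/\rho}$ is the abutment in bidegree $(0,0)$, strong convergence equips it with a finite filtration whose associated graded is $\bigoplus_{s\geq 0}E_{\infty}^{s,s,0}$; it then suffices to prove that $E_2^{s,s,0}\cong 0$ for $s\geq 1$, that $E_2^{0,0,0}\cong \Hom^{0,0}_{\G_{**}}(\mbp^{iso}_{**}(X),\mbp^{iso}_{**}(Y))$, and that $E_{\infty}^{0,0,0}=E_2^{0,0,0}$.

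The key input is the Chow-Novikov grading. By hypothesis $\mbp^{iso}_{**}(X)$ is concentrated in Chow-Novikov degrees $\geq 0$ and $\mbp^{iso}_{**}(Y)$ in Chow-Novikov degrees $\leq 0$, while $\G_{**}$, hence $\overline{\G_{**}}$ and all of its $\F$-tensor powers, is concentrated in Chow-Novikov degree $0$. Using the description of the $E_1$-page from the proof of Theorem~\ref{speseq} together with the cofree comodule adjunction, $E_2^{s,t,u}$ is the $s$-th cohomology of the cobar complex whose term in cohomological degree $s$ and internal bidegree $(t,u)$ is $\Hom^{0,0}_{\F}(\Sigma^{t,u}\mbp^{iso}_{**}(X),\ \overline{\G_{**}}^{\otimes s}\otimes_{\F}\mbp^{iso}_{**}(Y))$. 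Since $\Sigma^{t,u}$ shifts Chow-Novikov degree by $t-2u$, the source is concentrated in Chow-Novikov degrees $\geq t-2u$ and the target in Chow-Novikov degrees $\leq 0$; because an $\F$-linear map of degree $(0,0)$ preserves bidegree, this $\Hom$ vanishes whenever $t-2u>0$. On the line $t=s$, $u=0$ this forces the cobar term, and hence $E_2^{s,s,0}$, to vanish for every $s\geq 1$, whereas for $s=0$ the complex in internal bidegree $(0,0)$ computes precisely $\Hom^{0,0}_{\G_{**}}(\mbp^{iso}_{**}(X),\mbp^{iso}_{**}(Y))$.

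It remains to check that no differential disturbs $E_r^{0,0,0}$. Incoming differentials originate from $E_r^{-r,1-r,0}$, which vanishes because the cohomological degree is negative; outgoing differentials $d_r\colon E_r^{0,0,0}\to E_r^{r,r-1,0}$ land in a subquotient of $E_2^{r,r-1,0}$, which is again $0$ for $r\geq 2$ by the vanishing just established (now $t-2u=r-1>0$). Hence $E_{\infty}^{0,0,0}=E_2^{0,0,0}$, and combining this with strong convergence yields the isomorphism. I do not expect a genuine obstacle here; the only care required is bookkeeping, namely aligning the trigrading $(s,t,u)$ on $\ext_{\G_{**}}$ with the Chow-Novikov filtration and with the explicit $E_1$-page of Theorem~\ref{speseq}. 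One could alternatively phrase the vanishing via relative injective resolutions of $Y$ rather than the cobar complex, which sidesteps writing the complex but needs the same degree estimates; in spirit this is the same degeneration argument used in the $C\tau$-module picture of Gheorghe--Wang--Xu.
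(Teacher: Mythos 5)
Your proposal is correct and follows essentially the same route as the paper: both rely on the isotropic Adams--Novikov spectral sequence of Theorem~\ref{speseq}, use the Chow--Novikov degree hypotheses on $X$ and $Y$ to see that the $E_1$-page vanishes for $t>2u$, observe that the abutment line $t=s$, $u=0$ is therefore concentrated at $s=0$, and check that no differential can affect $E_r^{0,0,0}$. Your phrasing via the cofree-comodule adjunction and the cobar complex makes the degree bookkeeping explicit, but it is a rendering of the same vanishing argument the paper applies directly to the $\Hom^{t,u}_{\G_{**}}$ description of the $E_1$-page, not a different method.
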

\begin{proof}
	We use the isotropic Adams-Novikov spectral sequence developed in Theorem \ref{speseq}, whose $E_1$-page is given by:
	$$E_1^{s,t,u} \cong \Hom ^{t,u}_{\G_{**}}(\mbp^{iso}_{**}(X),\G_{**} \otimes_{\F} \overline{\G_{**}}^{\otimes s} \otimes_{\F} \mbp^{iso}_{**}(Y)).$$
	
	Since $X$ is concentrated in non-negative Chow-Novikov degree and $Y$ in non-positive Chow-Novikov degree, it follows that $E_1^{s,t,u}\cong 0$ for all $s \geq 0$ and $t > 2u$. 
	
	Note that the only groups contributing to $[X,Y]_{\un^{iso}/\rho}$ are of the form $E_1^{t,t,0}$ for all $t \geq 0$, and these are trivial for $t>0$. Furthermore, for all $r\geq 1$, we have the following complex:
	$$E_r^{-r,-r+1,0}\xrightarrow{d_r}E_r^{0,0,0}\xrightarrow{d_r}E_r^{r,r-1,0},$$
	whose homology in the middle is $E_{r+1}^{0,0,0}$. Since the two ends of the complex are trivial for $r\geq 2$, we obtain an isomorphism $E_{r+1}^{0,0,0}\cong E_{r}^{0,0,0}$ for all $r\geq 2$. Therefore, we have:
	$$[X,Y]_{\un^{iso}/\rho} \cong E_{\infty}^{0,0,0}\cong E_2^{0,0,0} \cong \ext^{0,0,0}_{\G_{**}}(\mbp^{iso}_{**}(X),\mbp^{iso}_{**}(Y)) \cong \Hom^{0,0}_{\G_{**}}(\mbp^{iso}_{**}(X),\mbp^{iso}_{**}(Y)),$$
	which completes the proof.
	\end{proof}

\begin{cor}\label{morcor}
	Let $X$ and $Y$ be objects in $\un^{iso}/\rho{\text -}\Mod^{\heartsuit}_{cell}$. Then, there is an isomorphism:
	$$[\Sigma^{t,u}X,Y]_{\un^{iso}/\rho} \cong \ext^{2u-t,2u,u}_{\G_{**}}(\mbp^{iso}_{**}(X),\mbp^{iso}_{**}(Y)).$$
\end{cor}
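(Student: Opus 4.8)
The plan is to read the statement off the isotropic Adams--Novikov spectral sequence of Theorem \ref{speseq}. Since $X$ and $Y$ lie in $\un^{iso}/\rho{\text -}\Mod^{\heartsuit}_{cell}$, which is a full subcategory of $\un^{iso}/\rho{\text -}\Mod^b_{cell}$, that theorem applies and yields a strongly convergent spectral sequence
$$E_2^{s,t,u}\cong \ext^{s,t,u}_{\G_{**}}(\mbp^{iso}_{**}(X),\mbp^{iso}_{**}(Y)) \Longrightarrow [\Sigma^{t-s,u}X,Y]_{\un^{iso}/\rho}.$$
In particular $[\Sigma^{t,u}X,Y]_{\un^{iso}/\rho}$ is assembled from the groups $E_{\infty}^{s,s+t,u}$ with $s\geq 0$, so it suffices to show that at most one of them survives, namely $E_{\infty}^{2u-t,2u,u}$, and that the spectral sequence degenerates there.

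\noindent\textbf{Key step: a Chow--Novikov constraint.} I would first exploit the heart hypothesis. By definition of $\un^{iso}/\rho{\text -}\Mod^{\heartsuit}_{cell}$, both $\mbp^{iso}_{**}(X)$ and $\mbp^{iso}_{**}(Y)$ are concentrated in Chow--Novikov degree $0$, and $\G_{**}$ (hence also $\overline{\G_{**}}$) is concentrated in Chow--Novikov degree $0$ by its very definition. Therefore the $E_1$-term, which by the proof of Theorem \ref{speseq} is
$$E_1^{s,t,u}\cong \Hom^{t,u}_{\G_{**}}\big(\mbp^{iso}_{**}(X),\,\G_{**}\otimes_{\F}\overline{\G_{**}}^{\otimes s}\otimes_{\F}\mbp^{iso}_{**}(Y)\big),$$
vanishes unless $t=2u$; passing to subquotients gives $E_r^{s,t,u}\cong 0$ for $t\neq 2u$ and all $r\geq 1$, and in particular $E_2^{s,t,u}\cong \ext^{s,t,u}_{\G_{**}}(\mbp^{iso}_{**}(X),\mbp^{iso}_{**}(Y))$ is nonzero only on the line $t=2u$. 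This is the same mechanism already used in the proof of Theorem \ref{mustlab}.

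\noindent\textbf{Collapse and conclusion.} Next I would run the collapse argument: the differential $d_r^{s,t,u}\colon E_r^{s,t,u}\to E_r^{s+r,t+r-1,u}$ has source supported on the line $t=2u$ and target supported on the line $t+r-1=2u$, and for $r\geq 2$ these lines are disjoint, so all higher differentials vanish and the spectral sequence degenerates at $E_2$. Among the groups $E_{\infty}^{s,s+t,u}$ contributing to $[\Sigma^{t,u}X,Y]_{\un^{iso}/\rho}$, the vanishing on the line $t\neq 2u$ forces $s+t=2u$, i.e.\ $s=2u-t$; since $s\geq 0$, the target group vanishes whenever $2u-t<0$ (consistent with $\ext$ in negative homological degree being zero), and otherwise there is a single contributing group, hence no extension problem. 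We conclude
$$[\Sigma^{t,u}X,Y]_{\un^{iso}/\rho}\cong E_{\infty}^{2u-t,2u,u}\cong E_2^{2u-t,2u,u}\cong \ext^{2u-t,2u,u}_{\G_{**}}(\mbp^{iso}_{**}(X),\mbp^{iso}_{**}(Y)),$$
as claimed. I do not expect a genuine obstacle here; the only points that need care are the triple-grading bookkeeping and checking that objects of the heart are bounded, so that Theorem \ref{speseq} is indeed applicable.
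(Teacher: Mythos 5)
Your proof is correct and follows essentially the same route as the paper's: both run the isotropic Adams--Novikov spectral sequence from Theorem \ref{speseq}, observe that the heart hypothesis concentrates the $E_1$-page on the line $t=2u$ so that all $d_r$ for $r\geq 2$ vanish, and then read off the surviving group $E_2^{2u-t,2u,u}$. Your version is slightly more explicit in noting that $\G_{**}$ is concentrated in Chow--Novikov degree $0$ and in checking the absence of an extension problem, but the argument is the same.
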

\begin{proof}
	Since both $X$ and $Y$ are concentrated in Chow-Novikov degree 0, the only non-trivial groups on the $E_1$-page of the isotropic Adams-Novikov spectral sequence are of the form $E^{s,2u,u}_1$ for all $s \geq 0$ and $u \in \Z$. Therefore, the complex 
	$$E_r^{s-r,2u-r+1,u}\xrightarrow{d_r}E_r^{s,2u,u}\xrightarrow{d_r}E_r^{s+r,2u+r-1,u}$$
	has trivial ends for all $r \geq 2$, so we get an isomorphism $E_{r+1}^{s,2u,u}\cong E_r^{s,2u,u}$. Hence, the spectral sequence collapses at the second page, and we conclude that
	$$[\Sigma^{t,u}X,Y]_{\un^{iso}/\rho} \cong E_{\infty}^{2u-t,2u,u} \cong E_2^{2u-t,2u,u} \cong \ext^{2u-t,2u,u}_{\G_{**}}(\mbp^{iso}_{**}(X),\mbp^{iso}_{**}(Y)),$$
	which is what we aimed to show.
	\end{proof}

\begin{lem}\label{ext}
	Let $M$ be a left $\G_{**}$-comodule that is a finite $\F$-module concentrated in Chow-Novikov degree 0. Then, there exists an object $X$ in $\un^{iso}/\rho{\text -}\Mod^{\heartsuit}_{cell}$ such that $M \cong \mbp^{iso}_{**}(X)$.
\end{lem}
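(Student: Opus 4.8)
The plan is to induct on $\dim_{\F}M$, realizing $M$ by a finite $\un^{iso}/\rho$-cell complex assembled by splicing trivial comodules one weight at a time. If $\dim_{\F}M\le 1$ there is nothing to do: $0$ is realized by $0$, and a one-dimensional comodule, necessarily trivial because $\G_{**}$ is concentrated in non-negative weights with $\G_{0,0}=\F$, equals $\Sigma^{2q_0,q_0}\F$ for its weight $q_0$ and is realized by $\Sigma^{2q_0,q_0}\un^{iso}/\rho$, which lies in $\un^{iso}/\rho{\text -}\Mod^{\heartsuit}_{cell}$ and has $\mbp^{iso}_{**}\cong\F$ by Proposition \ref{pimbpisorho}.

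\textbf{Inductive step.} Let $q_0$ be the least weight in which $M$ is supported. Counitality together with the connectivity of $\G_{**}$ forces the coaction on $N:=M_{2q_0,q_0}$ to be trivial, so $N$ is a trivial subcomodule and there is a short exact sequence $0\to N\to M\to M'\to 0$ of $\G_{**}$-comodules with $M'$ finite, concentrated in Chow--Novikov degree $0$, and of strictly smaller dimension; write $e\in\ext^{1,0,0}_{\G_{**}}(M',N)$ for its class. By induction pick $X'\in\un^{iso}/\rho{\text -}\Mod^{\heartsuit}_{cell}$ with $\mbp^{iso}_{**}(X')\cong M'$, and set $X_N:=\bigvee\Sigma^{2q_0,q_0}\un^{iso}/\rho$, one summand per basis vector of $N$, so $\mbp^{iso}_{**}(X_N)\cong N$. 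Feeding $X'$ and $\Sigma^{1,0}X_N$ into the isotropic Adams--Novikov spectral sequence of Theorem \ref{speseq}, and noting that $\mbp^{iso}_{**}(X')$ sits in Chow--Novikov degree $0$ while $\mbp^{iso}_{**}(\Sigma^{1,0}X_N)$ sits in degree $1$, the $E_2$-term $\ext^{s,t,u}_{\G_{**}}(M',\Sigma^{1,0}N)$ is concentrated on $t-2u=1$; hence the only tridegree contributing to $[X',\Sigma^{1,0}X_N]_{\un^{iso}/\rho}$ is $(s,t,u)=(1,1,0)$, it supports no differentials, and
$$[X',\Sigma^{1,0}X_N]_{\un^{iso}/\rho}\;\cong\;\ext^{1,1,0}_{\G_{**}}(M',\Sigma^{1,0}N)\;\cong\;\ext^{1,0,0}_{\G_{**}}(M',N).$$
For any $f\colon X'\to\Sigma^{1,0}X_N$ the map $\mbp^{iso}_{**}(f)$ vanishes for degree reasons (a degree-preserving comodule map from Chow--Novikov degree $0$ to degree $1$ is zero), so the cofiber sequence $X_N\to\fib(f)\to X'\xrightarrow{f}\Sigma^{1,0}X_N$ yields a short exact sequence $0\to N\to\mbp^{iso}_{**}(\fib(f))\to M'\to 0$ whose class I call $\Phi(f)\in\ext^{1,0,0}_{\G_{**}}(M',N)$; it remains to choose $f$ with $\Phi(f)=e$ and put $X:=\fib(f)$, which then satisfies $\mbp^{iso}_{**}(X)\cong M$ and, being cellular, bounded and concentrated in Chow--Novikov degree $0$, lies in $\un^{iso}/\rho{\text -}\Mod^{\heartsuit}_{cell}$.

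\textbf{Where the work is.} The point is surjectivity of $\Phi$; since its source and target are finite $\F$-vector spaces of equal dimension by the displayed isomorphism, it suffices to show $\Phi$ is injective. If $\Phi(f)=0$, the sequence $0\to N\to\mbp^{iso}_{**}(\fib(f))\to M'\to 0$ splits; since $X'$ and $\fib(f)$ both lie in the heart, Corollary \ref{hom} realizes the comodule splitting by a map $h\colon X'\to\fib(f)$, and the composite with $\fib(f)\to X'$ is a self-map of $X'$ inducing the identity on $\mbp^{iso}_{**}$, hence an equivalence because $X'$ is $\mbp$-complete as a $\un^{iso}/\rho$-module (Corollary \ref{mbpcomp}). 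Thus $X'$ splits off $\fib(f)$, forcing $\fib(f)\simeq X_N\vee X'$ with the maps of the cofiber sequence identified with the inclusion and the projection, so that $f=0$. This injectivity argument, which is the only place where genuine homotopy theory enters rather than comodule bookkeeping, is where I expect the subtlety to lie; one could alternatively deduce surjectivity of $\Phi$ from the standard fact that a homology Adams-type spectral sequence detects filtration-one maps via the extension class of the associated attaching cofiber sequence.
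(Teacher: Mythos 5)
Your proof is correct, but it takes a genuinely different route than the paper's. The paper invokes Landweber's theorem \cite[Theorem 3.3]{L} to produce a finite filtration of $M$ with \emph{one-dimensional} subquotients, then builds $X$ from the bottom up, attaching one cell per filtration step: realize the subcomodule $M_{i-1}$ inductively, identify $[\Sigma^{2q_i-1,q_i}\un^{iso}/\rho,X_{i-1}]_{\un^{iso}/\rho}\cong\ext^{1,0,0}_{\G_{**}}(\Sigma^{2q_i,q_i}\F,M_{i-1})$ by Corollary \ref{morcor} (which applies because both objects lie in the heart), and take the cofiber of the corresponding map; the matching of extension classes is asserted via the standard Adams-filtration fact. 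You instead induct on $\dim_{\F}M$, peel off the entire bottom-weight piece $N$ as a trivial subcomodule (avoiding Landweber at the cost of using counitality and connectivity of $\G_{**}$), realize the \emph{quotient} $M'$ inductively, and build $X$ as the fiber of a map $X'\to\Sigma^{1,0}X_N$. Since $\Sigma^{1,0}X_N$ is not in the heart you cannot use Corollary \ref{morcor}; you instead run the spectral sequence of Theorem \ref{speseq} directly, which is correct but requires the extra tridegree bookkeeping. Your most notable divergence is at the final step: rather than asserting the extension-class identification, you prove surjectivity of $\Phi$ via injectivity plus a dimension count, with injectivity established by a splitting argument through Corollary \ref{hom}. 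This is a clean workaround (and the retract argument that $f=0$ is correct), though one remark: the composite $X'\to\fib(f)\to X'$ inducing the identity on $\mbp^{iso}_{**}$ is already homotopic to $\mathrm{id}_{X'}$ by Corollary \ref{hom} (or Proposition \ref{heart}), so the appeal to $\mbp$-completeness is unnecessary. Both proofs are valid; yours trades Landweber and the Adams-filtration assertion for a slightly heavier spectral-sequence computation and the dimension-count trick.
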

\begin{proof}
	Since $M$ is finite, \cite[Theorem 3.3]{L} implies that there exists a finite filtration of subcomodules
	$$0 \cong M_0 \subset M_1 \subset \dots \subset M_n \cong M$$
	with graded components $M_i/M_{i-1} \cong \Sigma^{2q_i,q_i} \F$ for all $1\leq i\leq n$.
	 
	We proceed by induction on $i$. For $i=1$, we have:
	$$M_1\cong M_1/M_0 \cong \Sigma^{2q_1,q_1} \F \cong \mbp^{iso}_{**}(\Sigma^{2q_1,q_1}\un^{iso}/\rho).$$
	
    Now, suppose we have an object $X_{i-1}$ in $\un^{iso}/\rho{\text-}\Mod^{\heartsuit}_{cell}$ such that $M_{i-1} \cong \mbp^{iso}_{**}(X_{i-1})$. By Corollary \ref{morcor}, the short exact sequence
	$$0 \rightarrow M_{i-1} \rightarrow M_i
	\rightarrow \Sigma^{2q_i,q_i} \F \rightarrow 0$$
	corresponds to a map $f_i$ in $$[\Sigma^{2q_i-1,q_i}\un^{iso}/\rho,X_{i-1}]_{\un^{iso}/\rho} \cong \ext_{\G_{**}}^{1,0,0}(\Sigma^{2q_i,q_i} \F,M_{i-1}),$$
    so we can define an object $X_i$ in $\un^{iso}/\rho{\text -}\Mod^{\heartsuit}_{cell}$ by:
	$$X_i \coloneqq \cof(\Sigma^{2q_i-1,q_i}\un^{iso}/\rho \xrightarrow{f_i} X_{i-1}).$$ 
	
	The associated cofiber sequence induces a short exact sequence of $\mbp^{iso}$-homology groups:
	$$0 \rightarrow  M_{i-1} \rightarrow \mbp^{iso}_{**}(X_i) \rightarrow \Sigma^{2q_i,q_i} \F  \rightarrow 0.$$
	
	By Remark \ref{morcor} and the construction of $X_i$, it follows that the latter extension coincides with the one corresponding to $f_i$. Therefore, we obtain an isomorphism $\mbp^{iso}_{**}(X_i) \cong M_i$, which completes the proof.
	\end{proof}

\begin{lem}\label{filtcol}
	Let $\{X_{\alpha}\}$ be a filtered system in $\un^{iso}/\rho{\text -}\Mod^{\heartsuit}_{cell}$. Then, the colimit $\colim X_{\alpha}$, taken in $\un^{iso}/\rho{\text -}\Mod_{cell}$, belongs to $\un^{iso}/\rho{\text -}\Mod^{\heartsuit}_{cell}$.	
\end{lem}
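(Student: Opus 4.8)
The plan is to exploit the fact that $\mbp^{iso}$-homology commutes with filtered colimits, so that the defining vanishing condition of the heart is preserved under such colimits.

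First I would record where the colimit lives. The cellular subcategory $\un^{iso}/\rho{\text -}\Mod_{cell}$ is a localizing subcategory of $\un^{iso}/\rho{\text -}\Mod$, hence closed under all colimits, so $\colim X_{\alpha}$ is automatically an object of $\un^{iso}/\rho{\text -}\Mod_{cell}$ and cellularity is not an issue. Moreover this colimit agrees with the one computed in $\SH(\R)$, since the forgetful functor $\un^{iso}/\rho{\text -}\Mod \rightarrow \SH(\R)$ preserves colimits (colimits of modules are computed on underlying objects). I would then use two standard facts: the smash product commutes with all colimits, giving $\mbp^{iso}\wedge \colim X_{\alpha}\simeq \colim(\mbp^{iso}\wedge X_{\alpha})$; and $\Sigma^{p,q}\un$ is a compact object of $\SH(\R)$ for all $p,q\in\Z$, so that $\pi_{p,q}(-)=[\Sigma^{p,q}\un,-]$ commutes with filtered colimits. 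Combining these yields, for all $p,q$,
$$\mbp^{iso}_{p,q}(\colim X_{\alpha}) \cong \colim_{\alpha}\,\mbp^{iso}_{p,q}(X_{\alpha}).$$

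Next I would feed in the hypothesis: since each $X_{\alpha}$ lies in $\un^{iso}/\rho{\text -}\Mod^{\heartsuit}_{cell}$, we have $\mbp^{iso}_{p,q}(X_{\alpha})\cong 0$ whenever $p\neq 2q$, and a filtered colimit of zero groups is zero, so $\mbp^{iso}_{p,q}(\colim X_{\alpha})\cong 0$ for all $p\neq 2q$. In particular the $\mbp^{iso}$-homology of $\colim X_{\alpha}$ is concentrated in the single Chow-Novikov degree $0$, hence non-trivial in only finitely many Chow-Novikov degrees, so $\colim X_{\alpha}$ lies in $\un^{iso}/\rho{\text -}\Mod^b_{cell}$ and satisfies the vanishing condition defining the heart. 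Therefore $\colim X_{\alpha}\in \un^{iso}/\rho{\text -}\Mod^{\heartsuit}_{cell}$, as claimed. There is no serious obstacle in this argument; the only points demanding a little care are the bookkeeping about which ambient category the colimit is formed in and the compactness of $\Sigma^{p,q}\un$ in $\SH(\R)$, both of which are routine.
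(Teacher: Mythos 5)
Your proof is correct and follows the same strategy as the paper: both hinge on $\mbp^{iso}$-homology commuting with filtered colimits, so that the vanishing of $\mbp^{iso}_{p,q}$ for $p\neq 2q$ passes to the colimit. You simply unpack the relevant compactness and preservation-of-colimits facts that the paper takes for granted.
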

\begin{proof}
	Since $\mbp^{iso}$-homology preserves filtered colimits, we immediately see that $\colim X_{\alpha}$, where the colimit is taken in $\un^{iso}/\rho{\text -}\Mod_{cell}$, has $\mbp^{iso}$-homology concentrated in Chow-Novikov degree 0. This concludes the proof.
	\end{proof}

\begin{prop}\label{heart}
	The functor 
	$$\mbp^{iso}_{**}:\un^{iso}/\rho{\text -}\Mod^{\heartsuit}_{cell} \rightarrow \A_{*}{\text -}\Com_{*}$$
	is an equivalence of $\infty$-categories.
\end{prop}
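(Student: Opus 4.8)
The plan is to show that $\mbp^{iso}_{**}$ is fully faithful and essentially surjective, and that $\un^{iso}/\rho{\text -}\Mod^{\heartsuit}_{cell}$ is genuinely a $1$-category, so that the claimed equivalence of $\infty$-categories reduces to an equivalence of ordinary categories. First I would check that the functor is well-defined, i.e. that it really lands in $\A_*{\text -}\Com_*$. If $X$ lies in $\un^{iso}/\rho{\text -}\Mod^{\heartsuit}_{cell}$, then $\mbp^{iso}\wedge X$ is a $\mbp^{iso}/\rho$-module, so $\rho$ acts as zero on $\mbp^{iso}_{**}(X)$ and its $\mbp^{iso}_{**}(\mbp^{iso})$-comodule structure factors, via Proposition \ref{pimbpisorho}, through a $\G_{**}$-comodule structure. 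The Chow--Novikov concentration says $\mbp^{iso}_{**}(X)$ is supported on the line $p=2q$, and such $\G_{**}$-comodules are precisely graded comodules over the topological dual Steenrod algebra $\A_*$ (using $\G_{p,q}\cong\A_q$ exactly when $p=2q$). Hence $\mbp^{iso}_{**}$ takes values in $\A_*{\text -}\Com_*$.

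Next, full faithfulness. For $X,Y$ in the heart, Corollary \ref{morcor} with $t=u=0$ gives
$$[X,Y]_{\un^{iso}/\rho}\cong\ext^{0,0,0}_{\G_{**}}(\mbp^{iso}_{**}(X),\mbp^{iso}_{**}(Y))=\Hom^{0,0}_{\G_{**}}(\mbp^{iso}_{**}(X),\mbp^{iso}_{**}(Y)),$$
which under the identification above is exactly $\Hom_{\A_*{\text -}\Com_*}(\mbp^{iso}_{**}(X),\mbp^{iso}_{**}(Y))$; so $\mbp^{iso}_{**}$ is bijective on morphism sets of the homotopy categories. To upgrade this to an equivalence of $\infty$-categories it suffices to see that the mapping spaces in $\un^{iso}/\rho{\text -}\Mod^{\heartsuit}_{cell}$ are discrete: for $n>0$, Corollary \ref{morcor} with $t=n$, $u=0$ gives $\pi_n\mathrm{Map}(X,Y)\cong[\Sigma^{n,0}X,Y]_{\un^{iso}/\rho}\cong\ext^{-n,0,0}_{\G_{**}}(\mbp^{iso}_{**}(X),\mbp^{iso}_{**}(Y))=0$, since $\ext$ vanishes in negative cohomological degree. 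Thus the heart is equivalent to a $1$-category and $\mbp^{iso}_{**}$ is fully faithful.

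Finally, essential surjectivity. Every object $M$ of $\A_*{\text -}\Com_*$ is the filtered colimit of its finite subcomodules $M_\alpha$ (comodules over a coalgebra flat over a field are locally finite). By the full faithfulness just established, the restriction of $\mbp^{iso}_{**}$ to the full subcategory of the heart spanned by realizations of finite comodules is an equivalence onto the full subcategory of finite graded $\A_*$-comodules; in particular Lemma \ref{ext} supplies objects $X_\alpha$ with $\mbp^{iso}_{**}(X_\alpha)\cong M_\alpha$, and the inclusions $M_\alpha\hookrightarrow M_\beta$ lift uniquely and compatibly to a filtered diagram $\{X_\alpha\}$ in $\un^{iso}/\rho{\text -}\Mod^{\heartsuit}_{cell}$. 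Set $X\coloneqq\colim_\alpha X_\alpha$, the colimit taken in $\un^{iso}/\rho{\text -}\Mod_{cell}$; then $X$ lies in the heart by Lemma \ref{filtcol}, and since $\mbp^{iso}$-homology commutes with filtered colimits (the generators $\Sigma^{0,q}\un^{iso}/\rho$ being compact), $\mbp^{iso}_{**}(X)\cong\colim_\alpha M_\alpha\cong M$. Hence $\mbp^{iso}_{**}$ is essentially surjective, completing the proof.

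I expect the main subtlety to be the bookkeeping in the last step: assembling the realizations $X_\alpha$ of the finite subcomodules into an honest filtered diagram — which is precisely why full faithfulness must be proved first, so that the lifts of the inclusions exist and are coherent — and confirming that $\mbp^{iso}$-homology commutes with the relevant filtered colimits inside the cellular subcategory. By contrast, the vanishing of the higher mapping-space homotopy groups and the translation between $\G_{**}$-comodules supported on $p=2q$ and graded $\A_*$-comodules are routine, being immediate from Corollary \ref{morcor} and the definition of $\G_{**}$.
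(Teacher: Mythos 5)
Your argument is correct and follows essentially the same strategy as the paper: use the isotropic Adams--Novikov computations to show full faithfulness (and discreteness of mapping spaces), then use locally finite comodules together with Lemma \ref{ext} and Lemma \ref{filtcol} to get essential surjectivity. The only small deviation is that for full faithfulness you invoke Corollary \ref{morcor} (vanishing of $\ext$ in negative cohomological degree), whereas the paper cites Corollary \ref{hom} directly, observing that for $n\geq 0$ the shift $\Sigma^{n,0}X$ still lies in $\un^{iso}/\rho{\text -}\Mod^{b,\geq 0}_{cell}$; the two routes are equivalent. You are also slightly more careful than the paper in spelling out why the filtered system $\{M_\alpha\}$ lifts coherently to a diagram $\{X_\alpha\}$ in the heart — this is indeed exactly where full faithfulness (and discreteness of the mapping spaces) is needed, and your remark is a useful clarification of a step the paper leaves implicit.
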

\begin{proof}
		By Corollary \ref{hom}, for all $X$ and $Y$ in $\un^{iso}/\rho{\text -}\Mod^{\heartsuit}_{cell}$ and for all $n\geq0$, we have the following isomorphisms:
	\begin{align*}
		[\Sigma^{n,0}X,Y]_{\un^{iso}/\rho}\cong\Hom ^{n,0}_{\G_{**}}(\mbp^{iso}_{**}(X),\mbp^{iso}_{**}(Y))\cong
		\begin{cases}
			\Hom ^{0,0}_{\G_{**}}(\mbp^{iso}_{**}(X),\mbp^{iso}_{**}(Y)) & \mathrm{for} \: n=0\\
			0 & \mathrm{for} \:  n>0
		\end{cases}
	\end{align*}
	from which it follows that the functor $\mbp^{iso}_{**}$ is fully faithful. On the other hand, every object $M$ in $\A_{*}{\text -}\Com_{*}$ is a filtered colimit of graded comodules $M_{\alpha}$, where each $M_{\alpha}$ is finite as an $\F$-module. By Lemma \ref{ext}, each of these $M_{\alpha}$ lies in the essential image of $\mbp^{iso}_{**}$ and, by Lemma \ref{filtcol}, the subcategory $\un^{iso}/\rho{\text -}\Mod^{\heartsuit}_{cell}$ is closed under filtered colimits in $\un^{iso}/\rho{\text -}\Mod_{cell}$. We deduce that $M\cong \colim M_{\alpha}$ is also in the essential image of $\mbp^{iso}_{**}$. Hence, $\mbp^{iso}_{**}$ is essentially surjective, and therefore an equivalence.
	\end{proof}

\begin{prop}\label{tri}
The pair $(\un^{iso}/\rho{\text -}\Mod^{b, \geq0}_{cell},\un^{iso}/\rho{\text -}\Mod^{b, \leq0}_{cell})$ defines a bounded $t$-structure on the stable $\infty$-category $\un^{iso}/\rho{\text -}\Mod^{b}_{cell}$.
\end{prop}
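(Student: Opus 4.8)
The plan is to mirror the proof of Proposition \ref{mbpts}, verifying the hypotheses of \cite[Proposition 3.6]{GWX}, but now with $\mbp^{iso}$-homology playing the role that homotopy groups played there. The key homological inputs are Corollary \ref{hom} (a $\Hom$-computation between the connective and coconnective parts) and Proposition \ref{heart} (the identification of the putative heart with $\A_*{\text -}\Com_*$), both of which are already available.

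First I would dispatch the elementary closure properties and the exhaustion axiom. Applying $\mbp^{iso}_{**}(-)$ to a cofiber sequence gives a long exact sequence, and the Chow--Novikov support of $\mbp^{iso}_{**}(\Sigma^{1,0}E)$ is that of $\mbp^{iso}_{**}(E)$ shifted up by one; from this one reads off that $\un^{iso}/\rho{\text -}\Mod^{b,\geq0}_{cell}$ is closed under suspension, $\un^{iso}/\rho{\text -}\Mod^{b,\leq0}_{cell}$ under desuspension, and both under extensions (boundedness of the middle term of an extension following from that of the outer two). Setting $\un^{iso}/\rho{\text -}\Mod^{b,\geq n}_{cell}\coloneqq\{E\in\un^{iso}/\rho{\text -}\Mod^{b}_{cell}:\mbp^{iso}_{p,q}(E)\cong0\text{ for }p<2q+n\}$, exhaustion $\un^{iso}/\rho{\text -}\Mod^{b}_{cell}=\bigcup_{n\in\Z}\un^{iso}/\rho{\text -}\Mod^{b,\geq n}_{cell}$ is immediate, since a bounded object has $\mbp^{iso}$-homology supported in finitely many Chow--Novikov degrees.

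Next comes the orthogonality. For $X\in\un^{iso}/\rho{\text -}\Mod^{b,\geq0}_{cell}$ and $Y\in\un^{iso}/\rho{\text -}\Mod^{b,\leq-1}_{cell}\subset\un^{iso}/\rho{\text -}\Mod^{b,\leq0}_{cell}$, Corollary \ref{hom} identifies $[X,Y]_{\un^{iso}/\rho}$ with $\Hom^{0,0}_{\G_{**}}(\mbp^{iso}_{**}(X),\mbp^{iso}_{**}(Y))$. Since $\G_{**}$ is concentrated in Chow--Novikov degree $0$, the coaction on any $\G_{**}$-comodule preserves Chow--Novikov degree, so every comodule splits as a direct sum of its Chow--Novikov graded pieces and a degree-$(0,0)$ comodule homomorphism is Chow--Novikov-degree-preserving; as $\mbp^{iso}_{**}(X)$ is supported in degrees $\geq0$ and $\mbp^{iso}_{**}(Y)$ in degrees $\leq-1$, this group vanishes.

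The remaining and main point is the truncation step. Given $X\in\un^{iso}/\rho{\text -}\Mod^{b,\geq0}_{cell}$, the Chow--Novikov decomposition of $M\coloneqq\mbp^{iso}_{**}(X)$ exhibits the part $M_{\geq1}$ of positive Chow--Novikov degree as a subcomodule, with quotient $M_0\coloneqq M/M_{\geq1}$ a $\G_{**}$-comodule concentrated in Chow--Novikov degree $0$, hence an object of $\A_*{\text -}\Com_*$. By Proposition \ref{heart} there is $X_0\in\un^{iso}/\rho{\text -}\Mod^{\heartsuit}_{cell}$ with $\mbp^{iso}_{**}(X_0)\cong M_0$, and since $X_0$ is coconnective, Corollary \ref{hom} realizes the surjection $M\to M_0$ by a unique map $X\to X_0$; its fiber $F$ is again bounded and cellular, and the $\mbp^{iso}$-homology long exact sequence collapses (the map being degreewise surjective) to $0\to\mbp^{iso}_{**}(F)\to M\to M_0\to0$, so $\mbp^{iso}_{**}(F)\cong M_{\geq1}$ is supported in Chow--Novikov degrees $\geq1$ and $F\in\un^{iso}/\rho{\text -}\Mod^{b,\geq1}_{cell}$. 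With these properties in hand, \cite[Proposition 3.6]{GWX} yields that $(\un^{iso}/\rho{\text -}\Mod^{b,\geq0}_{cell},\un^{iso}/\rho{\text -}\Mod^{b,\leq0}_{cell})$ is a bounded $t$-structure. I expect the only real care needed is bookkeeping: checking that the fiber $F$ and the terms of extensions stay inside $\un^{iso}/\rho{\text -}\Mod^{b}_{cell}$, which follows from cellularity being closed under (co)limits in $\un^{iso}/\rho{\text -}\Mod$ together with the boundedness constraints in the long exact sequences; everything structural about the heart is already packaged in Proposition \ref{heart}.
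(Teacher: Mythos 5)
Your proof is correct and follows essentially the same route as the paper: verify the hypotheses of \cite[Proposition 3.6]{GWX} using Corollary \ref{hom} for the orthogonality and for realizing the truncation map, and Proposition \ref{heart} for producing the object $X_0$ in the heart. You supply slightly more detail than the paper in two places — spelling out why $\Hom^{0,0}_{\G_{**}}$ between comodules of disjoint Chow--Novikov supports vanishes, and checking directly that the fiber $F$ has $\mbp^{iso}$-homology $M_{\geq 1}$ — but these are the same steps the paper leaves implicit, not a different argument.
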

\begin{proof}
	By definition, $\un^{iso}/\rho{\text -}\Mod^{b, \geq0}_{cell}$ is closed under suspensions, $\un^{iso}/\rho{\text -}\Mod^{b, \leq0}_{cell}$ is closed under desuspensions, and both are closed under extensions. Moreover, we have the identification:
	$$\un^{iso}/\rho{\text -}\Mod^{b}_{cell}=\bigcup_{n\in \Z}\un^{iso}/\rho{\text -}\Mod^{b, \geq n}_{cell},$$
	where 
	$$\un^{iso}/\rho{\text -}\Mod^{b, \geq n}_{cell}\coloneqq \{E \in \un^{iso}/\rho{\text -}\Mod^{b}_{cell} : \mbp^{iso}_{p,q}(E)\cong 0 \: \mathrm{for} \: p<2q+n\}.$$ 
	
	Now, let $X$ be in $\un^{iso}/\rho{\text -}\Mod^{b, \geq0}_{cell}$ and $Y$ in $\un^{iso}/\rho{\text -}\Mod^{b, \leq-1}_{cell}$. By Corollary \ref{hom}, we obtain:
	$$[X,Y]_{\un^{iso}/\rho}\cong \Hom ^{0,0}_{\G_{**}}(\mbp^{iso}_{**}(X),\mbp^{iso}_{**}(Y)) \cong 0.$$
	
	To finish, take $X$ in $\un^{iso}/\rho{\text -}\Mod^{b, \geq0}_{cell}$ and consider the epimorphism $\mbp^{iso}_{**}(X)\rightarrow \mbp^{iso}_{**}(X)_0$ that annihilates all elements of $\mbp^{iso}_{**}(X)$ in positive Chow-Novikov degrees. By Proposition \ref{heart}, there exists an object $X_0$ in $\un^{iso}/\rho{\text -}\Mod^{\heartsuit}_{cell}$ such that $\mbp^{iso}_{**}(X_0) \cong\mbp^{iso}_{**}(X)_0$. 
	
	By Corollary \ref{hom}, the epimorphism $\mbp^{iso}_{**}(X)\rightarrow \mbp^{iso}_{**}(X)_0$ is induced by a unique map $X \rightarrow X_0$, whose fiber belongs to $\un^{iso}/\rho{\text -}\Mod^{b, \geq1}_{cell}$. Thus, \cite[Proposition 3.6]{GWX} concludes the proof.
	\end{proof}

\begin{thm}\label{specfib}
	There is an equivalence of stable $\infty$-categories:
	$$\un^{iso}/\rho{\text -}\Mod_{cell} \simeq {\mathcal D}(\A_*{\text -}\Com_*).$$
\end{thm}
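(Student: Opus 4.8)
The plan is to run the argument of Theorem~\ref{eqmbpisorho} one degree of generality higher, with $\mbp^{iso}/\rho$ replaced by $\un^{iso}/\rho$ and the semisimple category $\F{\text -}\Mod_*$ replaced by $\A_*{\text -}\Com_*$. By Proposition~\ref{tri} the stable $\infty$-category $\un^{iso}/\rho{\text -}\Mod^b_{cell}$ carries a bounded $t$-structure whose heart, by Proposition~\ref{heart}, is $\A_*{\text -}\Com_*$ via the functor $\mbp^{iso}_{**}$. I would then apply \cite[Proposition 2.11]{GWX} to produce a $t$-exact equivalence ${\mathcal D}^b(\A_*{\text -}\Com_*)\xrightarrow{\simeq}\un^{iso}/\rho{\text -}\Mod^b_{cell}$, and conclude by ind-completing, exactly as at the end of the proof of Theorem~\ref{eqmbpisorho}:
$$\un^{iso}/\rho{\text -}\Mod_{cell}\simeq\ind(\un^{iso}/\rho{\text -}\Mod^b_{cell})\simeq\ind({\mathcal D}^b(\A_*{\text -}\Com_*))={\mathcal D}(\A_*{\text -}\Com_*).$$

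The crux is to feed \cite[Proposition 2.11]{GWX} the correct homological input about the heart. Unlike $\F{\text -}\Mod_*$, the category $\A_*{\text -}\Com_*$ is far from semisimple, so the trivial $\ext$-vanishing used in Theorem~\ref{eqmbpisorho} is unavailable. Instead I would use that $\A_*{\text -}\Com_*$ is a Grothendieck abelian category cogenerated by the injective cofree comodules $\A_*\otimes_{\F}V$, and that by Lemma~\ref{injmbp} these are precisely the comodules $\mbp^{iso}_{**}(I)$ for $I$ in $\mbp^{iso}/\rho{\text -}\Mod^{\heartsuit}_{cell}$; hence, under Proposition~\ref{heart}, they correspond to the essential image of $\mbp^{iso}/\rho{\text -}\Mod^{\heartsuit}_{cell}$ inside $\un^{iso}/\rho{\text -}\Mod^{\heartsuit}_{cell}$. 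The required vanishing then comes from Corollary~\ref{morcor}: for $X$ in $\un^{iso}/\rho{\text -}\Mod^{\heartsuit}_{cell}$, $I$ corresponding to a cofree comodule, and $n>0$,
$$[\Sigma^{-n,0}X,I]_{\un^{iso}/\rho}\cong\ext^{n,0,0}_{\G_{**}}(\mbp^{iso}_{**}(X),\mbp^{iso}_{**}(I))\cong\ext^{n}_{\A_*}(\mbp^{iso}_{**}(X),\mbp^{iso}_{**}(I))=0,$$
where the middle isomorphism uses that $\G_{**}$ is concentrated on the diagonal $p=2q$ and that both comodules sit in Chow-Novikov degree $0$, so that $\ext$ over $\G_{**}$ collapses to $\ext$ over the topological dual Steenrod algebra $\A_*$, and the last equality holds because cofree comodules are injective.

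The hard part will be exactly this homological step: pinning down the right distinguished family of objects of the heart ---made explicit via Lemma~\ref{injmbp} as the $\mbp^{iso}$-homologies of $\mbp^{iso}/\rho$-cellular modules--- and checking that \cite[Proposition 2.11]{GWX} is available in the (injective, cohomological) form appropriate to a non-semisimple heart, rather than the projective form used for $\F{\text -}\Mod_*$. The remaining ingredient, the identification of $\un^{iso}/\rho{\text -}\Mod_{cell}$ with the ind-completion of its bounded cellular subcategory, is routine and handled just as in Theorem~\ref{eqmbpisorho}.
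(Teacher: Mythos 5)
Your proposal follows essentially the same route as the paper's proof: bounded $t$-structure from Proposition~\ref{tri}, heart identified with $\A_*{\text -}\Com_*$ via Proposition~\ref{heart}, the $\ext$-vanishing for injectives from Corollary~\ref{morcor}, then \cite[Proposition 2.11]{GWX} in its injective form and ind-completion. The only small difference is that you single out cofree comodules explicitly, identified via Lemma~\ref{injmbp} as the $\mbp^{iso}_{**}(I)$ for $I$ in $\mbp^{iso}/\rho{\text -}\Mod^{\heartsuit}_{cell}$, whereas the paper works directly with arbitrary $Y$ in the heart having $\mbp^{iso}_{**}(Y)$ injective; both variants suffice since cofrees are injective and cogenerate.
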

\begin{proof}
		By Propositions \ref{heart} and \ref{tri}, the stable $\infty$-category $\un^{iso}/\rho{\text -}\Mod^b_{cell}$ is equipped with a bounded $t$-structure, whose heart is the abelian category $\A_* {\text -} \Com_*$, which has enough injective objects. 
		
		Now, let $X$ and $Y$ be objects in $\un^{iso}/\rho{\text -}\Mod^{\heartsuit}_{cell} $ such that $\mbp^{iso}_{**}(Y)$ is an injective $\G_{**}$-comodule. By Corollary \ref{morcor}, we have the following isomorphisms:
		$$[\Sigma^{-i}X,Y]_{\un^{iso}/\rho} \cong \ext^{i,0,0}_{\G_{**}}(\mbp^{iso}_{**}(X),\mbp^{iso}_{**}(Y)) \cong 0$$
		for all $i>0$, since $\mbp^{iso}_{**}(Y)$ is injective. Hence, \cite[Proposition 2.11]{GWX} implies that there is a $t$-exact equivalence of stable $\infty$-categories:
	$${\mathcal D}^b(\A_*{\text -}\Com_*) \xrightarrow{\simeq}\un^{iso}/\rho{\text -}\Mod^b_{cell}.$$
	
	By passing to the respective ind-completions, we obtain the equivalence:
	$$\un^{iso}/\rho{\text -}\Mod_{cell} \simeq \ind(\un^{iso}/\rho{\text -}\Mod^b_{cell}) \simeq \ind({\mathcal D}^b(\A_*{\text -}\Com_*))={\mathcal D}(\A_*{\text -}\Com_*),$$
	which completes the proof.
\end{proof}

\section{The generic fiber}

Once we have understood the structure of the special fiber, we can proceed to study the generic fiber of the deformation in $\SH^{iso}(\R)$ induced by the parameter $\rho$. This is the main goal of this section.

Denote by $\rr:\SH(\R)\rightarrow \SH$ the real realization functor, which sends an $\R$-scheme $X$ to the topological space $X(\R)$ of its real points.

\begin{prop}\label{unisos}
	There is an equivalence of $E_{\infty}$-algebras in $\SH$:
	$$\rr(\un^{iso}) \simeq \sph.$$
\end{prop}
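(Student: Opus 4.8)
The plan is to compute $\rr(\un^{iso})$ directly from the colimit defining $\un^{iso}$, using that $\rr$ is a symmetric monoidal left adjoint (hence commutes with colimits, cofibers, and sends $\Sigma^{\infty}_+X$ to $\Sigma^{\infty}_+X(\R)$). Since
$$\un^{iso} \simeq \colim_n \cof(\Sigma^{\infty}_{+}\ce(Q_{\rho^n}) \rightarrow \un),$$
applying $\rr$ yields
$$\rr(\un^{iso}) \simeq \colim_n \cof(\Sigma^{\infty}_{+}\rr(\ce(Q_{\rho^n})) \rightarrow \sph).$$
So the whole statement reduces to understanding the topological space of real points of the Pfister quadric $Q_{\rho^n}$, and of its \v{C}ech nerve.

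The key geometric observation is that $Q_{\rho^n}$ is the projective quadric cut out by $\langle\langle -1\rangle\rangle^{\otimes n} = \sum_{i=0}^{2^n-1} x_i^2 = 0$, which has \emph{no real points at all}: a sum of squares over $\R$ vanishes only when every coordinate does, and the origin is excluded from projective space. Hence $Q_{\rho^n}(\R) = \varnothing$. The \v{C}ech nerve $\ce(Q_{\rho^n})$ is the simplicial object $[m]\mapsto Q_{\rho^n}^{\times (m+1)}$, and real realization sends it to the \v{C}ech nerve of the map $Q_{\rho^n}(\R)\to \ast$; since $Q_{\rho^n}(\R)=\varnothing$, this \v{C}ech nerve is the simplicial space which is empty in every degree, whose geometric realization is the empty space. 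Therefore $\Sigma^{\infty}_+\rr(\ce(Q_{\rho^n})) \simeq \Sigma^{\infty}_+\varnothing \simeq 0$ (the suspension spectrum of the empty space, i.e.\ the zero object, since $\varnothing_+ = \ast$). Consequently
$$\cof(\Sigma^{\infty}_{+}\rr(\ce(Q_{\rho^n})) \rightarrow \sph) \simeq \cof(0 \rightarrow \sph) \simeq \sph$$
for every $n$, and the transition maps in the colimit are easily seen to be the identity on $\sph$ (they are induced by the maps of \v{C}ech nerves, all of which become the identity after realization), so $\colim_n \sph \simeq \sph$. This gives the underlying equivalence $\rr(\un^{iso})\simeq\sph$.

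For the $E_{\infty}$-refinement, note that $\rr$ is symmetric monoidal, so it carries the $E_{\infty}$-algebra $\un^{iso}$ (Proposition \ref{idm}) to an $E_{\infty}$-algebra in $\SH$; the structure map $\rr(\un)\to\rr(\un^{iso})$ is a map of $E_{\infty}$-rings $\sph\to\rr(\un^{iso})$, and any $E_{\infty}$-ring receiving a map from $\sph$ which is moreover an equivalence of underlying spectra is equivalent to $\sph$ as an $E_{\infty}$-ring, because the unit map $\sph \to \rr(\un^{iso})$ is the unique $E_{\infty}$-map out of the initial object and it is an equivalence. The only point requiring a little care is the precise bookkeeping of real points of a projective quadric and the identification $\rr(\ce(Q_{\rho^n}))\simeq |\,\text{\v{C}ech nerve of }\varnothing\to\ast\,|$; this is the main (minor) obstacle, but it is standard once one observes the defining form is positive definite and hence anisotropic over $\R$, and that $\rr$ commutes with the finite products and the geometric realization defining the \v{C}ech nerve.
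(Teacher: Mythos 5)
Your proposal is correct and follows essentially the same route as the paper: real realization preserves colimits, the Pfister quadric $Q_{\rho^n}$ is anisotropic and hence has no real points, so $\Sigma^{\infty}_+\ce(\rr(Q_{\rho^n}))\simeq 0$ and each term of the colimit is $\sph$. Your additional remarks on the $E_{\infty}$-refinement (via symmetric monoidality of $\rr$ and initiality of $\sph$) and on the \v{C}ech nerve of the empty space are correct elaborations of details the paper leaves implicit.
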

\begin{proof}
	Since real realization preserves colimits, we obtain the following equivalence in $\SH$:
	$$\rr(\un^{iso}) \simeq \colim_n\cof(\Sigma^{\infty}_+\ce(\rr(Q_{\rho^n}))\rightarrow \sph).$$
	
	Since the quadric $Q_{\rho^n}$ is anisotropic, it has no real points, implying that $\rr(Q_{\rho^n})=\emptyset$ for all $n \geq 0$. Therefore, we have an equivalence $\Sigma^{\infty}_+\ce(\rr(Q_{\rho^n})) \simeq 0$ for all $n$, from which we deduce that $\rr(\un^{iso}) \simeq \sph$.
	\end{proof}

\begin{cor}
The real realization functor $\rr:\SH(\R)\rightarrow \SH$ factors through $\SH^{iso}(\R)$.
\end{cor}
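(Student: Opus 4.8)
The plan is to produce the factorization by hand, using only that real realization is symmetric monoidal and the computation $\rr(\un^{iso})\simeq\sph$ from Proposition \ref{unisos}. First I would record that $\rr:\SH(\R)\rightarrow\SH$ is a symmetric monoidal, colimit-preserving functor, and that the fully faithful inclusion $\iota:\SH^{iso}(\R)\hookrightarrow\SH(\R)$ satisfies $\iota\circ L^{iso}\simeq\un^{iso}\wedge(-)$ as endofunctors of $\SH(\R)$. The candidate factorization is then the composite $\overline{\rr}\coloneqq\rr\circ\iota:\SH^{iso}(\R)\rightarrow\SH$, and the claim is that $\rr\simeq\overline{\rr}\circ L^{iso}$.

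To verify this, I would compute, for an arbitrary $E$ in $\SH(\R)$,
$$\overline{\rr}(L^{iso}E)\simeq\rr(\un^{iso}\wedge E)\simeq\rr(\un^{iso})\wedge\rr(E)\simeq\sph\wedge\rr(E)\simeq\rr(E),$$
where the second equivalence is the strong monoidality of $\rr$, the third is Proposition \ref{unisos}, and the last is the unit equivalence in $\SH$. Equivalently --- and this route makes the factorization manifestly canonical --- one checks that $\rr$ annihilates every $\un^{iso}$-acyclic object: if $\un^{iso}\wedge E\simeq 0$, then $\rr(E)\simeq\rr(\un^{iso})\wedge\rr(E)\simeq\rr(\un^{iso}\wedge E)\simeq 0$. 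Since $L^{iso}$ exhibits $\SH^{iso}(\R)$ as the Bousfield localization of $\SH(\R)$ at the $\un^{iso}$-acyclics, the universal property of localization then supplies the unique colimit-preserving functor $\overline{\rr}$ with $\overline{\rr}\circ L^{iso}\simeq\rr$, and one identifies it with $\rr\circ\iota$.

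The part requiring a little care --- not really an obstacle --- is the $\infty$-categorical bookkeeping: one must make sure the displayed equivalences are natural in $E$, i.e.\ that they come from the symmetric monoidal structure on $\rr$ and from an equivalence $\rr(\un^{iso})\simeq\sph$ of $E_{\infty}$-algebras (which is exactly what Proposition \ref{unisos} provides), so that they assemble into an equivalence of functors rather than a merely pointwise statement. Granting that, the argument is entirely formal.
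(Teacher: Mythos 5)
Your argument is correct and coincides with the paper's proof: both use strong monoidality of $\rr$ together with Proposition \ref{unisos} to show $\rr(\un^{iso}\wedge E)\simeq\rr(E)$, and then invoke the description of $\SH^{iso}(\R)$ as a localization of $\SH(\R)$ to obtain the factorization. The second route you sketch (checking that $\rr$ kills $\un^{iso}$-acyclics and appealing to the universal property) and the remark about naturality are fine elaborations, but they do not change the substance of the argument.
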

\begin{proof}
	It follows from Proposition \ref{unisos} that:
	$$\rr(E^{iso}) = \rr(\un^{iso} \wedge E) \simeq \rr(\un^{iso})\wedge \rr(E) \simeq \sph \wedge \rr(E) \simeq \rr(E)$$
	for all real motivic spectra $E$. Since $\SH^{iso}(\R)$ is the localization of $\SH(\R)$ obtained by inverting the maps $E \rightarrow E^{iso}$, we deduce that $\rr:\SH(\R)\rightarrow \SH$ factors as:
	$$\SH(\R)\xrightarrow{L^{iso}} \SH^{iso}(\R) \xrightarrow{\rr} \SH$$
	which is what we aimed to show.
	\end{proof}

\begin{thm}\label{equniso}
There is an equivalence of stable $\infty$-categories:
$$\un^{iso}[\rho^{-1}]{\text -}\Mod \simeq \SH.$$
\end{thm}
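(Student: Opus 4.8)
The plan is to combine the factorization of the real realization functor just established with Bachmann's theorem identifying $\rho$-inverted motivic spectra over a real closed field with classical spectra. Recall from \cite{B} that inverting $\rho$ in $\SH(\R)$ yields an equivalence $\SH(\R)[\rho^{-1}] \simeq \SH$, realized concretely by the real realization functor: the composite $\SH(\R) \xrightarrow{\rho^{-1}} \SH(\R)[\rho^{-1}] \simeq \SH$ agrees with $\rr$. The first step is therefore to recall this statement precisely and to note that, since $\un^{iso}[\rho^{-1}]{\text -}\Mod$ is obtained from $\SH^{iso}(\R)$ by inverting $\rho$, and $\SH^{iso}(\R)$ is itself a smashing localization of $\SH(\R)$, the category $\un^{iso}[\rho^{-1}]{\text -}\Mod$ is the localization of $\SH(\R)$ at the combined class of maps (isotropic localization maps together with $\rho$).

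The key point is that these two localizations, when performed together, collapse to a single one. Concretely, I would argue that inverting $\rho$ already forces the isotropic localization to be trivial: by Remark \ref{annull}, the isotropic localization annihilates the suspension spectra of anisotropic Pfister quadrics $\Sigma^{\infty}_+ Q_{\rho^n}$, but after inverting $\rho$ these spectra are \emph{already} contractible. Indeed, $Q_{\rho^n}$ has no real points, so $\rr(\Sigma^{\infty}_+ Q_{\rho^n}) \simeq 0$; since $\rho$-inversion is computed by real realization, $\Sigma^{\infty}_+ Q_{\rho^n}[\rho^{-1}] \simeq 0$ in $\SH(\R)[\rho^{-1}]$, and likewise for their \v{C}ech nerves. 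Hence the map $\un \to \un^{iso}$ becomes an equivalence after inverting $\rho$, so that $\un^{iso}[\rho^{-1}] \simeq \un[\rho^{-1}]$ as $E_{\infty}$-rings in $\SH(\R)$, and consequently
$$\un^{iso}[\rho^{-1}]{\text -}\Mod \simeq \un[\rho^{-1}]{\text -}\Mod = \SH(\R)[\rho^{-1}] \simeq \SH,$$
where the last equivalence is \cite{B}.

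To make the middle step rigorous one should check that the colimit defining $\un^{iso}$ commutes with $\rho$-inversion (it does, since inverting $\rho$ is a smashing localization, hence preserves colimits) and that each $\cof(\Sigma^{\infty}_+\ce(Q_{\rho^n})\to\un)[\rho^{-1}] \simeq \un[\rho^{-1}]$, which follows from $\Sigma^{\infty}_+\ce(Q_{\rho^n})[\rho^{-1}] \simeq 0$; the latter in turn reduces, via the simplicial filtration of the \v{C}ech nerve and the fact that $\Sigma^\infty_+$ of a real-point-free scheme is $\rho$-locally trivial, to Bachmann's computation. I expect the main obstacle to be purely bookkeeping: verifying that ``invert $\rho$ inside $\un^{iso}{\text -}\Mod$'' really does agree with ``take the image of $\un^{iso}{\text -}\Mod$ under $\rho$-inversion of $\SH(\R)$'', i.e. that the two smashing localizations commute and their composite is the Bousfield localization at the union of the two classes of maps. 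This is formal — it uses that $\un^{iso}$ and $\un[\rho^{-1}]$ are both idempotent $E_\infty$-algebras and that $\un^{iso} \wedge \un[\rho^{-1}] \simeq \un[\rho^{-1}]$ by the vanishing above — but it is the step where one must be careful about which category the localization is taken in.
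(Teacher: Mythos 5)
Your proposal is correct and is essentially the paper's argument, just presented with the key step inlined rather than factored out. The paper isolates the fact that anisotropic Pfister quadrics become contractible under real realization (equivalently, $\rho$-inversion) as Proposition \ref{unisos}, i.e.\ $\rr(\un^{iso}) \simeq \sph$, and then observes that the fully faithful embedding $\un^{iso}[\rho^{-1}]{\text -}\Mod \hookrightarrow \SH(\R)[\rho^{-1}] \simeq \SH$ is therefore an equivalence. Your version reaches the same conclusion by showing directly that $\un \to \un^{iso}$ becomes an equivalence after inverting $\rho$ and then passing to module categories; the underlying computation (no real points on $Q_{\rho^n}$, $\rho$-inversion preserves colimits and is identified with $\rr$ via Bachmann) is identical. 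The ``bookkeeping'' worry you flag at the end — that inverting $\rho$ internally to $\un^{iso}{\text -}\Mod$ agrees with the external composite localization — is exactly what the fully faithful embedding phrasing in the paper handles: both $\un^{iso}$ and $\un[\rho^{-1}]$ are idempotent $E_\infty$-algebras, so the order in which the two smashing localizations are performed is immaterial.
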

\begin{proof}
	By \cite[Theorem 35 and Proposition 36]{B}, we know that $\SH(\R)[\rho^{-1}] \simeq \SH$, and that the $\rho$-periodization functor $\SH(\R) \rightarrow \SH(\R)[\rho^{-1}]$ coincides with real realization. Hence, by Proposition \ref{unisos}, the fully faithful functor
	$$\un^{iso}[\rho^{-1}]{\text -}\Mod \hookrightarrow \SH(\R)[\rho^{-1}] \simeq \SH$$
	is an equivalence. This concludes the proof.
	\end{proof}

\begin{prop}\label{mbpisohft}
	There is an equivalence of $E_{\infty}$-algebras in $\SH$:
	$$\rr(\mbp^{iso}) \simeq \hzt.$$
\end{prop}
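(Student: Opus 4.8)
The plan is to compute $\rr(\mbp^{iso})$ by first relating it to the real realization of $\mbp$ and then exploiting the structure of $\un^{iso}$ established in the previous sections. Since real realization is symmetric monoidal and preserves colimits, Proposition \ref{unisos} gives
$$\rr(\mbp^{iso}) = \rr(\un^{iso} \wedge \mbp) \simeq \rr(\un^{iso}) \wedge \rr(\mbp) \simeq \rr(\mbp)$$
as $E_{\infty}$-algebras in $\SH$, so it suffices to identify $\rr(\mbp)$. For this I would invoke the known computation of the real realization of $\mgl$: by results of Heard–Isaksen or Bachmann (and the splitting $\mgl_{(2)} \simeq \bigvee_{\alpha} \Sigma^{2q_\alpha, q_\alpha}\mbp$), the real realization of $\mgl$ is the homotopy fixed points / real cobordism spectrum whose underlying homotopy is well understood; more directly, $\rr(\mathrm{H}\F)\simeq \hzt$ and $\rr$ of the motivic Brown–Peterson tower degenerates. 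So an efficient route is: combine the $\mgl^{iso}$-module equivalence $\hz^{iso}\simeq \bigvee_I \Sigma^{p_I,q_I}\mbp^{iso}$ from Proposition \ref{freembp} with $\rr$, giving $\rr(\hz^{iso})\simeq \bigvee_I \Sigma^{|r_I|}\rr(\mbp^{iso})$, and separately identify $\rr(\hz^{iso})$.

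Concretely, the key steps in order: (i) apply $\rr$ to Proposition \ref{freembp} to get $\rr(\hz^{iso}) \simeq \bigvee_I \Sigma^{p_I}\rr(\mbp^{iso})$ in $\rr(\mgl^{iso})$-modules, where now the bidegree collapses to a single degree $p_I$ since $\rr(\Sigma^{a,b}) = \Sigma^{a}$ under real realization of $(a,b)$-spheres only when... here one must be careful: $\rr(\Sigma^{1,1}\un) \simeq \Sigma^1\sph$ while $\rr(\Sigma^{1,0}\un)\simeq\rr(\gm/\text{pt})$, and in fact $\rr(\rho)$ is an equivalence, so after realization the two gradings are identified and $r_I$ lands in degree $q_I$; (ii) independently compute $\rr(\hz^{iso})$ by applying $\rr$ to the definition $\hz^{iso}=\un^{iso}\wedge\hz$, yielding $\rr(\hz^{iso})\simeq\rr(\hz)\simeq\hzt$ by the standard fact that real realization of the motivic Eilenberg–MacLane spectrum of $\F$ is the topological one (this uses that $\rr$ sends $\hz$ to $\hzt$, a result of Heller–Ormsby / Bachmann, together with $\rr(\un^{iso})\simeq\sph$ from Proposition \ref{unisos}); (iii) conclude that $\hzt \simeq \bigvee_I \Sigma^{q_I}\rr(\mbp^{iso})$ as a module over $\rr(\mgl^{iso})\simeq\rr(\mgl)$, and then read off $\pi_*\rr(\mbp^{iso})$ by comparing with $\pi_*(\hzt)\cong\A_*$: the indexing set $I$ of the $r_I$ is precisely a monomial basis of the dual Steenrod algebra shifted appropriately, forcing $\pi_*\rr(\mbp^{iso})\cong\F$ concentrated in degree $0$; (iv) since $\rr(\mbp^{iso})$ is a connective $E_\infty$-ring (being $\rr$ of the $E_\infty$-ring $\mbp^{iso}$ from Proposition \ref{mbpring}) with $\pi_0\cong\F$ and $\pi_{>0}=0$, and it is $2$-complete (inherited from $\eta=0$ hence $2=0$, via Remark \ref{orcomp} applied to the $\mgl$-module $\mbp$), it must be the Eilenberg–MacLane spectrum $\hzt$.

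Alternatively, and perhaps more cleanly, one can argue directly on homotopy groups: $\rr$ induces a ring map $\pi_{**}(\mbp^{iso})\to\pi_*(\rr(\mbp^{iso}))$, and by Theorem \ref{pimbp} the source is $\F[\rho]$; since $\rr(\rho)$ is a unit (indeed $\SH(\R)[\rho^{-1}]\simeq\SH$ and $\rr$ is the $\rho$-periodization by \cite{B}), $\pi_*(\rr(\mbp^{iso}))$ is a quotient of $\F[\rho^{\pm 1}]$ concentrated in degree $0$ after the grading collapse, hence $\cong\F$; then connectivity plus the $E_\infty$-structure identifies $\rr(\mbp^{iso})$ with $\hzt$. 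The main obstacle I anticipate is bookkeeping the bigraded-to-graded degree collapse under $\rr$ carefully enough to be sure $\pi_*(\rr(\mbp^{iso}))$ is concentrated in degree $0$ and not spread across negative degrees (a priori $\rho$-inversion could produce negative-degree classes); resolving this cleanly requires knowing that $\mbp^{iso}$ is connective and that $\rr$ preserves connectivity, together with the vanishing $\pi_{p,q}(\mbp^{iso})=0$ for $p<2q$ from Lemma \ref{pimbppure}, which pins the realization to be connective with $\pi_0\cong\F$.
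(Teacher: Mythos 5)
Your ``alternative'' route at the end is exactly the paper's argument: realize $\rr$ as $\rho$-periodization (Bachmann), so $\pi_*(\rr(\mbp^{iso})) \cong \pi_{*,0}(\mbp^{iso}[\rho^{-1}])$; feed in $\pi_{**}(\mbp^{iso}) \cong \F[\rho]$ from Theorem~\ref{pimbp} to find $\F$ concentrated in degree $0$; and conclude from the $E_\infty$-ring structure (via monoidality of $\rr$ and Proposition~\ref{mbpring}) that the result must be $\hzt$. The worry you raise about classes spreading to negative degrees is harmless and needs no extra connectivity input: $\F[\rho]$ is concentrated on the half-diagonal $p=q\leq 0$, so the weight-zero colimit $\colim(\pi_{p,0}(\mbp^{iso})\xrightarrow{\cdot\rho}\pi_{p-1,-1}(\mbp^{iso})\xrightarrow{\cdot\rho}\cdots)$ has every term zero unless $p=0$. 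Also, ``quotient of $\F[\rho^{\pm 1}]$'' is not quite the right phrasing; it is the weight-$0$ summand, but the conclusion is the same.

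Your first sketch, though of similar flavor, has slips that would need repair. You have the bidegree-to-degree collapse reversed: $\Sigma^{1,1}\un = \Sigma^\infty\gm$ realizes to $\sph$ (degree $0$), while $\Sigma^{1,0}\un$ realizes to $\Sigma^1\sph$; in general $\rr(\Sigma^{p,q}\un)\simeq \Sigma^{p-q}\sph$, so $r_I$ lands in degree $p_I-q_I$, not $q_I$. Moreover $\pi_*(\hzt)\cong\F$ concentrated in degree $0$, not $\A_*$ (the latter is $\pi_*(\hzt\wedge\hzt)$). Finally, extracting $\pi_*\rr(\mbp^{iso})$ from a wedge decomposition $\hzt\simeq\bigvee_I\Sigma^{n_I}\rr(\mbp^{iso})$ still requires a prior connectivity bound on $\rr(\mbp^{iso})$ to rule out contributions from the $I\neq\emptyset$ summands in degree $0$ and below, at which point the direct computation via $\rho$-inversion is strictly cleaner.
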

\begin{proof}
	The homotopy groups of $\rr(\mbp^{iso})$ can be computed as follows:
	$$\pi_*(\rr(\mbp^{iso}))\cong \pi_{*,0}(\mbp^{iso}[\rho^{-1}])\cong \colim(\pi_{*,0}(\mbp^{iso}) \xrightarrow{\cdot \rho}\pi_{*-1,-1}(\mbp^{iso})\xrightarrow{\cdot \rho}\dots).$$
	
	Therefore, by Theorem \ref{pimbp}, we obtain an isomorphism $\pi_*(\rr(\mbp^{iso}))\cong \F$. This implies that $\rr(\mbp^{iso})\simeq \hzt$.
	\end{proof}

\begin{prop}\label{rehg}
	For all $p \geq 2q$, the real realization functor $\rr$ induces an isomorphism of homotopy groups:
	$$\pi_{p,q}(\un^{iso}) \cong \pi_{p-q},$$
	where $\pi_*$ are the homotopy groups of the sphere spectrum $\sph$.
\end{prop}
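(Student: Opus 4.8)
The plan is to compute $\pi_{p,q}(\un^{iso})$ in the range $p \geq 2q$ by comparing it with the homotopy groups of its $\rho$-inversion, and then invoke the identification $\un^{iso}[\rho^{-1}]{\text-}\Mod \simeq \SH$ from Theorem \ref{equniso} together with the fact that $\rho$-periodization is real realization (by \cite{B}, as used in the proof of Theorem \ref{equniso}). Concretely, since $\un^{iso}[\rho^{-1}] \simeq \sph$ under real realization, we have $\pi_{p,q}(\un^{iso}[\rho^{-1}]) \cong \pi_{p-q}$ for all $p,q$, because inverting $\rho$ kills the second grading up to the weight shift $\rho$ introduces (degree $(-1,-1)$), leaving the single-graded homotopy groups of $\sph$ indexed by $p - q$. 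So it suffices to show that the localization map $\pi_{p,q}(\un^{iso}) \to \pi_{p,q}(\un^{iso}[\rho^{-1}]) \cong \pi_{p-q}$ is an isomorphism whenever $p \geq 2q$.

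The key step is a connectivity/vanishing argument controlling the $\rho$-Bockstein spectral sequence, or equivalently the cofiber sequences $\Sigma^{-1,-1}\un^{iso} \xrightarrow{\rho} \un^{iso} \to \un^{iso}/\rho$. First I would note that $\un^{iso}[\rho^{-1}] = \colim(\un^{iso} \xrightarrow{\rho} \Sigma^{1,1}\un^{iso} \xrightarrow{\rho} \cdots)$, so $\pi_{p,q}(\un^{iso}[\rho^{-1}]) = \colim_k \pi_{p+k,q+k}(\un^{iso})$. Thus the claim reduces to showing that multiplication by $\rho \colon \pi_{p+1,q+1}(\un^{iso}) \to \pi_{p,q}(\un^{iso})$ is an isomorphism for all $p \geq 2q$ (equivalently $(p+1) \geq 2(q+1) - 1$, i.e. one needs the isomorphism just above the Chow-Novikov line $p = 2q$ and on it). The long exact sequence
$$\pi_{p+1,q+1}(\un^{iso}) \xrightarrow{\rho} \pi_{p,q}(\un^{iso}) \to \pi_{p,q}(\un^{iso}/\rho) \to \pi_{p,q+1}(\un^{iso}) \xrightarrow{\rho} \pi_{p-1,q}(\un^{iso})$$
shows that $\rho$ is an isomorphism in bidegree $(p,q)$ provided $\pi_{p,q}(\un^{iso}/\rho)$ and $\pi_{p+1,q+1}(\un^{iso}/\rho)$ both vanish. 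By Theorem \ref{mustlab}, $\pi_{p,q}(\un^{iso}/\rho) \cong \ext^{2q-p,q}_{\A_*}(\F,\F)$, which vanishes whenever $2q - p < 0$, i.e. exactly when $p > 2q$. So for $p > 2q$ both relevant groups of $\un^{iso}/\rho$ vanish (the second is in cohomological degree $2(q+1) - (p+1) = 2q - p + 1 \leq 0$, and is nonzero only in degree $0$, where it is $\F$ — this boundary case needs a small extra check), giving the isomorphism. Iterating up the colimit then yields $\pi_{p,q}(\un^{iso}) \cong \pi_{p-q}$ for $p \geq 2q$.

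The main obstacle will be the boundary case $p = 2q$ (and the adjacent degree $2q - p = -1$ appearing in the shifted term): there $\ext^{0,q}_{\A_*}(\F,\F) = \F$ is nonzero, so $\rho$ need not be injective a priori, and one must argue that the class $r_0$-type contribution, which is precisely what $\un^{iso}/\rho$ detects on the line $p = 2q$, is already accounted for — i.e. that the map $\pi_{2q+1,q+1}(\un^{iso}) \to \pi_{2q,q}(\un^{iso})$ is still surjective with the expected kernel. I would handle this by a more careful bookkeeping in the $\rho$-Bockstein spectral sequence: the $E_1$-page is $\pi_{**}(\un^{iso}/\rho)[\rho] \cong \ext_{\A_*}(\F,\F)[\rho]$, converging to $\pi_{**}(\un^{iso})$, and comparing with the classical Adams spectral sequence $\ext_{\A_*}(\F,\F) \Rightarrow \pi_*(\sph)^{\wedge}_2$ whose abutment matches $\pi_*(\sph)$ in the relevant range after noting $\un^{iso}$ is already suitably complete in the directions that matter (Proposition \ref{2comp} and its consequences for $\un^{iso}/\rho$). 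The region $p \geq 2q$ corresponds exactly to the non-negative Adams filtration region, so no differentials can leave it, and the $\rho$-multiplication reassembles the classical stable stems; the weight $q$ plays the role of Adams filtration plus stem, and $p - q$ recovers the stem. Alternatively, and perhaps more cleanly, one can observe that the slice or effective filtration forces $\pi_{p,q}(\un^{iso})$ to stabilize under $\rho$ already at $p = 2q$ because $\un^{iso}$ lies in the heart-adjacent part of the homotopy $t$-structure there, so the colimit defining the $\rho$-inversion is eventually constant starting from $p = 2q$.
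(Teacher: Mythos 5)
Your overall strategy is the same as the paper's: reduce to the vanishing of $\pi_{**}(\un^{iso}/\rho)$ in positive Chow--Novikov degree (Theorem~\ref{mustlab}), feed this into the long exact sequence of the $\rho$-cofiber sequence to show $\rho$ acts isomorphically, and then identify the $\rho$-periodic homotopy with $\pi_*(\sph)$ via Proposition~\ref{unisos} / Theorem~\ref{equniso}. However, there is a sign error in your setup that propagates into a fictitious boundary case. Since $\un^{iso}[\rho^{-1}] = \colim(\un^{iso} \to \Sigma^{1,1}\un^{iso} \to \cdots)$, one has
$$\pi_{p,q}(\un^{iso}[\rho^{-1}]) \cong \colim_k \pi_{p-k,q-k}(\un^{iso}),$$
not $\colim_k \pi_{p+k,q+k}(\un^{iso})$ as you wrote. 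Consequently, the maps whose bijectivity one must establish are $\rho\colon \pi_{p',q'}(\un^{iso}) \to \pi_{p'-1,q'-1}(\un^{iso})$ for all $(p',q')$ with $p' \geq 2q'$, not $\rho\colon \pi_{p+1,q+1} \to \pi_{p,q}$ with $p\geq 2q$.

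For the correct maps, the relevant terms of the long exact sequence are $\pi_{p',q'-1}(\un^{iso}/\rho)$ and $\pi_{p'-1,q'-1}(\un^{iso}/\rho)$, which sit in Chow--Novikov degrees $p'-2q'+2 \geq 2$ and $p'-2q'+1 \geq 1$ respectively — both strictly positive when $p'\geq 2q'$, so both groups vanish by Theorem~\ref{mustlab}. There is no boundary case at all; the Chow--Novikov degree of the $\un^{iso}/\rho$ terms is always offset by $+1$ or $+2$ from that of $\pi_{p',q'}(\un^{iso})$, which is exactly what makes the paper's argument close cleanly. The boundary issue you agonize over (at $p = 2q$, where $\ext^{0,0}_{\A_*}(\F,\F) = \F \neq 0$) is real for the map $\pi_{p+1,q+1} \to \pi_{p,q}$ — that map indeed fails to be surjective when $p=q=0$ — but that map simply is not needed. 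As a result, the entire second half of your proposal ($\rho$-Bockstein bookkeeping, comparison with the Adams spectral sequence, slice/effective filtration arguments) is both unnecessary and, as written, not a proof: the Adams spectral sequence would only control the $2$-complete piece, whereas the claim concerns the uncompleted $\pi_*(\sph)$, and the slice-filtration remark has no concrete content. Fix the colimit direction and the vanishing argument goes through immediately, matching the paper's Proposition~\ref{rehg}.
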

\begin{proof}
	The cofiber sequence
	$$\Sigma^{-1,-1}\un^{iso} \xrightarrow{\rho}\un^{iso} \rightarrow \un/\rho$$
	induces a long exact sequence of homotopy groups
	$$\dots \rightarrow \pi_{p,q-1}(\un^{iso}/\rho) \rightarrow \pi_{p,q}(\un^{iso}) \xrightarrow{\cdot \rho}\pi_{p-1,q-1}(\un^{iso})\rightarrow \pi_{p-1,q-1}(\un^{iso}/\rho) \rightarrow \dots.$$
	
	By Theorem \ref{mustlab}, we know that for $p \geq 2q$ both $\pi_{p,q-1}(\un^{iso}/\rho)$ and $\pi_{p-1,q-1}(\un^{iso}/\rho)$ are zero, as they are in positive Chow-Novikov degrees. It follows that the multiplication by $\rho$ gives an isomorphism $\pi_{p,q}(\un^{iso}) \xrightarrow{\cong} \pi_{p-1,q-1}(\un^{iso})$ for $p \geq 2q$.
	
	Since $\pi_{p,q}(\un^{iso})$ is in non-negative Chow-Novikov degree, then also $\pi_{p-1,q-1}(\un^{iso})$ is so. Hence, by induction, for all $n >0$, we get an isomorphism $\pi_{p,q}(\un^{iso}) \xrightarrow{\cong} \pi_{p-n,q-n}(\un^{iso})$ that is the multiplication by $\rho^n$. 
	
	We conclude by noticing that
	$$\pi_{p-q} \cong \pi_{p-q}(\rr(\un^{iso}))\cong \pi_{p-q,0}(\un^{iso}[\rho^{-1}])\cong \colim(\pi_{p,q}(\un^{iso}) \xrightarrow{\cdot \rho}\pi_{p-1,q-1}(\un^{iso})\xrightarrow{\cdot \rho}\dots) \cong \pi_{p,q}(\un^{iso})$$
	for $p \geq 2q$, by Proposition \ref{unisos}.
	\end{proof}

\begin{rem}
\normalfont
We can summarize what we have proved so far by saying that the category $\SH^{iso}_{cell}(\R) \coloneqq \un^{iso}{\text -}\Mod_{cell}$ is a one-parameter deformation of $\SH$ with parameter $\rho$ and special fiber given by derived comodules over the dual Steenrod algebra:
$$\SH \xleftarrow{\rho^{-1}} \SH^{iso}_{cell}(\R) \xrightarrow{\rho=0} {\mathcal D}(\A_*{\text -}\Com_*).$$

This description of the category of real isotropic cellular spectra is reminiscent of the one of $\F$-synthetic spectra introduced in \cite{P}. In the next section, we show that these categories are in fact equivalent, thereby bridging real isotropic motivic homotopy theory and $\F$-synthetic homotopy theory.
\end{rem}

\section{Comparison with $\F$-synthetic spectra}

The main purpose of this section is to compare real isotropic cellular spectra with $\F$-synthetic spectra. Before that, let us recall from \cite{P} the definition of $\F$-synthetic spectra.

\begin{dfn}
	\normalfont
A spectrum $X$ is called finite $\hzt$-projective if it is finite and $H_*(X;\F)$ is a finite $\F$-vector space. We denote by $\Sp^{fp}_{\hzt}$ the full subcategory of spectra spanned by finite $\hzt$-projective spectra. Then, the category of $\F$-synthetic spectra, denoted by $\syn_{\F}$, is defined as the category of spherical sheaves of spectra on $\Sp^{fp}_{\hzt}$ with respect to the $(\hzt)_*$-surjection topology. In brief, we have:
$$\syn_{\F} \coloneqq \sh^{\Sp}_{\Sigma}(\Sp^{fp}_{\hzt}).$$
\end{dfn}
 
In order to compare real isotropic spectra with $\F$-synthetic spectra, we need to introduce an intermediate category.

\begin{dfn}
	\normalfont
An object $X$ in $\SH^{iso}_{cell}(\R)$ is called finite $\mbp^{iso}$-projective if it is finite and $\mbp^{iso}_{**}(X)$ is a free $\mbp^{iso}_{**}$-module, finitely generated by classes in Chow-Novikov degree 0. We denote by $\Sp^{fp}_{\mbp^{iso}}$ the full subcategory of $\SH^{iso}_{cell}(\R)$ spanned by finite $\mbp^{iso}$-projective isotropic cellular spectra, and by $\sh^{\Sp}_{\Sigma}(\Sp^{fp}_{\mbp^{iso}})$ the category of spherical sheaves of spectra on $\Sp^{fp}_{\mbp^{iso}}$ with respect to the $\mbp^{iso}_{**}$-surjection topology.
\end{dfn}

\begin{prop}\label{cruc}
	If $M$ is a finite $\mbp^{iso}$-projective motivic spectrum, then $\rr(M)$ is a finite $\hzt$-projective spectrum. Moreover, the morphism induced by real realization
	$$\mbp^{iso}_{2q,q}(M) \rightarrow H_q(\rr(M);\F)$$ 
	is an isomorphism.
\end{prop}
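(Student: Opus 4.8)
The plan is to reduce the statement to a splitting of $\mbp^{iso}\wedge M$ into a finite wedge of twisted copies of $\mbp^{iso}$, and then apply the real realization functor, using Propositions \ref{unisos} and \ref{mbpisohft}. First I would record that $\rr(M)$ is a finite spectrum: since $\SH^{iso}_{cell}(\R)=\un^{iso}{\text-}\Mod_{cell}$ is generated as a localizing subcategory by the cells $\Sigma^{n,q}\un^{iso}$, a finite object $M$ is a retract of a finite iterated cofibre of such cells, and as $\rr$ is symmetric monoidal, colimit-preserving, and sends $\un^{iso}$ to $\sph$ by Proposition \ref{unisos}, it sends $\Sigma^{n,q}\un^{iso}$ to $\Sigma^{n-q}\sph$; hence $\rr(M)$ is a retract of a finite iterated cofibre of spheres, i.e.\ a finite spectrum.

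Next I would exploit the $\mbp^{iso}$-projectivity hypothesis. Because $M$ is $\un^{iso}$-cellular, $\mbp^{iso}\wedge M$ lies in $\mbp^{iso}{\text-}\Mod_{cell}$, and by hypothesis $\pi_{**}(\mbp^{iso}\wedge M)=\mbp^{iso}_{**}(M)$ is a free $\pi_{**}(\mbp^{iso})$-module on finitely many generators $x_1,\dots,x_n$ with $x_i\in\mbp^{iso}_{2q_i,q_i}(M)$. Lemma \ref{modcell} then gives an equivalence of $\mbp^{iso}$-modules $\mbp^{iso}\wedge M\simeq\bigvee_{i=1}^{n}\Sigma^{2q_i,q_i}\mbp^{iso}$. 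Applying $\rr$, symmetric monoidality together with $\rr(\mbp^{iso})\simeq\hzt$ (Proposition \ref{mbpisohft}) yields
$$\rr(M)\wedge\hzt\simeq\rr(M)\wedge\rr(\mbp^{iso})\simeq\rr(\mbp^{iso}\wedge M)\simeq\bigvee_{i=1}^{n}\Sigma^{q_i}\hzt,$$
so $H_*(\rr(M);\F)\cong\bigoplus_{i=1}^{n}\Sigma^{q_i}\F$ is a finite $\F$-vector space; combined with the previous paragraph, this shows $\rr(M)$ is finite $\hzt$-projective.

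For the isomorphism statement I would track the Chow--Novikov grading. Since $\pi_{**}(\mbp^{iso})\cong\F[\rho]$ with $\rho$ in bidegree $(-1,-1)$ by Theorem \ref{pimbp}, the Chow--Novikov degree of $\rho^{k}$ is $k$, so the Chow--Novikov degree $0$ part of $\Sigma^{2q_i,q_i}\pi_{**}(\mbp^{iso})$ is exactly a copy of $\F$ in bidegree $(2q_i,q_i)$. Hence $\mbp^{iso}_{2q,q}(M)\cong\bigoplus_{i:\,q_i=q}\F\cdot x_i$, and likewise $H_q(\rr(M);\F)\cong\bigoplus_{i:\,q_i=q}\F$. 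The comparison map of the statement is induced by $\rr$ and is compatible with the two splittings above: on the $i$-th summand it is $\rr$ of the inclusion $\Sigma^{2q_i,q_i}\mbp^{iso}\hookrightarrow\mbp^{iso}\wedge M$ precomposed with the unit, and since $\rr$ carries the unit map $\un^{iso}\to\mbp^{iso}$ to the unit map $\sph\to\hzt$ (using that both of Propositions \ref{unisos} and \ref{mbpisohft} are equivalences of $E_\infty$-algebras), it sends each generator $x_i$ with $q_i=q$ to the corresponding generator of $H_q(\rr(M);\F)$. Therefore $\mbp^{iso}_{2q,q}(M)\to H_q(\rr(M);\F)$ is an isomorphism for every $q$.

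I expect the only genuinely delicate point to be the grading bookkeeping in the last step, namely making sure that precisely the wedge summands with $q_i=q$ contribute, and that the hypothesis ``finitely generated by classes in Chow--Novikov degree $0$'' is exactly what forces $\mbp^{iso}_{2*,*}(M)$ to be the $\F$-span of the chosen generators rather than something larger; everything else is a formal consequence of the monoidality and colimit-preservation of $\rr$, Lemma \ref{modcell}, and the two realization computations in Propositions \ref{unisos} and \ref{mbpisohft}.
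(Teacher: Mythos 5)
Your proof takes the same approach as the paper: apply Lemma \ref{modcell} to split $\mbp^{iso}\wedge M$ into a finite wedge of twisted copies of $\mbp^{iso}$, realize using Propositions \ref{unisos} and \ref{mbpisohft}, and extract the Chow--Novikov degree $0$ part via Theorem \ref{pimbp}. Your write-up is in fact slightly more thorough on two points the paper treats as evident: you explicitly argue that $\rr(M)$ is a finite spectrum (by tracing cells through $\rr$), whereas the paper only checks that $H_*(\rr(M);\F)$ is finite-dimensional; and you verify that the map in the statement — not merely abstractly isomorphic groups — is an isomorphism, by matching generators across the two splittings using the $E_\infty$-compatibility of $\rr$ with the unit maps. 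Both additions are sound, and the description of the map on the $i$-th summand is essentially correct even if phrased a bit informally; the substance of the argument matches the paper's.
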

\begin{proof}
	By Lemma \ref{modcell} we have an equivalence:
	$$\mbp^{iso} \wedge M \simeq \bigvee_{\alpha \in A}\Sigma^{2q_{\alpha},q_{\alpha}}\mbp^{iso}$$
	for some finite set $A$. Then, it follows from Proposition \ref{mbpisohft} that: 
	$$\hzt \wedge \rr(M) \simeq \rr(\mbp^{iso} \wedge M) \simeq \bigvee_{\alpha \in A}\Sigma^{q_{\alpha}}\hzt,$$
	which proves the first part of the statement.
	
	To show the second part, notice that by Theorem \ref{pimbp} we have an isomorphism of $\mbp^{iso}_{**}$-modules:
	$$\mbp^{iso}_{**}(M) \cong \bigoplus_{\alpha \in A}\mbp^{iso}_{**}\cdot \{x_{\alpha}\} \cong \bigoplus_{\alpha \in A}\F[\rho]\cdot \{x_{\alpha}\}$$
	with $x_{\alpha} \in \mbp^{iso}_{2q_{\alpha},q_{\alpha}}(M)$. By restricting to Chow-Novikov degree 0, we obtain:
	$$\mbp^{iso}_{2*,*}(M) \cong  \bigoplus_{\alpha \in A}\F\cdot \{x_{\alpha}\} \cong H_*(\rr(M);\F)$$
    which completes the proof.
	\end{proof}

\begin{rem}
\normalfont
In \cite[Section 7.2]{P}, the category of finite $\mgl$-projective motivic spectra over $\C$, denoted by $\Sp^{fp}_{\mgl}$, is related to the category of finite even $\MU$-projective spectra, denoted by $\Sp^{fpe}_{\MU}$, via the complex realization functor $\rc:\SH(\C) \rightarrow \SH$, which maps $\mgl$ to $\MU$. More precisely, we have a complex realization functor $\rc:\Sp^{fp}_{\mgl}\rightarrow \Sp^{fpe}_{\MU}$ that induces, in turn, a functor between spherical sheaves:
$$\rc^*:\sh^{\Sp}_{\Sigma}(\Sp^{fp}_{\mgl}) \rightarrow \sh^{\Sp}_{\Sigma}(\Sp^{fpe}_{\MU}),$$
which can be described as the unique cocontinuous functor mapping $\Sigma^{\infty}_+y(M)$ to $\Sigma^{\infty}_+y(\rc(M))$, where $y(-)$ denotes the representable sheaf of spaces and $M$ is an object in $\Sp^{fp}_{\mgl}$. The existence of this functor crucially relies on \cite[Lemma 7.9]{P}.

 The situation for real isotropic motivic spectra is formally the same once we replace $\SH(\C)$ with $\SH^{iso}(\R)$, $\rc$ with $\rr$, $\mgl$ with $\mbp^{iso}$ and $\MU$ with $\hzt$, by Proposition \ref{mbpisohft} and Proposition \ref{cruc}, which is the analogue of \cite[Lemma 7.9]{P}. In the end, we obtain a functor:
 $$\rr^*:\sh^{\Sp}_{\Sigma}(\Sp^{fp}_{\mbp^{iso}}) \rightarrow \sh^{\Sp}_{\Sigma}(\Sp^{fp}_{\hzt})$$
that is the unique cocontinuous functor mapping $\Sigma^{\infty}_+y(M)$ to $\Sigma^{\infty}_+y(\rr(M))$, where $M$ is an object in $\Sp^{fp}_{\mbp^{iso}}$.
 \end{rem}

\begin{dfn}
\normalfont
For any object $X$ in $\SH^{iso}_{cell}(\R)$, denote by $\Upsilon X$ the presheaf of spectra on $\Sp^{fp}_{\mbp^{iso}}$ defined by $\Upsilon X(M) \coloneqq \mathrm{map}(M,X)$, where $ \mathrm{map}$ denotes the mapping spectrum in $\SH^{iso}_{cell}(\R)$.
\end{dfn}

\begin{lem}\label{upsidupsi}
For any object $X$ in $\SH^{iso}_{cell}(\R)$, the presheaf of spectra $\Upsilon X$ belongs to $\sh^{\Sp}_{\Sigma}(\Sp^{fp}_{\mbp^{iso}})$. Moreover, if $\mbp^{iso}_{**}(X)$ is concentrated in non-negative Chow-Novikov degrees, then $\Upsilon X \simeq \Sigma^{\infty}_+y(X)$.
\end{lem}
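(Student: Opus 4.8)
The plan is to handle the two assertions separately, using throughout Pstr\k{a}gowski's characterization of $\sh^{\Sp}_{\Sigma}(\Sp^{fp}_{\mbp^{iso}})$ inside presheaves of spectra \cite{P}: a presheaf of spectra on $\Sp^{fp}_{\mbp^{iso}}$ is a spherical sheaf precisely when it is additive (sends finite wedges to finite products) and sends every $\mbp^{iso}_{**}$-exact cofiber sequence — a cofiber sequence $P''\to P'\to P$ with all terms finite $\mbp^{iso}$-projective and inducing a short exact sequence on $\mbp^{iso}$-homology — to a fiber sequence of spectra. For the first assertion, the presheaf $\Upsilon X=\mathrm{map}(-,X)$ is additive because $\mathrm{map}(-,X)$ carries finite coproducts to finite products, and it carries \emph{every} cofiber sequence in $\SH^{iso}_{cell}(\R)$ to a fiber sequence, being an exact functor $\SH^{iso}_{cell}(\R)^{op}\to\Sp$ of stable $\infty$-categories; in particular it does so on $\mbp^{iso}_{**}$-exact cofiber sequences. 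It remains to see that the $\mbp^{iso}_{**}$-surjection topology is generated by such sequences: if $N\to M$ is a map of finite $\mbp^{iso}$-projective spectra surjective on $\mbp^{iso}$-homology, its fiber $N'$ is again finite, and the long exact sequence identifies $\mbp^{iso}_{**}(N')$ with the kernel of $\mbp^{iso}_{**}(N)\to\mbp^{iso}_{**}(M)$; since $\mbp^{iso}_{**}\cong\F[\rho]$ and, by Theorem \ref{pimbp}, both homology modules have the form $\F[\rho]\otimes_{\F}V$ with $V$ concentrated in Chow--Novikov degree $0$, this kernel is again free and generated in Chow--Novikov degree $0$, so $N'$ is finite $\mbp^{iso}$-projective. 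Hence $\Upsilon X\in\sh^{\Sp}_{\Sigma}(\Sp^{fp}_{\mbp^{iso}})$.

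For the second assertion, the strategy is a cocontinuity comparison between $\Upsilon$ and $\Sigma^{\infty}_{+}y$. First, $\Upsilon$ preserves colimits: every object of $\Sp^{fp}_{\mbp^{iso}}$ is compact in $\SH^{iso}_{cell}(\R)$, so each $\mathrm{map}(P,-)$ is exact and preserves filtered colimits, and under the $\mbp^{iso}_{**}$-surjection topology spherical sheaves are stable under the relevant colimits. Second, the restrictions of $\Upsilon$ and of $\Sigma^{\infty}_{+}y$ to $\Sp^{fp}_{\mbp^{iso}}$ agree naturally: for $M$ finite $\mbp^{iso}$-projective the spectral presheaf $\mathrm{map}(-,M)$ is identified with $\Sigma^{\infty}_{+}y(M)$ exactly as in the corresponding step of \cite[Section 7]{P}, using the $\mbp^{iso}$-projectivity of $M$ together with Propositions \ref{hmbpiso} and \ref{mbpisoy}. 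The universal property of the left Kan extension then yields a natural transformation $\Sigma^{\infty}_{+}y(-)\to\Upsilon(-)$ on all of $\SH^{iso}_{cell}(\R)$ extending this equivalence. Finally, I would show that the full subcategory of those $X$ with $\mbp^{iso}_{**}(X)$ concentrated in non-negative Chow--Novikov degrees is the colimit closure of $\Sp^{fp}_{\mbp^{iso}}$: for such $X$ one resolves the $\F[\rho]\otimes\G_{**}$-comodule $\mbp^{iso}_{**}(X)$ by extended comodules in non-negative Chow--Novikov degrees (possible since it is bounded below), realizes each resolution step as a wedge of finite $\mbp^{iso}$-projectives via Lemma \ref{modcell}, and assembles an $\mbp^{iso}$-Adams tower; the non-negativity hypothesis is exactly what forces this tower to converge to $X$ itself rather than to an $\mbp^{iso}$-completion, in the spirit of the identification $\pi_{p,q}(\un^{iso})\cong\pi_{p-q}$ for $p\geq 2q$ of Proposition \ref{rehg}. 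Since $\Upsilon$ and $\Sigma^{\infty}_{+}y$ are both cocontinuous and agree on $\Sp^{fp}_{\mbp^{iso}}$, the natural transformation above is then an equivalence on this subcategory, which is the claim.

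I expect the main obstacle to be the last step: exhibiting a connective $X$ as a colimit of finite $\mbp^{iso}$-projective spectra. Concretely this requires (i) realizing extended (equivalently, suitably free) $\F[\rho]\otimes\G_{**}$-comodules concentrated in non-negative Chow--Novikov degrees as the $\mbp^{iso}$-homology of genuine finite $\mbp^{iso}$-projective isotropic spectra, and (ii) controlling the $\mathrm{Ext}$-groups over $\F[\rho]\otimes\G_{**}$ that govern the attaching maps of the resolution and the convergence of the resulting tower, which brings back an isotropic Adams--Novikov argument of the type carried out in Theorem \ref{speseq}. The identification $\mathrm{map}(-,M)\simeq\Sigma^{\infty}_{+}y(M)$ for finite $\mbp^{iso}$-projective $M$ is the other delicate point, but it runs directly parallel to Pstr\k{a}gowski's treatment of finite $\mgl$-projective motivic spectra over $\C$ in \cite[Section 7]{P}; once both are granted, the rest of the argument is purely formal.
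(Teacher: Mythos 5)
The paper's own ``proof'' here is a single sentence: it cites \cite[Lemmas 7.17, 7.18, 7.19]{P} and declares the argument analogous. So the task is really to check whether your reconstruction matches Pstr\k{a}gowski's.

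Your argument for the first assertion is correct and is the same as Pstr\k{a}gowski's: $\mathrm{map}(-,X)$ is exact, so it sends every cofiber sequence, and in particular every $\mbp^{iso}_{**}$-exact one among finite $\mbp^{iso}$-projectives, to a fiber sequence; additivity is automatic; and you correctly verify (using $\mbp^{iso}_{**}\cong\F[\rho]$ and freeness in Chow--Novikov degree~$0$ from Theorem~\ref{pimbp}) that the fiber of an $\mbp^{iso}_{**}$-surjection between finite projectives is again finite projective, so the site behaves as expected.

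The second assertion has a genuine gap. You assert that $\Sigma^{\infty}_{+}y$ is cocontinuous, and use this plus agreement on $\Sp^{fp}_{\mbp^{iso}}$ to conclude. But $\Sigma^{\infty}_{+}y$ is \emph{not} cocontinuous: $y$ lands in sheaves of spaces, which is not stable, so $y$ does not send cofiber sequences in $\SH^{iso}_{cell}(\R)$ to cofiber sequences of sheaves of spaces, and $\Sigma^{\infty}_{+}$ cannot repair this. The functor $\Sigma^{\infty}_{+}y$ (Pstr\k{a}gowski's ``synthetic analogue'') preserves only filtered colimits and sends $\mbp^{iso}_{**}$-exact cofiber sequences to cofiber sequences. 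Moreover, your argument, if valid, would prove the stronger (and false) statement that $\Upsilon\simeq\Sigma^{\infty}_{+}y$ on \emph{all} of $\SH^{iso}_{cell}(\R)$, since cocontinuous functors agreeing on the compact generators agree everywhere; the non-negativity hypothesis would play no role, which should be a red flag. Relatedly, the natural transformation $\Sigma^{\infty}_{+}y\to\Upsilon$ does not arise by left Kan extension — $\Sigma^{\infty}_{+}y$ is not a left Kan extension off $\Sp^{fp}_{\mbp^{iso}}$ — but rather from the adjunction $\Sigma^{\infty}_{+}\dashv\Omega^{\infty}$ together with the tautological identification $\Omega^{\infty}\Upsilon X\simeq y(X)$.

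The mechanism you sketch at the very end — building $X$ from finite $\mbp^{iso}$-projectives via an $\mbp^{iso}$-Adams tower and using non-negativity for convergence — is in fact the correct route, precisely because the steps in such a tower are filtered colimits and $\mbp^{iso}_{**}$-exact cofiber sequences, which are the \emph{only} colimits $\Sigma^{\infty}_{+}y$ preserves. But as written your proof treats this as a backup to the cocontinuity argument, when it is actually the whole argument; the cocontinuity claim needs to be deleted, and the tower construction (realization of comodule resolutions by finite $\mbp^{iso}$-projectives, $\mbp^{iso}_{**}$-exactness of each attaching cofiber sequence, and convergence from boundedness below in Chow--Novikov degree) needs to be carried out in detail, as in \cite{P}.
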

\begin{proof}
	The proof is analogous to \cite[Lemmas 7.17, 7.18 and 7.19]{P}. 
	\end{proof}

By Lemma \ref{upsidupsi}, we obtain a cocontinuous functor:
$$\Theta^{*}\coloneqq \rr^* \circ \Upsilon: \SH^{iso}_{cell}(\R) \rightarrow \syn_{\F}$$
from real isotropic cellular spectra to $\F$-synthetic spectra. Our goal is to prove that this functor is indeed an equivalence. 

\begin{lem}\label{spheres}
For all $p$, $q \in \Z$, there is an equivalence of $\F$-synthetic spectra:
$$\Theta^*(\Sigma^{p,q}\un^{iso}) \simeq \sph^{p-q,q},$$
where $\sph^{t,w}$ denotes the bigraded $\F$-synthetic sphere $\Sigma^{t-w}\Sigma^{\infty}_+y(\sph^w)$.
\end{lem}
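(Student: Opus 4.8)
The plan is to reduce to the single weight slice $p=2q$ using that $\Theta^*$ is exact, and then to identify $\Theta^*(\Sigma^{2q,q}\un^{iso})$ by hand. First recall that $\Theta^*=\rr^*\circ\Upsilon$ is cocontinuous: $\rr^*$ is cocontinuous by construction, and $\Upsilon$ is cocontinuous since every object of $\Sp^{fp}_{\mbp^{iso}}$ is compact in $\SH^{iso}_{cell}(\R)$ and, by Lemma \ref{upsidupsi}, $\Upsilon X$ is already a sheaf for every $X$, so no sheafification intervenes. A colimit-preserving functor between stable $\infty$-categories is exact, hence $\Theta^*$ commutes with the (de)suspension $\Sigma^{\pm1}=\Sigma^{\pm1,0}$. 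Since $\Sigma^{p,q}\un^{iso}\simeq\Sigma^{p-2q}(\Sigma^{2q,q}\un^{iso})$ and, directly from the definition, $\sph^{p-q,q}=\Sigma^{(p-q)-q}\Sigma^{\infty}_+y(\sph^q)=\Sigma^{p-2q}\sph^{q,q}$, it suffices to prove the case $p=2q$, i.e.\ $\Theta^*(\Sigma^{2q,q}\un^{iso})\simeq\sph^{q,q}$.

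For this base case I would argue as follows. By Theorem \ref{pimbp}, $\mbp^{iso}_{**}(\Sigma^{2q,q}\un^{iso})\cong\Sigma^{2q,q}\pi_{**}(\mbp^{iso})\cong\Sigma^{2q,q}\F[\rho]$, which is a free $\mbp^{iso}_{**}$-module on a single generator in bidegree $(2q,q)$, that is, in Chow--Novikov degree $0$; moreover all its classes lie in non-negative Chow--Novikov degree, since multiplication by $\rho$ raises Chow--Novikov degree by $1$. As $\un^{iso}$ is the unit of $\SH^{iso}_{cell}(\R)$, it is compact, so $\Sigma^{2q,q}\un^{iso}$ is finite, and therefore belongs to $\Sp^{fp}_{\mbp^{iso}}$. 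Lemma \ref{upsidupsi} then yields $\Upsilon(\Sigma^{2q,q}\un^{iso})\simeq\Sigma^{\infty}_+y(\Sigma^{2q,q}\un^{iso})$, the suspension spectrum of the representable sheaf, and applying $\rr^*$ and its defining property gives $\Theta^*(\Sigma^{2q,q}\un^{iso})\simeq\Sigma^{\infty}_+y(\rr(\Sigma^{2q,q}\un^{iso}))$. Finally, since real realization is symmetric monoidal with $\rr(S^{a,b})\simeq S^{a-b}$, Proposition \ref{unisos} gives $\rr(\Sigma^{2q,q}\un^{iso})\simeq\Sigma^{q}\rr(\un^{iso})\simeq\Sigma^q\sph=\sph^q$, so $\Theta^*(\Sigma^{2q,q}\un^{iso})\simeq\Sigma^{\infty}_+y(\sph^q)=\sph^{q,q}$. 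Combining with the reduction above, $\Theta^*(\Sigma^{p,q}\un^{iso})\simeq\Sigma^{p-2q}\sph^{q,q}=\sph^{p-q,q}$ for all $p,q\in\Z$.

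I do not expect a genuine obstacle here: the argument is formal once Lemma \ref{upsidupsi}, the construction of $\rr^*$, Proposition \ref{unisos} and Theorem \ref{pimbp} are available. The one point that requires care is the base case itself, namely checking that $\Sigma^{2q,q}\un^{iso}$ really does lie in $\Sp^{fp}_{\mbp^{iso}}$ and satisfies the Chow--Novikov positivity hypothesis of Lemma \ref{upsidupsi}, so that $\Upsilon$ sends it to an honest representable and $\rr^*$ can be evaluated via its defining formula; and then tracking the weight bookkeeping implicit in the convention $\sph^{t,w}=\Sigma^{t-w}\Sigma^{\infty}_+y(\sph^w)$.
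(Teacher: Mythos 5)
Your argument is correct and follows the same route as the paper: reduce to $p=2q$ by exactness of $\Theta^*$, apply Lemma \ref{upsidupsi} to rewrite $\Upsilon(\Sigma^{2q,q}\un^{iso})$ as $\Sigma^{\infty}_+y(\Sigma^{2q,q}\un^{iso})$, push through $\rr^*$, and use $\rr(\un^{iso})\simeq\sph$. The only difference is that you spell out explicitly, via Theorem \ref{pimbp}, that $\Sigma^{2q,q}\un^{iso}$ lies in $\Sp^{fp}_{\mbp^{iso}}$ and has $\mbp^{iso}$-homology concentrated in non-negative Chow--Novikov degree, which the paper leaves implicit; this is a reasonable level of added detail and does not change the proof.
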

\begin{proof}
	Since $\Theta^*$ is an exact functor, it is enough to prove the equivalence for $p=2q$. In this case, by Lemma \ref{upsidupsi}, we know that $\Upsilon(\Sigma^{2q,q}\un^{iso}) \simeq \Sigma^{\infty}_+y(\Sigma^{2q,q}\un^{iso})$. It follows that:
	$$\Theta^*(\Sigma^{2q,q}\un^{iso}) \simeq \rr^*(\Sigma^{\infty}_+y(\Sigma^{2q,q}\un^{iso}))\simeq \Sigma^{\infty}_+y(\rr(\Sigma^{2q,q}\un^{iso}))\simeq \Sigma^{\infty}_+y(\sph^q)=\sph^{q,q},$$
	which proves the claim.
	\end{proof}

\begin{prop}\label{homgr}
	The functor $\Theta^{*}$ induces an isomorphism of homotopy groups: $$\pi_{p,q}(\un^{iso}) \cong \pi^{\syn}_{p-q,q}$$
	for all $p$, $q \in \Z$, where $\pi^{\syn}_{**}$ denote the $\F$-synthetic homotopy groups of the $\F$-synthetic sphere spectrum $\sph^{0,0}$.
\end{prop}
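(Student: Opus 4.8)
The plan is to compute both sides by a Bockstein argument along the parameters $\rho$ and $\tau$ and to match them through $\Theta^*$. By Lemma \ref{spheres}, $\Theta^*$ sends $\un^{iso}$ to $\sph^{0,0}$ and $\Sigma^{p,q}\un^{iso}$ to $\sph^{p-q,q}$, so it induces a natural map
$$\Theta_{p,q}: \pi_{p,q}(\un^{iso}) \longrightarrow \pi^{\syn}_{p-q,q},$$
and the goal is to show $\Theta_{p,q}$ is an isomorphism for all $p,q$. Since $\Theta^*$ is exact and carries $\rho$ to (a sign times) the $\F$-synthetic parameter $\tau:\sph^{0,-1}\to\sph^{0,0}$, it sends the cofiber sequence $\Sigma^{-1,-1}\un^{iso}\xrightarrow{\rho}\un^{iso}\to\un^{iso}/\rho$ to $\sph^{0,-1}\xrightarrow{\tau}\sph^{0,0}\to\sph^{0,0}/\tau$. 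Applying $\pi_{**}$, the maps $\Theta_{p,q}$ organize into a commutative ladder comparing the long exact sequence of multiplication by $\rho$ on $\pi_{**}(\un^{iso})$ with the long exact sequence of multiplication by $\tau$ on $\pi^{\syn}_{**}$, in which the ``cofiber'' terms are $\pi_{**}(\un^{iso}/\rho)$ on one side and $\pi_{**}(\sph^{0,0}/\tau)$ on the other.

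The heart of the matter is the comparison of special fibers, that is, showing that $\Theta^*$ induces an isomorphism $\pi_{p,q}(\un^{iso}/\rho)\xrightarrow{\cong}\pi^{\syn}_{p-q,q}(\sph^{0,0}/\tau)$. Both groups are identified with $\ext^{2q-p,q}_{\A_*}(\F,\F)$ — the left-hand one by Theorem \ref{mustlab}, the right-hand one by the corresponding $\F$-synthetic computation in \cite{P}. To see that $\Theta^*$ realizes the identity between them, I would note that $\Theta^*$ restricts to a cocontinuous functor $\un^{iso}/\rho{\text -}\Mod_{cell}\to\sph^{0,0}/\tau{\text -}\Mod$ (as $\Theta^*(\un^{iso}/\rho)\simeq\sph^{0,0}/\tau$), identify source and target with ${\mathcal D}(\A_*{\text -}\Com_*)$ via Theorem \ref{specfib} and its $\F$-synthetic analogue, and check that this functor is $t$-exact for the $t$-structure of Proposition \ref{tri} and its synthetic counterpart and is the identity on the common heart $\A_*{\text -}\Com_*$. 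This last point is exactly where Proposition \ref{cruc} — the real analogue of \cite[Lemma 7.9]{P}, matching $\mbp^{iso}$-homology with mod-$2$ homology after real realization, together with Proposition \ref{mbpisohft} — is used; granting it, $\Theta^*$ restricts to an equivalence of special fibers, and in particular induces the asserted isomorphism on homotopy groups.

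For the base of the induction, Proposition \ref{rehg} gives $\pi_{p,q}(\un^{iso})\cong\pi_{p-q}$ whenever $p\geq 2q$, and the same computation on the $\F$-synthetic side (i.e. $\pi^{\syn}_{t,w}\cong\pi_t$ for $w\leq t$, see \cite{P}) is identified with it under $\Theta$, because on that range $\Theta^*$ agrees with the equivalence obtained by inverting $\rho$, resp. $\tau$, which is real realization by Theorem \ref{equniso}. Combined with the vanishing of $\pi_{p,q}(\un^{iso})$ and of $\pi^{\syn}_{p-q,q}$ in negative stem — $\un^{iso}$ is very effective, hence connective, cf. the proof of Proposition \ref{2etacomp}, and $\sph^{0,0}$ is connective — this shows $\Theta_{p,q}$ is an isomorphism whenever the weight $q\leq 0$. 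One then induces on the weight: for each $q$, the long exact sequence of $\rho$ (resp. $\tau$) expresses the weight-$(q+1)$ part of $\pi_{**}(\un^{iso})$ (resp. $\pi^{\syn}_{**}$) in terms of its weight-$q$ part together with the homotopy of the special fiber; since the special-fiber terms are isomorphisms by the previous paragraph and the weight-$q$ terms by the inductive hypothesis, the five lemma promotes the statement from weight $q$ to weight $q+1$, completing the proof.

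The step I expect to be the main obstacle is the special-fiber comparison of the second paragraph: one has to verify that the functor induced by $\Theta^*$ on $\un^{iso}/\rho$-modules genuinely intertwines the two a priori unrelated identifications of the special fiber with ${\mathcal D}(\A_*{\text -}\Com_*)$ — equivalently, that it is $t$-exact and restricts to the identity on $\A_*{\text -}\Com_*$ — which forces one to carry the $\mbp^{iso}_{**}$-versus-$H_*(-;\F)$ bookkeeping of Propositions \ref{cruc} and \ref{mbpisohft} through with care. Everything else is either a formal consequence of the exactness and cocontinuity of $\Theta^*$ or a diagram chase.
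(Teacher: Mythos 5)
Your proposal follows essentially the same route as the paper's proof: a five-lemma argument on the ladder of long exact sequences obtained by applying $\Theta^*$ to the $\rho$-Bockstein cofiber sequence, with base case coming from Proposition \ref{rehg} and its synthetic counterpart, the observation $\Theta^*(\rho)=\pm\lambda$ (hence $\Theta^*(\un^{iso}/\rho)\simeq C\lambda$), and the cofiber-of-$\rho$ terms handled by comparing $\pi_{**}(\un^{iso}/\rho)$ with $\pi^{\syn}_{**}(C\lambda)$ via Theorem \ref{mustlab} and [P, Lemma 4.56]. The differences are cosmetic or expository: you induct on the weight $q$ where the paper inducts on the Chow--Novikov degree $2q-p$ (both work), and you spell out a (heavier) $t$-exactness argument for the special-fiber comparison where the paper simply cites the abstract identifications with $\ext_{\A_*}^{**}(\F,\F)$ — a point you rightly flag as the one step requiring care, since one must know that $\Theta^*$ (and not merely an abstract isomorphism) realizes the identification; either your $t$-structure route or a direct comparison of the $\mbp^{iso}$- and $\nu(\hzt)$-based Adams--Novikov spectral sequences, fed by Propositions \ref{cruc} and \ref{mbpisohft}, would make this explicit.
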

\begin{proof}
	For $p\geq 2q$, the isomorphism follows from Proposition \ref{rehg} and \cite[Theorem 4.58]{P}. In particular, we deduce that $\Theta^*$ induces an isomorphism:
	$$\Z \cdot \rho \cong  \pi_{-1,-1}(\un^{iso}) \cong \pi^{\syn}_{0,-1} \cong \Z\cdot \lambda,$$
	where we denote by $\lambda$ the parameter of the deformation for $\syn_{\F}$. Therefore, we have an equivalence of $\F$-synthetic spectra:
	$$\Theta^*(\un^{iso}/\rho) \simeq C\lambda,$$
	where $C\lambda \coloneqq \cof(\sph^{0,-1}\xrightarrow{\lambda}\sph^{0,0})$.
	
	For $p<2q$, we proceed by induction on the integer $n=2q-p$. Suppose by induction hypothesis that $\Theta^*$ induces an isomorphism of homotopy groups for all $m < n$. Then, we obtain a morphism of exact sequences:
	$$
	\xymatrix{
		  \pi_{p,q-1}(\un^{iso}) \ar@{->}[r] \ar@{->}[d] &  \pi_{p,q-1}(\un^{iso}/\rho)\ar@{->}[r] \ar@{->}[d]  &  \pi_{p,q}(\un^{iso}) \ar@{->}[r] \ar@{->}[d]	 &  \pi_{p-1,q-1}(\un^{iso}) \ar@{->}[r]\ar@{->}[d]  &\pi_{p-1,q-1}(\un^{iso}/\rho) \ar@{->}[d]  \\
		\pi_{p-q+1,q-1}^{\syn} \ar@{->}[r] &  \pi_{p-q+1,q-1}(C\lambda)\ar@{->}[r]   &  \pi_{p-q,q}^{\syn} \ar@{->}[r] 	 &  \pi_{p-q,q-1}^{\syn} \ar@{->}[r] &\pi_{p-q,q-1}(C\lambda) 
	}
	$$
	where the first and fourth vertical arrows are isomorphisms by induction hypothesis, while the second and fifth are isomorphisms by Theorem \ref{mustlab} and \cite[Lemma 4.56]{P}. It follows that also the middle map is an isomorphism that is what we aimed to show.
	\end{proof}

\begin{thm}\label{upsi}
The functor $\Theta^*$ is an equivalence of stable $\infty$-categories.
\end{thm}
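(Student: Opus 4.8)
The plan is to show that $\Theta^*$ is fully faithful and essentially surjective by exploiting the deformation structure on both sides: each category is a one-parameter family over a generic fiber ($\SH$, by Theorem \ref{equniso} and Pstr\k{a}gowski's identification $\syn_{\F}[\lambda^{-1}] \simeq \Sp$) and a special fiber (${\mathcal D}(\A_*{\text -}\Com_*)$, by Theorem \ref{specfib} and \cite[Theorem 4.46]{P}). The functor $\Theta^*$ is exact, cocontinuous, and sends $\rho$ to $\lambda$ up to a sign by Proposition \ref{homgr}; hence it is compatible with inverting and annihilating the parameter, inducing functors on generic and special fibers. The strategy is first to check $\Theta^*$ is an equivalence on generators, then on all of $\SH^{iso}_{cell}(\R)$.

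First I would verify that $\Theta^*$ restricted to the full subcategory on the bigraded spheres $\Sigma^{p,q}\un^{iso}$ is fully faithful. By Lemma \ref{spheres}, $\Theta^*(\Sigma^{p,q}\un^{iso}) \simeq \sph^{p-q,q}$, and by Proposition \ref{homgr} the induced map
$$\pi_{a,b}(\un^{iso}) \cong [\Sigma^{a,b}\un^{iso},\un^{iso}] \longrightarrow [\sph^{a-b,b},\sph^{0,0}]_{\syn_{\F}} \cong \pi^{\syn}_{a-b,b}$$
is an isomorphism for all $a,b \in \Z$. Since the $\Sigma^{p,q}\un^{iso}$ generate $\SH^{iso}_{cell}(\R)$ under colimits and desuspensions, and $\Theta^*$ is cocontinuous, a standard dévissage argument (the class of objects $X$ for which $\mathrm{map}(\Sigma^{p,q}\un^{iso},X) \to \mathrm{map}(\sph^{p-q,q},\Theta^*X)$ is an equivalence for all $p,q$ is closed under colimits and contains the generators) upgrades this to: $\Theta^*$ induces an equivalence $\mathrm{map}(\Sigma^{p,q}\un^{iso},X) \xrightarrow{\simeq} \mathrm{map}(\Theta^*(\Sigma^{p,q}\un^{iso}),\Theta^*X)$ for every $X$. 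A second dévissage in the source variable then gives full faithfulness of $\Theta^*$ on all of $\SH^{iso}_{cell}(\R)$.

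For essential surjectivity, I would argue that the essential image of $\Theta^*$ is a localizing subcategory of $\syn_{\F}$ (it is closed under colimits since $\Theta^*$ is cocontinuous, and under desuspension since $\Theta^*$ is exact and $\SH^{iso}_{cell}(\R)$ is stable) containing all the bigraded synthetic spheres $\sph^{t,w}$ by Lemma \ref{spheres}. Since $\syn_{\F}$ is generated under colimits and desuspensions by the spheres $\sph^{t,w}$ (indeed by $\Sigma^{\infty}_+ y(\sph^w)$, see \cite[Section 4]{P}), the essential image is all of $\syn_{\F}$. Combined with full faithfulness, this yields the equivalence.

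The main obstacle is the first dévissage step: carefully checking that the subcategory of $X \in \SH^{iso}_{cell}(\R)$ on which $\Theta^*$ is "corepresentably fully faithful" is closed under the relevant colimits. One must know that both $\mathrm{map}(\Sigma^{p,q}\un^{iso},-)$ and $\mathrm{map}(\sph^{p-q,q},-)\circ\Theta^*$ commute with the filtered colimits and cofiber sequences used to build a general cellular object from the spheres — the first because $\Sigma^{p,q}\un^{iso}$ is a compact generator, the second because $\sph^{p-q,q}$ is compact in $\syn_{\F}$ and $\Theta^*$ is cocontinuous. Once compactness of the generators on both sides is in hand (which follows from $\mbp$ being cellular together with the bounded-$t$-structure analysis of Sections 4--5, exactly as in \cite[Section 7]{P}), the argument is formal. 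It is worth noting that no $2$-completion enters, in contrast with \cite{P}: the point is that $\rho$ and hence $\lambda$ already exist integrally, and Proposition \ref{homgr} is an integral statement.
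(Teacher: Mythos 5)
Your proposal is correct and follows essentially the same route as the paper: fully faithfulness via a two-step dévissage — first in the target variable against the compact generators $\Sigma^{p,q}\un^{iso}$ using Lemma \ref{spheres} and Proposition \ref{homgr}, then in the source variable — followed by essential surjectivity from cocontinuity and the fact that $\syn_{\F}$ is generated by the bigraded synthetic spheres. The opening remarks about the deformation-theoretic framing (generic/special fibers) are not actually used in the argument, and the paper likewise proceeds directly by dévissage; the one point you flag more explicitly than the paper — compactness of $\sph^{p-q,q}$ in $\syn_{\F}$ so that $\mathrm{map}(\sph^{p-q,q},-)\circ\Theta^*$ preserves coproducts — is indeed needed and is standard from \cite{P}.
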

\begin{proof}
	First, we prove that $\Theta^*$ is fully faithful, that is, for all $X$ and $Y$ in $\SH^{iso}_{cell}(\R)$, $\Theta^*$ induces an equivalence:
	$$\mathrm{Map}_{\SH^{iso}_{cell}(\R)}(X,Y) \rightarrow \mathrm{Map}_{\syn_{\F}}(\Theta^*(X),\Theta^*(Y)).$$
	
	We start with the case $X=\Sigma^{0,q}\un^{iso}$ for any $q$. Denote by ${\mathcal C}_q$ the full subcategory of $\SH^{iso}_{cell}(\R)$ spanned by those isotropic cellular spectra $Y$ such that
	$$\mathrm{Map}_{\SH^{iso}_{cell}(\R)}(\Sigma^{0,q}\un^{iso},Y) \rightarrow \mathrm{Map}_{\syn_{\F}}(\Theta^*(\Sigma^{0,q}\un^{iso}),\Theta^*(Y))$$
	is an equivalence. We notice that ${\mathcal C}_q$ contains $\Sigma^{0,q'}\un^{iso}$ for all $q'$ by Lemma \ref{spheres} and Proposition \ref{homgr}. Moreover, it is stable and closed under small coproducts since $\Sigma^{0,q}\un^{iso}$ is compact and $\Theta^*$ is cocontinuous. Hence, ${\mathcal C}_q$ coincides with $\SH^{iso}_{cell}(\R)$ for all $q$.
	
	Now, let $\mathcal{D}$ be the full subcategory of $\SH^{iso}_{cell}(\R)$ spanned by those isotropic cellular spectra $X$ such that
	$$\mathrm{Map}_{\SH^{iso}_{cell}(\R)}(X,Y) \rightarrow \mathrm{Map}_{\syn_{\F}}(\Theta^*(X),\Theta^*(Y))$$
	is an equivalence for all $Y$. Then, $\mathcal{D}$ is stable, closed under small coproducts by the cocontinuity of $\Theta^*$, and contains $\Sigma^{0,q}\un^{iso}$ for all $q$. We conclude that $\mathcal{D}$ coincides with $\SH^{iso}_{cell}(\R)$, which means that $\Theta^*$ is fully faithful.
	
	To finish, we notice that the essential surjectivity of $\Theta^*$ follows from its fully faithfulness and cocontinuity, the fact that $\syn_{\F}$ is generated by the $\F$-synthetic spheres $\sph^{p,q}$ and Lemma \ref{spheres}.
	\end{proof}

\begin{rem}
\normalfont
In Theorem \ref{upsi}, we have decided to show directly that $\Theta^{*}= \rr^* \circ \Upsilon$ is an equivalence. Alternatively, one could follow the approach of \cite{P}, and show first that $\Upsilon$ is an equivalence, and then that $\rr^*$ is as well. This implies in particular that:
$$\SH^{iso}_{cell}(\R) \simeq \sh^{\Sp}_{\Sigma}(\Sp^{fp}_{\mbp^{iso}}) \simeq \syn_{\F}.$$
\end{rem}

\section{Relation to real Artin-Tate motivic spectra}

In this section, we study the relation between real Artin-Tate motivic spectra and real isotropic cellular spectra. We begin by recalling some definitions and notations from \cite{BHS}, where the category of real Artin-Tate motivic spectra is studied in detail.

\begin{dfn}
	\normalfont
Denote by $\SH(\R)^{\mathrm{AT}}$ the category of Artin-Tate $\R$-motivic spectra, that is, the full localizing subcategory of $\SH(\R)$ generated by $\Sigma^{0,q}\un$ and $\Sigma^{0,q}\Sigma^{\infty}_+\spec(\C)$ for all $q \in \Z$.

Let $\un_2$ be the 2-completion of the real motivic sphere spectrum in $\SH(\R)^{\mathrm{AT}}$. Following \cite{BHS}, we denote by $\SH(\R)^{\mathrm{AT}}_{i2}$ the category $\Mod(\SH(\R)^{\mathrm{AT}};\un_2)$.
\end{dfn}

In $\SH(\R)^{\mathrm{AT}}$ there are some notable objects and maps whose definitions we now recall. Denote by $\sph^{\C}$ the cofiber of the map $\Sigma^{\infty}_+\spec(\C) \rightarrow \un$, and by $a$ the induced map $\un \rightarrow \sph^{\C}$. This allows us to define trigraded spheres $\sph^{p,q,w} \coloneqq \Sigma^{p+w,w}\un \wedge (\sph^{\C})^{\wedge q-w}$.

By \cite[Theorem 2.1]{BHS}, there exists a morphism $\ta:\sph^{0,0,-1} \rightarrow \un_2$ that maps to $\tau\in \pi_{0,-1} ({\un_{\C,2}})$ under base change to $\C$. Then, \cite[Theorem 1.4]{BHS} describes $\SH(\R)^{\mathrm{AT}}_{i2}$ as a one-parameter deformation of the $C_2$-equivariant stable homotopy category with parameter $\ta$ and special fiber given by the derived category of Mackey-functor $\MU_*\MU$-comodules.

\begin{thm}\label{tosyn}
The isotropic localization functor $L^{iso}:\SH(\R) \rightarrow \SH^{iso}(\R)$ restricts to a functor $L^{iso}:\SH(\R)^{\mathrm{AT}} \rightarrow \SH^{iso}_{cell}(\R)$ that factors through the category $\SH(\R)^{\mathrm{AT}}[a^{-1}]$ and sends $\sph^{p,q,w}$ to $\Sigma^{p+w,w}\un^{iso}$.
\end{thm}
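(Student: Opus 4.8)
The plan is to deduce everything from two inputs: first, $L^{iso}$ is a smashing localization, hence exact, cocontinuous and symmetric monoidal; second, $\Sigma^{\infty}_+\spec(\C)$ is isotropically trivial, i.e. $(\Sigma^{\infty}_+\spec(\C))^{iso}\simeq 0$. The second is the instance of Remark \ref{annull} for the anisotropic quadric $Q_{\rho}\cong\spec(\C)$, and was already used in the proof of Proposition \ref{2comp}. I begin by checking that $L^{iso}$ carries $\SH(\R)^{\mathrm{AT}}$ into $\SH^{iso}_{cell}(\R)$: the preimage $(L^{iso})^{-1}\bigl(\SH^{iso}_{cell}(\R)\bigr)$ is a localizing subcategory of $\SH(\R)$ (because $L^{iso}$ is exact and cocontinuous), it contains $\Sigma^{0,q}\un$ since $L^{iso}(\Sigma^{0,q}\un)=\Sigma^{0,q}\un^{iso}$, and it contains $\Sigma^{0,q}\Sigma^{\infty}_+\spec(\C)$ since $L^{iso}$ kills it; as these objects generate $\SH(\R)^{\mathrm{AT}}$, the preimage contains all of $\SH(\R)^{\mathrm{AT}}$.

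Next I would establish the factorization through $\SH(\R)^{\mathrm{AT}}[a^{-1}]$. Recall that $a:\un\rightarrow\sph^{\C}$ is the cofiber of $\Sigma^{\infty}_+\spec(\C)\rightarrow\un$, so $\fib(a)\simeq\Sigma^{\infty}_+\spec(\C)$, and $\SH(\R)^{\mathrm{AT}}[a^{-1}]$ is the localization inverting $a$; its acyclic objects form the localizing tensor-ideal generated by $\fib(a)=\Sigma^{\infty}_+\spec(\C)$. Since $L^{iso}$ kills $\Sigma^{\infty}_+\spec(\C)$ and is exact, cocontinuous and symmetric monoidal, it annihilates every $a$-acyclic object, hence sends every $a$-equivalence to an equivalence; by the universal property of the localization, $L^{iso}|_{\SH(\R)^{\mathrm{AT}}}$ factors uniquely through $\SH(\R)^{\mathrm{AT}}[a^{-1}]$. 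Equivalently, $L^{iso}(a)$ is an equivalence because its fiber is $L^{iso}(\fib(a))\simeq 0$; I record the resulting identification $L^{iso}(\sph^{\C})\simeq\un^{iso}$.

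For the last assertion, I use that $L^{iso}(\sph^{\C})\simeq\un^{iso}$, the $\otimes$-unit of $\SH^{iso}(\R)$. Since $L^{iso}$ is symmetric monoidal it sends the $\otimes$-invertible object $\sph^{\C}$ to a $\otimes$-invertible object, which must therefore be $\un^{iso}$ itself; in particular $L^{iso}\bigl((\sph^{\C})^{\wedge n}\bigr)\simeq\un^{iso}$ for every $n\in\Z$. Combining this with $L^{iso}(\Sigma^{p+w,w}\un)=\Sigma^{p+w,w}\un^{iso}$ and the monoidality of $L^{iso}$ gives
$$L^{iso}(\sph^{p,q,w})\simeq L^{iso}(\Sigma^{p+w,w}\un)\wedge_{\un^{iso}}L^{iso}\bigl((\sph^{\C})^{\wedge q-w}\bigr)\simeq\Sigma^{p+w,w}\un^{iso},$$
which is the asserted formula.

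The argument is essentially formal; the only genuine ingredient is Remark \ref{annull} (that $\spec(\C)=Q_{\rho}$ is anisotropic over $\R$, hence isotropically trivial), the rest being bookkeeping for a smashing localization. The point I would be most careful about is matching the definition of $\SH(\R)^{\mathrm{AT}}[a^{-1}]$ from \cite{BHS} with ``localization at $a$'', and, relatedly, handling the negative powers $(\sph^{\C})^{\wedge q-w}$: this uses the $\otimes$-invertibility of $\sph^{\C}$ in $\SH(\R)^{\mathrm{AT}}$ (its real realization being the invertible $C_2$-spectrum $S^{\sigma}$), a fact already implicit in the very definition of the trigraded spheres, together with the symmetric monoidality of $L^{iso}$. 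Once those are pinned down, the computation is immediate.
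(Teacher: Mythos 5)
Your proof is correct and follows essentially the same route as the paper: both hinge on the identification $\spec(\C)\cong Q_{\rho}$ and Remark \ref{annull} to get $L^{iso}(\Sigma^{\infty}_+\spec(\C))\simeq 0$, then deduce the containment in the cellular subcategory, the invertibility of $L^{iso}(a)$, and the formula for $L^{iso}(\sph^{p,q,w})$. You simply spell out the bookkeeping (the localizing-subcategory test on generators, the universal property of localizing at $a$, and the invertibility of $\sph^{\C}$ needed for negative exponents $q-w$) that the paper's terse proof leaves implicit.
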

	\begin{proof}
		Since $\spec(\C) \cong Q_{\rho}$, it follows from Remark \ref{annull} that $L^{iso}(\Sigma^{\infty}_+\spec(\C)) \simeq 0$. Therefore, we immediately see that the essential image of $\SH(\R)^{\mathrm{AT}}$ under $L^{iso}$ is contained in $\SH^{iso}_{cell}(\R)$. Moreover, we notice that $L^{iso}(a):\un^{iso} \rightarrow L^{iso}(\sph^{\C})$ is an equivalence, which implies that $L^{iso}$ factors through $\SH(\R)^{\mathrm{AT}}[a^{-1}]$ and
		$$L^{iso}(\sph^{p,q,w}) =L^{iso}(\Sigma^{p+w,w}\un \wedge (\sph^{\C})^{\wedge q-w})\simeq \Sigma^{p+w,w}\un^{iso} \wedge (\un^{iso})^{\wedge q-w} \simeq \Sigma^{p+w,w}\un^{iso} .$$
		
		This completes the proof.
		\end{proof}
	
	\begin{rem}
		\normalfont
		Denote by $R:\SH^{iso}_{cell}(\R) \rightarrow \SH(\R)^{\mathrm{AT}}$ the right adjoint to the cocontinuous functor $L^{iso}:\SH(\R)^{\mathrm{AT}} \rightarrow \SH^{iso}_{cell}(\R)$ from Theorem \ref{tosyn}. Since $R$ preserves 2-completions in the respective categories, we obtain an induced adjunction:
		$$L^{iso}_2:\SH(\R) ^{\mathrm{AT}}_{2}\leftrightarrows \SH^{iso}_{cell}(\R)_{2}:R$$
		where $L^{iso}_2\coloneqq (-)^{\wedge}_2 \circ L^{iso}$, and $\SH(\R) ^{\mathrm{AT}}_{2}$ and $\SH^{iso}_{cell}(\R)_{2}$ are the subcategories of $2$-complete objects.
		
		Let $\un^{iso}_2$ be the 2-completion of $\un^{iso}$ in $\SH^{iso}_{cell}(\R)$, and let $\SH^{iso}_{cell}(\R)_{i2}\coloneqq \Mod(\SH^{iso}_{cell}(\R);\un^{iso}_2)$. Since $L^{iso}_2(\un_2)\simeq \un^{iso}_2$, we get a symmetric monoidal functor $L^{iso}_{i2}:\SH(\R) ^{\mathrm{AT}}_{i2}\rightarrow \SH^{iso}_{cell}(\R)_{i2}$ that restricts to $L^{iso}_2$ on $2$-complete objects.
	\end{rem}

\begin{prop}\label{syn2comp}
	The functor $L^{iso}_{i2}:\SH(\R) ^{\mathrm{AT}}_{i2}\rightarrow \SH^{iso}_{cell}(\R)_{i2}$ preserves the one-parameter deformation, that is, it sends $\sph^{p,q,w}_2$ to $\Sigma^{p+w,w}\un^{iso}_2$ and $\ta$ to $\rho$ up to a sign.
\end{prop}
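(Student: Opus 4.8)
The first half of the statement is essentially a restatement of Theorem \ref{tosyn}. Since $L^{iso}_{i2}$ is symmetric monoidal with $L^{iso}_{i2}(\un_2)\simeq\un^{iso}_2$, and since $\sph^{p,q,w}_2\simeq\Sigma^{p+w,w}\un_2\wedge(\sph^{\C}_2)^{\wedge q-w}$, it is enough to check that $L^{iso}_{i2}(\sph^{\C}_2)\simeq\un^{iso}_2$. But the map $L^{iso}(a):\un^{iso}\rightarrow L^{iso}(\sph^{\C})$ is an equivalence in $\SH^{iso}_{cell}(\R)$, as already observed in the proof of Theorem \ref{tosyn}, and this property is preserved by $2$-completion; hence $L^{iso}_{i2}(\sph^{\C}_2)\simeq\un^{iso}_2$ and therefore $L^{iso}_{i2}(\sph^{p,q,w}_2)\simeq\Sigma^{p+w,w}\un^{iso}_2$ for all $p,q,w\in\Z$.

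For the claim about $\ta$, specialising the previous paragraph to $(p,q,w)=(0,0,-1)$ shows that $L^{iso}_{i2}$ carries $\ta\in[\sph^{0,0,-1}_2,\un_2]$ to an element of $[\Sigma^{-1,-1}\un^{iso}_2,\un^{iso}_2]\cong\pi_{-1,-1}(\un^{iso}_2)$. I would first identify this target group. By Proposition \ref{rehg} we have $\pi_{-1,-1}(\un^{iso})\cong\pi_0(\sph)\cong\Z$ and $\pi_{-2,-1}(\un^{iso})\cong\pi_{-1}(\sph)\cong0$, so the derived $2$-completion gives $\pi_{-1,-1}(\un^{iso}_2)\cong\Z^{\wedge}_2$; moreover $\rho$ is a generator, since the isomorphism of Proposition \ref{rehg} is induced by the $\rho$-periodization functor, which agrees with $\rr$, and $\rr(\rho)$ is a unit in $\pi_0(\sph)$ because $\rho$ becomes invertible in $\un^{iso}[\rho^{-1}]{\text -}\Mod\simeq\SH$ by Theorem \ref{equniso}. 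For the same reason $L^{iso}(a)$ is a unit in $\pi_{0,0}(\un^{iso})\cong\Z$, hence equal to $\pm1$, and so $L^{iso}_{i2}(a)=\pm1$ in $\pi_{0,0}(\un^{iso}_2)$.

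It then remains to pin down $L^{iso}_{i2}(\ta)$ inside $\Z^{\wedge}_2\cdot\rho$, and here I would import from \cite{BHS} the relation $a\cdot\ta=\rho$ (up to a sign) in $[\sph^{0,-1,-1}_2,\un_2]$ — equivalently, the fact that $\ta$ is carried to $\rho$ under the functor inverting $a$, through which $L^{iso}$ factors by Theorem \ref{tosyn}. Applying $L^{iso}_{i2}$ and using $L^{iso}_{i2}(a)=\pm1$ together with $L^{iso}_{i2}(\rho)=\rho$ then yields $L^{iso}_{i2}(\ta)=\pm\rho$. Alternatively, one may argue through real realisation: $\rr$ factors through $L^{iso}$, $\rr(a)=\mathrm{id}$ and $\rr(\rho)=\pm1$ in $\pi_0(\sph)$, so the relation forces $\rr(\ta)=\pm1$, and the identification $\pi_{-1,-1}(\un^{iso}_2)\cong\pi_0(\sph^{\wedge}_2)$, which sends $L^{iso}_{i2}(\ta)\mapsto\rr(\ta)$ and $\rho\mapsto\rr(\rho)$, again gives $L^{iso}_{i2}(\ta)=\pm\rho$.

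The only genuine obstacle is this last step. The existence statement for $\ta$ on its own (its base change to $\C$ being $\tau$) cannot pin down $L^{iso}_{i2}(\ta)$: the isotropic localisation over $\C$ is the zero category, and both $\rho$ and $a\cdot\ta$ base change to $0$ over $\C$, so the base-change map is uninformative here. One must therefore genuinely invoke the relation between $\ta$, $a$ and $\rho$ supplied by \cite{BHS}. Everything else reduces to formal bookkeeping with Theorem \ref{tosyn}, Proposition \ref{rehg} and Theorem \ref{equniso}.
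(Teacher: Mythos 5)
Your proposal is correct and follows essentially the same route as the paper: the first half is immediate from Theorem \ref{tosyn}, and the second half rests on the BHS relation $\rho = \ta\cdot a$ (cited by the paper as \cite[Corollary 3.6]{BHS}) together with the observation that $L^{iso}(a)$ is a unit in $\pi_{0,0}(\un^{iso})\cong\Z$. The extra material you include — the computation of $\pi_{-1,-1}(\un^{iso}_2)\cong\Z^{\wedge}_2$, the alternative argument through real realization, and the remark that base change to $\C$ is uninformative — are sound and clarifying, but not needed beyond what the paper does.
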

\begin{proof}
	Theorem \ref{tosyn} immediately implies that $L^{iso}_{i2}$ sends $\sph^{p,q,w}_2$ to $\Sigma^{p+w,w}\un^{iso}_2$, so we only need to check that $L_{i2}^{iso}(\ta) \simeq \pm\rho$. Since $L^{iso}$ factors through $\SH(\R)^{\mathrm{AT}}[a^{-1}]$, this follows from the relation $\rho=\ta\cdot a$ in $\pi_{0,-1,-1}(\un_2)=[\sph^{0,-1,-1},\un_2]$ proved in \cite[Corollary 3.6]{BHS}, and from the fact that $L^{iso}(a)$ is a unit in $\pi_{0,0}(\un^{iso})\cong \Z$.
	\end{proof}
	
	\begin{rem}
	\normalfont
	Under the identification between real isotropic cellular spectra and $\F$-synthetic spectra proved in the previous section, Theorem \ref{tosyn} provides a cocontinuous functor $L: \SH(\R)^{\mathrm{AT}} \rightarrow \syn_{\F}$ that factors through $\SH(\R)^{\mathrm{AT}}[a^{-1}]$ and sends $\sph^{p,q,w}$ to $\sph^{p,w}$, by Lemma \ref{spheres}. 
	
	By Proposition \ref{syn2comp}, we also obtain a functor:
	$$L^{\wedge}_{i2}:\SH(\R) ^{\mathrm{AT}}_{i2}\rightarrow\syn_{\F,i2}$$
	where $\syn_{\F,i2} \coloneqq \Mod(\syn_{\F};\sph^{\wedge}_2)$.
	
	Since the functor $L^{\wedge}_{i2}$ factors through $\SH(\R) ^{\mathrm{AT}}_{i2}[a^{-1}]$, in the end we get a functor:
	$$\mathrm{Re}_{\F}:\SH(\R) ^{\mathrm{AT}}_{i2}[a^{-1}] \rightarrow \syn_{\F,i2}$$
	which can be identified with the functor constructed in \cite[Proposition 7.6]{BHS}.
	\end{rem}

Recall that there is an isomorphism of Hopf algebroids:
$$(\mathrm{BP}_*, \mathrm{BP}_*\mathrm{BP})\cong (\Z_{(2)}[v_1,v_2,\dots],\Z_{(2)}[v_1,v_2,\dots,t_1,t_2,\dots]).$$ 
	
	\begin{prop}\label{last}
	The functor $L^{iso}:\SH(\R)^{\mathrm{AT}} \rightarrow \SH^{iso}_{cell}(\R)$ induces a morphism of Hopf algebroids:
	$$(\pi_{2*,*}(\mbp),\mbp_{2*,*}(\mbp)) \rightarrow (\pi_{2*,*}(\mbp^{iso}),\mbp^{iso}_{2*,*}(\mbp^{iso}))$$
	that is the quotient map
	$$(\mathrm{BP}_*, \mathrm{BP}_*\mathrm{BP}) \rightarrow (\F, \A_*)$$
	sending $v_i$ to 0 and $t_i$ to $\xi_i$.
	\end{prop}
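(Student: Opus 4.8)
The plan is to first produce the morphism of Hopf algebroids by abstract nonsense from the monoidal localization $L^{iso}$, then identify it on coefficients using the proof of Lemma \ref{pimbppure}, and finally pin it down on co-operations by comparing with the reduction map $\mbp\to\hz$.

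For the source, $\mbp$ is cellular and connective, so $L^{iso}$ (equivalently, its restriction from Theorem \ref{tosyn}) sends it to $\mbp^{iso}$, and the pure Hopf algebroid $(\pi_{2*,*}(\mbp),\mbp_{2*,*}(\mbp))$ is identified with $(\mathrm{BP}_*,\mathrm{BP}_*\mathrm{BP})$ — the $\mbp$-version of the corresponding statement for $\mgl$, using the splitting $\mgl_{(2)}\simeq\bigvee_\alpha\Sigma^{2q_\alpha,q_\alpha}\mbp$; cf.\ \cite{H}, \cite{HKO}. Because $L^{iso}$ is symmetric monoidal and $\un^{iso}$ is idempotent (Proposition \ref{idm}), one has $L^{iso}(\mbp\wedge\mbp)\simeq\mbp^{iso}\wedge\mbp\simeq\mbp^{iso}\wedge\mbp^{iso}$ and similarly for triple smash powers, so $L^{iso}$ carries the units, the counit, the comultiplication and the conjugation of the co-operation Hopf algebroid of $\mbp$ to those of $\mbp^{iso}$. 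All these structure maps respect the invariant $p-2q$, hence restrict to the pure subalgebras, producing a morphism of Hopf algebroids into $(\pi_{2*,*}(\mbp^{iso}),\mbp^{iso}_{2*,*}(\mbp^{iso}))$; by Theorem \ref{pimbp} the latter equals $(\F,\A_*)$, since the pure part of $\F[\rho]$ is $\F$ and, $\rho$ being in bidegree $(-1,-1)$, the pure part of $\F[\rho]\otimes_\F\G_{**}$ is $\G_{2*,*}\cong\A_*$.

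On coefficients this morphism is the claimed quotient: the proof of Lemma \ref{pimbppure} identifies $\pi_{2*,*}(\mbp^{iso})$ with $\colim_n\coker(\mbp_{2*,*}(Q_{\rho^n})\to\mbp_{2*,*})$, where the image of $\mbp_{2*,*}(Q_{\rho^n})$ is $(2,v_1,\dots,v_{n-1})$ by \cite[Example 7.7]{DV}, and the structure map $\mbp_{2*,*}\to\pi_{2*,*}(\mbp^{iso})$ is the canonical projection onto this colimit, i.e.\ reduction modulo $(2,v_1,v_2,\dots)$; hence $v_i\mapsto 0$. Write $\phi\colon\mathrm{BP}_*\mathrm{BP}\to\A_*$ for the induced map on co-operations. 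Since $\phi$ is a map of Hopf algebroids over $\mathrm{BP}_*\to\F$, it is in particular a map of left $\mathrm{BP}_*$-algebras, hence factors through $\mathrm{BP}_*\mathrm{BP}\otimes_{\mathrm{BP}_*}\F=\F[t_1,t_2,\dots]$ (via the left unit), in which $t_i$ has the same motivic bidegree $(2^{i+1}-2,2^i-1)$ as $\xi_i$. So it remains to prove $\phi(t_i)=\xi_i$.

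For this last step I would use the reduction map $\mbp\to\hz$. Functoriality of $L^{iso}$ together with naturality of the transformation $\mathrm{id}\to L^{iso}$ gives a commutative square
$$\xymatrix{\mbp_{2*,*}(\mbp) \ar[r]^{\phi} \ar[d] & \mbp^{iso}_{2*,*}(\mbp^{iso}) \ar[d] \\ \A_{2*,*} \ar[r] & \A^{iso}_{2*,*}}$$
whose left vertical arrow is the motivic form of the classical Thom reduction $(\mathrm{BP}_*,\mathrm{BP}_*\mathrm{BP})\to(\F,\A_*)$ and sends $t_i$ to $\xi_i$ (cf.\ \cite{H}, \cite{HKO}); whose bottom arrow, induced by $\hz\to\hz^{iso}$, is the base change $x\mapsto 1\otimes x$ along the ring map $\pi_{**}(\hz)\to\pi_{**}(\hz^{iso})$ of Lemma \ref{ringhom}, hence sends $\xi_i$ to $\xi_i$; and whose right vertical arrow, induced by $\mbp^{iso}\to\hz^{iso}$, is injective on pure parts — because $\mbp^{iso}\to\hz^{iso}$ is the split inclusion of the bottom wedge summand in the decomposition of Proposition \ref{freembp} — and identifies $\G_{2*,*}=\F[\xi_i:i\ge1]$ with the pure part $\F[\xi_i:i\ge1]$ of $\A^{iso}_{**}$ (which is the pure part since $\rho$, the $r_i$ and the $\tau_i$ each strictly raise $p-2q$), matching $\xi_i$ with $\xi_i$ by the very construction of $\G_{**}$ in Proposition \ref{hmbpiso}. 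Chasing $t_i$ around the square then forces $\phi(t_i)=\xi_i$. The step I expect to be the main obstacle is precisely this identification: reconciling the several occurrences of ``$\xi_i$'' and the topological-versus-motivic degree conventions, and importing the classical reduction $\mathrm{BP}_*\mathrm{BP}\to\A_*$ into the isotropic setting with the correct Milnor generators; everything else is a formal consequence of the monoidality of $L^{iso}$, the computations of $\pi_{**}(\mbp^{iso})$, $\mbp^{iso}_{**}(\mbp^{iso})$ and $\A^{iso}_{**}$ from Section 3, and naturality.
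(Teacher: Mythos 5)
Your proposal is correct and takes essentially the same approach as the paper: both produce the morphism of pure Hopf algebroids from the ring map $\mbp \to \mbp^{iso}$ (you via monoidality of $L^{iso}$), compute the effect on coefficients from Lemma \ref{pimbppure}, and pin down the co-operations by factoring through the Thom reduction to $\hz$ and invoking Propositions \ref{hmbpiso} and \ref{freembp} and Theorem \ref{pimbp}. The only cosmetic difference is that you replace $\hz$ in both slots (the square with $\A_{2*,*}\to\A^{iso}_{2*,*}$), whereas the paper factors as $\mbp_{2*,*}(\mbp)\to\mbp_{2*,*}(\hz)\to\mbp^{iso}_{2*,*}(\hz^{iso})\to\mbp^{iso}_{2*,*}(\mbp^{iso})$, changing only the second slot and using that the last arrow is an isomorphism in Chow-Novikov degree $0$; both variants are correct and rest on the same inputs.
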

\begin{proof}
The map of homotopy commutative ring spectra $\mbp \rightarrow \mbp^{iso}$ induces a morphism of Hopf algebroids: $$(\pi_{**}(\mbp),\mbp_{**}(\mbp)) \rightarrow (\pi_{**}(\mbp^{iso}),\mbp^{iso}_{**}(\mbp^{iso})),$$ 
	which in Chow-Novikov degree 0 factors as:
	$$\mbp_{2*,*}(\mbp) \rightarrow \mbp_{2*,*}(\hz) \rightarrow \mbp^{iso}_{2*,*}(\hz^{iso}) \rightarrow \mbp^{iso}_{2*,*}(\mbp^{iso}).$$
	
	It follows from Proposition \ref{hmbpiso}, Proposition \ref{freembp} and Theorem \ref{pimbp} that this composition is the quotient map
	$$(\mathrm{BP}_*, \mathrm{BP}_*\mathrm{BP}) \rightarrow (\F, \A_*)$$ 
	by the ideal $(2,v_1,v_2,\dots)$. This completes the proof.
	\end{proof}

\begin{rem}
\normalfont
Proposition \ref{syn2comp} and Proposition \ref{last} imply in particular that the functor
$$\Mod(\SH(\R)^{\mathrm{AT}}_{i2};C\ta) \rightarrow \Mod(\SH^{iso}_{cell}(\R)_{i2};\un^{iso}/\rho)\simeq \Mod(\syn_{\F,i2};C\lambda)$$ 
is induced by the map of Hopf algebroids $(\mathrm{BP}_*, \mathrm{BP}_*\mathrm{BP}) \rightarrow (\F, \A_*)$ which sends $t_i$ to $\xi_i$. This answers a question posed by Burklund-Hahn-Senger in \cite[Question 7.7]{BHS}.
\end{rem}

\footnotesize{
	
}

\noindent {\scshape Dipartimento di Matematica e Applicazioni, Universit\`a degli Studi di Milano-Bicocca}\\
fabio.tanania@gmail.com


\begin{thebibliography}{00}
		
		\bibitem{B} T. Bachmann,
		\textit{Motivic and real \'etale stable homotopy theory}, Compos. Math. 154 (2018), no. 5, 883-917.
		
		\bibitem{BH} T. Bachmann, M. J. Hopkins,
		\textit{$\eta$-periodic motivic stable homotopy theory over fields}, arXiv:2005.06778. 
		
		\bibitem{BKWX} T. Bachmann, H. J. Kong, G. Wang, Z. Xu, 
		\textit{The Chow t-structure on the $\infty$-category of
		motivic spectra}, Ann. of Math. (2) 195 (2022), no. 2, 707-773.
	
	    \bibitem{BO} T. Bachmann, P. A. \O stv\ae r,
	    \textit{Topological models for stable motivic invariants of regular number rings}, Forum Math. Sigma 10 (2022), Paper No. e1, 27 pp.
	    
	    \bibitem{BHS} R. Burklund, J. Hahn, A. Senger,
	    \textit{Galois reconstruction of Artin-Tate $\R$-motivic spectra}, arXiv:2010.10325.
		
		\bibitem{DI} D. Dugger, D. C. Isaksen,
		\textit{Motivic cell structures}, Algebr. Geom. Topol. 5 (2005), 615-652.
		
		\bibitem{DI2} D. Dugger, D. C. Isaksen,
		\textit{The motivic Adams spectral sequence}, Geom. Topol. 14 (2010), no. 2, 967-1014.
		
		\bibitem{DV} P. Du, A. Vishik,
		\textit{On the Balmer spectrum of the Morel-Voevodsky category}, Duke Math. J. 174 (2025), no. 6, 1013-1044.
		
		\bibitem{G} B. Gheorghe, 
		\textit{The motivic cofiber of $\tau$}, Doc. Math. 23 (2018), 1077-1127.
		
		\bibitem{GWX} B. Gheorghe, G. Wang, Z. Xu,
		\textit{The special fiber of the motivic deformation of the stable homotopy category is algebraic}, Acta Math. 226 (2021), no. 2, 319-407.
		
		\bibitem{H} M. Hoyois, 
		\textit{From algebraic cobordism to motivic cohomology}, J. Reine Angew. Math. 702 (2015), 173-226.
		
	    \bibitem{HKO} M. Hoyois, S. Kelly, P.A. \O stv\ae r, \textit{The motivic Steenrod algebra in positive characteristic},  J. Eur. Math. Soc. (JEMS) 19 (2017), no. 12, 3813-3849.
	    
	    \bibitem{IWX} D. C. Isaksen, G. Wang, Z. Xu,
	    \textit{Stable homotopy groups of spheres: from dimension 0 to 90}, Publ. Math. Inst. Hautes Études Sci. 137 (2023), 107-243.
	    
	    \bibitem{L} P. S. Landweber,
	    \textit{Associated prime ideals and Hopf algebras}, J. Pure Appl. Algebra 3 (1973), 43-58.
	    
	    \bibitem{La} T. Lawson, 
	    \textit{Secondary power operations and the Brown-Peterson spectrum at the prime 2}, Ann. of Math. (2) 188 (2018), no. 2, 513-576.
	    
	    \bibitem{Le} M. Levine, 
	    \textit{Aspects of enumerative geometry with quadratic forms}, Doc. Math. 25 (2020), 2179-2239.
		
		\bibitem{Lu} J. Lurie, 
		\textit{Higher algebra}, available at http://people.math.harvard.edu/\texttildelow lurie/papers/HA.pdf.
		
		\bibitem{OO} K. M. Ormsby, P. A. \O stv\ae r,
		\textit{Motivic Brown-Peterson invariants of the rationals}, Geom. Topol. 17 (2013), no. 3, 1671-1706.
		
		\bibitem{P} P. Pstr\k{a}gowski,
		\textit{Synthetic spectra and the cellular motivic category}, Invent. Math. 232 (2023), no. 2, 553-681.
		
		\bibitem{T1} F. Tanania,
		\textit{Isotropic stable motivic homotopy groups of spheres}, Adv. Math. 383 (2021), Paper No. 107696, 28 pp.
		
		\bibitem{T} F. Tanania,
		\textit{Cellular objects in isotropic motivic categories}, Geom. Topol. 27 (2023), no. 5, 2013-2048.
		
			\bibitem{T2} F. Tanania,
		\textit{Isotropic motivic fundamental groups}, arXiv:2411.16540.
		
		\bibitem{V} A. Vishik,
		\textit{Isotropic motives}, J. Inst. Math. Jussieu 21 (2022), no. 4, 1271-1330.
		
		\bibitem{V1} A. Vishik,
		\textit{Isotropic and numerical equivalence for Chow groups and Morava
			K-theories}, Invent. Math. 237 (2024), no. 2, 779-808.
		
		\bibitem{Vo} V. Voevodsky,
		\textit{Triangulated categories of motives over a field}, Ann. of Math. Stud., 143
		Princeton University Press, Princeton, NJ, 2000, 188-238.\\
		
	\end{thebibliography}
\end{document}